\documentclass{amsart}
\usepackage[all]{xy}
\usepackage{amsmath,amsfonts,amssymb,amsthm,epsfig,amscd,psfrag,framed,comment}
\usepackage{nicefrac,xspace,tikz}
\usetikzlibrary{arrows}

\addtolength{\hoffset}{-1.5cm}
\addtolength{\textwidth}{3cm}

\usepackage{hyperref}


\newcommand{\puteps}[2][0.5]
{\includegraphics[scale=#1]{#2.eps}}

\newtheorem{Theorem}{Theorem}[section]
 
\newtheorem{Proposition}[Theorem]{Proposition} 
\newtheorem{Lemma}[Theorem]{Lemma}

\newtheorem{Corollary}[Theorem]{Corollary}
\newtheorem{Conjecture}[Theorem]{Conjecture}

\theoremstyle{definition}
\newtheorem{Remark}[Theorem]{Remark}


\newcommand{\smcbub}[1]{
\xybox{%
 (-5,0)*{};
  (5,0)*{};
  (-2,0)*{}="t1";
  (2,0)*{}="t2";
  "t2";"t1" **\crv{(2,3) & (-2,3)}; ?(0)*\dir{<} ?(.95)*\dir{<} ?(.3)*\dir{}+(0,0)*{\bullet}+(1,2)*{\scs {#1}};
  "t2";"t1" **\crv{(2,-3) & (-2,-3)};
}}

\newcommand{\smccbub}[1]{
\xybox{%
 (-5,0)*{};
  (5,0)*{};
  (-2,0)*{}="t1";
  (2,0)*{}="t2";
  "t2";"t1" **\crv{(2,3) & (-2,3)}; ?(.05)*\dir{>} ?(1)*\dir{>}; ?(.3)*\dir{}+(0,0)*{\bullet}+(1,2)*{\scs
  {#1}};
  "t2";"t1" **\crv{(2,-3) & (-2,-3)};
}}

\newcommand{\cbub}[1]{
\xybox{%
 (-5,0)*{};
  (5,0)*{};
  (-2,0)*{}="t1";
  (2,0)*{}="t2";
  "t2";"t1" **\crv{(2,3) & (-2,3)}; ?(0)*\dir{<} ?(.95)*\dir{<};
  "t2";"t1" **\crv{(2,-3) & (-2,-3)};
}}

\newcommand{\ccbub}[1]{
\xybox{%
(-5,0)*{};
  (5,0)*{};
  (-2,0)*{}="t1";
  (2,0)*{}="t2";
  "t2";"t1" **\crv{(2,3) & (-2,3)}; ?(.05)*\dir{>} ?(1)*\dir{>}; 
  "t2";"t1" **\crv{(2,-3) & (-2,-3)};
}}

\newcommand{\Udotcapl}{\;\;
    \vcenter{\xy (2,-2)*{}; (-2,-2)*{} **\crv{(2,2) & (-2,2)}?(1)*\dir{>};
            (2,-3)*{};(-2,3)*{};(1.5,0)*{\bullet}+(-3,2); \endxy} \;\; }

\newcommand{\Udotcupr}{
\xy (-2,2)*{}; (2,2)*{} **\crv{(-2,-2) & (2,-2)}?(1)*\dir{>};
            (2,-3)*{};(-2,3)*{};(1.5,0)*{\bullet} \endxy}


\newcommand{\Ucupr}{
   \xy (-2,2)*{}; (2,2)*{} **\crv{(-2,-2) & (2,-2)}?(1)*\dir{>};
            (2,-3)*{};(-2,3)*{}; \endxy}

\newcommand{\Ucapr}{\;\;
    \vcenter{\xy (-2,-1)*{}; (2,-1)*{} **\crv{(-2,3) & (2,3)}?(1)*\dir{>};
            (2,-3)*{};(-2,3)*{}; \endxy} \;\; }

\newcommand{\Ucapl}{\;\;
    \vcenter{\xy (2,-2)*{}; (-2,-2)*{} **\crv{(2,2) & (-2,2)}?(1)*\dir{>};
            (2,-3)*{};(-2,3)*{}; \endxy} \;\; }

\newcommand{\sUup}{
    \xy {\ar (0,-2)*{};(0,2)*{} };(1.5,0)*{};(-1.5,0)*{};\endxy}

\newcommand{\sUdown}{
    \xy {\ar (0,2)*{};(0,-2)*{} };(1.5,0)*{};(-1.5,0)*{};\endxy}

\newcommand{\sUupdot}{
   \xy {\ar (0,-2)*{};(0,2)*{} };(0,0)*{\scs \bullet};(1.5,0)*{};(-1.5,0)*{};\endxy}

\newcommand{\sUdowndot}{
   \xy {\ar (0,2)*{};(0,-2)*{} };(0,0)*{\scs \bullet};(1.5,0)*{};(-1.5,0)*{};\endxy}


\newcommand{\scs}{\scriptstyle}
\newcommand{\qbin}[2]{
\left[
 \begin{array}{c}
 #1 \\
 #2 \\
 \end{array}
 \right]
}
\newcommand{\qbins}[2]{
\left[
 \begin{array}{c}
 \scs #1 \\
 \scs #2 \\
 \end{array}
 \right]
}
\def\th{\theta}
\def\sC{{\mathcal{C}}}
\def\bD{\mathbb{D}}

\def\Sym{{\mbox{Sym}}}
\def\dmod{{\mathrm{-mod}}}

\def\bk{{\underline{k}}}
\def\i{{\underline{i}}}
\def\j{{\underline{j}}}
\def\ux{{\underline{x}}}
\def\0{{\underline{0}}}
\def\u2{{\underline{2}}}
\def\um{{\underline{m}}}

\def\rA{{\mathrm{A}}}
\def\M{\mathfrak{M}}
\def\hK{\hat{K}}
\def\K{\mathcal{K}}

\def\A{{\sf{A}}}
\def\B{{\sf{B}}}
\def\bA{{\mathbb{A}}}
\def\fC{{\sf{C}}}
\def\wt{{\sf{wt}}}
\def\Kom{{\sf{Kom}}}
\def\1{{\mathbf{1}}}
\def\P{{\sf{P}}}
\def\bP{{\mathbb{P}}}

\def\k{{\mathbb C}}
\def\id{{\mathrm{id}}}
\def\l{{\lambda}}

\def\R{{\sf{R}}}
\def\sl{\mathfrak{sl}}

\def\g{\mathfrak{g}}

\newcommand{\Gr}{\mathrm{Gr}}
\newcommand{\ul}{{\underline{\lambda}}}
\newcommand{\umu}{{\underline{\mu}}}
\newcommand{\Z}{\mathbb{Z}}

\newcommand{\Cone}{\operatorname{Cone}}

\newcommand{\D}{\mathcal{D}}
\def\O{{\mathcal O}}
\def\e{{\underline{e}}}
\def\sA{{\mathcal{A}}}
\def\sE{{\mathcal{E}}}

\def\sF{{\mathcal{F}}}
\def\sL{{\mathcal{L}}}

\def\sH{{\mathcal{H}}}

\def\sP{{\mathcal{P}}}
\def\sQ{{\mathcal{Q}}}
\def\bG{\mathbb{G}}

\def\t{{T}}
\def\I{{\sf I}}

\def\v{{\underline{v}}}
\def\w{{\underline{w}}}
\def\N{\mathbb N}
\def\tY{\tilde{Y}}
\newcommand{\E}{\mathsf{E}}
\newcommand{\F}{\mathsf{F}}

\newcommand{\G}{\mathbb{G}}

\newcommand{\U}{\mathsf{U}}
\newcommand{\T}{\mathsf{T}}
\newcommand{\la}{\langle}
\newcommand{\ra}{\rangle}

\newcommand{\Hom}{\operatorname{Hom}}
\newcommand{\End}{\operatorname{End}}

\begin{document}
\setcounter{tocdepth}{1}

\title[Clasp technology to knot homology via the affine Grassmannian]{Clasp technology to knot homology via the affine Grassmannian}

\author{Sabin Cautis}
\email{cautis@math.ubc.ca}
\address{Department of Mathematics\\ University of British Columbia \\ Vancouver, Canada}

\begin{abstract}
We categorify all the Reshetikhin-Turaev tangle invariants of type A. Our main tool is a categorification of the generalized Jones-Wenzl projectors (a.k.a. clasps) as infinite twists. Applying this to certain convolution product varieties on the affine Grassmannian we extend our earlier work with Kamnitzer \cite{CK1,CK2} from standard to arbitrary representations. 
\end{abstract}

\maketitle
\tableofcontents
\section{Introduction}

We give a method, based on the higher representation theory of $U_q(\sl_\infty)$, for categorifying all the Reshetikhin-Turaev tangle invariants associated to $\sl_m$. In particular, we show how to define homological tangle invariants using any categorification of the $U_q(\sl_\infty)$-module $\Lambda_q^{m \infty}(\k^m \otimes \k^{2 \infty})$. 

One example of such a categorification uses the affine Grassmannian of ${\rm PGL}_m$. Alternatively, one could use Nakajima quiver varieties to categorify the sub-module $\Lambda_q^{\infty}(\k^{2\infty})^{\otimes m}$. These two approaches are closely related and give the same homological knot invariants.

The main tool we use is skew Howe duality, building on work from \cite{CKL1}. The key technical construction is the categorification of the Jones-Wenzl projectors and their generalizations called clasps.

\subsection{Reshetikhin-Turaev invariants and clasps}\label{sec:RTinv}
  
Let $\g'$ be a complex semisimple Lie algebra and denote by $U_q(\g')$ the corresponding quantum group. Consider a tangle $T$ whose strands are labeled by dominant weights of $\g'$ so that the strands at the bottom and top are labeled by $\ul = (\l_1, \dots, \l_n)$ and $\umu = (\mu_1, \dots, \mu_{n'})$ respectively. 

Following Reshetikhin-Turaev \cite{RT} one can associate to $T$ a map of $U_q(\g')$-modules
$$\psi(T): V_{\ul} = V_{\l_1} \otimes \dots \otimes V_{\l_n} \longrightarrow V_{\mu_1} \otimes \dots \otimes V_{\mu_{n'}} = V_{\umu}$$
where $V_{\l}$ denotes the irreducible $U_q(\g')$-module with highest weight $\l$. This map is an invariant of oriented tangles. In particular, if $T=K$ is a link then $\psi(K)$ is an endomorphism of the trivial module and hence given as multiplication by some $\psi(K)(1) \in \k(q)$. If $\g'=\sl_2$ and all strands are labeled by the standard representation then $\psi(K)(1)$ is just the Jones polynomial of $K$. 

If $\g' = \sl_m$ then one way to define $\psi(T)$ uses the following data. 
\begin{enumerate}
\item Maps $\psi(T)$ where the strands of $T$ are labeled by fundamental weights $\Lambda_1, \dots, \Lambda_{m-1}$. 
\item Idempotents $P \1_\i$ given as the composition 
$$V_{\Lambda_{i_1}} \otimes \dots \otimes V_{\Lambda_{i_N}} \xrightarrow{\pi} V_{\sum_k \Lambda_{i_k}} \xrightarrow{\iota} V_{\Lambda_{i_1}} \otimes \dots \otimes V_{\Lambda_{i_N}}$$
where $\pi$ and $\iota$ are the natural projection and inclusion maps. 
\end{enumerate}
For example, to compute the invariant of the unknot labeled by $V_{2 \Lambda_1}$ one calculates the composition 
$$\k \rightarrow (V_{\Lambda_1}^\vee \otimes V_{\Lambda_1}^\vee) \otimes (V_{\Lambda_1} \otimes V_{\Lambda_1}) \xrightarrow{(II)(P)} (V_{\Lambda_1}^\vee \otimes V_{\Lambda_1}^\vee) \otimes (V_{\Lambda_1} \otimes V_{\Lambda_1}) \rightarrow \k$$
where the leftmost (resp. rightmost) map corresponds to a double cup (resp. double cap). 

\begin{Remark}
When $m=2$ the maps $P$ are the standard {\em Jones-Wenzl projectors}. These can be described recursively in terms of caps and cups. When $m=3$ the maps $P$ were studied by Kuperberg \cite{Kup} and also given a recursive definition in terms of webs. Kuperberg called these idempotents {\em clasps}. We adopt this terminology and call all idempotents $P$ clasps. 
\end{Remark}

One way to define the maps in (i) is using skew Howe duality (see section \ref{sec:Howe}). More precisely, fix $m$ and consider the vector space $\Lambda^{mN}(\k^m \otimes \k^{2N})$ equipped with commuting actions of $U(\sl_m)$ and $U(\sl_N)$. As a $U(\sl_N)$-module it breaks up into weight spaces of the form $V_{\Lambda_{i_1}} \otimes \dots \otimes V_{\Lambda_{i_N}}$ where $E_k, F_k \in U(\sl_\infty)$ are maps 
\begin{equation}\label{eq:EFmaps}
V_{\Lambda_{i_1}} \otimes \dots \otimes V_{\Lambda_{i_k}} \otimes V_{\Lambda_{i_{k+1}}} \otimes \dots \otimes V_{\Lambda_{i_N}} \overset{E_k}{\underset{F_k}{\rightleftarrows}} V_{\Lambda_{i_1}} \otimes \dots \otimes V_{\Lambda_{i_k-1}} \otimes V_{\Lambda_{i_{k+1}+1}} \otimes \dots \otimes V_{\Lambda_{i_N}}.
\end{equation}
The generators $s_k$ of the Weyl group of $\sl_N$ can be lifted from $U(\sl_N)$ to $U_q(\sl_N)$ to obtain maps
\begin{equation}\label{eq:intro1}
\t_k: V_{\Lambda_{i_1}} \otimes \dots \otimes V_{\Lambda_{i_k}} \otimes V_{\Lambda_{i_{k+1}}} \otimes \dots \otimes V_{\Lambda_{i_N}} \longrightarrow V_{\Lambda_{i_1}} \otimes \dots \otimes V_{\Lambda_{i_{k+1}}} \otimes V_{\Lambda_{i_{k}}} \otimes \dots \otimes V_{\Lambda_{i_N}}
\end{equation}
which generate an action of the braid group on the weight spaces of $\Lambda_q^{N}(\k^m \otimes \k^{2N})$. In \cite{CKL1} we showed that this action agrees with the braid group action defined by Reshetikhin-Turaev using the R-matrix associated with $\sl_m$. Using the $E$'s and $F$'s (see section \ref{sec:tanglesfund}) one can also define caps and cups and thus recover all the tangle maps $\psi(T)$ from (i). 

To define the clasps $P \1_\i$ in (ii) we will use the braid group action from (i). More specifically, denote by $\t_\omega \1_\i$ the full twist of $n$ strands (see equation (\ref{eq:Tomega})). Then one can show that the limit $\lim_{\ell \rightarrow \infty} \t_\omega^{2 \ell} \1_\i$ exists and converges to give the clasp $P \1_\i$. 

Notice that, in the constructions above, the more strands in our tangle the larger the $N$ we need to use. In order to work with all tangles in a uniform manner we will let $N \rightarrow \infty$ and pass to the $U_q(\sl_\infty)$-module $\Lambda_q^{m \infty}(\k^m \otimes \k^{2 \infty})$.

\begin{Remark}\label{rem:only}
In the end, to recover the Reshetikhin-Turaev invariants of $\g'=\sl_m$, we only use the data encoded in the $U_q(\sl_\infty)$-module $\Lambda_q^{m \infty}(\k^m \otimes \k^{2 \infty})$.
\end{Remark}

\subsection{Categorification}\label{sec:categorification}

The first examples of homological knot invariants are due to Khovanov \cite{K1,K2}. He considers the case where $\g'=\sl_2$ and all the strands are labeled by the standard representation. In subsequent work \cite{KR}, Khovanov and Rozansky consider $\g' = \sl_m$ where all strands are again labeled by the standard representation (or its dual). Their construction uses categories of matrix factorizations. 

In \cite{CK1,CK2} we also considered $\g'=\sl_m$ where all strands are labeled by the standard representation (or its dual). The categories used, inspired by the geometric Satake correspondence, are derived categories of coherent sheaves on certain varieties $Y(i_1,i_2,\dots,i_N)$ (see section \ref{sec:varieties}) where each $i_k$ is either $1$ or $m-1$. The current paper uses tools from higher representation theory to generalize \cite{CK1,CK2} to tangles labeled by arbitrary representations of $\g'=\sl_m$. 

More precisely, we will categorify the skew Howe duality construction from section \ref{sec:RTinv}. The categorical analogue of a $U_q(\sl_N)$ action that we use is an $(\sl_N, \th)$ action (section \ref{sec:catgactions}). It was introduced in \cite{C2} where it was shown that such an action induces a categorical action of $\sl_N$ in the sense of \cite{KL3,Rou1}. The advantage of an $(\sl_N,\th)$ action is that it is simpler and easier to check in practice. 

The first step is to lift the $U_q(\sl_N)$-module $\Lambda_q^{mN}(\k^m \otimes \k^{2N})$ to a 2-category $\K$ equipped with an $(\sl_N,\th)$ action. Roughly, this means that the nonzero objects of $\K$ are in bijection with nonzero weight spaces of $\Lambda_q^{mN}(\k^m \otimes \k^{2N})$. In other words, the objects of $\K$ are indexed by sequences $\i = (i_1, \dots, i_N)$ which correspond to the weight space $V_{\Lambda_{i_1}} \otimes \dots \otimes V_{\Lambda_{i_N}}$. 

The fact that $\K$ is equipped with a $(\sl_N,\th)$ means that inside $\K$ we have 1-morphisms 
\begin{align*}
\xymatrix{
(i_1, \dots, i_k, i_{k+1}, \dots, i_N) \ar@/^/[rr]|-{\E_k} && \ar@/^/[ll]|-{\F_k} (i_1, \dots, i_k-1, i_{k+1}+1, \dots, i_N) }
\end{align*}
lifting the maps in (\ref{eq:EFmaps}). Following \cite{CKL3,CK3} one can use these 1-morphisms to define complexes $\T_k \in \Kom(\K)$ which lift the maps in (\ref{eq:intro1}) and give rise to a braid group action (here $\Kom(\K)$ denotes the homotopy category of $\K$). These complexes allow us to lift the maps $\psi(T)$ associated to a tangle whose strands are labeled by fundamental representations. 

Most of the work in this paper is to explain how to also lift the clasps $P$. This is done by showing that the limit $\lim_{\ell \rightarrow \infty} \T_\omega^{2 \ell}$ converges to an idempotent $\P^- \in \Kom^-_*(\K)$ (Theorem \ref{thm:main1}) and that this idempotent categorifies the clasp $P$ (Theorem \ref{thm:main2}). Putting all this together gives us a $\Z^2$-graded link invariant $\sH^{i,j}_-(K)$ (Theorem \ref{thm:main3}).

\subsection{The affine Grassmannian}\label{sec:grass}

The 2-category $\K = \K_{\Gr,m}$ that we use in the constructions above is obtained from the affine Grassmannian of ${\rm PGL}_m$. More precisely, the objects in $\K_{\Gr,m}$ are derived categories of coherent sheaves $D(Y(\i))$ on certain convolution product varieties $Y(\i)$ (see section \ref{sec:varieties}). The 1-morphisms are then kernels and the 2-morphisms are morphisms between kernels. 

In Theorem \ref{thm:main4} we show that $\K_{\Gr,m}$ carries a $(\sl_\infty,\th)$ action. It follows that, using affine Grassmannians of type A, one can categorify all the Reshetikhin-Turaev invariants of type A. The resulting homology of a knot labeled by fundamental representations is finite dimensional whereas for non-fundamental representations the homology is infinite dimensional. 

\subsection{Rigidity}\label{sec:rigid}

The input for our categorification of $\g'=\sl_m$ link invariants is any 2-category $\K$ equipped with an $(\sl_\infty, \th)$ action which lifts the $U_q(\sl_\infty)$-module $\Lambda_q^{m \infty}(\k^m \otimes \k^{2 \infty})$ ({\it c.f.} Remark \ref{rem:only}). 

At first it seems that the resulting homology depends on the 2-category $\K$. However, in \cite{C2} we showed that any $(\sl_N,\th)$ action can be equipped with an action of the quiver Hecke algebras (KLR algebras). Together with \cite{CLa} this means that any $(\sl_N,\th)$ action can be lifted to a 2-representation in the sense of \cite{KL3}. 

The extra structure in such a 2-representation allows us to take any complex of 1-morphisms and simplify it. In particular, it gives us a uniform way to compute the homology of a link which is independent of the choice of $\K$. 

This observation implies a type of rigidity for homological link invariants. It means that two homological link invariants which are obtained (using the skew Howe duality approach) from a 2-category $\K$ must be isomorphic. Various link homologies such as those obtained using derived categories of coherent sheaves \cite{CK1,CK2}, category $\O$ \cite{MS,Su}, matrix factorizations \cite{Wu,Y} and foams \cite{MSV,LQR,QR} are known (or at least expected to) fit in this picture. The argument above shows they are isomorphic. 

\subsection{Lee-like deformations}

Lee defined in \cite{Lee} a modification of Khovanov homology (now called Lee homology) and showed that there exists a spectral sequence whose $E_2$-term is Khovanov homology and which converges to Lee homology. Subsequently, Rasmussen \cite{Ras} used this to define a concordance invariant and give a combinatorial proof of the Milnor conjecture. 

Using the 2-category $\K_{\Gr,2}$ it is possible to recover the analogue of Lee homology as follows. The objects in this category are varieties which carry a natural action of ${\rm PGL}_2$. Moreover, all morphisms and 2-morphisms we define are ${\rm PGL}_2$-equivariant so we can define a ${\rm PGL}_2$-equivariant version of this category. 

The resulting homology associates to a link a complex of modules over $\End_{{\rm PGL}_2}(\k) \cong \k[x]$ where $\deg(x)=4$. If one specializes $x=0$ then this recovers Khovanov homology. If one specializes to $x \ne 0$ then this gives the analogue of Lee homology. Moreover, the natural filtration on the complex induced by $\k[x] \supset x \k[x] \supset x^2 \k[x] \supset \dots$ means that there is a spectral sequence starting at the former and converging to the latter.

This approach generalizes to arbitrary $m$ by considering the ${\rm PGL}_m$-equivariant version of $\K_{\Gr,m}$. The resulting link homology is a complex of free modules over $\End_{{\rm PGL}_m}(\k) \cong \k[x_2, \dots, x_m]$ where $\deg(x_i)=2i$. This defines a family of homologies. Specializing to $x_2=\dots=x_m=0$ one obtains the $\sl_m$ homology discussed in this paper while specializing to generic values of $x_2,\dots,x_m$ gives the $\sl_m$ analogue of Lee's homology. For the same reasons as above, there is again a spectral sequence starting with the former and converging to the latter.

\subsection{Related work on categorical clasps}\label{sec:catproj}

Rozansky \cite{Roz} categorified the Jones-Wenzl projectors within Bar-Natan's graphical formulation of Khovanov homology. In this setup the 1-morphisms are tangles and 2-morphisms are cobordisms modulo certain relations. He constructs the Jones-Wenzl projectors as infinite braids (as far as we know this is the first place that such an infinite braid construction appears). 

Cooper and Krushkal \cite{CoK} independently categorified the Jones-Wenzl projectors within Bar-Natan's setup using a recursive definition of the projectors. In the case of two strands, they describe the projector explicitly \cite[Sect. 4.1]{CoK} as a complex which is very similar to that in (\ref{eq:tempcpx}). Indeed, if we depict $\F_2$ as a cap and $\E_2$ as a cup then (\ref{eq:tempcpx}) can be identified with the (dual) of their complex. 

Another categorification of Jones-Wenzl projectors appears in \cite{FSS}. In this case the projectors are given by explicit bimodules. More recently, Rose \cite{Ros} works within the Morrison-Nieh formulation of $\g' = \sl_3$ knot homology to categorify $\sl_3$ clasps. Finally, Webster \cite{W1,W2} describes a categorification of $\g' = \sl_m$ Reshetikhin-Turaev invariants though his approach is quite different from ours and does not treat the clasps separately. It would be interesting to relate his construction to ours.

\subsection{Further remarks}

Although, for convenience, we work over $\k$ it is possible to work over $\Z$ (we illustrate with some examples in section \ref{sec:example}). This produces homological knot invariants defined over $\Z$. The existence of invariants over $\Z$ is not clear even in simpler cases like Khovanov-Rozansky homology. 

To work over $\Z$, we first lift the $(\sl_\infty, \th)$ action to a 2-representation in the sense of \cite{KL3} (see section \ref{sec:rigid}). Then we have to show that all the $\sl_\infty$ relations we proved among 1-morphisms hold over $\Z$ (for example, those from Lemma \ref{lem:rels1}). Such relations fall into two types: the $\sl_2$ relations (involving a single node) and the $\sl_3$ relations (involving two adjacent nodes). The former were checked in \cite{KLMS} and the latter in \cite{St}. 

\subsection{Acknowledgements}

I would like to thank Pramod Achar, Joel Kamnitzer, Mikhail Khovanov, Aaron Lauda, Anthony Licata, Jacob Rasmussen, Raphael Rouquier, Lev Rozansky, Noah Snyder and Joshua Sussan for helpful discussions. Lauda and Rasmussen also corrected and helped with the calculations in section \ref{sec:example}. Research was supported by NSF grant DMS-1101439 and the Alfred P. Sloan foundation. 

\section{Statements of results}

\subsection{Lie algebras and braid groups}

For convenience the base field will always be $\k$. In the remainder of the paper we take $\g = \sl_n$ or $\g = \sl_\infty$. This means that the vertex set $I$ of the Dynkin diagram of $\g$ is indexed by $\{1, \dots, n-1\}$ if $\g = \sl_n$ or by $\Z$ if $\g = \sl_\infty$. We fix the following data:
\begin{enumerate}
\item a free $\Z$-module $X$  (the weight lattice),
\item for $i \in I$ an element $\alpha_i \in X$ (simple roots),
\item for $i \in I$ an element $\Lambda_i \in X$ (fundamental weights),
\item a symmetric non-degenerate bilinear form $\la \cdot,\cdot \ra$ on $X$.
\end{enumerate}
These data should satisfy:
\begin{enumerate}
\item the set $\{\alpha_i\}_{i \in I}$ is linearly independent,
\item for all $i,j \in I$ we have $\la \alpha_i, \alpha_j \ra = \begin{cases} 2 &\text{ if } i=j \\ -1 &\text{ if } |i-j|=1 \\ 0 &\text{ if } |i-j| > 1 \end{cases}$, 
\item $\la \Lambda_i, \alpha_j \ra = \delta_{i,j}$ for all $i, j \in I$. 
\end{enumerate}
We will often abbreviate $\la \l, \alpha_i \ra = \l_i$ and $\la \alpha_i, \alpha_j \ra = \la i,j \ra$. The root lattice will be denoted $Y$ and $Y_\k := Y \otimes_\Z \k$.

If $V$ is a representation then $V(\mu)$ denotes the weight space corresponding to $\mu \in X$. If $\l \in X$ is a dominant weight then $V_\l$ denotes the irreducible representation with highest weight $\l$. More generally, for a sequence $\ul = (\l_1, \dots, \l_k)$, where each $\l_i \in X$ is dominant, $V_\ul := V_{\l_1} \otimes \dots \otimes V_{\l_k}$. 

The Weyl group of $\g$ has generators $s_i$ for $i \in I$ and relations $s_i^2 = 1$ and 
\begin{align*}
s_i s_j s_i = s_j s_i s_j & \text{ if } |i-j|=1 \\
s_i s_j = s_j s_i & \text{ if } |i-j| > 1.
\end{align*}
This group acts on $X$ via $s_i \cdot \l := \l - \l_i \alpha_i$. 

The braid group of $\g$ has generators $\sigma_i$ for $i \in I$ and relations
\begin{align*}
\sigma_i \sigma_j \sigma_i = \sigma_j \sigma_i \sigma_j & \text{ if } |i-j|=1  \\
\sigma_i \sigma_j = \sigma_j \sigma_i & \text{ if } |i-j|>1.
\end{align*}

\subsection{$(\g,\th)$ actions}\label{sec:catgactions}

A $(\g,\th)$ action consists of a target graded, additive, $\k$-linear idempotent complete 2-category $\K$ where the objects (0-morphisms) are indexed by $\l \in X$ and equipped with
\begin{enumerate}
\item 1-morphisms: $\E_i \1_\l = \1_{\l+\alpha_i} \E_i$ and $\F_i \1_{\l+\alpha_i} = \1_\l \F_i$ where $\1_\l$ is the identity 1-morphism of $\l$.
\item 2-morphisms: for each $\l \in X$, a linear map $Y_\k \rightarrow \End^2(\1_\l)$.
\end{enumerate}
On this data we impose the following conditions.
\begin{enumerate}
\item \label{co:hom1} $\Hom(\1_\l, \1_\l \la l \ra)$ is zero if $l < 0$ and one-dimensional if $l=0$ and $\1_\l \ne 0$. Moreover, the space of maps between any two 1-morphisms is finite dimensional.
\item \label{co:adj} $\E_i$ and $\F_i$ are left and right adjoints of each other up to specified shifts. More precisely
\begin{enumerate}
\item $(\E_i \1_\l)_R \cong \1_\l \F_i \la \l_i + 1 \ra$
\item $(\E_i \1_\l)_L \cong \1_\l \F_i \la - \l_i -1 \ra$.
\end{enumerate}

\item \label{co:EF} We have
\begin{align*}
\E_i \F_i \1_\l &\cong \F_i \E_i \1_{\l} \bigoplus_{[\l_i]} \1_\l \ \ \text{ if } \l_i \ge 0 \\
\F_i \E_i \1_{\l} &\cong \E_i \F_i \1_{\l} \bigoplus_{[-\l_i]} \1_\l \ \ \text{ if } \l_i \le 0
\end{align*}

\item \label{co:EiFj} If $i \ne j \in I$ then $\F_j \E_i \1_\l \cong \E_i \F_j \1_\l$.

\item \label{co:theta} If $\l_i \ge 0$ then map $(I \th I) \in \End^2(\E_i \1_\l \F_i)$ induces an isomorphism between $\l_i+1$ (resp. zero) of the $\l_i+2$ summands $\1_{\l+\alpha_i}$ when $\la \th, \alpha_i \ra \ne 0$ (resp. $\la \th, \alpha_i \ra = 0$). If $\l_i \le 0$ then the analogous result holds for $(I \th I) \in \End^2(\F_i \1_\l \E_i)$.

\item \label{co:vanish1} If $\alpha = \alpha_i$ or $\alpha = \alpha_i + \alpha_j$ for some $i,j \in I$ with $\la i,j \ra = -1$ then $\1_{\l+r \alpha} = 0$ for $r \gg 0$ or $r \ll 0$.

\item \label{co:new} Suppose $i \ne j \in I$ and $\l \in X$. If $\1_{\l+\alpha_i}$ and $\1_{\l+\alpha_j}$ are nonzero then $\1_{\l}$ and $\1_{\l+\alpha_i+\alpha_j}$ are also nonzero.
\end{enumerate}

The main result of \cite{C2} states that a $(\g,\th)$ action carries an action of the quiver Hecke algebras and, in particular, the affine nilHecke relations. These affine nilHecke relations give us the divided powers $\E_i^{(r)}$ or $\F_i^{(r)}$. These 1-morphisms satisfy 
\begin{enumerate}
\item $(\E^{(r)}_i \1_\l)_R \cong \1_\l \F^{(r)}_i \la r(\l_i+r) \ra$
\item $(\E^{(r)}_i \1_\l)_L \cong \1_\l \F^{(r)}_i \la -r(\l_i+r) \ra$.
\end{enumerate}

We will denote by $\Kom^-(\K)$ the (bounded above) homotopy category of $\K$. Here the objects are the same as those of $\K$, the 1-morphisms are bounded above complexes of 1-morphisms in $\K$ (up to homotopy) and the 2-morphisms are maps of complexes. Notice that these are now $\Z \oplus \Z$-graded additive categories since we have the old grading $\la \cdot \ra$ and the new cohomological grading which we denote $[\cdot]$. 

In section \ref{sec:qadic} we define a certain smaller subcategory $\Kom^-_*(\K) \subset \Kom^-(\K)$ which comes equipped with a map $p: K(\Kom^-_*(\K)) \rightarrow \hK(\K)$. Here $K(\cdot)$ denotes the Grothendieck group of a category and $\hK(\cdot)$ its completion (see section \ref{sec:hptycats}). 

Inside $\Kom^-_*(\K)$ one can define the Rickard complexes 
\begin{align}
\label{eq:cpx'} \T_i \1_\l &:= \left[ \dots \rightarrow \E_i^{(-\l_i+s)} \F_i^{(s)} \la -s \ra \rightarrow \E_i^{(-\l_i+s-1)} \F_i^{(s-1)} \la -(s-1) \ra \rightarrow \dots \rightarrow \E_i^{(-\l_i)} \right] \1_\l \\
\label{eq:cpx} \T_i \1_\l &:= \left[ \dots \rightarrow \F_i^{(\l_i+s)} \E_i^{(s)} \la -s \ra \rightarrow \F_i^{(\l_i+s-1)} \E_i^{(s-1)} \la -(s-1) \ra \rightarrow \dots \rightarrow \F_i^{(\l_i)} \right] \1_\l 
\end{align}
depending on whether $\l_i \le 0$ or $\l_i \ge 0$ respectively. Notice that these complexes are finite since $\E_i^{(s)} \1_\l = 0$ and $\F_i^{(s)} \1_\l = 0$ for $s \gg 0$ (because $\1_{\l \pm s \alpha_i} = 0$ for $s \gg 0$). In \cite{CK3} we showed that these $\T_i$ satisfy the braid relations. 

\begin{Remark} These expressions for $\T_i \1_\l$ categorify Lusztig's definition from \cite[Sect. 5.2.1]{Lu}. There he defines $\t_i \1_\l$ as 
\begin{equation}\label{eq:cpxdecat2}
\sum_{a,b,c \ge 0, a-b+c = \l_i} (-1)^b q^{ac-b} E_i^{(a)} F_i^{(b)} E_i^{(c)}.
\end{equation}
The summation in (\ref{eq:cpxdecat2}) over terms with $c=0$ agrees with the decategorification of (\ref{eq:cpx'}) and (\ref{eq:cpx}) (up to a factor of $-q$). On the other hand, if $c \ne 0$ then the sum in (\ref{eq:cpxdecat2}) is actually zero. So (\ref{eq:cpxdecat2}) agrees with (\ref{eq:cpx'}) and (\ref{eq:cpx}). 
\end{Remark}

\subsection{Statements of results} $ $ \\
\noindent{\bf Result \#1.} Suppose $\g = \sl_n$. Associated to the longest element $\omega$ of the Weyl group of $\g$ we have the complex $\T_\omega \1_\l \in \Kom^-_*(\K)$ defined by
\begin{align}\label{eq:Tomega}
\T_\omega \1_\l
&\cong (\T_{n-1})(\T_{n-2}\T_{n-1}) \dots (\T_2 \dots \T_{n-1})(\T_1 \dots \T_{n-1}) \1_\l \\
&\cong (\T_{n-1}\dots \T_1)(\T_{n-1} \dots \T_2) \dots (\T_{n-1} \T_{n-2})(\T_{n-1}) \1_\l.
\end{align}
For example if $n=3$ then $I = \{1,2\}$ and $\T_\omega \1_\l = \T_1 \T_2 \T_1 \1_\l$. Note that $\T_\omega^2 \1_\l = \1_\l \T_\omega^2$. Our first result is that the limit $\lim_{\ell \rightarrow \infty} \T_\omega^{2 \ell}$ exists. 

\begin{Theorem}\label{thm:main1}
Given a 2-category $\K$ equipped with an $(\sl_n, \th)$ action there exists a 2-morphism $\1_\l \rightarrow \T_\omega^2 \1_\l$ so that the limit $\lim_{\ell \rightarrow \infty} \T_\omega^{2 \ell} \1_\l$ converges to a complex $\P^- \1_\l = \T_\omega^\infty \1_\l \in \Kom^-_*(\K)$. Moreover, this element is idempotent, meaning that $\P^- \cdot \P^- \cong \P^- $.
\end{Theorem}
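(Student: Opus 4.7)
The plan is to adapt Rozansky's strategy \cite{Roz}, originally carried out for $\sl_2$: exhibit a chain map $\iota:\1_\l\to \T_\omega^2\1_\l$, show that the iterated composites
\begin{equation*}
\1_\l \;\xrightarrow{\iota}\; \T_\omega^2\1_\l \;\xrightarrow{\T_\omega^2\iota}\; \T_\omega^4\1_\l \;\xrightarrow{}\; \cdots
\end{equation*}
form a Cauchy tower in the $q$-adic topology on $\Kom^-_*(\K)$ introduced in section \ref{sec:qadic}, and define $\P^-\1_\l$ as the resulting limit.

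For step one, constructing $\iota$, one inspects the Rickard presentation of $\T_\omega^2\1_\l$ obtained by stacking the complexes (\ref{eq:cpx'})--(\ref{eq:cpx}) for a reduced expression of $\omega^2$. Using the adjunction axiom (\ref{co:adjoints}) together with the cancellation relation (\ref{co:EF}), one identifies a canonical summand isomorphic to $\1_\l$ sitting in cohomological degree zero of $\T_\omega^2\1_\l$, and takes $\iota$ to be the inclusion of this summand. Independence of the choice of reduced expression is guaranteed by the braid relations proved in \cite{CaK3}.

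For step two, Cauchy-ness, set $C:=\Cone(\iota)$, which by construction is supported in cohomological degrees $\le-1$. The heart of the argument is the estimate that the component of $C$ in cohomological degree $-k$ is a direct sum of composites $\E^{(a_\bullet)}\F^{(b_\bullet)}\1_\l$ whose internal $q$-shift is bounded below by a function $\phi(k)$ with $\phi(k)\to\infty$ as $k\to\infty$; this follows from the explicit form of the differentials $d_i^s$ in (\ref{eq:cpx'})--(\ref{eq:cpx}) together with axioms (\ref{co:EiEj})--(\ref{co:theta}). Consequently the cone of the $\ell$-th transition map, namely $\T_\omega^{2\ell}\otimes C$, lies in arbitrarily deep $q$-adic levels once $\ell$ is large, so the tower converges in $\Kom^-_*(\K)$ to a complex $\P^-\1_\l$.

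For step three, idempotency, one uses that composition of 1-morphisms is continuous in the $q$-adic topology of $\Kom^-_*(\K)$, so
\begin{equation*}
\P^-\cdot\P^-\1_\l \;\cong\; \lim_{\ell_1}\T_\omega^{2\ell_1}\Bigl(\lim_{\ell_2}\T_\omega^{2\ell_2}\1_\l\Bigr) \;\cong\; \lim_m \T_\omega^{2m}\1_\l \;\cong\; \P^-\1_\l,
\end{equation*}
where the middle equality uses cofinality of the diagonal $\{(\ell,\ell)\}\subset \Z_{\ge 0}^2$ inside the indexing set of the double limit.

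The main obstacle, as in the $\sl_2$ case, is the $q$-shift growth estimate for $\Cone(\iota)$ at cohomological depth $k$. For a single $\T_i^2$ this is a short check using the $\sl_2$-relations on a single node, but for the full twist $\T_\omega^2$ with $n$ large it requires careful combinatorial bookkeeping through the iterated Rickard complex and an inductive use of the $\sl_n$ Serre-type relations (\ref{co:EiEj})--(\ref{co:EiFj}) to control how $q$-positive terms propagate through compositions of Rickard complexes; one should also check that this growth rate is preserved (up to a uniform shift) under subsequent applications of $\T_\omega^2$.
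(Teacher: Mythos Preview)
Your outline has the right shape but misses the key mechanism, and conflates two distinct notions of convergence.

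First, the convergence theorem you cite (section~\ref{sec:convergence}) is about \emph{homological} order: you need $|\Cone(\T_\omega^{2\ell}\iota)|_h \to -\infty$, i.e.\ that $\T_\omega^{2\ell}\R\1_\l$ is homotopic to a complex supported in cohomological degrees $\le -c(\ell)$ with $c(\ell)\to\infty$, where $\R=\Cone(\iota)$. Your $q$-shift estimate on $C=\R$ is a separate issue (relevant to membership in $\Kom^-_*$, section~\ref{sec:qadic}) and does \emph{not} yield this. Indeed, $\T_\omega^{2\ell}$ is a bounded complex, so tensoring $C$ by it shifts things by a bounded amount; there is no reason the homological support should recede with $\ell$ without a further argument showing massive cancellation.

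Second, the paper obtains that cancellation not by ``combinatorial bookkeeping through the iterated Rickard complex'' but via explicit commutation relations between $\T_\omega^2$ and the generators. The crucial facts (Proposition~\ref{prop:1}, Corollaries~\ref{cor:1} and \ref{cor:2}) are
\[
\T_\omega^{2}\1_\l \F_i^{(p)} \cong \F_i^{(p)}\T_\omega^{2}\,\la -2p(\la\l,\alpha_i\ra+p+1)\ra[2p]
\quad\text{and}\quad
\T_\omega^{2}\U_i\1_\l \cong \U_i\T_\omega^{2}\1_\l
\]
(and the $\E$-analogues). Iterating the first relation $\ell$ times pushes every non-extremal term of $\1_\l\T_\omega^2$ through $\T_\omega^{2\ell}$ with a homological shift $[2p\ell]$, hence into degrees $\le -2\ell$; the second relation lets the extremal terms $\U_i$ slide through \emph{without} shift, and then $\U_i\U_i\1_\mu$ collapses to $\1_\mu$ plus terms again of the first type. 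The unique surviving identity summand cancels against the $\1_\l$ in $\R=\Cone(\1_\l\to\T_\omega^2\1_\l)$, giving $|\T_\omega^{2\ell}\R\1_\l|_h\le -2\ell$ (Proposition~\ref{prop:2}). These commutation relations are themselves proved by induction on $n$ from the $\sl_2$ case (Proposition~\ref{prop:0}, Corollary~\ref{cor:0}) together with how $\E_j,\F_j$ interact with $\T_i$ for $|i-j|=1$ (Lemma~\ref{lem:2}); none of this is visible from a direct analysis of the terms of $\T_\omega^2\1_\l$.

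Your idempotency argument via continuity and cofinality is plausible in principle, but the paper's route is shorter once Proposition~\ref{prop:2} is in hand: since $\T_\omega^\infty\T_\omega^{2\ell}\cong\T_\omega^\infty$ for every $\ell$, one has $\T_\omega^\infty\R\1_\l\cong\T_\omega^\infty\T_\omega^{2\ell}\R\1_\l$, which is supported in degrees $\le -2\ell$ for all $\ell$ and hence contractible. Finally, membership in $\Kom^-_*(\K)$ (the $q$-adic condition) is established separately in section~\ref{sec:Kconverge}, again using the commutation relations rather than a direct estimate on the Rickard terms.
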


\noindent {\bf Result \#2.} Suppose $\K$ is a 2-category equipped with an $(\sl_n,\th)$ action. Moreover, let us assume that the objects $\l$ of $\K$ are additive categorifies so that on Grothendieck groups we get the $U_q(\sl_n)$-module $\Lambda_q^{N}(\k^m \otimes \k^n)$ for some $m,N$. The nonzero weight spaces in this case are parametrized by sequences $\i = (i_1, \dots, i_n)$ such that $0 \le i_1, \dots, i_n \le m$ and $\sum_k i_k = N$. More precisely, the weight space $V(\i)$ is isomorphic to 
$$\Lambda_q^{\i}(\k^m) := \Lambda_q^{i_1}(\k^m) \otimes \dots \otimes \Lambda_q^{i_n}(\k^m).$$ 
The composition $\Lambda_q^{\i}(\k^m) \xrightarrow{\pi} V_{\sum_k \Lambda_{i_k}} \xrightarrow{\iota} \Lambda_q^{\i}(\k^m)$ (where $\pi$ and $\iota$ are the projection and inclusion maps) defines an idempotent $P \1_\i := (\iota \circ \pi) \1_\i$. This idempotent is a {\em generalized Jones-Wenzl projector} or, following the terminology in \cite{Kup}, a {\em clasp}. 

\begin{Theorem}\label{thm:main2} 
The functor $\P^- \1_{\i} \in \Kom^-_*(\K)$ categorifies the clasp $P \1_\i$. 
\end{Theorem}

\noindent{\bf Result \#3.} Fix $m$. The $U_q(\sl_\infty)$-module $\Lambda_q^{m \infty}(\k^m \otimes \k^{2\infty})$ is defined as a limit as $N \rightarrow \infty$ of the $U_q(\sl_{2N})$-modules $\Lambda_q^{mN}(\k^m \otimes \k^{2N})$. The nonzero weight spaces of $\Lambda_q^{m \infty}(\k^m \otimes \k^{2\infty})$ are parametrized by sequences $\i = (\dots, i_k,i_{k+1}, \dots)$ where $0 \le i_k \le m$ and $i_k=0$ for $k \ll 0$, $i_k = m$ if $k \gg 0$ (see section \ref{sec:weightspaces}). The weight space indexed by $\i$ is isomorphic to 
$$\Lambda_q^{\i}(\k^m) := \dots \otimes \Lambda_q^{i_k}(\k^m) \otimes \Lambda_q^{i_{k+1}}(\k^m) \otimes \dots$$ 

\begin{Theorem}\label{thm:main3}
Suppose $\K$ is a 2-category equipped with an $(\sl_\infty,\th)$ action and such that the nonzero weight spaces of $\K$ are the same as those of $\Lambda_q^{m \infty}(\k^m \otimes \k^{2\infty})$. From this data one can construct a tangle invariant. In particular, this gives a homological link invariant which categorifies the Reshetikhin-Turaev invariants of links labeled by arbitrary representations of $\sl_m$. 
\end{Theorem}

This construction works as follows. To a positive crossing exchanging strands $k$ and $k+1$ one associates (up to a shift) the functor $\T_k \1_\i$, to a cap and cup the functors
\begin{align*}
\E_k^{(i_k)} :& (\dots, i_k, m-i_k, \dots) \rightarrow (\dots, 0,m, \dots) \\
\F_k^{(i_k)} :& (\dots, 0, m, \dots) \rightarrow (\dots, i_k,m-i_k, \dots)
\end{align*}
and to a clasp $\P^- \1_\i$ (see section \ref{sec:tanglesfund}). For example, the fact that the unknot labeled with $V_{\Lambda_i}$ evaluates to a vector space of (graded) dimension $\qbin{m}{i}$ corresponds to the fact that $\E_k^{(i)} \F_k^{(i)} \in \End((\dots,0,m,\dots))$ evaluates to $\qbin{m}{i}$ copies of the identity functor. 

\begin{Remark} 
If the objects of $\K$ are categories and the $(\sl_n,\th)$ action categorifies the $\sl_\infty$-module $\Lambda_q^{m \infty}(\k^m \otimes \k^{2\infty})$ then, at the level of Grothendieck groups, the tangle invariant from Theorem \ref{thm:main3} also recovers the $\sl_m$ Reshetikhin-Turaev tangle invariants. 
\end{Remark}

\noindent{\bf Result \#4.} In section \ref{sec:affGrass} we define a 2-category $\K_{\Gr,m}$ whose objects are derived categories of coherent sheaves on certain twisted product varieties $Y(\i) = Y(i_1) \tilde{\times} \dots \tilde{\times} Y(i_{2N})$. The 1-morphisms in $\K_{\Gr,m}$ are kernels between these varieties and the 2-morphisms are maps of kernels. 

Here $Y(i_j)$ is the Grassmannian of $i_j$-planes in $\k^m$ and should be thought of as an orbit in the affine Grassmannian of ${\rm PGL}_m$. The twisted product is then the convolution product for the affine Grassmannian. 

\begin{Theorem}\label{thm:main4}
One can define an $(\sl_\infty,\th)$ action on $\K_{\Gr,m}$ which categorifies the $U_q(\sl_\infty)$-module $\Lambda_q^{m \infty}(\k^m \otimes \k^{2 \infty})$.
\end{Theorem}

There is a similar 2-category $\K_{{\rm Q},m}$ where the objects consist of derived categories of coherent sheaves on certain Nakajima quiver varieties (where the quiver is of type $\sl_\infty$). This category can also be equipped with an $(\sl_\infty,\th)$ action which, at the level of K-theory, recovers the $U_q(\sl_\infty)$-module $\Lambda_q^{\infty}(\k^{2\infty})^{\otimes m}$. In section \ref{sec:quivers} we relate these two 2-categories using a geometric 2-functor $\K_{\Gr,m} \rightarrow \K_{{\rm Q},m}$. This 2-functor categorifies the natural projection $\Lambda_q^{m \infty}(\k^m \otimes \k^{2 \infty}) \rightarrow \Lambda_q^{\infty}(\k^{2\infty})^{\otimes m}$ of $U_q(\sl_\infty)$-modules.

\noindent{\bf Final remarks.} The clasp $\P^- \in \Kom^-(\K_{\Gr,m})$ consists of complexes of 1-morphisms which are bounded above but not below. In section \ref{sec:remarks} we discuss the analogous projectors $\P^+ \in \Kom^+(\K_{\Gr,m})$ and explain that they are related to $\P^-$ via a duality functor $\bD$. We also relate the resulting link homologies (Proposition \ref{prop:Dmap}). 

The 2-category $\K_{\Gr,m}$ has a triangulated structure. Instead of defining $\T_i$ inside $\Kom(\K_{\Gr,m})$ one could use this structure to take the convolution (iterated cone) of $\T_i$ to obtain an object in $\K_{\Gr,m}$. 

We conjecture that one can also define $\P^-$ inside $\K^-_{\Gr,m}$ (Conjecture \ref{conj:convolution}). Moreover, when $m=2$ we conjecture a geometric description of this clasp (Conjecture \ref{conj:clasp}). If these conjectures hold it would give a purely geometric construction, in terms of the affine Grassmannian for ${\rm PGL}_2$, of all the Reshetikhin-Turaev tangle invariants associated with $\g'=\sl_2$. 

\section{Some preliminaries}

In this section we collect some conventions, definitions and technical results which will be used later. Except for some examples in sections \ref{sec:computation1} and \ref{sec:computation2}, our ground field will always be $\k$. We denote by $[n]$ the quantum integer $q^{n-1} + q^{n-3} + \dots + q^{-n+3} + q^{-n+1}$. More generally, 
$$\qbin{n}{k} := \frac{[n]\dots[1]}{([n-k] \dots [1])([k]\dots[1])}.$$ 
If $f = f_aq^a \in {\mathbb N}[q,q^{-1}]$ and $A$ is an object in a graded category (see below) then we write $\oplus_f A$ for the direct sum $\oplus_{a \in \Z} A^{f_a} \la a \ra$. For example, $\oplus_{[n]} A = \oplus_{k=0}^{n-1} A \la n-1-2k \ra$. 

\subsection{Categories}\label{sec:cats}

By a graded category $\sC$ we mean a category equipped with an auto-equivalence $\la 1 \ra$. We denote by $\la l \ra$ the auto-equivalence obtained by applying $\la 1 \ra$ $l$ times. 

A graded additive $\k$-linear 2-category $\K$ is a category enriched over graded additive $\k$-linear categories. This means that the Hom categories $\Hom_{\K}(A,B)$ between objects $A$ and $B$ are graded additive $\k$-linear categories and the composition map $\Hom_{\K}(A,B) \times \Hom_{\K}(B,C) \to \Hom_{\K}(A,C)$ is a graded additive $\k$-linear functor.

An additive category $\sC$ is idempotent complete if whenever $e \in \End(A)$ and $e^2=e$ then $A \cong A_1 \oplus A_2$ where $e$ acts by the identity on $A_1$ and by zero on $A_2$. Similarly, an additive 2-category $\K$ is idempotent complete when the Hom categories $\Hom_{\K}(A,B)$ are idempotent complete for any pair of objects $A, B$ of $\K$, so that all idempotent 2-morphisms split. 

Given a 2-category $\K$ we denote by $\Kom(\K)$ its homotopy 2-category. The objects of $\Kom(\K)$ are the same as those objects of $\K$. The 1-morphisms of $\Kom(\K)$ are bounded complexes of 1-morphisms in $\K$, and 2-morphisms are maps of complexes. Two complexes of 1-morphisms are then deemed isomorphic if they are homotopy equivalent. 

\subsection{Cancellation laws}\label{sec:cancellaw}
Suppose that (each graded piece of) the space of homs between two objects in a graded category $\sC$ is finite dimensional. Then every object in ${\mathcal C}$ has a unique, up to isomorphism, direct sum decomposition into indecomposables (see section 2.2 of \cite{Rin}). This means that for any $A,B,C \in {\mathcal C}$ we have the following cancellation laws:
$$A \oplus B \cong A \oplus C \Rightarrow B \cong C \ \ \ \ \ \  A \otimes_\k V \cong B \otimes_\k V \Rightarrow A \cong B $$
where $V$ is a graded $\k$ vector space (see section 4.1 of \cite{CK3}). In this paper we also assume that every graded category satisfies the following condition:
$$\text{for any nonzero object } A \in {\mathcal C} \text{ we have } A \cong A \la k \ra \Rightarrow k=0.$$

One of the tools we will use on many occasions in this paper is the following generalization of a lemma which Bar-Natan calls ``Gaussian elimination''. 

\begin{Lemma}\label{lem:cancel}
Let $ X, Y, Z, W, U, V$ be six objects in an additive category and consider a complex
\begin{equation}\label{eq:6.5}
\dots \rightarrow U \xrightarrow{u} X \oplus Y \xrightarrow{f} Z \oplus W \xrightarrow{v} V \rightarrow \dots
\end{equation}
where $f = \left( \begin{matrix} A & B \\ C & D \end{matrix} \right)$ and $u,v$ are arbitrary morphisms. If $D: Y \rightarrow W$ is an isomorphism, then (\ref{eq:6.5}) is homotopic to a complex
\begin{eqnarray}\label{eq:new}
\dots \rightarrow U \xrightarrow{u} X \xrightarrow{A-BD^{-1}C} Z \xrightarrow{v|_Z} V \rightarrow \dots
\end{eqnarray}
\end{Lemma}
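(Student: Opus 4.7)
The plan is to reduce (\ref{eq:6.5}) to (\ref{eq:new}) by a change-of-basis on the middle terms, after which the $Y\xrightarrow{D}W$ piece splits off as a contractible summand. First, write $u = \begin{pmatrix} u_X \\ u_Y \end{pmatrix}$ and $v = (v_Z, v_W)$ with respect to the given direct sum decompositions. Since (\ref{eq:6.5}) is a complex we have $fu = 0$ and $vf = 0$, which expand to $Au_X + Bu_Y = 0$, $Cu_X + Du_Y = 0$, $v_Z A + v_W C = 0$, $v_Z B + v_W D = 0$. Using that $D$ is invertible, the middle two identities give the crucial formulas $u_Y = -D^{-1}Cu_X$ and $v_W = -v_Z BD^{-1}$.

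Next, introduce the automorphisms
\[
\phi := \begin{pmatrix} I & 0 \\ -D^{-1}C & I \end{pmatrix} : X\oplus Y \to X\oplus Y, \qquad
\psi := \begin{pmatrix} I & -BD^{-1} \\ 0 & I \end{pmatrix} : Z\oplus W \to Z\oplus W,
\]
both of which are invertible (with obvious inverses obtained by flipping the sign of the off-diagonal entry). A direct matrix computation shows
\[
\psi \circ f \circ \phi = \begin{pmatrix} A - BD^{-1}C & 0 \\ 0 & D \end{pmatrix}.
\]
Replacing the middle terms by the same objects but with this change of basis, (\ref{eq:6.5}) becomes isomorphic (not just homotopic) to
\[
\dots \to U \xrightarrow{\phi^{-1}u} X \oplus Y \xrightarrow{\psi f \phi} Z \oplus W \xrightarrow{v\psi^{-1}} V \to \dots
\]
Now I would compute the new endpoints: $\phi^{-1}u = \begin{pmatrix} u_X \\ D^{-1}Cu_X + u_Y \end{pmatrix} = \begin{pmatrix} u_X \\ 0 \end{pmatrix}$, where the vanishing uses the identity $u_Y = -D^{-1}Cu_X$ derived above. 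Similarly $v\psi^{-1} = (v_Z, v_ZBD^{-1} + v_W) = (v_Z, 0)$.

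Thus the transformed complex decomposes as a direct sum of two complexes: one is
\[
\dots \to U \xrightarrow{u_X} X \xrightarrow{A - BD^{-1}C} Z \xrightarrow{v_Z} V \to \dots,
\]
which is exactly (\ref{eq:new}), and the other is the two-term complex $0 \to Y \xrightarrow{D} W \to 0$ concentrated in the middle two positions, with zero maps to and from $U$ and $V$. Because $D$ is an isomorphism, the latter is null-homotopic (an explicit contracting homotopy is given by $D^{-1}: W \to Y$). Hence (\ref{eq:6.5}) is homotopy equivalent to (\ref{eq:new}), as claimed. The only step requiring care is the bookkeeping that guarantees $\phi^{-1}u$ and $v\psi^{-1}$ land only in $X$ and come only from $Z$ respectively — this is precisely where the complex hypothesis $fu = 0 = vf$ is used, so the result genuinely requires (\ref{eq:6.5}) to be a complex and not just an arbitrary diagram.
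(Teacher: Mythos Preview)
Your proof is correct and complete; this is exactly the standard change-of-basis argument for Gaussian elimination. The paper itself does not give a proof but simply refers to \cite[Lemma~6.2]{CLi2}, so your write-up in fact supplies the details the paper omits.
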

\begin{proof}
See Lemma 6.2 of \cite{CLi}. 
\end{proof}

Let $\sC$ be a graded category and consider an object $A \in \sC$ with $\End(A) \cong \k$. Now, suppose $ X, Y $ are two objects of $\sC$ and let $ f :X \rightarrow Y $ be a morphism. Then $f$ gives rise to a bilinear pairing $ \Hom(A, X) \times \Hom(Y, A) \rightarrow \Hom(A,A) = \k $. We define the {\it $A$-rank of $f$} to be the rank of this bilinear pairing and the {\it total $A$-rank of $f$} to be the sum over all $A \la i \ra$-ranks where $i \in \Z$. See section 4.1 of \cite{CK3} for another discussion of rank. 

\subsection{Convergence of complexes}\label{sec:convergence}

We now explain what it means for a sequence of complexes to converge. We try to use the same terminology as in \cite{Roz}. 

Fix an additive category $\sC$ and consider a complex ${\sf{K}}^\bullet$ in the homotopy category $\Kom(\sC)$ of $\sC$. We say that ${\sf{K}}^\bullet$ is supported in homological degrees $\le k$ if it is homotopic to a complex $\tilde{{\sf{K}}}^\bullet$ where $\tilde{{\sf{K}}}^i = 0$ if $i > k$. The infimum over all such $k$ is called the {\em homological order} of ${\sf{K}}^\bullet$ and is denoted $|{\sf{K}}^\bullet|_h$. 

A direct system 
$${\sf K}^\bullet_\star := {\sf K}_0^\bullet \xrightarrow{f_0} {\sf K}_1^\bullet \xrightarrow{f_1} {\sf K}_2^\bullet \xrightarrow{f_2} \dots$$ 
of complexes in $\Kom^-(\sC)$ is Cauchy if $\lim_{i \rightarrow \infty} |\Cone(f_i)|_h = -\infty$. Moreover, we say that $\lim_{\rightarrow} {\sf K}^\bullet_\star = {\sf K}^\bullet$ if there exist maps $\tilde{f}_i: {\sf K}^\bullet_i \rightarrow {\sf K}^\bullet$ such that $\tilde{f}_i$ is homotopic to $\tilde{f}_{i+1} \circ f_i$ and $\lim_{i \rightarrow \infty} |\Cone(\tilde{f}_i)|_h = -\infty$. 

\begin{Theorem}\cite[Thm. 2.5, 2.6]{Roz} 
A direct system ${\sf K}^\bullet_\star$ has a limit if and only if it is Cauchy. Moreover, this limit is unique up to homotopy equivalence.
\end{Theorem}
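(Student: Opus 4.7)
The plan is to prove the two directions separately, establishing uniqueness first (which is cleaner and gives intuition for how to construct the limit), and then existence by a telescope/stabilization construction.

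For uniqueness, suppose ${\sf L}^\bullet$ and ${\sf L}'^\bullet$ are both limits, witnessed by maps $\tilde{f}_i \colon {\sf K}^\bullet_i \to {\sf L}^\bullet$ and $\tilde{g}_i \colon {\sf K}^\bullet_i \to {\sf L}'^\bullet$ with $|\Cone(\tilde{f}_i)|_h, |\Cone(\tilde{g}_i)|_h \to -\infty$. The key observation is that if $h \colon \sA^\bullet \to \sB^\bullet$ satisfies $|\Cone(h)|_h < -N$, then $h$ induces an isomorphism in the homotopy category after applying the stupid truncation $\tau^{\ge -N}$, hence is uniquely invertible up to homotopy in that truncated range. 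I would use this to define, for each $N$, a map $\phi_N \colon \tau^{\ge -N} {\sf L}^\bullet \to \tau^{\ge -N} {\sf L}'^\bullet$ as $(\tau^{\ge -N} \tilde{g}_i) \circ (\tau^{\ge -N} \tilde{f}_i)^{-1}$ for any sufficiently large $i$, check independence of $i$ up to homotopy, and verify that the $\phi_N$ are compatible as $N \to \infty$. Passing to the limit gives a map ${\sf L}^\bullet \to {\sf L}'^\bullet$, and symmetry together with the truncated-isomorphism argument shows it is a homotopy equivalence.

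For existence, the plan is to build ${\sf L}^\bullet$ as the degreewise stabilization of the system. First lift the $f_i$ to strict chain maps between representatives (possible since the $\Cone(f_i)$ can be taken as literal mapping cones). Choose, for each $i$, a representative with $\Cone(f_i)$ supported in degrees $\le k_i$ where $k_i \to -\infty$; this forces $f_i^n \colon {\sf K}_i^n \to {\sf K}_{i+1}^n$ to be a chain homotopy equivalence onto its image for $n > k_i$. Then set ${\sf L}^n$ to be the eventual value of ${\sf K}_i^n$ (well-defined up to canonical homotopy for $i \gg 0$ depending on $n$) with differential inherited from any stable term. The structural maps $\tilde{f}_i \colon {\sf K}_i^\bullet \to {\sf L}^\bullet$ are the obvious projections onto the stable range, and by construction $\Cone(\tilde{f}_i)$ is supported in degrees $\le k_i$, so $|\Cone(\tilde{f}_i)|_h \to -\infty$. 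The converse (limit implies Cauchy) is immediate from the triangle $\Cone(f_i) \to \Cone(\tilde{f}_i) \to \Cone(\tilde{f}_{i+1})[1]$, which bounds $|\Cone(f_i)|_h$ in terms of $|\Cone(\tilde{f}_i)|_h$ and $|\Cone(\tilde{f}_{i+1})|_h$.

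The main obstacle is the passage from the homotopy category to strict complexes. The maps $f_i$ are only defined up to homotopy, so the stabilization of ${\sf K}_i^n$ in degree $n$ requires coherent choices of representatives along the tower. I would handle this by induction on $i$: having fixed a strict chain model for $f_0, \dots, f_{i-1}$, choose a representative for $f_i$, and then replace the mapping cone by its stupid truncation above degree $k_i$; this alteration is a quasi-isomorphism of cones and transfers through to modify $f_i$ up to homotopy without changing its class. Iterating gives the required tower of strict representatives, after which the telescope is a standard object and all compatibilities among the $\tilde{f}_i$ become chain-level identities.
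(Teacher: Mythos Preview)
The paper does not supply a proof of this theorem: it is quoted verbatim as \cite[Thm.~2.5, 2.6]{Roz} and invoked as a black box, so there is no ``paper's own proof'' to compare against. Your write-up is therefore a reconstruction of Rozansky's argument rather than a comparison target.

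That said, your outline is essentially the standard one and is correct in spirit. A couple of technical points deserve care. First, the stupid truncation $\tau^{\ge -N}$ is not a functor on the homotopy category, so the expression $(\tau^{\ge -N}\tilde f_i)^{-1}$ is not literally meaningful; what you really use is that if $|\Cone(h)|_h \le -N-1$ then $h$ can be replaced (up to homotopy) by a strict chain map which is a degreewise isomorphism above $-N$, and \emph{that} map has an honest inverse in those degrees. Second, in the existence argument the phrase ``$f_i^n$ is a chain homotopy equivalence onto its image'' is imprecise in a bare additive category; the clean statement is that after Gaussian elimination (as in the paper's Lemma on cancellation and the argument in \S\ref{sec:hptycats}) one may arrange representatives so that $f_i^n$ is an \emph{isomorphism} for $n > k_i$, and then the degreewise colimit is literally eventually constant. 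Your inductive replacement scheme is exactly how one makes these choices coherent. Finally, the octahedral triangle you invoke for ``limit $\Rightarrow$ Cauchy'' should read $\Cone(f_i)\to\Cone(\tilde f_i)\to\Cone(\tilde f_{i+1})$ (no shift on the last term), but the conclusion is unaffected.
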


\subsection{Grothendieck groups}

If $\sC$ is an idempotent complete, additive category then we can consider the free group generated by isomorphism classes of objects in $\sC$ modulo the relation that $[B] = [A]+[C]$ if $B \cong A \oplus C$. If $\sC$ is abelian or triangulated we also quotient by this relation if $A \rightarrow B \rightarrow C$ is an exact sequence (or distinguished triangle). By $K({\mathcal C})$ we denote the tensor product of this quotient with the base field $\k$, and refer to it as the (split) Grothendieck group of $\sC$.  

If $\sC$ is also graded then the autoequivalence $\la -1 \ra$ corresponds to multiplication by an indeterminate $q$. More precisely, the object $A \la -1 \ra \in {\mathcal C}$ has class $q[A] \in K({\mathcal C})$. In this case $K({\mathcal C})$ is a $\k[q,q^{-1}]$-module rather than just a $\k$-module. 

One can likewise decategorify a 2-category to obtain a 1-category by applying the process above to 1-morphisms and 2-morphisms while keeping the objects the same. Thus, when we have a $(\g,\th)$ action on $\K$ then $K(\K)$ is a 1-category whose objects are the same and whose 1-morphisms are $\k[q,q^{-1}]$-modules that include $E_i^{(r)} \1_\l$ and $\1_\l F_i^{(r)}$ satisfying the usual relations. Finally, we forget about the 2-morphisms altogether. 

\subsection{The homotopy category $\Kom^-_*(\K)$}\label{sec:qadic}\label{sec:hptycats}

Suppose $\K$ is a graded 2-category and consider the natural map $\K \rightarrow \Kom^-(\K)$ given by including in cohomological degree zero. We would like this map to induce an isomorphism on Grothendieck group. Unfortunately, this map is not even injective. To fix this problem we consider a subcategory $\Kom^-_*(\K) \subset \Kom^-(\K)$ so that there exist maps 
$$\hK(\K) \xrightarrow{i} K(\Kom_*^-(\K)) \xrightarrow{p} \hK(\K)$$
whose composition is the identity ($\hK(\K)$ is defined below). 

To define $\Kom^-_*(\K)$ we assume that the space of 1-morphisms in $K(\K)$ is finite dimensional. Choose a basis $b_1, \dots, b_s$ of morphisms in $K(\K)$. For a morphism $[\A]$ in $K(\K)$ write $[\A] = \sum_{i,j} a_{ij} q^i b_j$ and denote $\la \A \ra_q = {\rm min}\{i: a_{ij} \ne 0 \}$. 

We now define $\Kom^-_*(\K) \subset \Kom^-(\K)$ as the subcategory made up of complexes which are homotopic to a complex $\dots \rightarrow \A^{-2} \rightarrow \A^{-1} \rightarrow \A^0$ where $\la \A^{-u} \ra_q \rightarrow \infty$ as $u \rightarrow \infty$. It is not hard to see that this condition does {\it not} depend on the choice of basis $b_1, \dots, b_s$. It is also clear that this subcategory is closed under taking cones. 

{\bf Example.} Let $\A \in \K$ be a 1-morphism. Then the complex 
$$\A^\bullet := \left[ \dots \xrightarrow{0} \A \la -2 \ra \xrightarrow{0} \A \la -1 \ra \xrightarrow{0} \A \right]$$
belongs to $\Kom^-_*(\K)$. Its class in $K(\K)$ is equal to $\left( \sum_{j \ge 0} (-1)^j q^j \right) [\A]$. On the other hand, the complex $\left[ \dots \xrightarrow{0} \A \xrightarrow{0} \A \xrightarrow{0} \A \right]$ does not belong to $\Kom^-_*(\K)$. 

We will denote by $\hK(\K)$ the completion of $K(\K)$ in the $q$-adic norm. This means that we are allowing elements of the form $f \cdot A$ where $f \in \k[[q]][q^{-1}]$ (instead of just $f \in \k[q,q^{-1}]$). Now one can define the map $p: K(\Kom^-_*(\K)) \rightarrow \hK(K)$ by mapping 1-morphisms as follows
\begin{equation}\label{eq:14}
[A^\bullet] \mapsto \sum_{u=0}^{\infty} (-1)^u [\A^{-u}].
\end{equation}
This sum converges because $\la \A^{-u} \ra_q \rightarrow \infty$ as $u \rightarrow \infty$. To see that this map is well defined suppose $\A^{\bullet} \xrightarrow{f} \B^{\bullet}$ is a homotopy equivalence. We need to show that they are mapped to the same thing. Now $\Cone(f)$, which is homotopic to zero, is mapped to $p(\A^\bullet) - p(\B^\bullet)$. So it suffice to show that any nul-homotopic complex $\fC^\bullet$ is mapped to zero. 

Now if $\fC^\bullet = [\dots \rightarrow \fC^{-2} \rightarrow \fC^{-1} \xrightarrow{f} \fC^0]$ is nul-homotopic there is a map $h: \fC^0 \rightarrow \fC^{-1}$ so that $f \circ h = \id_{\fC^0}$. Thus $\fC^{-1} = \fC^0 \oplus \hat{\fC}^{-1}$ for some $\hat{\fC}^{-1}$ and $\fC^{\bullet}$ is homotopic to $[\dots \rightarrow \fC^{-2} \rightarrow \hat{\fC}^{-1} \rightarrow 0]$. Now we repeat. Since $|[\fC^{-u}]|_q \rightarrow 0$ this means that for any $e$ one can find $d \gg 0$ so that $\sum_{u=0}^d (-1)^u [\fC^{-u}] = \sum_{i,j} a_{ij} q^i b_j$ with $a_{ij}=0$ if $i < e$. It follows that $\sum_{u=0}^d (-1)^u [\fC^{-u}]$ converges to zero as $d \rightarrow \infty$. 


\begin{Remark} 
Instead of bounded above complexes one could consider bounded below complexes. This leads to the analogous definitions of $\Kom^+_*(\K)$. With the exception of a discussion in section \ref{sec:remarks} we will work with $\Kom_*^-(\K)$. However, we could have worked with $\Kom_*^+(\K)$ since all our arguments apply in the same way.
\end{Remark}

\section{Rickard complexes and relations}

In this section $\g = \sl_2$ so we abbreviate $\E_1$ and $\F_1$ as $\E$ and $\F$. We will abuse notation slightly and write $\l \in \Z$ instead of $\l_1$. Also, $Y_\k$ is one-dimensional so we take $\th \in Y_\k$ to be any nonzero vector. Our aim is to define and study how the complexes (\ref{eq:cpx'}) and (\ref{eq:cpx}) commute with functors $\E^{(p)}$ and $\F^{(p)}$.

\subsection{Rickard complexes}

First recall the following result which describes how $\E$s and $\F$s commute. 

\begin{Lemma}\label{lem:rels1}
We have
$$\E^{(a)} \F^{(b)} \1_\l \cong \bigoplus_{j \ge 0} \bigoplus_{\qbins{\l+a-b}{j}} \F^{(b-j)} \E^{(a-j)} \1_\l \text{  if  } \l+a-b \ge 0$$
$$\F^{(b)} \E^{(a)} \1_\l \cong \bigoplus_{j \ge 0} \bigoplus_{\qbins{-\l-a+b}{j}} \E^{(a-j)} \F^{(b-j)} \1_\l \text{  if  } \l+a-b \le 0.$$
\end{Lemma}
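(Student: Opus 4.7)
The plan is to prove this by induction on $a$, for the $\E^{(a)}\F^{(b)}$ identity; the $\F^{(b)}\E^{(a)}$ identity follows by the symmetric argument (or by composing with biadjunctions from axiom \ref{co:adjoints}). The base case $a = 0$ is immediate since $\E^{(0)} \1_\l = \1_\l$, and only the $j = 0$ term $\F^{(b)} \1_\l$ appears on the right.

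The crucial base case is $a = 1$: I claim $\E\F^{(b)} \1_\l \cong \F^{(b)}\E \1_\l \oplus_{[\l - b + 1]} \F^{(b-1)} \1_\l$ when $\l - b + 1 \ge 0$. To prove it, I would compute $\E\F \cdot \F^{(b-1)} \1_\l$ in two ways. On the one hand, axiom (vi) gives $\F\F^{(b-1)} \cong \oplus_{[b]} \F^{(b)}$, hence $\E\F\F^{(b-1)} \1_\l \cong \oplus_{[b]} \E\F^{(b)} \1_\l$. On the other hand, $\F^{(b-1)} \1_\l$ lands in weight $\l - 2(b-1)$; applying axiom \ref{co:EF} at that weight (with a case split according to the sign of $\l-2b+2$) rewrites $\E\F\F^{(b-1)} \1_\l$ as $\F\E\F^{(b-1)} \1_\l$ plus an $\1$-summand whose multiplicity is a quantum integer. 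Using $\F\F^{(b-1)} \cong \oplus_{[b]} \F^{(b)}$ once more to collapse $\F\E\F^{(b-1)} \1_\l$, and then invoking the cancellation law from Section \ref{sec:cancellaw} to divide through by $[b]$, yields the $a = 1$ identity.

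For the inductive step with $a \ge 2$, I would use axiom (vi) in the form $\E\E^{(a-1)} \cong \oplus_{[a]} \E^{(a)}$ to write
\begin{equation*}
\bigoplus_{[a]} \E^{(a)}\F^{(b)} \1_\l \;\cong\; \E\,\E^{(a-1)}\F^{(b)} \1_\l.
\end{equation*}
Applying the inductive hypothesis to $\E^{(a-1)}\F^{(b)} \1_\l$ and then the $a = 1$ case to each $\E\F^{(b-j)}$ at the appropriate shifted weight $\l + 2(a - 1 - j)$, I obtain a double sum $\bigoplus_{j,k} \bigoplus_{(\ast)} \F^{(b-j-k)} \E^{(a-j-k)} \1_\l$, where the multiplicities $(\ast)$ are products of a $\qbins{\l+a-1-b}{j}$ with a $[\l + a - b - 2j + 1]$ (or $1$ when $k=0$). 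Re-indexing by $j' = j + k$ and applying the $q$-Pascal recursion $[n] \qbins{n-1}{j-1} + [n - 2j] \qbins{n-1}{j} \cdot \text{(shift)}$ folds these multiplicities into $[a] \cdot \qbins{\l + a - b}{j'}$. Cancelling the common factor $[a]$ (by the cancellation law) gives the claimed decomposition.

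The main obstacle is bookkeeping. One must track which form of axiom \ref{co:EF} applies at each intermediate weight — these weights can become negative during the induction even when $\l + a - b \ge 0$ — and verify that the two cases of the axiom combine consistently into the single positive-weight answer. One must also align the grading shifts arising from the divided-power relations with those implicit in $\qbins{\l+a-1-b}{j}$ versus $\qbins{\l+a-b}{j}$ in order to invoke $q$-Pascal correctly. Both issues are routine but tedious, and are essentially the categorical lift of Lusztig's identity $E^{(a)}F^{(b)} = \sum_j F^{(b-j)}\qbins{H + a - b}{j} E^{(a-j)}$ in $U_q(\sl_2)$; one could alternatively deduce the lemma by quoting the analogous computation from \cite{KLMS} in the free categorical $\sl_2$ algebra, since any 2-representation inherits all relations holding there.
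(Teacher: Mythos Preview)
The paper does not prove this lemma at all: its entire proof is the single line ``See, for instance, \cite[Thm. 5.9]{KLMS}.'' Your final sentence --- that one could alternatively deduce the lemma by quoting the computation from \cite{KLMS} --- is therefore precisely the paper's argument, so in that sense you have matched the paper exactly.

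Your direct inductive argument is a genuine addition and is essentially the standard route, but the $a=1$ step as written has a gap. After rewriting $\E\F\F^{(b-1)}\1_\l$ as $\F\E\F^{(b-1)}\1_\l \oplus_{[\l-2b+2]}\F^{(b-1)}\1_\l$ via axiom~(\ref{co:EF}), you say you ``use $\F\F^{(b-1)}\cong\oplus_{[b]}\F^{(b)}$ once more to collapse $\F\E\F^{(b-1)}\1_\l$''. But there is no $\F\F^{(b-1)}$ present in $\F\E\F^{(b-1)}$; to make the $\E$ pass through $\F^{(b-1)}$ you need the $a=1$ identity for $b-1$, i.e.\ a sub-induction on $b$. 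Once you assume $\E\F^{(b-1)}\1_\l \cong \F^{(b-1)}\E\1_\l \oplus_{[\l-b+2]}\F^{(b-2)}\1_\l$, applying $\F$ on the left and then the divided-power relations gives $\F\E\F^{(b-1)}\1_\l \cong \oplus_{[b]}\F^{(b)}\E\1_\l \oplus_{[b-1][\l-b+2]}\F^{(b-1)}\1_\l$, and the quantum-integer identity $[b][\l-b+1]=[b-1][\l-b+2]+[\l-2b+2]$ closes the induction after cancelling $[b]$. So your outline is correct once this nested induction on $b$ is made explicit; the rest of your inductive step on $a$ and the $q$-Pascal bookkeeping are fine.
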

\begin{proof}
See, for instance, \cite[Thm. 5.9]{KLMS}. 
\end{proof}

Suppose from hereon that $\l+r \ge 0$. We define the complex $\1_\l \tau_{\l+r} \1_{-\l-2r} \in \Kom(\K)$ by 
\begin{align}
\label{eq:cpx1} 
\dots \rightarrow \1_\l \E^{(\l+r+s)} \F^{(s)} \la -s(r+1) \ra \xrightarrow{d^s} \1_\l \E^{(\l+r+s-1)} \F^{(s-1)} \la -(s-1)(r+1) \ra \rightarrow \dots \rightarrow \1_\l \E^{(\l+r)} & \\
\label{eq:cpx2}
\dots \rightarrow \1_\l \F^{(s)} \E^{(\l+r+s)} \la s(r-1) \ra \xrightarrow{d^s} \1_\l \F^{(s-1)} \E^{(\l+r+s-1)} \la (s-1)(r-1) \ra \rightarrow \dots \rightarrow \1_\l \E^{(\l+r)} &
\end{align}
depending on whether $r \ge 0$ or $r \le 0$. The differential $d^s$ in (\ref{eq:cpx1}) is given as the composition
\begin{align*}
\1_\l \E^{(\l+r+s)} \F^{(s)} 
&\rightarrow \1_\l \E^{(\l+r+s-1)} \E \1_{-\l-2r-2s} \F \F^{(s-1)} \la -\l-r-2s+2 \ra \\
&\cong \1_\l \E^{(\l+r+s-1)} \E \E_R \la \l+2r+2s-1 \ra \F^{(s-1)} \la -\l-r-2s+2 \ra \\
&\rightarrow \1_\l \E^{(\l+r+s-1)} \F^{(s-1)} \la r+1 \ra 
\end{align*}
where the first map is inclusion into the lowest degree summand and the last map is adjunction. The differential in (\ref{eq:cpx2}) is similar. 

Likewise, we define the complex $\1_{-\l} \tau'_{\l+r} \1_{\l+2r} \in \Kom(\K)$ by
\begin{align}
\label{eq:cpx3}
\dots \rightarrow \1_{-\l} \F^{(\l+r+s)} \E^{(s)} \la -s(r+1) \ra \xrightarrow {d^s} \1_{-\l} \F^{(\l+r+s-1)} \E^{(s-1)} \la -(s-1)(r+1) \ra \rightarrow \dots \rightarrow \1_{-\l} \F^{(\l+r)} & \\
\label{eq:cpx4}
\dots \rightarrow \1_{-\l} \E^{(s)} \F^{(\l+r+s)} \la s(r-1) \ra \xrightarrow {d^s} \1_{-\l} \E^{(s-1)} \F^{(\l+r+s-1)} \la (s-1)(r-1) \ra \rightarrow \dots \rightarrow \1_{-\l} \F^{(\l+r)} &
\end{align}
depending on whether $r \ge 0$ or $r \le 0$. Notice that when $r=0$ then (\ref{eq:cpx1}) and (\ref{eq:cpx2}) are both equal to the complex (\ref{eq:cpx'}) while (\ref{eq:cpx3}) and (\ref{eq:cpx4}) are both equal to (\ref{eq:cpx}). In \cite{CR} the complexes (\ref{eq:cpx'}) and (\ref{eq:cpx}) are called Rickard complexes so we will also refer to (\ref{eq:cpx1}) - (\ref{eq:cpx4}) as Rickard complexes. Note, however, that if $r \ne 0$ then the complexes (\ref{eq:cpx1})--(\ref{eq:cpx4}) are {\em not} invertible in $\Kom(\K)$. 

\begin{Remark}\label{rem:cpx} By Lemma \ref{lem:homs} the space of maps between two consecutive terms in the complex (\ref{eq:cpx1}) is one-dimensional. This means that any complex which is indecomposable and which has the same terms as the complex in (\ref{eq:cpx1}) must actually be homotopic to (\ref{eq:cpx1}). Subsequently, we do not need to worry about what precise differential to choose (just pick any nonzero multiple of the unique map). The same is also true of complexes (\ref{eq:cpx2}), (\ref{eq:cpx3}) and (\ref{eq:cpx4}). 
\end{Remark}

\begin{Lemma}\label{lem:homs} 
If $\dots \rightarrow \A \rightarrow \A' \rightarrow \dots$ are two consecutive terms appearing in any of the four complexes (\ref{eq:cpx1})--(\ref{eq:cpx4}) then $\Hom(\A,\A \la \ell \ra) \cong \Hom(\A,\A' \la \ell \ra) \cong \begin{cases} 0 \text{ if } \ell < 0 \\ \k \text{ if } \ell=0. \end{cases}$
\end{Lemma}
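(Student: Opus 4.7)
The plan is to reduce each Hom computation to a direct sum of Homs of the form $\Hom(\1_\mu,\1_\mu\la *\ra)$, which by axiom (iii) vanish for $*<0$ and are one-dimensional for $*=0$. The two main tools are the biadjunction between $\E^{(a)}$ and $\F^{(a)}$ up to shift (axiom (iv)), together with the $\sl_2$-commutation identities of Lemma \ref{lem:rels1} and the divided-power absorption $\E^{(a)}\E^{(b)}\cong\bigoplus_{\qbins{a+b}{a}}\E^{(a+b)}$ (axiom (vi)).

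I treat complex (\ref{eq:cpx1}) in detail; complexes (\ref{eq:cpx2})--(\ref{eq:cpx4}) follow by the same argument with the roles of $\E$ and $\F$ reversed and the sign of the weight flipped. Write $\A_s:=\1_\l\E^{(\l+r+s)}\F^{(s)}\1_{-\l-2r}$, so that the term in (\ref{eq:cpx1}) of cohomological degree $-s$ is $\A_s\la-s(r+1)\ra$. In order to handle the self-Hom case and the consecutive-term case uniformly, I consider $\Hom(\A_s,\A_{s'}\la m\ra)$ for $s'\in\{s,s-1\}$. Two applications of axiom (iv)---first to move $\E^{(\l+r+s)}$ from the source to the target via $(\E^{(a)}\1_\mu)_R\cong\1_\mu\F^{(a)}\la a(\mu+a)\ra$, and then analogously for $\F^{(s)}$---convert the Hom into
\begin{align*}
\Hom\bigl(\1_{-\l-2r-2s},\ \E^{(s)}\F^{(\l+r+s)}\E^{(\l+r+s')}\F^{(s')}\1_{-\l-2r}\la m+\delta(s,s')\ra\bigr)
\end{align*}
for an explicit shift $\delta(s,s')$ tracked through the adjunction formulas.

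Next I expand the middle composition $\F^{(\l+r+s)}\E^{(\l+r+s')}$, which acts at weight $-\l-2r-2s'\le 0$ (since $r\ge 0$ in case (\ref{eq:cpx1})). Lemma \ref{lem:rels1} gives
\begin{align*}
\F^{(\l+r+s)}\E^{(\l+r+s')}\1_{-\l-2r-2s'}\cong\bigoplus_j\bigoplus_{\qbins{\l+2r+s+s'}{j}}\E^{(\l+r+s'-j)}\F^{(\l+r+s-j)}\1_{-\l-2r-2s'},
\end{align*}
and absorbing the newly adjacent $\E^{(s)}\E^{(\l+r+s'-j)}$ and $\F^{(\l+r+s-j)}\F^{(s')}$ pairs via axiom (vi) rewrites each summand as a direct sum of $\E^{(\l+r+s+s'-j)}\F^{(\l+r+s+s'-j)}\1_{-\l-2r-2s}$ with $q$-binomial multiplicities (these are the only summands where the composition closes up to an endomorphism of $\D(-\l-2r-2s)$). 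A further adjunction identifies $\Hom(\1_{-\l-2r-2s},\E^{(c)}\F^{(c)}\1_{-\l-2r-2s}\la *\ra)$ with $\End(\F^{(c)}\1_{-\l-2r-2s}\la *\ra)$, which is zero in negative degrees and one-dimensional in degree zero---a standard consequence of the axioms obtained inductively in $c$ using Lemma \ref{lem:rels1} and the rank considerations of section \ref{sec:rank}. Summing over $c=\l+r+s+s'-j$ and incorporating every shift produces a power series $G(q)\in\mathbb{N}[[q]][q^{-1}]$, and the Hom in question in internal degree $\ell$ is the coefficient of $q^\ell$ in $G(q)\cdot q^{\delta(s,s')+s(r+1)-s'(r+1)}$.

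The main obstacle is the combinatorial verification that this series begins in degree $0$ with coefficient exactly $1$, in both the cases $s'=s$ and $s'=s-1$. This reduces to a $q$-binomial identity of the shape $\sum_j q^{\alpha_j}\qbins{\cdot}{j}\cdot(1+O(q))=1+O(q)$, which can be checked either by direct manipulation of $q$-binomials or by recognizing $\End(\A_s\la *\ra)$ with the graded endomorphism ring of a standard object in the nilHecke / cyclotomic KLR setup, whose lowest graded component is one-dimensional (as in \cite{KLMS}). The arguments for complexes (\ref{eq:cpx2})--(\ref{eq:cpx4}) are obtained from the above by the symmetry $\E\leftrightarrow\F$, $\l\leftrightarrow-\l$, and require no new ideas.
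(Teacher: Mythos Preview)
Your strategy---adjunction plus Lemma~\ref{lem:rels1} plus divided-power absorption, followed by a degree count---is exactly the paper's approach. The difference is that the paper actually performs the degree count, whereas you stop at the point you yourself call ``the main obstacle'' and defer to either an unspecified $q$-binomial manipulation or an external appeal to \cite{KLMS}. That step is the entire content of the lemma; everything before it is routine unwinding of adjunctions. So as written this is a proof outline, not a proof.

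Concretely, the paper (illustrating with complex~(\ref{eq:cpx3})) uses a \emph{single} adjunction---moving only $\F^{(\l+r+s-1)}$ across---rather than your double adjunction. This leaves $\Hom(\E^{(\mu)}\F^{(\mu+1)}\E^{(s)},\E^{(s-1)}\la\cdot\ra)$, and after one application of Lemma~\ref{lem:rels1} and one more adjunction the problem becomes a sum over $j$ of $\Hom(\E^{(\mu-j+s)},\E^{(\mu-j+s)}\la k\ra)$ with an explicit shift $k$. The paper then bounds
\[
k \le -2(\mu-j)^2 - 2(\mu-j)(r+1),
\]
which is $\le 0$ with equality exactly when $j=\mu$, and in that case there is a single summand. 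This inequality is the heart of the argument; your proposal never states or proves its analogue. Your two-adjunction route produces a four-fold product $\E^{(s)}\F^{(\cdot)}\E^{(\cdot)}\F^{(s')}$ and an extra layer of $q$-binomial multiplicities, which makes the bookkeeping heavier but not different in kind---you would still need to extract a clean inequality of the shape above. If you want to complete your version, write down the exact shift $\delta(s,s')$, expand all three $q$-binomials, and show the minimal exponent is $0$ and is achieved exactly once; alternatively, switch to the single-adjunction route and the bound drops out in a few lines.
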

\begin{proof}
The proof is basically the same for all four complexes. We illustrate with (\ref{eq:cpx3}) where 
$$\A = \1_{-\l} \F^{(\l+r+s)} \E^{(s)} \la -s(r+1) \ra \text{ and } \A' = \1_{-\l} \F^{(\l+r+s-1)} \E^{(s-1)} \la -(s-1)(r+1) \ra.$$
We have:
\begin{align*}
\Hom(\A,\A') 
&\cong \Hom(\1_{-\l} \F^{(\l+r+s)} \E^{(s)}, \1_{-\l} \F^{(\l+r+s-1)} \E^{(s-1)} \la r+1 \ra) \\
&\cong \Hom((\1_{-\l} \F^{(\mu)})_L \F^{(\mu+1)} \E^{(s)}, \E^{(s-1)} \1_{\l+2r} \la r+1 \ra) \\
&\cong \Hom(\E^{(\mu)} \F^{(\mu+1)} \1_{\l+2r+2s} \E^{(s)} \la \mu(r+s-1) \ra, \E^{(s-1)} \la r+1 \ra) \\
&\cong \bigoplus_{j=0}^\mu \bigoplus_{\qbins{\l+2r+2s-1}{j}} \Hom(\1_{\l+2r+2s-2} \F^{(\mu-j+1)} \E^{(\mu-j)} \E^{(s)}, \E^{(s-1)} \la r+1 - \mu(r+s-1) \ra) \\
&\cong \bigoplus_{j=0}^\mu \bigoplus_{\qbins{\mu+r+s}{j}} \Hom(\E^{(\mu-j)} \E^{(s)}, \E^{(\mu-j+1)} \E^{(s-1)} \la t \ra) \\
&\cong \bigoplus_{j=0}^\mu \bigoplus_{\qbins{\mu+r+s}{j}} \Hom ( \bigoplus_{\qbins{\mu-j+s}{s}} \E^{(\mu-j+s)}, \bigoplus_{\qbins{\mu-j+s}{s-1}} \E^{(\mu-j+s)} \la t \ra )
\end{align*}
where, for convenience, we abbreviated $\mu := \l+r+s-1$ and 
$$t = r+1 - \mu(r+s-1) - (\mu-j+1)(2\mu+r+s-1-j).$$ 
Thus the last line is a direct sum of terms of the form $\Hom(\E^{(\mu-j+s)}, \E^{(\mu-j+s)} \la k \ra)$ for some $k$. Since $\qbins{N}{m}$ is supported in degrees $-m(N-m), \dots,  m(N-m)$ we find that 
$$k \le j(\mu+r+s-j) + s(\mu-j) + (s-1)(\mu-j+1) + t = -2(\mu-j)^2 -2(\mu-j)(r+1).$$
Since $\mu \ge j$ and $\E^{(\mu-j+s)}$ has no negative degree endomorphisms we see that $\Hom(\A,\A' \la \ell \ra) = 0$ if $\ell < 0$. Moreover, if $\ell = 0$ then there is precisely one term which is nonzero (when $j=\mu$) and we get $\Hom(\A,\A') \cong \End(\E^{(s)}) \cong \k$. A similar argument also works with $\Hom(\A,\A \la \ell \ra)$.

\begin{Remark} Above we used that $\dim \End^k(\E^{(r)}) = \begin{cases} 0 \text{ if } k < 0 \\ 1 \text{ if } k=0. \end{cases}$ This is proved by induction in \cite[Lemma 4.9]{CKL2} assuming only condition (\ref{co:hom1}) from our definition in section \ref{sec:catgactions}. 
\end{Remark}
\end{proof}

\subsection{The affine nilHecke algebra}\label{sec:nilHecke}

In \cite{C2} we show that a $(\g,\th)$ action carries an action of the quiver Hecke algebras. In the simplest case when $\g=\sl_2$ these algebras are just the affine nilHecke algebras. 
Recall that the action of the affine nilHecke algebra consists of two 2-morphisms
$$X: \E \1_\l \rightarrow \E \1_\l \la 2 \ra \ \ \text{ and } \ \ T: \E \E \1_\l \rightarrow \E \E \1_\l \la -2 \ra $$
which satisfy the following relations
\begin{enumerate}
\item $T^2=0$ where $T \in \End(\E \E)$,
\item $(IT)(TI)(IT)=(TI)(IT)(TI)$ where $TI,IT \in \End(\E \E \E)$,
\item $(XI)T-T(IX)=II=-(IX)T+T(XI)$ where $XI,IX,T \in \End(\E \E)$.
\end{enumerate}
We will use this action (in a minor way) in several of the proofs in this section.

\subsection{The main Proposition}

\begin{Proposition}\label{prop:0} 
For $p \ge 0$ we have 
\begin{itemize}
\item $\tau_{\l} \1_{-\l} \E^{(p)} \cong \tau_{\l+p} \1_{-\l-2p} \la -p(\l+p+1) \ra [p]$ if $\l \ge 0$
\item $\tau_{\l} \1_{-\l} \F^{(p)} \cong \tau_{\l-p} \1_{-\l+2p}$ if $\l-p \ge 0$
\item $\E^{(p)} \tau_\l \1_{-\l} \cong \tau_{\l+p} \1_{-\l} \la -p(\l+p+1) \ra [p]$ if $\l \ge 0$
\item $\F^{(p)} \tau_\l \1_{-\l} \cong \tau_{\l-p} \1_{-\l}$ if $\l-p \ge 0$.
\end{itemize}
Similarly, we have 
\begin{itemize}
\item $\tau'_{\l} \1_\l \E^{(p)} \cong \tau'_{\l-p} \1_{\l-2p}$ if $\l-p \ge 0$
\item $\tau'_{\l} \1_\l \F^{(p)} \cong \tau'_{\l+p} \1_{\l-2p} \la -p(\l+p+1) \ra [p]$ if $\l \ge 0$
\item $\E^{(p)} \tau'_\l \1_\l \cong \tau'_{\l-p} \1_\l$ if $\l-p \ge 0$
\item $\F^{(p)} \tau'_\l \1_\l \cong \tau'_{\l+p} \1_\l \la -p(\l+p+1) \ra [p]$ if $\l \ge 0$.
\end{itemize}
\end{Proposition}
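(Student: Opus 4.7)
My plan is to prove the first identity by direct computation; the remaining seven identities then follow by analogous calculations (the $\F^{(p)}$ cases use the simpler expansion $\F^{(s)}\F^{(p)} \cong \oplus_{\qbins{s+p}{p}} \F^{(s+p)}$ in place of Lemma \ref{lem:rels1}), by working on the left instead of the right, or by the involution on $\K$ exchanging $\E \leftrightarrow \F$ and $\mu \leftrightarrow -\mu$, which sends $\tau$-statements to $\tau'$-statements.

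First I expand the $s$-th term of $\tau_\l \1_{-\l} \E^{(p)}$, namely $\1_\l \E^{(\l+s)} \F^{(s)} \E^{(p)} \la -s \ra$ sitting in cohomological degree $-s$. Since $-\l-2p + p - s \le 0$, Lemma \ref{lem:rels1} applies and gives
\[
\F^{(s)} \E^{(p)} \1_{-\l-2p} \;\cong\; \bigoplus_{j=0}^{\min(s,p)} \bigoplus_{\qbins{\l+s+p}{j}} \E^{(p-j)} \F^{(s-j)} \1_{-\l-2p}.
\]
Combining with $\E^{(\l+s)} \E^{(p-j)} \cong \oplus_{\qbins{\l+s+p-j}{p-j}} \E^{(\l+s+p-j)}$ decomposes each $s$-th term into (shifted) copies of $\1_\l \E^{(\l+p+t)} \F^{(t)}$ for $t := s-j \ge 0$, with multiplicity governed by the product $\qbins{\l+s+p}{j}\qbins{\l+s+p-j}{p-j}$ at cohomological degree $-s = -(t+j)$.

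Next I apply Lemma \ref{lem:cancel} iteratively between consecutive cohomological degrees to cancel the overlapping summands. The differentials between matched summands must be isomorphisms: by Lemma \ref{lem:homs} the relevant hom-spaces are one-dimensional, and the Rickard differential is nonzero on them. A $q$-Vandermonde style identity ensures that the multiplicities telescope so that, for each fixed $t \ge 0$, exactly one copy of $\1_\l \E^{(\l+p+t)} \F^{(t)}$ survives, sitting in cohomological degree $-t-p$ at internal degree $\la -t(p+1) - p(\l+p+1) \ra$. These are precisely the terms of $\tau_{\l+p} \1_{-\l-2p} \la -p(\l+p+1) \ra [p]$, and Remark \ref{rem:cpx} (applied via Lemma \ref{lem:homs}) identifies the surviving differentials with the Rickard ones up to nonzero scalars.

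The main obstacle will be the combinatorial bookkeeping in the Gaussian elimination step: checking that the $q$-binomial multiplicities at consecutive cohomological degrees really do pair up correctly, so that the iterated cancellations accumulate exactly the claimed shift $\la -p(\l+p+1) \ra [p]$. For the $\F^{(p)}$ identity, the absence of any ``commute $\F$ past $\E$'' step makes the computation substantially simpler and produces no cohomological shift, in accord with the statement; the left-action variants are handled symmetrically, and the four $\tau'$ identities follow formally from the $\tau$ versions via the $\E \leftrightarrow \F$ involution, completing the proof.
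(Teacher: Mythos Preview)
There is a genuine gap in your Gaussian elimination step. You assert that the components of the differential between matched summands are isomorphisms because ``the relevant hom-spaces are one-dimensional, and the Rickard differential is nonzero on them''. The second claim is precisely the difficult part and you give no argument for it. Lemma \ref{lem:homs} only tells you that the hom-space between two \emph{full} consecutive Rickard terms is one-dimensional; it says nothing about the component of $d^s I$ between individual summands of the form $\1_\l \E^{(\l+p+t)} \F^{(t)}$ after you decompose via Lemma \ref{lem:rels1}. Knowing that the total differential is nonzero does not imply that any given diagonal block is nonzero. Establishing this is exactly the content of the paper's Lemma \ref{lem:alphainduces}, whose proof is nontrivial: it uses the nilHecke 2-morphisms $X$ and $T$, the description of $\chi$ from Lemma \ref{lem:chi} and Corollary \ref{cor:chi}, and the rank computations of Lemmas \ref{lem:IXonFs}, \ref{lem:IXonEF}. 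Without an analogue of this lemma your cancellations are unjustified.

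The paper sidesteps the full combinatorics you sketch by inducting on $p$. In the inductive step one composes $\tau'_{\l+r}$ with a single $\F$, so only one layer of cancellation is needed and Lemma \ref{lem:alphainduces} supplies exactly the required surjectivity. After cancelling, the resulting complex has terms $\oplus_{[r+1]} \F^{(\cdot)}\E^{(\cdot)}$; to show this splits as $\oplus_{[r+1]}$ copies of a Rickard-type complex the paper uses a further trick with the 2-morphism $\Theta = I\theta I$ and Corollary \ref{cor:IXonEF}. The induction hypothesis then identifies one summand, and indecomposability plus Remark \ref{rem:cpx} pins it down. Your direct approach for general $p$ would require a substantially stronger version of Lemma \ref{lem:alphainduces} tracking many summands simultaneously, which you have not supplied.
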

\begin{proof}
Let us prove that 
\begin{equation}\label{eq:prop01}
\tau'_{\l} \1_\l \F^{(p)} \cong \tau'_{\l+p} \1_{\l-2p} \la -p(\l+p+1) \ra [p].
\end{equation}
We do this by induction on $p$. We assume the result for $p \le r$ and consider $\1_{-\l} \tau'_{\l+r} \F $. The general term here is 
$$\1_{-\l} \F^{(\l+r+s)} \E^{(s)} \F \cong \left[ \bigoplus_{[\l+r+s+1]} \F^{(\l+r+s+1)} \E^{(s)} \bigoplus_{[\l+2r+s+1]} \F^{(\l+r+s)} \E^{(s-1)} \right] \la -s(r+1) \ra$$
which gives us a complex 
\begin{equation*}
\dots \rightarrow 
\left[ \begin{matrix} \oplus_{[\l+r+s+1]} \F^{(\l+r+s+1)} \E^{(s)} \\ \oplus_{[\l+2r+s+1]} \F^{(\l+r+s)} \E^{(s-1)} \end{matrix} \right] \la -s(r+1) \ra \xrightarrow{\alpha} 
\left[ \begin{matrix} \oplus_{[\l+r+s]} \F^{(\l+r+s-1)} \E^{(s-1)} \\ \oplus_{[\l+2r+s]} \F^{(\l+r+s-1)} \E^{(s-2)} \end{matrix} \right] \la -(s-1)(r+1) \ra \rightarrow \dots 
\end{equation*}
By Lemma \ref{lem:alphainduces} the map $\alpha$ induces a surjective map 
$$\oplus_{[\l+2r+s+1]} \F^{(\l+r+s)} \E^{(s-1)} \la -s(r+1) \ra \rightarrow \oplus_{[\l+r+s]} \F^{(\l+r+s-1)} \E^{(s-1)} \la -(s-1)(r+1) \ra.$$ 
Using the cancellation Lemma \ref{lem:cancel} on all such terms in every degree we end up with a complex
\begin{equation}\label{eq:prop02}
\left[ \dots \longrightarrow \begin{matrix} \bigoplus_{[r+1]} \F^{(\l+r+s-1)} \E^{(s-1)} \\ \la -(s-1)(r+2) \ra \end{matrix} \longrightarrow 
\begin{matrix} \bigoplus_{[r+1]} \F^{(\l+r+s-2)} \E^{(s-2)} \\ \la -(s-2)(r+2) \ra \end{matrix} \longrightarrow \dots \right] \la -(\l+2r+2) \ra [1].
\end{equation}
We would like to show that this is isomorphic to 
\begin{equation}\label{eq:prop03}
\bigoplus_{[r+1]} \left[ \dots \rightarrow \begin{matrix} \F^{(\l+r+s-1)} \E^{(s-1)} \\ \la -(s-1)(r+2) \ra \end{matrix} \longrightarrow
\begin{matrix} \F^{(\l+r+s-2)} \E^{(s-2)} \\ \la -(s-2)(r+2) \ra \end{matrix} \longrightarrow \dots \right] \la -(\l+2r+2) \ra [1]
\end{equation}
for some sequence of differentials. To do this we use the following trick. Suppose one has a complex of the form 
$$\B^\bullet := \dots \rightarrow \oplus_{[r+1]} \A^{-n} \rightarrow \oplus_{[r+1]} \A^{-n+1} \rightarrow \dots$$
where $\Hom(\A^{-n},\A^{-n+1} \la \ell \ra) = 0$ if $\ell < 0$ and $\Hom(\A^{-n},\A^{-n+1})$ is one-dimensional. Then there are two natural maps
$$\iota: {\sf C}_1^\bullet \la r+1 \ra \rightarrow \B^\bullet \text{ and } \pi: \B^\bullet \rightarrow {\sf C}_2^\bullet \la -(r+1) \ra$$
where ${\sf C}_1^\bullet$ and ${\sf C}_2^\bullet$ are both complexes of the form $\dots \rightarrow \A^{-n} \rightarrow \A^{-n+1} \rightarrow \dots$. These maps are just including into and projecting from $\B^\bullet$. Now suppose further that there exists a map $\gamma: \B^\bullet \rightarrow \B^\bullet \la 2 \ra$ which, in every homological degree $-n$, induces an isomorphism between $r$ summands $\A^{-n}$. Then the composition 
$${\sf C}_1^\bullet \xrightarrow{\iota} \B^\bullet \la -(r+1) \ra \xrightarrow{\gamma^{r+1}} \B^\bullet \la r+1 \ra \xrightarrow{\pi} {\sf C}_2^\bullet$$
is a homotopy equivalence. This in turn implies that $\B^\bullet \cong \oplus_{[r+1]} {\sf C}_1^\bullet$.

Now let us apply this to the case where $\B^\bullet$ is the complex in (\ref{eq:prop02}). The $\Hom$ conditions on the $\A^{-n}$s follow from Lemma \ref{lem:homs}. The map $\gamma$ is given by 
$IIX: \1_{-\l} \tau'_{\l+r} \F \rightarrow \1_{-\l} \tau'_{\l+r} \F \la 2 \ra.$
It has the isomorphism property described above because, by Lemma \ref{lem:IXonEF},
$$IIIX: \1_{-\l} \F^{(\l+r+s)} \E^{(s)}  \F \rightarrow \1_{-\l} \F^{(\l+r+s)} \E^{(s)} \F \la 2 \ra$$ 
induces an isomorphism between all but one summand of the form $\F^{(\l+r+s)} \E^{(s-1)}$ on either side. 

Thus, we conclude that $\1_{-\l} \tau'_{\l+r} \F$ is isomorphic to a complex as in (\ref{eq:prop03}) for some choice of differentials. On the other hand, by induction we know that $\1_{-\l} \tau'_{\l} \F^{(r)} \cong \1_{-\l} \tau'_{\l+r} \la -r(\l+r+1) \ra [r]$, which means 
$$\1_{-\l} \tau'_{\l+r} \F \cong \1_{-\l} \tau'_{\l} \F^{(r)} \F \la r(\l+r+1) \ra [-r] \cong \oplus_{[r+1]} \1_{-\l} \tau'_{\l} \F^{(r+1)} \la r(\l+r+1) \ra [-r].$$
Hence $\1_{-\l} \tau'_{\l} \F^{(r+1)}$ must be isomorphic to one of the summands in (\ref{eq:prop03}), namely, to a complex 
$$\left[ \dots \longrightarrow \begin{matrix} \F^{(\l+r+s-1)} \E^{(s-1)} \\ \la -(s-1)(r+2) \ra \end{matrix} \longrightarrow \begin{matrix} \F^{(\l+r+s-2)} \E^{(s-2)} \\ \la -(s-2)(r+2) \ra \end{matrix} \longrightarrow \dots \right] \la -(r+1)(\l+r+2) \ra [r+1].$$
Notice that the terms in this complex are the same as those in $\1_{-\l} \tau'_{\l+r+1} \la -(r+1)(\l+r+2) \ra [r+1]$. Since $\1_{-\l} \tau'_\l$ is invertible $\1_{-\l} \tau'_\l \F^{(r+1)}$ must be indecomposable. Hence, by Remark \ref{rem:cpx}, 
$$\1_{-\l} \tau'_{\l} \F^{(r+1)} \cong \1_{-\l} \tau'_{\l+r+1} \la -(r+1)(\l+r+2) \ra [r+1]$$
and our induction is complete. 
\end{proof}

Since $\1_\l \tau_\l = \1_\l \T$ the following is an immediate Corollary of Proposition \ref{prop:0}. 

\begin{Corollary}\label{cor:0} 
For $p \ge 0$ we have
\begin{itemize}
\item $\T \1_\l \F^{(p)} \cong \1_{-\l} \E^{(p)} \T \la -p(\l+p+1) \ra [p]$ if $\l \ge 0$
\item $\T \1_\l \E^{(p)} \cong \1_{-\l} \F^{(p)} \T \la -p(-\l+p+1) \ra [p]$ if $ \l \le 0$
\item $\T \1_\l \E^{(\l)} \cong \1_{-\l} \F^{(\l)} \T$ if $\l \ge 0$
\item $\T \1_\l \F^{(-\l)} \cong \1_{-\l} \E^{(-\l)} \T$ if $\l \le 0$.
\end{itemize}
\end{Corollary}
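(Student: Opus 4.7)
The plan is to express $\T \1_\l$ in terms of the complexes $\tau$ and $\tau'$ of Proposition \ref{prop:0}, then read off each claim from the appropriate bullet of that proposition. Comparing defining terms, for $\l \ge 0$ one has the equality of complexes $\T \1_\l = \1_{-\l} \tau'_\l \1_\l$ (namely (\ref{eq:cpx3}) with $r = 0$), and for $\l \le 0$ one has $\T \1_\l = \1_{-\l} \tau_{-\l} \1_\l$ (namely (\ref{eq:cpx1}) with $r = 0$). This is the content of the observation $\1_\l \tau_\l = \1_\l \T$ noted just above the corollary.

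For the first claim ($\l \ge 0$), I would rewrite the LHS by applying the bullet of Proposition \ref{prop:0} that commutes $\tau'_\l$ past $\F^{(p)}$, producing a complex of the form $\1_{-\l} \tau'_{\l+p} \1_{\l+2p} \la -p(\l+p+1) \ra [p]$. To match the RHS, I would use $\T \1_{\l+2p} = \1_{-\l-2p} \tau'_{\l+2p} \1_{\l+2p}$ (valid since $\l + 2p \ge 0$) and then invoke the bullet $\E^{(p)} \tau'_\mu \1_\mu \cong \tau'_{\mu-p} \1_\mu$ (valid for $\mu - p \ge 0$) with $\mu = \l + 2p$. This produces the same $\tau'$ complex, so comparing yields the first claim. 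The second claim ($\l \le 0$) follows by the mirror argument using the $\tau$-family bullets, with $-\l \ge 0$ playing the role of $\l$: one applies $\tau_{-\l} \1_\l \E^{(p)} \cong \tau_{-\l+p} \1_{\l-2p} \la -p(-\l+p+1) \ra [p]$ to the LHS and $\F^{(p)} \tau_{-\l+2p} \1_{\l-2p} \cong \tau_{-\l+p} \1_{\l-2p}$ to the RHS (valid since $-\l + p \ge 0$).

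For claim 3 ($\l \ge 0$), I would use two shift-free bullets. From $\tau'_\l \1_\l \E^{(p)} \cong \tau'_{\l-p} \1_{\l-2p}$ with $p = \l$ one obtains $\T \1_\l \E^{(\l)} \cong \1_{-\l} \tau'_0 \1_{-\l}$, while from $\F^{(p)} \tau_\l \1_{-\l} \cong \tau_{\l-p} \1_{-\l}$ with $p = \l$ one obtains $\1_{-\l} \F^{(\l)} \T \cong \1_{-\l} \tau_0 \1_{-\l}$ (after substituting $\T \1_{-\l} = \1_\l \tau_\l \1_{-\l}$). The final step is to identify $\tau'_0 \1_{-\l}$ with $\tau_0 \1_{-\l}$ as 1-morphisms $\D(-\l) \to \D(-\l)$: writing both out via (\ref{eq:cpx4}) (with $\mu = \l,\ r = -\l$) and (\ref{eq:cpx1}) (with $\mu = -\l,\ r = \l$) respectively, each has general term $\E^{(s)} \F^{(s)} \1_{-\l} \la -s(\l+1) \ra$, and since consecutive terms admit only a one-dimensional space of degree-zero maps (cf.\ Lemma \ref{lem:homs}) and the complexes are indecomposable, Remark \ref{rem:cpx} gives a homotopy equivalence. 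Claim 4 is proved by the symmetric argument.

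The only real obstacle is bookkeeping: each $\tau_a \1_b$ or $\tau'_a \1_b$ is determined by the subscript $a$ together with the source weight $b$ (which implicitly fixes the parameter $r$ and the target weight), so care is required when translating between the statements. Once the weights are tracked, each case reduces to one or two applications of Proposition \ref{prop:0}, with Remark \ref{rem:cpx} closing the shift-free cases 3 and 4.
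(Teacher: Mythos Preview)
Your proposal is correct and follows the paper's approach: the paper's proof is the single sentence ``Since $\1_\l \tau_\l = \1_\l \T$ the following is an immediate Corollary of Proposition~\ref{prop:0},'' and you have spelled out exactly how each bullet reduces to one or two applications of that proposition. Your extra step for claims 3 and 4---identifying $\1_{-\l}\tau'_0\1_{-\l}$ with $\1_{-\l}\tau_0\1_{-\l}$ via Remark~\ref{rem:cpx}---is a genuine detail the paper leaves implicit; note that the indecomposability hypothesis of that remark is satisfied because $\T$ is an equivalence and $\E^{(\l)}\1_{-\l}$, $\F^{(\l)}\1_\l$ are indecomposable 1-morphisms (alternatively, since the Hom space between consecutive terms is one-dimensional by Lemma~\ref{lem:homs}, any two choices of nonzero differentials yield isomorphic complexes).
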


\subsection{Some useful maps}\label{sec:helpmaps} 

Recall that 
\begin{equation}\label{eq:temp}
\F \F^{(r)} \cong \oplus_{[r+1]} \F^{(r+1)} \cong \F^{(r)} \F
\end{equation}
so (abusing notation a little) we can define maps 
$$\iota: \F^{(r+1)} \rightarrow \F \F^{(r)} \la -r \ra \text{ and } \iota: \F^{(r+1)} \rightarrow \F^{(r)} \F \la -r \ra$$
by including into the lowest summand. Including into the lowest (as opposed to say highest) summand is natural because there is a unique (up to a nonzero multiple) such map {\it i.e.} the map does not depend on the choice of isomorphisms in (\ref{eq:temp}). This is a consequence of the fact $\dim \End^\ell(\F^{(r+1)})$ is zero if $\ell < 0$ and one if $\ell=0$. Likewise, we have maps 
$$\pi: \F \F^{(r)} \rightarrow \F^{(r+1)} \la -r \ra \text{ and } \pi: \F^{(r)} \F \rightarrow \F^{(r+1)} \la -r \ra$$
given by projecting out of the top degree summand in $\F \F^{(r)}$. Again, these maps are unique. 

Next we have the adjunction maps 
\begin{align*}
\epsilon: \F \E \1_\mu \rightarrow \1_\mu \la \mu+1 \ra \ \ &\text{ and } \ \ \eta: \1_\mu \rightarrow \E \F \1_\mu \la -\mu+1 \ra \\ 
\epsilon: \E \F \1_\mu \rightarrow \1_\mu \la -\mu+1 \ra \ \ &\text{ and } \ \ \eta: \1_\mu \rightarrow \F \E \1_\mu \la \mu+1 \ra.
\end{align*}

More generally we can define the map $\epsilon'$ and $\eta'$ as the compositions
\begin{align*}
& \epsilon': \F \1_{\mu} \E^{(s)} \xrightarrow{I \iota} \F \1_{\mu} \E \E^{(s-1)} \la -(s-1) \ra \xrightarrow{\epsilon} \E^{(s-1)} \la \mu-s \ra \\
& \eta': \E^{(s-1)} \1_{\mu+2} \xrightarrow{I \eta} \E^{(s-1)} \1_{\mu} \E \F \la -\mu-1 \ra \xrightarrow{\pi I} \E^{(s)} \F \la -\mu-s \ra.
\end{align*}
Finally, by adjunction we have $\dim \Hom(\F \E \1_\mu, \E \F \1_\mu) = 1$ \cite[Lem. A6]{C2} so we fix $\chi: \F \E \rightarrow \E \F$ to be this map (uniquely defined up to rescaling). 

\begin{Lemma}\label{lem:chi}
The composition $\F \E \1_\mu \xrightarrow{\chi} \E \F \1_\mu \xrightarrow{IX} \E \F \la 2 \ra \1_\mu$ is equal to a nontrivial linear combination of the compositions $\F \E \1_\mu \xrightarrow{\epsilon} \1_\mu \la \mu+1 \ra \xrightarrow{\eta} \E \F \la 2 \ra \1_\mu$ and $\F \E \1_\mu \xrightarrow{XI} \F \E \la 2 \ra \1_\mu \xrightarrow{\chi} \E \F \la 2 \ra \1_\mu$.
\end{Lemma}
\begin{proof}
Since $\Hom(\F \E \1_\mu, \E \F \1_\mu) \cong \k$ the map $\chi$ is, up to rescaling, equal to the composition 
$$\F \E \1_\mu \xrightarrow{\eta II} \E \F \F \E \la -\mu+1 \ra \1_\mu \xrightarrow{ITI} \E \F \F \E \la -\mu-1 \ra \1_\mu \xrightarrow{II \epsilon} \E \F \1_\mu.$$
The result then follows using the affine nilHecke relation $(XI)T = II + T(IX)$. 
\end{proof}

\begin{Lemma}\label{lem:IXonEF}
If $\mu + s - 1 \ge 0$ then the total $\E^{(s-1)}$-rank of $\E^{(s)} \F \1_\mu \xrightarrow{IX} \E^{(s)} \F \1_\mu$ is $\mu+s-2$. 
\end{Lemma}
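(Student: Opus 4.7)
The plan is to exploit the decomposition of $\E^{(s)}\F\1_\mu$ provided by Lemma \ref{lem:rels1} and to analyze $IX$ using the nilHecke relations (Theorem \ref{thm:nilHecke}) together with Corollary \ref{cor:chi}. Since $\mu+s-1 \ge 0$, Lemma \ref{lem:rels1} gives
$$\E^{(s)}\F\1_\mu \;\cong\; \F\E^{(s)}\1_\mu \;\oplus\; \bigoplus_{k=0}^{\mu+s-2} \E^{(s-1)}\1_\mu\la \mu+s-2-2k\ra,$$
so there are exactly $\mu+s-1$ shifted copies of $\E^{(s-1)}$, which I label $B_0,\ldots,B_{\mu+s-2}$ in order of decreasing internal degree. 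Because $IX$ shifts internal degree by $+2$, the only possibly nonzero degree-matching matrix blocks of $IX$ between the $\E^{(s-1)}$-summands are $B_k \to B_{k+1}\la 2\ra$ for $0 \le k \le \mu+s-3$, giving at most $\mu+s-2$ contributions. The goal reduces to showing that each such block is an isomorphism and that no further $\E^{(s-1)}$-rank can be manufactured via factorizations through the $\F\E^{(s)}$-summand.

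To execute this, I realize the inclusions $\iota_k \colon \E^{(s-1)}\1_\mu \la \mu+s-2-2k\ra \hookrightarrow \E^{(s)}\F\1_\mu$ explicitly as (suitably shifted) compositions $(\pi I)\circ(IX^k I)\circ(I\eta)$, where $I\eta$ creates a cap--cup on the right of $\E^{(s-1)}$ and $\pi I$ absorbs the new $\E$ into $\E^{(s)}$. That these produce a full set of $\mu+s-1$ distinct inclusions is an adjoint restatement of Lemma \ref{lem:IXonFs} applied on the $\E$-side. Then using the nilHecke relation $(XI)T - T(IX) = I$ from Theorem \ref{thm:nilHecke} together with Corollary \ref{cor:chi} to commute $X$ past the adjunction unit and the projection, the composition $IX\circ\iota_k$ equals $\iota_{k+1}$ up to a nonzero scalar, modulo an error term coming from the ``$=I$'' piece of the nilHecke relation which factors through a cap--cup $\eta\epsilon$ and therefore lands in the $\F\E^{(s)}\la 2\ra$-summand. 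In the boundary case $k = \mu+s-2$, the target $B_{\mu+s-1}$ does not exist, so $IX\circ\iota_{\mu+s-2}$ is forced entirely into $\F\E^{(s)}\la 2\ra$. Counting, this yields exactly $\mu+s-2$ isomorphism blocks, as claimed.

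The hard part will be (a) controlling the scalar corrections as the extra $X$ is commuted across $\pi$ and $\eta$, and (b) confirming that the error terms sitting in $\F\E^{(s)}\la 2\ra$ cannot recombine with their adjoints to produce spurious $\E^{(s-1)}$-rank via compositions $B_k \to \F\E^{(s)}\la 2\ra \to B_j\la 2\ra$. The cleanest organization will likely be induction on $s$: the base case $s=1$ is handled directly by Corollary \ref{cor:chi} combined with condition (\ref{co:theta}) on $\theta$, while the inductive step writes $\E^{(s)}$ as a summand of $\E\E^{(s-1)}$ to pull the question back to $\E^{(s-1)}\E\F\1_\mu$, where Lemma \ref{lem:IXonFs} (in its $\E$-dual form) together with the inductive hypothesis for $\E^{(s-1)}\F\1_\mu$ pin down the total rank at $\mu+s-2$.
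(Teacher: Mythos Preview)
Your overall architecture---induction on $s$, with the upper bound $\mu+s-2$ read off from the degree spread of the $\E^{(s-1)}$-summands in $\E^{(s)}\F\1_\mu$---matches the paper's, and you correctly isolate the base case $s=1$ as where the real content sits. But the specific mechanisms you propose do not close, and they diverge from what the paper actually does.

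\textbf{The base case.} You claim $s=1$ follows from Corollary~\ref{cor:chi} together with condition~(\ref{co:theta}). Condition~(\ref{co:theta}) is a statement about $I\theta I$ acting on $\E\E$, not on $\E\F$, and Corollary~\ref{cor:chi} only tells you how $IX$ interacts with the inclusion $\chi:\F\E\hookrightarrow\E\F$; neither gives you direct access to the $\1$-rank of $IX$ on $\E\F\1_\mu$. The paper's base case is more indirect: it tensors on an extra $\F$ and computes the total $\F$-rank of $IIX$ on $\E\F\F\1_\mu$ in two ways---once via $\E\F\F\cong\bigoplus_{[2]}\E\F^{(2)}$ together with Lemma~\ref{lem:IXonFs}, once via $\E\F\F\cong\F\E\F\oplus_{[\mu-2]}\F$---and equates the answers. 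This trick is what you are missing.

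\textbf{The explicit inclusions.} Your maps $\iota_k=(\pi I)(IX^kI)(I\eta)$ carry the dots on the middle $\E$-strand, whereas the map $IX$ whose rank you want puts the dot on the $\F$-strand. To turn $IX\circ\iota_k$ into $\iota_{k+1}$ you must slide a dot from the $\F$-side to the $\E$-side of the cup $\eta$. That relation holds in the full Khovanov--Lauda calculus (with bubble corrections), but it is \emph{not} available from the axioms of a categorical $2$-representation in this paper: Theorem~\ref{thm:nilHecke} only gives nilHecke relations among $X$ and $T$ on $\E^n$, and Corollary~\ref{cor:chi} moves $X$ past $\chi$, not past $\eta$. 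So this step is a genuine gap.

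\textbf{The induction step.} Your plan is to embed $\E^{(s)}$ into $\E\E^{(s-1)}$ and invoke the hypothesis for $\E^{(s-1)}\F\1_\mu$ at the \emph{same} weight together with the $\E$-dual of Lemma~\ref{lem:IXonFs}. But Lemma~\ref{lem:IXonFs} concerns $IX$ with the dot on the adjacent $\E$ (or $\F$), not on the far $\F$-strand, so it does not contribute to the count you need; and the induction hypothesis at weight $\mu$ gives $\E^{(s-2)}$-rank $\mu+s-3$, which after tensoring by $\E$ yields $(s-1)(\mu+s-3)$, short of the required $s(\mu+s-2)$. The paper instead looks at $\E^{(s)}\E\F\1_\mu$, decomposes $\E\F\1_\mu\cong\F\E\1_\mu\oplus_{[\mu]}\1_\mu$, and uses Corollary~\ref{cor:chi} to reduce the $\E^{(s)}\F\E$-piece to $IXI$ on $\E^{(s)}\F\1_{\mu+2}\E$, invoking the induction hypothesis at the \emph{shifted} weight $\mu+2$. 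The resulting count $(\mu-1)+s(\mu+s)\ge(s+1)(\mu+s-2)+1$ is exactly what forces the $\E^{(s)}$-rank of $IX$ on $\E^{(s+1)}\F\1_\mu$ up to $\mu+s-1$. The weight shift is essential and is absent from your sketch.
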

\begin{proof}
This can be proved by induction on $s$. For instance, the base $s=1$ is equivalent to condition (\ref{co:theta}). However, to save work one can use \cite[Lemma 4.16]{KLMS} with $a=s-1,b=0,n=\mu$ and $x+y=\mu+s-2$ to obtain that, up to rescaling, the composition 
$$\E^{(s-1)} \1_\mu \xrightarrow{(\pi I)(I\eta)} \E^{(s)} \F \1_\mu \la -\mu-s+2 \ra \xrightarrow{IX^{\mu+s-2}I} \E^{(s)} \F \1_\mu \la \mu+s-2 \ra \xrightarrow{(I \epsilon I)(\iota II)} \E^{(s-1)} \1_\mu$$
is equal to the identity. Thus the map $IXI \in \End^2(\E^{(s)} \F \1_\mu)$ must induce an isomorphism between $(\mu+s-2)$ summands $\E^{(s-1)} \1_\mu$ on either side. The result follows. 

\end{proof}

\subsection{The main Lemma}

\begin{Lemma}\label{lem:alphainduces} 
If $\l,r,s \ge 0$ then the map 
$$\1_{-\l} \F^{(\l+r+s)} \E^{(s)} \F \xrightarrow{d^sI} \1_{-\l} \F^{(\l+r+s-1)} \E^{(s-1)} \F \la r+1 \ra$$
is surjective on summands of the form $\1_{-\l} \F^{(\l+r+s)} \E^{(s-1)}$. 
\end{Lemma}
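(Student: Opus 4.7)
The first step is to unpack $d^s$ along the lines of the definition used for \eqref{eq:cpx1}: it factors as the composition of the inclusion $\iota \otimes \iota$ into the lowest summands $\F^{(\l+r+s)} \hookrightarrow \F^{(\l+r+s-1)} \F$ and $\E^{(s)} \hookrightarrow \E \E^{(s-1)}$, followed by the adjunction counit applied to the newly exposed $\F\E$-pair in the middle. Tensoring with $\F$ on the right gives $d^s I$. Next I would identify the relevant summands via Lemma \ref{lem:rels1}: on the source side the decomposition $\E^{(s)} \F \1_\mu \cong \F \E^{(s)} \1_\mu \oplus_{[\mu+s-1]} \E^{(s-1)} \1_\mu$ (with $\mu = \l+2r+2$ the weight to the right of $\F$) produces the summand $\bigoplus_{[\l+2r+s+1]}\F^{(\l+r+s)}\E^{(s-1)}$; on the target side, the analogous decomposition of $\E^{(s-1)}\F\1_\mu$ combined with $\F^{(\l+r+s-1)}\F \cong \bigoplus_{[\l+r+s]}\F^{(\l+r+s)}$ yields the summand $\bigoplus_{[\l+r+s]}\F^{(\l+r+s)}\E^{(s-1)}$.

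A direct degree check shows both sets of copies lie on the common arithmetic progression $\l + r(2-s) - 2k$ and that the target's $\l+r+s$ degrees form an initial segment of the source's $\l+2r+s+1$ degrees. Since $d^s I$ is degree-preserving, its restriction to these summands is a block matrix whose nonzero entries pair one source copy with one target copy of the same degree. By an argument parallel to Lemma \ref{lem:homs}, each such entry lies in a one-dimensional hom-space $\Hom(\F^{(\l+r+s)}\E^{(s-1)}, \F^{(\l+r+s)}\E^{(s-1)})$ and is therefore a scalar.

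To show each such scalar is nonzero, I would invoke the $\theta$-action: by Corollary \ref{cor:IXonEF} (together with Lemma \ref{lem:IXonFs}), $I\theta I$ acts with maximal $\E^{(s-1)}$-rank $\mu+s-2$ on $\E^{(s)}\1_\mu\F$ and likewise on the target, inducing isomorphisms relating consecutive copies of $\E^{(s-1)}$ within each multiplicity summand. Since $d^s I$ is compatible with this action (a statement which, in the one-dimensional $\Hom$ setting, reduces to an equality of scalar matrix entries), the nonvanishing of one scalar forces the nonvanishing of all. To exhibit a single nonzero scalar I would unpack the restriction of $d^s I$ via its definition: on a chosen pair of source/target copies it factors as
\[
\F^{(\l+r+s)}\E^{(s-1)} \xrightarrow{I\eta'I} \F^{(\l+r+s)}\E^{(s)}\F\E^{(s-1)} \xrightarrow{(\iota\otimes\iota)II} \F^{(\l+r+s-1)}\F\E\E^{(s-1)}\F\E^{(s-1)} \xrightarrow{II\epsilon II} \F^{(\l+r+s-1)}\F\E^{(s-1)} \xrightarrow{\pi I} \F^{(\l+r+s)}\E^{(s-1)},
\]
and the unit-counit triangle identity for the $(\E,\F)$-adjunction collapses the inserted $\F\E$-loop, leaving (up to a nonzero multiple) the endomorphism $\pi\circ \iota$ of $\F^{(\l+r+s)}\E^{(s-1)}$ on the outer divided-power inclusion/projection, which is nonzero.

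The main obstacle will be the careful bookkeeping of shifts, signs, and scalar coefficients throughout the triangle-identity collapse and the $\eta'$/$\chi$ machinery from section \ref{sec:helpmaps}, to be confident the produced scalar is genuinely nonzero rather than accidentally killed by cancellation; once one nonzero entry is in hand, the $\theta$-propagation is essentially formal and produces the claimed surjection of $\bigoplus_{[\l+2r+s+1]}\F^{(\l+r+s)}\E^{(s-1)}$ onto $\bigoplus_{[\l+r+s]}\F^{(\l+r+s)}\E^{(s-1)}$.
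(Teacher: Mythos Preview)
Your overall strategy—view the restriction of $d^sI$ to the $\A$-summands (with $\A=\1_{-\l}\F^{(\l+r+s)}\E^{(s-1)}$) as an upper-triangular scalar matrix, establish one nonzero diagonal entry, and propagate—matches the paper's in spirit. The interchange law does give you $(d^sI)\circ(I\theta I)=(I\theta I)\circ(d^sI)$, since $\theta$ sits at $\1_{\l+2r}$, to the right of the domain of $d^s$. So that part is fine.

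The genuine gap is the claim ``likewise on the target.'' On the source $\F^{(\l+r+s)}\E^{(s)}\1_{\l+2r}\F$, Corollary~\ref{cor:IXonEF} indeed says $I\theta I$ has $\E^{(s-1)}$-rank $\l+2r+s$, which is exactly the $\A$-rank you need to link all consecutive source copies. But on the target $\F^{(\l+r+s-1)}\E^{(s-1)}\1_{\l+2r}\F$, the same corollary (with $s\mapsto s-1$) computes the $\E^{(s-2)}$-rank, i.e.\ the rank with respect to $\F^{(\l+r+s-1)}\E^{(s-2)}$, not $\A=\F^{(\l+r+s)}\E^{(s-1)}$. The $\A$-copies in the target arise from the $\F\E^{(s-1)}$ piece of $\E^{(s-1)}\F$ (via $\chi$), and Corollary~\ref{cor:IXonEF} tells you nothing directly about how $I\theta I$ moves among those copies. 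Without that, the commuting square does not force nonvanishing of the later diagonal entries: you could have $I\theta I$ isomorphic on source copies but degenerate on target $\A$-copies, and then a nonzero entry at $k=0$ need not propagate.

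The paper handles exactly this obstruction by switching from $I\theta I$ to $IIX^k$ on the \emph{rightmost} $\F$ (which still commutes with $d^sI$ since it acts on the ``$I$'' factor), and then using the commutative diagram \eqref{eq:lemalpha0} to transport the question to the bottom row, where the $\A$-content of the target is accessed through $\iota I$ and $I\chi$. This reduces to showing that
\[
\1_{-\l}\F^{(\l+r+s-1)}\F\E^{(s-1)}\xrightarrow{I\chi}\1_{-\l}\F^{(\l+r+s-1)}\E^{(s-1)}\F\xrightarrow{IIX}\1_{-\l}\F^{(\l+r+s-1)}\E^{(s-1)}\F\la 2\ra
\]
has $\A$-rank $\l+r+s-1$, which is then pushed (via Corollary~\ref{cor:chi} iterated through $\chi'$) back to the map $IX$ on $\F^{(\l+r+s-1)}\F$, where Lemma~\ref{lem:IXonFs} finishes it. That diagram-and-$\chi'$ reduction is the missing idea in your argument; your ``triangle identity collapse'' for a single entry is morally the $k=0$ case, but the propagation step cannot be completed with the tool you chose.
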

\begin{proof}
Let $\A := \1_{-\l} \F^{(\l+r+s)} \E^{(s-1)}$. Note that by Lemma \ref{lem:homs} we have $\End(\A) \cong \k$. By Lemma \ref{lem:IXonEF}, the summands $\A$ inside $\1_{-\l} \F^{(\l+r+s)} \E^{(s)} \F$ are all picked up by the composition 
\begin{equation}\label{eq:comp1}
\A \la \l + 2r + s - 2k \ra \xrightarrow{II \eta'} \1_{-\l} \F^{(\l+r+s)} \E^{(s)} \F \la -2k \ra \xrightarrow{IIIX^k} \1_{-\l} \F^{(\l+r+s)} \E^{(s)} \F 
\end{equation}
where $k=0, \dots, \l+2r+s$. In order to prove surjectivity of $d^sI$ it suffices to show that the composition of the map in (\ref{eq:comp1}) with $d^s$ has $\A$-rank one for $k=0, \dots, \l+r+s$. Now consider the following commutative diagram 
\begin{equation}\label{eq:lemalpha0}
\xymatrix{
\F^{(\l+r+s)} \E^{(s)} \F \ar[r]^{\iota II} & \F^{(\l+r+s-1)} \F \E^{(s)} \F \ar[r]^{\epsilon'} & \F^{(\l+r+s-1)} \E^{(s-1)} \F \\
\F^{(\l+r+s)} \E^{(s)} \F \ar[u]^{IIX^k} \ar[r]^{\iota II} & \F^{(\l+r+s-1)} \F \E^{(s)} \F \ar[r]^{I \epsilon' I} \ar[u]^{IIIX^k} & \F^{(\l+r+s-1)} \E^{(s-1)} \F \ar[u]^{IIX^k} \\
\F^{(\l+r+s)} \E^{(s-1)} \ar[r]^{\iota I} \ar[u]^{I \eta'} & \F^{(\l+r+s-1)} \F \E^{(s-1)} \ar[u]^{II \eta'} \ar[r]^{=} & \F^{(\l+r+s-1)} \F \E^{(s-1)} \ar[u]^{I \chi'}
}
\end{equation} 
where $k \in \{0, \dots, \l+r+s\}$ and $\chi' = (\epsilon' I)(I \eta')$. For simplicity we have ommited the $\la \cdot \ra$ shifts. All squares clearly commute. Since $\iota$ is an inclusion it suffices to show that the composition 
\begin{equation}
\label{eq:new3}
\1_{-\l} \F^{(\l+r+s-1)} \F \E^{(s-1)} \xrightarrow{I \chi'} \1_{-\l} \F^{(\l+r+s-1)} \E^{(s-1)} \F \xrightarrow{IIX} \1_{-\l} \F^{(\l+r+s-1)} \E^{(s-1)} \F \la 2 \ra
\end{equation}
has $\A$-rank $(\l+r+s-1)$. Now, it is not hard to check that the composition
$$\bigoplus_{[(s-1)!]} \F \E^{(s-1)} \cong \F \E^{s-1} \xrightarrow{\chi I^{s-2}} \E \F \E^{s-2} \rightarrow \dots \rightarrow \E^{s-2} \F \E \xrightarrow{I^{s-2} \chi} \E^{s-1} \F \cong \bigoplus_{[(s-1)!]} \E^{(s-1)} \F$$
is given by $\chi'$ acting diagonally. For $1 \le a \le s-1$ we can use Corollary \ref{lem:chi} to conclude that
\begin{equation}
\label{eq:temp1}
\F^{(\l+r+s-1)} \left[ \E^{a-1} \F \E^{s-a} \xrightarrow{I^{a-1} \chi I^{s-a-1}} \E^{a} \F \E^{s-a-1} \xrightarrow{I^a X I^{s-a-1}} \E^{a} \F \E^{s-a-1} \la 2 \ra \right]
\end{equation}
is a nontrivial linear combination of the following two compositions
\begin{align}
\label{eq:temp2} \F^{(\l+r+s-1)} & \left[ \E^{a-1} \F \E \E^{s-a-1} \xrightarrow{I^{a-1} \epsilon I^{s-a-1}} \E^{a-1} \E^{s-a-1} \xrightarrow{I^{a-1} \eta I^{s-a-1}} \E^{a-1} \E \F \E^{s-a-1} \right] \\
\label{eq:temp3} \F^{(\l+r+s-1)} & \left[\E^{a-1} \F \E \E^{s-a-1} \xrightarrow{I^{a-1} XI I^{s-a-1}} \E^{a-1} \F \E \E^{s-a-1} \xrightarrow{I^{a-1} \chi I^{s-a-1}} \E^{a-1} \E \F \E^{s-a-1} \right]
\end{align}
where we omit the grading shifts for convenience. The composition (\ref{eq:temp2}) factors through $\F^{(\l+r+s-1)} \E^{s-2}$ which contains no summands $\A$. So its $\A$-rank is zero which means that (\ref{eq:temp1}) and (\ref{eq:temp3}) have the same $\A$-rank. Applying this fact repeatedly we find that the two compositions 
\begin{align}
\label{eq:new1} & \F^{(\l+r+s-1)} \left[\F \E^{s-1} \xrightarrow{\chi I^{s-2}} \E \F \E^{s-2} \rightarrow \dots \rightarrow \E^{s-2} \F \E \xrightarrow{I^{s-2} \chi} \E^{s-1} \F \xrightarrow{I^{s-1}X} \E^{s-1} \F \la 2 \ra \right] \\
\label{eq:new2} & \F^{(\l+r+s-1)} \left[\F \E^{s-1} \xrightarrow{XI^{s-1}} \F \E^{s-1} \la 2 \ra \xrightarrow{\chi I^{s-2}} \E \F \E^{s-2} \la 2 \ra \rightarrow \dots \rightarrow \E^{s-2} \F \E \la 2 \ra \xrightarrow{I^{s-2} \chi} \E^{s-1} \F \la 2 \ra\right] 
\end{align}
have the same $\A$-rank. Using the affine nilHecke relations (see also \cite[Prop. 4.2]{CKL2}) the map $\F^{(\l+r+s-1)} \F \xrightarrow{IX} \F^{(\l+r+s-1)} \F \la 2 \ra$ has $\F^{(\l+r+s)}$-rank $(\l+r+s-1)$. It follows that 
$$\F^{(\l+r+s-1)} \F \E^{s-1} \xrightarrow{IXI^{s-1}} \F^{(\l+r+s-1)} \F \E^{s-1} \la 2 \ra$$
has $\A$-rank $(\l+r+s-1)(s-1)!$. The $\chi$ in the rest of the maps in (\ref{eq:new2}) are inclusions so the $\A$-rank of (\ref{eq:new2}) is also $(\l+r+s-1)(s-1)!$. It follows that (\ref{eq:new3}) has $\A$-rank $(\l+r+s-1)$ and we are done. 
\end{proof}

\section{The functor $\P^- = \T_\omega^\infty$}\label{sec:thm1}

In this section we prove that $\lim_{\ell \rightarrow \infty} \T_\omega^{2 \ell}$ is well defined and belongs to $\Kom^-_*(\K)$ (Theorem \ref{thm:main1}). 

\subsection{Braid group actions}\label{sec:braidgrpaction}

In \cite{CK3} we showed that a geometric categorical $\sl_n$ action induces an action of the braid group on its weight spaces via the complexes $\T_i \1_\l$. The definition of a geometric action was tailored to work with categories of coherent sheaves. However, the same proof works to show the following. 

\begin{Proposition}\label{prop:braidgrpaction}
Suppose $\g=\sl_n$ and consider a $(\g,\th)$ action on $\K$. Then, inside $\Kom(\K)$, the complexes $\T_i$ satisfy the braid relations $\T_i \T_j \T_i \cong \T_j \T_i \T_j$ if $|i-j|=1$ and $\T_i \T_j \cong \T_j \T_i $ if $|i-j| > 1$. 
\end{Proposition}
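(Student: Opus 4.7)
The plan is to mirror the argument of \cite{CaK3} essentially verbatim, replacing every geometric input used there by its abstract counterpart from the axioms of section \ref{sec:catgactions}. Thus the task is not to invent a new proof but to verify that each step of the existing one depends only on relations that hold in any categorical 2-representation.

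For the far commutativity case $|i-j| > 1$, conditions (\ref{co:EiEj}) and (\ref{co:EiFj}) yield $\E_i \E_j \cong \E_j \E_i$, $\F_i \F_j \cong \F_j \F_i$, and $\E_i \F_j \cong \F_j \E_i$, together with the analogous isomorphisms for divided powers. Consequently the terms of $\T_i \T_j \1_\l$ and $\T_j \T_i \1_\l$ match up in pairs of isomorphic summands with matching shifts, and since the differentials in (\ref{eq:cpx'}) and (\ref{eq:cpx}) are built only from biadjunction units/counits and inclusions or projections of divided powers, they intertwine with these commutations. This gives the desired homotopy equivalence essentially for free.

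The substantive case is the braid relation $\T_i \T_j \T_i \1_\l \cong \T_j \T_i \T_j \1_\l$ when $|i-j|=1$, which already takes place inside the $\sl_3$ sub-2-representation generated by the nodes $i,j$. The strategy of \cite{CaK3} is to expand both triple compositions into large complexes built from iterated divided powers, and then to perform a sequence of Gaussian eliminations (Lemma \ref{lem:cancel}) to cancel acyclic subcomplexes until one is left with a small canonical complex that is manifestly symmetric under $i \leftrightarrow j$. The ingredients used in this cancellation are the biadjunction of condition (\ref{co:adjoints}), the $\sl_2$ relation (\ref{co:EF}), the divided power isomorphisms of condition (vi), the categorified Serre relation (\ref{co:EiEj}), the commutation (\ref{co:EiFj}), and the nilHecke action on $\E_i^n$ and $\F_i^n$ supplied by Theorem \ref{thm:nilHecke}. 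Each of these is built into the definition of a categorical 2-representation, so every step in \cite{CaK3} translates directly.

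The main obstacle is that the original argument manipulates differentials built from explicit geometric maps between varieties; in the abstract setting these differentials must instead be determined and compared by uniqueness arguments. The key observation to make is that between consecutive terms of the relevant complexes the Hom space is at most one-dimensional, in the spirit of Lemma \ref{lem:homs} and Remark \ref{rem:cpx}, so that any indecomposable complex with the prescribed graded pieces is determined by its shape up to homotopy equivalence. Once this is verified, one can identify the simplified complexes on either side of the braid relation term-by-term, and the homotopy equivalence follows automatically. Carrying this out requires no new ideas beyond the careful substitution of abstract axioms for the geometric constructions used in \cite{CaK3}.
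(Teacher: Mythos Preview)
Your overall plan—reduce to \cite{CaK3} by checking that its hypotheses follow from the axioms of section \ref{sec:catgactions}—is correct and is exactly what the paper does. However, you have misidentified where the work lies. The relevant portion of \cite{CaK3} (its Section 6) already gives an \emph{abstract} definition of a ``strong categorical $\sl_n$ action'' and proves the braid relations for that; there are no geometric differentials to translate. The task is purely to compare two abstract axiom systems.

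There are exactly two discrepancies. The first is the nilHecke action, which you correctly note is supplied by Theorem \ref{thm:nilHecke}. The second, which you do not address, is that \cite{CaK3} posits an additional 2-morphism $T_{ij}: \E_i\E_j\1_\l \to \E_j\E_i\1_\l$ for $|i-j|=1$ satisfying
\[
T_{ji}T_{ij} = X_iI + IX_j \in \End(\E_i\E_j\1_\l).
\]
The map $T_{ij}$ itself comes for free since $\Hom(\E_i\E_j\1_\l,\E_j\E_i\1_\l\la 1\ra)$ is one-dimensional, but the displayed relation does \emph{not} follow from the axioms of section \ref{sec:catgactions} and cannot simply be assumed. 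The paper's point is that this relation is invoked at exactly one place in \cite{CaK3} (Lemma 4.9 there), to show that $T_{ij}I: \E_i\E_j\E_i\la -1\ra\1_\l \to \E_j\E_i\E_i\1_\l$ induces an isomorphism between the summands $\E_j\E_i^{(2)}\la -1\ra\1_\l$ on either side; one then verifies this isomorphism directly from the nilHecke relations, bypassing the $T_{ij}$ relation entirely. Your general appeal to one-dimensional Hom spaces between consecutive terms of complexes does not supply this step: the issue is about a specific 2-morphism between 1-morphisms, not about the differentials of the Rickard complexes themselves.
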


Now, suppose $\g = \sl_n$ and $|i-j|=1$. Then we have maps $T_{ij} \in \Hom (\E_i \E_j \la -1 \ra, \E_j \E_i \la 1 \ra)$ and likewise $T_{ij} \in \Hom(\F_i \F_j, \F_j \F_i \la 1 \ra)$ coming from the quiver Hecke algebra action. In this case these maps are easy to identify since both $\Hom$-spaces are one-dimensional \cite[Lem. A4]{C2}. We define 
$$\E_{ij} := [\E_i \E_j \la -1 \ra \xrightarrow{T_{ij}} \E_j \E_i] \ \ \text{ and } \ \ \F_{ij} := [\F_i \F_j \xrightarrow{T_{ij}} \F_j \F_i \la 1 \ra]$$ 
where $\E_i \E_j$ and $\F_i \F_j$ both lie in cohomological degree zero.

\begin{Lemma}\label{lem:2}
If $\g = \sl_n$ and $|i-j|=1$ then 
\begin{align*}
\E_{ij} \T_i \1_\l &\cong
\begin{cases}
\T_i \E_j \ \ &\text{ if } \ \ \l_i > 0 \\
\T_i \E_j [1]\la -1 \ra \ \ &\text{ if } \ \ \l_i \le 0
\end{cases} \\
\F_{ij} \T_i \1_\l &\cong
\begin{cases}
\T_i \F_j \ \ &\text{ if } \ \ \l_i \ge 0 \\
\T_i \F_j [-1]\la 1 \ra \ \ &\text{ if } \ \ \l_i < 0
\end{cases}\\
\1_\l \T_j \E_{ij} &\cong
\begin{cases}
\E_i \T_j \ \ &\text{ if } \ \ \l_j < 0 \\
\E_i \T_j [1] \la -1 \ra \ \ &\text{ if } \ \ \l_j \ge 0
\end{cases}\\
\1_\l \T_j \F_{ij} &\cong
\begin{cases}
\F_i \T_j \ \ &\text{ if } \ \ \l_j \le 0 \\
\F_i \T_j [-1] \la 1 \ra \ \ &\text{ if } \ \ \l_j > 0
\end{cases}
\end{align*}
\end{Lemma}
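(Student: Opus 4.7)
The plan is to prove the first isomorphism $\E_{ij} \T_i \1_\l \cong \T_i \E_j \1_\l$ (under $\la \l, \alpha_i \ra > 0$) by explicit computation with a double complex and Gaussian elimination; the other seven cases follow by entirely analogous arguments (using the Rickard complex in its other form (\ref{eq:cpx'}) when the relevant weight pairing is nonpositive, and passing to adjoints for the statements involving $\F_{ij}$ or $\T_j$ on the right).

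First I would unfold $\E_{ij} \T_i \1_\l$ as the total complex of a two-row double complex, with rows $\E_i \E_j \cdot \T_i \1_\l \la -1 \ra$ and $\E_j \E_i \cdot \T_i \1_\l$ joined vertically by $T_{ij} \cdot I$, where the $s$-th column (for $s \geq 0$) involves $\T_i \1_\l$'s term $\F_i^{(\l_i + s)} \E_i^{(s)} \la -s \ra$ with $\l_i := \la \l, \alpha_i \ra > 0$. Since $|i-j|=1$, condition (\ref{co:EiFj}) together with divided-power iteration gives $\E_j \F_i^{(a)} \1 \cong \F_i^{(a)} \E_j \1$, so I can slide $\E_j$ past all $\F_i$-factors. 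I would then apply Lemma \ref{lem:rels1} to $\E_i \F_i^{(\l_i+s)}$, which produces a direct sum of $\F_i^{(\l_i+s-1)}$ (with multiplicity controlled by a $q$-binomial) and $\F_i^{(\l_i+s)} \E_i$; in the bottom row, I would use the isomorphism $\E_i \E_i^{(s)} \cong \oplus_{[s+1]} \E_i^{(s+1)}$ and the Serre-type decomposition from condition (\ref{co:EiEj}) to expand $\E_j \E_i \E_i^{(s)}$ into its $\E_i^{(s+1)} \E_j$ and $\E_j \E_i^{(s+1)}$ pieces.

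Next I would apply the Gaussian elimination Lemma \ref{lem:cancel} column-by-column to cancel the matching summands between the two rows. The surjectivity hypothesis needed to run Lemma \ref{lem:cancel} is verified by a direct calculation analogous to Lemma \ref{lem:alphainduces}, tracking which component of $T_{ij} I$ hits the appropriate summand. After cancellation, the surviving terms are exactly $\F_i^{(\l_i + 1 + s)} \E_i^{(s)} \la -s \ra \cdot \E_j \1_\l$ for $s \ge 0$, that is, the terms of the Rickard complex $\T_i \1_{\l + \alpha_j}$ post-composed with $\E_j$; note this matches because $\la \l + \alpha_j, \alpha_i \ra = \l_i + 1 > 0$, so the form (\ref{eq:cpx}) still applies on both sides. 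Finally, since Lemma \ref{lem:homs} shows that the $\Hom$ space between any two consecutive such terms is one-dimensional, Remark \ref{rem:cpx} identifies the resulting complex with $\T_i \E_j \1_\l$ up to a nonzero scalar in each differential, and therefore up to homotopy equivalence.

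The hardest step will be bookkeeping: verifying that the graded multiplicities coming from Lemma \ref{lem:rels1} and condition (\ref{co:EiEj}) combine correctly with the shifts $\la -s \ra, \la -1 \ra$ so that the entire contribution of each column except the surviving $\T_i \E_j$ piece cancels. The case $\la \l, \alpha_i \ra \le 0$ is handled by using form (\ref{eq:cpx'}) of $\T_i$; the extra $[1]\la -1 \ra$ appearing there traces to the fact that $\la \l + \alpha_j, \alpha_i \ra = \la \l, \alpha_i \ra + 1$ may switch sign, so that the two Rickard complexes are of different shape and a boundary term in the cancellation survives. The statements involving $\F_{ij}$ are dual in the sense of adjunction (using that $T'_{ij}$ is the adjoint of $T_{ij}$), while those involving $\T_j$ on the right and $\E_{ij}, \F_{ij}$ on the left are obtained by symmetric arguments after interchanging the roles of $i$ and $j$.
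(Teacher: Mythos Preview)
The paper does not give a self-contained argument here; it simply cites Corollary~5.5 of \cite{CaK3} for the first case and asserts that the others follow by the same method. Your strategy---unfolding $\E_{ij}\T_i\1_\l$ as a two-row double complex, commuting $\E_j$ past powers of $\F_i$, decomposing via Lemma~\ref{lem:rels1} and condition~(\ref{co:EiEj}), and then cancelling with Lemma~\ref{lem:cancel}---is the natural direct approach and is essentially what the cited reference carries out.

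There is, however, a concrete error in your bookkeeping. Since $|i-j|=1$ we have $\la \alpha_j,\alpha_i\ra=-1$, so
\[
\la \l+\alpha_j,\alpha_i\ra=\l_i-1,
\]
not $\l_i+1$. The surviving Rickard complex $\T_i\1_{\l+\alpha_j}$ therefore has terms $\F_i^{(\l_i-1+s)}\E_i^{(s)}\la -s\ra$, not $\F_i^{(\l_i+1+s)}\E_i^{(s)}$. This also undermines your explanation of the shift $[1]\la -1\ra$: with the correct sign, when $\l_i>0$ both $\l_i$ and $\l_i-1$ are nonnegative, so both $\T_i\1_\l$ and $\T_i\1_{\l+\alpha_j}$ take the form~(\ref{eq:cpx}) and no ``sign switch'' occurs. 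The shift in the $\l_i\le 0$ case arises from the detailed cancellation pattern rather than from a change of Rickard form, and your account does not yet capture this.

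One further point: your final appeal to Remark~\ref{rem:cpx} and Lemma~\ref{lem:homs} is not immediate, since those statements concern the pure $\sl_2$ complexes, not complexes whose terms carry an extra factor of $\E_j$. After Gaussian elimination the surviving differentials are modified, so you need either to check directly that the relevant spaces $\Hom(\F_i^{(a)}\E_i^{(b)}\E_j\1_\l,\F_i^{(a-1)}\E_i^{(b-1)}\E_j\1_\l)$ remain one-dimensional, or to argue that the resulting complex is indecomposable (for instance because $\T_i$ is invertible and $\E_j$ is indecomposable) so that no differential can vanish.
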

\begin{Remark} Note that if $|i-j|>1$ then it is clear that $\T_i$ commutes with $\E_j$ and $\F_j$.\end{Remark} 
\begin{proof}
The first assertion, namely that $\E_{ij} \T_i \1_\l \cong \T_i \E_j$ if $\l_i > 0$, was proved in Corollary 5.5 of \cite{CK3}. The rest of the claims follow via the same argument. 
\end{proof}

\subsection{The map $\1 \rightarrow \T_\omega^2$}\label{sec:1toT2map}

By the definition of $\T_i \1_\l$ we have natural maps 
\begin{align*}
\F_i^{(\l_i)} \1_\l \longrightarrow \T_i \1_\l & \text{ if } \l_i \ge 0 \\
\E_i^{(-\l_i)} \1_\l \longrightarrow \T_i \1_\l & \text{ if } \l_i \le 0.
\end{align*}
Moreover, if $\l_i \ge 0$, we have
\begin{equation}\label{eq:1}
\E_i^{(\l_i)} \1_{s_i \cdot \l} \F_i^{(\l_i)} \1_\l \cong \bigoplus_{j \ge 0} \bigoplus_{\qbins{\l_i}{j}} \F_i^{(\l_i - j)} \E_i^{(\l_i-j)} \1_\l 
\end{equation}
which means that there is a natural map $\1_\l \rightarrow \E_i^{(\l_i)} \F_i^{(\l_i)} \1_\l$ corresponding to the inclusion of the unique summand $\1_\l$ (which occurs when $j = \l_i$ in the summation above). The composition gives us a map $\1_\l \rightarrow \T_i^2 \1_\l$. If $\l_i \le 0$ then the same argument (with the roles of $\E$s and $\F$s switched) also gives such a map. 

Now using (\ref{eq:Tomega}) we have 
\begin{equation}\label{eq:0}
\T_\omega^2 \cong 
[(\T_{n-1})(\T_{n-2}\T_{n-1}) \dots (\T_1 \dots \T_{n-1})][(\T_{n-1} \dots \T_1) \dots (\T_{n-1} \T_{n-2})(\T_{n-1})].
\end{equation}
So repeatedly using the maps $\1_\l \rightarrow \T_i^2 \1_\l$ defined above we obtain a map $\1_\l \rightarrow \T_\omega^2 \1_\l$. 

\subsection{Convergence of $\lim_{\ell \rightarrow \infty} \T_\omega^{2 \ell}$}

\begin{Proposition}\label{prop:1} If $\g = \sl_n$ then we have 
\begin{align*}
\T_\omega^2 \1_\l \F_i \cong \F_i \T_\omega^2 \1_{\l+\alpha_i} \la -2(\l_i+2)\ra [2] \hspace{.5cm} & \text{ if } \l_i \ge 0 \\
\T_\omega^2 \1_\l \E_i \cong \E_i \T_\omega^2 \1_{\l-\alpha_i} \la -2(-\l_i+2)\ra [2] \hspace{.5cm} & \text{ if } \l_i \le 0.
\end{align*}
\end{Proposition}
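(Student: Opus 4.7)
I will prove the first isomorphism; the second follows by interchanging $\E$ and $\F$ throughout (using the ``$\E$'' half of Corollary~\ref{cor:0}). The strategy is to conjugate $\F_i$ through $\T_\omega^2$ by writing $\T_\omega$ as a product of Rickard complexes $\T_k$ via (\ref{eq:Tomega}) and pushing $\F_i$ past each factor in turn, tracking grading and homological shifts carefully.

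The first step is to establish an \emph{intermediate} conjugation formula of the form
$$\T_\omega \1_\l \F_i \;\cong\; \E_{n-i}\, \T_\omega \1_\l \la -(\la\l,\alpha_i\ra+2) \ra [1]$$
(when $\la \l,\alpha_i\ra \ge 0$), where $n-i$ records the action of $\omega$ on the Dynkin diagram (since $\omega$ sends $\alpha_i$ to $-\alpha_{n-i}$). I would choose a reduced expression for $\T_\omega$ in which $\F_i$ first passes through factors $\T_k$ with $|i-k|>1$, which commute with $\F_i$ trivially. When $\F_i$ reaches the (unique) $\T_i$ factor, I apply Corollary~\ref{cor:0} to convert $\F_i$ into $\E_i$ with shift $\la -(\la\l,\alpha_i\ra+2)\ra[1]$. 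The resulting $\E_i$ is then pushed through the remaining $\T_k$ factors: those with $|i-k|>1$ commute trivially, and those with $|i-k|=1$ are handled by Lemma~\ref{lem:2}, whose indices walk $\E_i$ to $\E_{n-i}$ as the reflection $s_k$ relabels them.

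Next, I apply $\T_\omega$ a second time. By the analogous intermediate formula with $\E$ and $\F$ swapped (the ``$\E$''-half of Corollary~\ref{cor:0} applied in the $\sl_2$ step), we obtain $\T_\omega \1_{\mu} \E_{n-i} \cong \F_i \T_\omega \1_\mu \la s' \ra [t']$ at the weight $\mu = w_0 \l$ reached after the first pass. Composing yields
$$\T_\omega^2 \1_\l \F_i \;\cong\; \T_\omega \E_{n-i} \T_\omega \1_\l \la -(\la\l,\alpha_i\ra+2)\ra[1] \;\cong\; \F_i\, \T_\omega^2 \1_{\l+\alpha_i}\la -2(\la\l,\alpha_i\ra+2)\ra[2].$$
The final shift is pinned down by the $\sl_2$ case: for $n=2$ we have $\T_\omega=\T$, and two applications of Corollary~\ref{cor:0} (first at weight $\l\ge 0$, then at weight $-\l\le 0$) each contribute exactly $\la-(\l+2)\ra[1]$, giving the total $\la-2(\l+2)\ra[2]$. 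The weight input/output bookkeeping gives $\1_{\l+\alpha_i}$ on the right side because $\1_\l \F_i = \F_i \1_{\l+\alpha_i}$.

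\textbf{Main obstacle.} The hardest part is the $|i-k|=1$ commutations in the second phase of Step~1. Lemma~\ref{lem:2} does not give a naked commutation of $\T_k$ with $\E_i$; rather it identifies $\T_k \E_j$ with one term of a two-step complex involving $\E_{ik}$, so passing $\E_i$ through a $\T_k$ creates apparent ``correction terms'' built from $\E_i\E_k$ and $\E_k\E_i$. The key is that as these corrections are pushed through subsequent $\T_k$'s, they are absorbed back into $\T_\omega$ itself via the braid relations of Proposition~\ref{prop:braidgrpaction} --- essentially because the reduced-expression constraint for the longest element $\omega$ forces the surviving term to be the ``clean'' one $\E_{n-i}\T_\omega$. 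Carrying out this bookkeeping rigorously, and confirming that the net grading/homological shift from the non-trivial commutations telescopes to zero (leaving only the contribution from the two $\T_i$-steps), is the technical heart of the argument.
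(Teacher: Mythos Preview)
Your plan is different from the paper's, and the part you flag as the ``main obstacle'' is a genuine gap rather than just bookkeeping.

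The paper never proves an intermediate formula for a single $\T_\omega$. Instead it argues by induction on $n$, using the factorization
\[
\T_{\omega_{n+1}}^2 \;=\; (\T_1\cdots\T_n)\,\T_{\omega_n}^2\,(\T_n\cdots\T_1).
\]
In the outer word $(\T_n\cdots\T_1)$ each $\T_k$ occurs exactly once, and the only nontrivial passage for $\F_i$ is through the adjacent pair $\T_i\T_{i-1}$. Two applications of Lemma~\ref{lem:2} give
\[
\T_{i-1}\F_i \;\cong\; \F_{i-1,i}\,\T_{i-1}\,[s_1]\la -s_1\ra,
\qquad
\T_i\,\F_{i-1,i} \;\cong\; \F_{i-1}\,\T_i\,[s_2]\la -s_2\ra,
\]
so $(\T_n\cdots\T_1)\F_i \cong \F_{i-1}(\T_n\cdots\T_1)$ up to $[s_1+s_2]\la -s_1-s_2\ra$. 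The induction hypothesis then handles $\T_{\omega_n}^2\F_{i-1}$ (one checks $\la\l',\alpha_{i-1}\ra=\la\l,\alpha_i\ra$), and a symmetric computation pushes $\F_{i-1}$ back through $(\T_1\cdots\T_n)$ to $\F_i$, picking up $s_3,s_4$. An explicit weight calculation shows $s_1+s_2+s_3+s_4=0$ for every $\l$, so the entire shift $\la -2(\la\l,\alpha_i\ra+2)\ra[2]$ comes from the inductive step alone. Crucially, $\F_i$ stays an $\F$ throughout; no $\E$ ever appears.

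Your argument, by contrast, relies on a reduced expression for $\omega$ in which $\F_i$ first meets only $\T_k$ with $|i-k|>1$ and then a \emph{unique} $\T_i$. This fails already for $\sl_4$ with $i=2$: there is no generator $s_k$ with $|k-2|>1$, so the rightmost letter is $s_1$, $s_2$, or $s_3$; and every reduced word for $\omega\in S_4$ contains at least two $s_2$'s (otherwise $\omega=u\,s_2\,v$ with $u,v\in\langle s_1,s_3\rangle$ would have length $\le 5$, contradicting $\ell(\omega)=6$). Hence after your Corollary~\ref{cor:0} step turns $\F_i$ into $\E_i$, that $\E_i$ will meet another $\T_i$, and Corollary~\ref{cor:0} flips it back to $\F_i$. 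Your proposed fix for the $|i-k|=1$ passages --- that the $\E_{ij}$ correction terms are ``absorbed by braid relations'' --- is not an argument: Lemma~\ref{lem:2} converts $\E_j$ into the two-term complex $\E_{ij}$, and collapsing this back to a single $\E_{n-i}$ at the end requires a very specific sequence of indices which you have not produced. The paper's induction sidesteps all of this precisely because each outer word $(\T_n\cdots\T_1)$ contains each $\T_k$ only once, so the two-step Lemma~\ref{lem:2} mechanism $\F_i\mapsto\F_{i-1,i}\mapsto\F_{i-1}$ closes cleanly.
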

\begin{proof}
We will prove the first assertion by induction on $n$ (the case $\l_i \le 0$ follows similarly). To emphasize the dependence on $n$ we write $\omega_n$ instead of $\omega$. The base case follows from Corollary \ref{cor:0}. To apply induction we use 
$$\T_{\omega_{n+1}} = (\T_1 \dots \T_{n}) \T_{\omega_n} = \T_{\omega_n} (\T_{n} \dots \T_1).$$
to obtain 
\begin{align*}
\T_{\omega_{n+1}}^2 \1_\l \F_i 
& \cong (\T_1 \dots \T_{n}) \T_{\omega_n}^2 (\T_{n} \dots \T_1) \1_\l \F_i \\
& \cong (\T_1 \dots \T_{n}) \T_{\omega_n}^2 (\T_{n} \dots \T_i \T_{i-1} \F_i \1_\mu \T_{i-2} \dots \T_1) \\
& \cong (\T_1 \dots \T_{n}) \T_{\omega_n}^2 (\T_{n} \dots \1_{\mu'} \T_i \F_{i-1,i} \T_{i-1} \dots \T_1) [s_1]\la -s_1 \ra \\
& \cong (\T_1 \dots \T_{n}) \T_{\omega_n}^2 (\T_{n} \dots \F_{i-1} \T_i \dots \T_1) [s_1+s_2] \la -s_1-s_2 \ra \\
& \cong (\T_1 \dots \T_{n}) \T_{\omega_n}^2 \1_{\l'} \F_{i-1} (\T_{n} \dots \T_1) [s_1+s_2] \la -s_1-s_2 \ra 
\end{align*}
where, by Lemma \ref{lem:2}, we have
\begin{itemize}
\item $s_1=-1$ if $\mu_{i-1} \le 0$ and $s_1=0$ otherwise,
\item $s_2=1$ if $\mu'_i > 0$ and $s_2=0$ otherwise.
\end{itemize}
Now, one can check that $\mu_{i-1} = -1 + \sum_{k=1}^{i-1} \l_k$ and $\mu'_i = - \sum_{k=1}^i \l_k$. Moreover, $\l'_{i-1} = \l_i \ge 0$ so by induction we have $\T_{\omega_n}^2 \1_{\l'} \F_{i-1} = \F_{i-1} \T_{\omega_n}^2 \la -2(\l_i+2) \ra [2]$. To finish off the calculation we note that
\begin{align*}
\1_\l (\T_1 \dots \T_{n}) \F_{i-1} 
&\cong \1_\l \T_1 \dots \T_{i-1} \T_i \1_\nu \F_{i-1} \T_{i+1} \dots \T_{n} \\
&\cong \1_\l \T_1 \dots \1_{\nu'} \T_{i-1} \F_{i,i-1} \T_i \dots \T_{n} [s_3]\la -s_3 \ra \\
&\cong \1_\l \T_1 \dots \T_{i-2} \F_i \T_{i-1} \dots \T_{n} [s_3+s_4]\la -s_3-s_4 \ra \\
&\cong \1_\l \F_i (\T_1 \dots \T_{n}) [s_3+s_4]\la -s_3-s_4 \ra
\end{align*}
where
\begin{itemize}
\item $s_3=1$ if $\nu_i \ge 0$ and $s_3=0$ otherwise, 
\item $s_4=-1$ if $\nu'_{i-1} > 0$ and $s_4=0$ otherwise.
\end{itemize}
A similar calculation as before shows that $\nu_i = -1-\sum_{k=1}^i \l_k$ while $\nu'_{i-1} = \sum_{k=1}^{i-1} \l_k$. The key point is that everything works out so that $s_1+s_4=-1$ and $s_2+s_3=1$ regardless of $\l$. Thus $s_1+s_2+s_3+s_4=0$ and hence $\T_{\omega_{n+1}}^2 \1_\l \F_i = \F_i \T_{\omega_{n+1}}^2 \la -2(\l_i+2) \ra [2]$ which completes our induction. 

Note that the case $i=1$ in the argument above is special. However, by symmetry, this is the same as the case $i=n$ where the argument works. 
\end{proof}

\begin{Corollary}\label{cor:1} 
Suppose $\g = \sl_n$ and fix $p \ge 0$. Then we have 
\begin{align*}
\T_\omega^2 \1_\l \F_i^{(p)} \cong \F_i^{(p)} \T_\omega^2 \1_{\l+p\alpha_i} \la -2p(\l_i+p+1)\ra [2p] \hspace{.5cm} & \text{ if } \l_i \ge 0 \\
\T_\omega^2 \1_\l \E_i^{(p)} \cong \E_i^{(p)} \T_\omega^2 \1_{\l-p\alpha_i} \la -2p(-\l_i+p+1)\ra [2p] \hspace{.5cm} & \text{ if } \l_i \le 0. 
\end{align*}
\end{Corollary}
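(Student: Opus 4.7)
The proof is an immediate induction on $p$ using Proposition \ref{prop:1} as the base case, plus the cancellation laws from section \ref{sec:cancellaw}. I only discuss the $\F_i^{(p)}$ statement; the $\E_i^{(p)}$ case is entirely analogous (and in fact follows by the symmetric version of Proposition \ref{prop:1}).

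The plan is to assume the claim for a fixed $p\ge 1$ and derive it for $p+1$. Setting $a := \la\l,\alpha_i\ra \ge 0$ and tensoring the inductive hypothesis on the right with $\F_i$ gives
\[
\T_\omega^2 \1_\l \F_i^{(p)} \F_i \;\cong\; \F_i^{(p)} \T_\omega^2 \1_{\l+p\alpha_i} \F_i \la -2p(a+p+1) \ra [2p].
\]
On the left I expand $\F_i^{(p)} \F_i \cong \oplus_{[p+1]} \F_i^{(p+1)}$. On the right I apply Proposition \ref{prop:1} to move $\F_i$ past $\T_\omega^2$; since $\la \l + p\alpha_i, \alpha_i\ra = a + 2p \ge 0$, this produces a shift of $\la -2(a+2p+2)\ra[2]$. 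Combining with the existing shift gives a total shift of $\la -2(p+1)(a+p+2)\ra[2(p+1)]$ on the right, after expanding $\F_i^{(p)} \F_i \cong \oplus_{[p+1]} \F_i^{(p+1)}$ there as well. Thus
\[
\oplus_{[p+1]} \bigl(\T_\omega^2 \1_\l \F_i^{(p+1)}\bigr) \;\cong\; \oplus_{[p+1]} \bigl(\F_i^{(p+1)} \T_\omega^2 \1_{\l+(p+1)\alpha_i}\bigr) \la -2(p+1)(a+p+2) \ra [2(p+1)].
\]

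The last step is to cancel the common factor $\oplus_{[p+1]}$, which is the second cancellation law from section \ref{sec:cancellaw} applied to the graded vector space $V$ whose graded dimension is $[p+1]$. This yields precisely the desired isomorphism for $p+1$ and completes the induction. The only point requiring any care is that we are working inside $\Kom^-_*(\K)$ rather than in $\K$ itself, but this is not a genuine obstacle: the cancellation law extends to this homotopy setting because the doubly graded hom spaces between the indecomposables in question remain finite-dimensional in each bidegree, so Krull--Schmidt still applies.
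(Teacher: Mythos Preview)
Your proof is correct and takes essentially the same approach as the paper, which simply says ``This follows by applying Proposition \ref{prop:1} repeatedly.'' Your inductive organization with the cancellation law at each step is just a more explicit version of the same argument; your care about Krull--Schmidt in the homotopy category is warranted but, as you note, not an obstacle since the complexes in question are bounded with finite-dimensional hom spaces in each bidegree.
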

\begin{proof}
This follows by applying Proposition \ref{prop:1} repeatedly. 
\end{proof}
Let us denote by $\U_i \1_\mu = \1_{s_i \cdot \mu} \U_i$ the map $\E_i^{(-\mu_i)} \1_\mu$ if $\mu_i \le 0$ and $\F_i^{(\mu_i)} \1_\mu$ if $\mu_i \ge 0$. 

\begin{Lemma}\label{lem:2.5}
If $\g = \sl_n$ and $|i-j|=1$ then $\T_i \T_j \U_i \1_\l \cong \U_j \T_i \T_j \1_\l$.  
\end{Lemma}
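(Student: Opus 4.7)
My plan is to reduce the statement to showing $\T_i \T_j \F_i^{(a)} \1_\l \cong \F_j^{(a)} \T_i \T_j \1_\l$, where $a := \la \l, \alpha_i \ra \ge 0$; the case $a \le 0$ is entirely symmetric, with the roles of $\E$ and $\F$ exchanged. First I would verify by direct Weyl-group calculation that $\la s_i s_j \l, \alpha_j \ra = a$ (using $s_j s_i \l = \l - a\alpha_i - (a + \la \l, \alpha_j \ra)\alpha_j$ and the relation $s_i s_j s_i = s_j s_i s_j$), so that the right-hand side indeed reads $\U_j \1_{s_i s_j \l} = \F_j^{(a)} \1_{s_i s_j \l}$.

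The method is to commute $\F_i^{(a)}$ leftward through $\T_j$ and then through $\T_i$, in the spirit of the proof of Proposition \ref{prop:1}. Each commutation invokes Lemma \ref{lem:2} to swap an $\F$-type operator past a $\T$-operator, replacing $\F_i$ by an $\F_{ij}$-type complex and incurring a cohomological shift $[s_k] \la -s_k \ra$ with $s_k \in \{-1, 0, +1\}$ depending on the sign of $\la \mu, \alpha_j \ra$ or $\la \mu, \alpha_i \ra$ at the intermediate weight $\mu$. After the second commutation (this time past $\T_i$), the $\F_{ij}$-complex collapses back to a single divided power $\F_j^{(a)}$.

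The absence of any shift in the conclusion (in contrast to Proposition \ref{prop:1}) reflects the fact that the exponent $a$ equals precisely $\la \l, \alpha_i \ra$, so the ``unshifted'' cases (the last two bullets of Corollary \ref{cor:0}) govern the innermost and outermost commutations. The individual shifts $s_k$ produced by the Lemma \ref{lem:2} applications come in cancelling pairs, much as in the concluding paragraph of the proof of Proposition \ref{prop:1}, where $s_1 + s_4 = -1$ and $s_2 + s_3 = 1$ regardless of $\l$.

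The main obstacle is controlling the iterated $\F_{ij}$-complex that arises as $\T_j \F_i^{(a)} \T_j^{-1}$: one must verify it collapses, after conjugation by $\T_i$, to the single divided power $\F_j^{(a)}$ with precisely the shift needed to cancel the one produced at the $\T_j$ step. I expect this to follow from repeated use of the divided-power identity $\F_k \F_k^{(r)} \cong \bigoplus_{[r+1]} \F_k^{(r+1)}$ together with the Gaussian-elimination Lemma \ref{lem:cancel}, applied in the same way as in the big-complex simplifications carried out in the proofs of Propositions \ref{prop:0} and \ref{prop:1}.
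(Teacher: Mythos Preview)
Your approach is essentially the same as the paper's, but you are making your life harder by working with the divided power $\F_i^{(a)}$. Lemma~\ref{lem:2} tells you how a \emph{single} $\F_i$ commutes past $\T_j$ (becoming $\F_{ji}$) and how a single $\F_{ji}$ commutes past $\T_i$ (becoming $\F_j$); it says nothing directly about divided powers. This is the source of the ``main obstacle'' you identify.

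The paper sidesteps this entirely by proving the statement for the ordinary power $\F_i^r$ (with $r = \la \l,\alpha_i\ra$). Apply Lemma~\ref{lem:2} exactly $r$ times to obtain
\[
\T_j \F_i^r \1_\l \cong \F_{ji}^r \T_j \1_\l\,[s_1]\la -s_1\ra,
\qquad
\T_i \F_{ji}^r \1_{s_j\cdot\l} \cong \F_j^r \T_i \1_{s_j\cdot\l}\,[-s_2]\la s_2\ra,
\]
where $s_1$ counts how many of $\l_j,\l_j+1,\dots,\l_j+r-1$ are negative and $s_2$ counts how many of $-\l_j,-\l_j-1,\dots,-\l_j-r+1$ are positive. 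These two counts are manifestly equal, so the shifts cancel and $\T_i\T_j\F_i^r\1_\l \cong \F_j^r\T_i\T_j\1_\l$. Since $\F_i^r \cong \bigoplus_{[r]!}\F_i^{(r)}$ and $\F_j^r \cong \bigoplus_{[r]!}\F_j^{(r)}$, the divided-power statement follows from the cancellation law of \S\ref{sec:cancellaw}. No Gaussian elimination or collapsing of complexes is required, and the cancellation is a direct equality $s_1=s_2$ rather than the paired pattern from Proposition~\ref{prop:1}.
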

\begin{proof}
Suppose $\l_i \ge 0$ (the case $\l_i \le 0$ is similar). We will show that $\T_i \T_j \F_i^{\l_i} \1_\l \cong \F_j^{\l_i} \T_i \T_j \1_\l$ from which the result follows. Using Lemma \ref{lem:2} repeatedly we find that 
\begin{equation}\label{eq:s1}
\T_j \F_i^{\l_i} \1_\l \cong \F_{ji}^{\l_i} \T_j \1_\l [s] \la -s \ra
\end{equation}
where $s = \# \{w \in \{\l_j, \l_j+1, \dots, \l_j+r-1\}: w < 0\}$. Applying Lemma \ref{lem:2} again we also find that 
\begin{equation}\label{eq:s2}
\T_i \F_{ij}^{\l_i} \1_{s_j \cdot \l} \cong \F_j^{\l_i} \T_i \1_{s_j \cdot \l} [-s_2] \la s_2 \ra
\end{equation}
where $s_2 = \# \{w \in \{-\l_j, -\l_j-1, \dots, -\l_j-r+1 \}: w > 0\}$. Clearly $s_1=s_2$ and so, combining (\ref{eq:s1}) and (\ref{eq:s2}), the result follows. 
\end{proof}

\begin{Corollary}\label{cor:2} 
If $\g = \sl_n$ then we have $\T_\omega \U_i \1_\l \cong \U_{n-i} \T_\omega \1_\l$ and subsequently 
\begin{equation}\label{eq:3}
\T_\omega^2 \U_i \1_\l \cong \U_i \T_\omega^2 \1_\l.
\end{equation}
\end{Corollary}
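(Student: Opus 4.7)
The plan is to prove the equivalence $\T_\omega \U_i \1_\l \cong \U_{n-i} \T_\omega \1_\l$; the claimed bilateral relation (\ref{eq:3}) then follows immediately by applying this twice, since $n-(n-i) = i$. I would proceed by induction on $n$. The base case $n=2$ reduces to the last two bullets of Corollary \ref{cor:0}, which give $\T_1 \U_1 \cong \U_1 \T_1$ (and here $n-i = 1 = i$).

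For the inductive step, I would exploit the standard parabolic decomposition of the longest element. Let $\omega_{n-1}^*$ denote the longest element of the parabolic subgroup $\la s_2, \ldots, s_{n-1} \ra \cong S_{n-1}$, which sends $\alpha_j \mapsto \alpha_{n+1-j}$ for $j \in \{2, \ldots, n-1\}$. Both
$$\omega_n = (s_{n-1} s_{n-2} \cdots s_1) \cdot \omega_{n-1}^* = \omega_{n-1}^* \cdot (s_1 s_2 \cdots s_{n-1})$$
are reduced expressions, and Proposition \ref{prop:braidgrpaction} lifts them to isomorphisms
$$\T_\omega \cong (\T_{n-1} \T_{n-2} \cdots \T_1) \cdot \T_\omega^{(n-1)*} \cong \T_\omega^{(n-1)*} \cdot (\T_1 \T_2 \cdots \T_{n-1}),$$
where $\T_\omega^{(n-1)*}$ is the Rickard complex for $\omega_{n-1}^*$. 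Applying induction to the parabolic yields $\T_\omega^{(n-1)*} \U_j \cong \U_{n+1-j} \T_\omega^{(n-1)*}$ for $j \in \{2, \ldots, n-1\}$.

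For $i \in \{2, \ldots, n-1\}$, I would use the first decomposition. Induction moves $\U_i$ past $\T_\omega^{(n-1)*}$, producing $\U_j$ with $j := n+1-i$, and then one pushes $\U_j$ through $(\T_{n-1} \cdots \T_1)$ in three steps: commute past $\T_1, \ldots, \T_{j-2}$ using the non-adjacency commutation $\T_k \U_j \cong \U_j \T_k$ for $|k-j|>1$ (which follows from conditions (vii), (viii) of Section \ref{sec:catgactions} applied termwise to the Rickard complex); apply Lemma \ref{lem:2.5} in the form $\T_j \T_{j-1} \U_j \cong \U_{j-1} \T_j \T_{j-1}$ (or directly $\T_2 \T_1 \U_2 \cong \U_1 \T_2 \T_1$ when $j=2$); and finally commute $\U_{j-1}$ past $\T_{j+1}, \ldots, \T_{n-1}$. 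The net effect is $(\T_{n-1} \cdots \T_1) \U_j \cong \U_{j-1} (\T_{n-1} \cdots \T_1)$, giving $\U_{n-i}$ on the left. For $i = 1$, I would use the second decomposition analogously: commute $\U_1$ past $\T_3, \ldots, \T_{n-1}$, apply Lemma \ref{lem:2.5} to obtain $\T_1 \T_2 \U_1 \cong \U_2 \T_1 \T_2$, and then invoke induction to transform $\T_\omega^{(n-1)*} \U_2 \cong \U_{n-1} \T_\omega^{(n-1)*}$.

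The argument is essentially bookkeeping; the one subtlety to watch is distinguishing the boundary cases ($j=2$, where the pushing starts immediately with Lemma \ref{lem:2.5} since there are no non-adjacent indices to commute past on the right; and $j=n-1$, where there are no commutations on the left afterward). No additional 2-categorical inputs beyond Lemma \ref{lem:2.5}, Corollary \ref{cor:0}, Proposition \ref{prop:braidgrpaction}, and the adjacency commutations from conditions (vii)--(viii) are required.
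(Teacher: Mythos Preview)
Your proof is correct and follows essentially the same strategy as the paper: induction on $n$ via a parabolic decomposition of $\omega$, commuting $\U_j$ past non-adjacent $\T_k$'s, and invoking Lemma~\ref{lem:2.5} for the adjacent step. The only difference is cosmetic: the paper uses the standard parabolic $\langle s_1,\dots,s_{n-1}\rangle$ (writing $\T_{\omega_{n+1}} = (\T_1\cdots\T_n)\T_{\omega_n}$) and runs the argument uniformly in $i$, whereas you use the shifted parabolic $\langle s_2,\dots,s_{n-1}\rangle$ and split off the case $i=1$ with the mirror decomposition. Both routes use exactly the same ingredients; your explicit case split is arguably cleaner, since the paper's uniform treatment tacitly relies on Dynkin-diagram symmetry to cover the extremal index.
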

\begin{proof}
We use induction, as in the proof of Proposition \ref{prop:1}. Suppose $\l_i \ge 0$ so that $\U_i \1_\l \cong \F_i^{(\l_i)} \1_\l$ (the case $\l_i \le 0$ is similar). Then 
\begin{align*}
\T_{\omega_{n+1}} \U_i \1_\l  
&\cong (\T_1 \dots \T_n) \T_{\omega_n} \U_i \1_\l \\
&\cong (\T_1 \dots \T_n) \U_{n-i} \T_{\omega_n} \1_\l \\
&\cong (\T_1 \dots \T_{n-i} \T_{n-i+1} \U_{n-i} \T_{n-i+2} \dots \T_n) \T_{\omega_n} \1_\l \\
&\cong (\T_1 \dots \U_{n-i+1} \T_{n-i} \T_{n-i+1} \dots \T_n) \T_{\omega_n} \1_\l \\
&\cong \U_{n+1-i} (\T_1 \dots \T_n) \T_{\omega_n} \1_\l
\end{align*}
where the second line follows by induction and the fourth from Lemma \ref{lem:2.5}. 
\end{proof}

Now let us denote by $\R \1_\l := \Cone(\1_\l \rightarrow \T_\omega^2 \1_\l)$. 

\begin{Proposition}\label{prop:2} 
If $\ell \ge 0$ then $\T_\omega^{2 \ell} \R \1_\l$ and $\R \T_\omega^{2 \ell} \1_\l$ are supported in homological degrees $\le -2 \ell$.
\end{Proposition}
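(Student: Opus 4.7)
My plan is to prove both statements simultaneously by induction on $\ell$. For the base case $\ell = 0$, observe that $\R\1_\l = \Cone(\1_\l \to \T_\omega^2 \1_\l)$ lies in homological degrees $\le 0$: each Rickard complex $\T_i\1_\mu$ is, by its explicit form (\ref{eq:cpx'})--(\ref{eq:cpx}), supported in degrees $\le 0$, so the composition defining $\T_\omega^2 \1_\l$ is as well, and the shifted summand $\1_\l[1]$ from the cone sits in degree $-1$. Hence both $\T_\omega^0 \R \1_\l$ and $\R \T_\omega^0 \1_\l$ (which both equal $\R\1_\l$) lie in degrees $\le 0$.

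For the inductive step, the key input is Corollary \ref{cor:1}: commuting $\T_\omega^2$ past $\F_i^{(p)}$ or $\E_i^{(p)}$ yields a homological shift $[2p]$, which is at least $[2]$ whenever $p \ge 1$. I would write $\T_\omega^{2(\ell+1)} \R \1_\l = \T_\omega^2 \cdot \T_\omega^{2\ell} \R \1_\l$ and analyze the latter. By the inductive hypothesis $\T_\omega^{2\ell} \R \1_\l$ is supported in degrees $\le -2\ell$, so the naive convolution bound gives degrees $\le -2\ell$, which is two short of what is needed. The missing $[-2]$ shift must come from commuting $\T_\omega^2$ past a nontrivial $\F$ or $\E$ via Corollary \ref{cor:1}.

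To supply this shift, one needs to establish that, up to homotopy equivalence, every 1-morphism appearing in the top-most homological degree of $\T_\omega^{2\ell} \R \1_\l$ has the form $\F_i^{(p)} X$ or $\E_j^{(q)} Y$ with $p,q \ge 1$. This is built into the base case: the natural map $\1_\l \to \T_\omega^2 \1_\l$ of \S\ref{sec:1toT2map} is the successive inclusion into the unique $\1$-summand of each $\E_i^{(r)}\F_i^{(r)}\1_\l$ appearing via (\ref{eq:1}), so in $\R\1_\l$ this summand is precisely what the cone removes; every surviving term in top degree therefore carries at least one $\E$ or $\F$. Propagating this structural feature through the induction (applying Corollary \ref{cor:1} to rewrite $\T_\omega^2 \cdot \F_i^{(p)} X \cong \F_i^{(p)} \cdot \T_\omega^2 X \la \cdots \ra [2p]$ and similarly for $\E$) yields the improved bound $\le -2(\ell+1)$.

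The argument for $\R\T_\omega^{2\ell}\1_\l$ is symmetric: one applies $\T_\omega^2$ on the left instead and uses the left-sided commutation statements of Corollary \ref{cor:1}. The main obstacle is the bookkeeping that sustains the structural claim across iterations: one must verify that after absorbing the shift from each commutation, the resulting complex still has its top-degree terms beginning with a nontrivial $\F$ or $\E$ (so the induction keeps going), and that the sign changes of $\la \l, \alpha_i\ra$ that arise as weights are permuted by $\T_\omega^2$ do not break the dichotomy between the two clauses of Corollary \ref{cor:1}. Handling both of these uniformly across all weight spaces is the technical crux; once it is in place, the induction closes cleanly.
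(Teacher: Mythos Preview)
Your inductive scheme does not close as written, and the missing ingredient is precisely the one the paper's proof supplies. The issue is the structural claim you flag as ``the technical crux'': you need that, after simplification, every term of $\T_\omega^{2\ell}\R\1_\l$ in the top two homological degrees begins with some $\F_i^{(p)}$ or $\E_i^{(p)}$ satisfying the sign hypothesis of Corollary~\ref{cor:1}. But already in the base case this fails. The degree-$0$ part of $\T_\omega^2\1_\l$ is the product $\prod \U_i$, and each $\U_i\1_\mu$ is by definition either $\E_i^{(-\la\mu,\alpha_i\ra)}\1_\mu$ with $\la\mu,\alpha_i\ra\le 0$ or $\F_i^{(\la\mu,\alpha_i\ra)}\1_\mu$ with $\la\mu,\alpha_i\ra\ge 0$; after composing with the next $\T_\omega^2$ the weight at the interface becomes $s_i\cdot\mu$, so the sign of $\la\cdot,\alpha_i\ra$ flips and Corollary~\ref{cor:1} does \emph{not} apply. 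You have no commutation formula for $\T_\omega^2\U_i$ in your toolkit, so the induction cannot move $\T_\omega^2$ past the outermost layer. This is exactly why the paper proves the separate statement Corollary~\ref{cor:2}, namely $\T_\omega^2\U_i\1_\l\cong\U_i\T_\omega^2\1_\l$ with no shift; without it (or an equivalent) your argument is stuck.

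The paper's proof is not by induction on $\ell$. It analyzes $\T_\omega^{2\ell}\cdot\T_\omega^2\1_\l$ directly for fixed $\ell$ by expanding the right-hand $\T_\omega^2$ as a product of $\T_i$'s. For each $\T_i$, the terms in cohomological degree $-s$ with $s\ge 1$ carry an extra $\E_i^{(s)}$ (or $\F_i^{(s)}$) on the side adjacent to $\T_\omega^{2\ell}$, and the sign condition is automatically the correct one for Corollary~\ref{cor:1}; commuting $\T_\omega^{2\ell}$ past this factor yields a shift $[2s\ell]$, not $[2s]$, so one application already pushes these terms to degrees $\le -2\ell$. The degree-$0$ term $\U_i$ is handled by Corollary~\ref{cor:2}, which commutes it past $\T_\omega^{2\ell}$ without shift. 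Iterating over all the $\T_i$'s leaves a product of $\U$'s sandwiching $\T_\omega^{2\ell}$; one then uses Corollary~\ref{cor:2} to bring adjacent $\U_i$'s together, expands $\U_i\U_i$ via Lemma~\ref{lem:rels1} as $\bigoplus_j\F_i^{(r-j)}\E_i^{(r-j)}$, and applies Corollary~\ref{cor:1} again to the $j<r$ summands (now with the correct sign) to shift them by $[2(r-j)\ell]$. Only the identity summand survives, and that is exactly what the cone with $\1_\l$ removes. The entire mechanism rests on the interplay of Corollaries~\ref{cor:1} and~\ref{cor:2}, and on getting the full shift $[2s\ell]$ in one pass rather than accumulating $[2]$ per step.
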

\begin{proof}
We deal with the complex $\T_\omega^{2\ell} \R \1_\l$ (the proof for the other complex is the same). The idea is to use the expression for $\1_\l \T^2_\omega$ from (\ref{eq:0}) to study $\T_\omega^{2 \ell} \1_\l \T^2_\omega$. Consider the left most term $\1_\l \T_{n-1}$ and suppose $\l_{n-1} \le 0$. Then $\1_\l \T_{n-1}$ is given by a complex 
$$\dots \rightarrow \1_\l \E_{n-1}^{(s)} \F_{n-1}^{(-\l_{n-1}+s)} \la -s \ra \rightarrow \1_\l \E_{n-1}^{(s-1)} \F_{n-1}^{(-\l_{n-1}+s-1)} \la -s+1 \ra \rightarrow \dots \rightarrow \1_\l \F_{n-1}^{(-\l_{n-1})}.$$
Then by Corollary \ref{cor:1} we have 
\begin{equation}\label{eq:6}
\T_\omega^{2 \ell} \1_\l \E_{n-1}^{(s)} \F_{n-1}^{(-\l_{n-1}+s)} \cong \E_{n-1}^{(s)} \T_\omega^{2 \ell} \1_{\l - s\alpha_{n-1}} \F_{n-1}^{(-\l_{n-1}+s)} \la -2s \ell(-\l_{n-1}+s+1) \ra [2s \ell]
\end{equation}
which is a complex supported in homological degrees $\le -2s \ell$. Thus only the term $\T_\omega^{2 \ell} \1_\l \F_{n-1}^{(-\l_{n-1})}$ can contribute to the cohomology in degrees $\ge -2\ell$. In this case, by Corollary \ref{cor:2}, we have 
\begin{equation}\label{eq:8}
\T_\omega^{2 \ell} \1_\l \F_{n-1}^{(\l_{n-1})} \cong \F_{n-1}^{(-\l_{n-1})} \T_\omega^{2 \ell} \1_\l.
\end{equation}
Now we repeat this argument with the other $\T$s in the expression for $\1_\l \T_\omega^2$ and conclude that the only terms in $\1_\l \T^2_\omega$ which can contribute something in cohomology degrees $\ge -2 \ell$ in $\T_\omega^{2 \ell} \1_\l \T^2_\omega$ is 
$$[(\U_{n-1})(\U_{n-2}\U_{n-1}) \dots (\U_1 \dots \U_{n-1})][(\U_{n-1} \dots \U_1) \dots (\U_{n-1} \U_{n-2})(\U_{n-1})] \1_\l \T_\omega^{2 \ell}.$$ 
Using Corollary \ref{cor:2} we can rewrite this as 
\begin{equation}\label{eq:2}
[(\U_{n-1})(\U_{n-2}\U_{n-1}) \dots (\U_1 \dots \U_{n-2})] \T_\omega^{2\ell} \U_{n-1}^2 [(\U_{n-2} \dots \U_1) \dots (\U_{n-1} \U_{n-2})(\U_{n-1})] \1_\l.
\end{equation}
Consider the middle factor $\U_{n-1}^2 \1_\mu$ in (\ref{eq:2}) where $\mu = s_{n-1} s_{\omega} \cdot \l$. Let us suppose $\mu_{n-1} \ge 0$ (the case $\mu_{n-1} \le 0$ is the same). Then 
\begin{eqnarray*}
\U_{n-1} \U_{n-1} \1_\mu 
&\cong& \E_{n-1}^{(\mu_{n-1})} \F_{n-1}^{(\mu_{n-1})} \1_\mu \\
&\cong& \bigoplus_{j \ge 0} \bigoplus_{\qbins{\mu_{n-1}}{j}} \1_\mu \F_{n-1}^{(\mu_{n-1}-j)} \E_{n-1}^{(\mu_{n-1}-j)} \1_\mu.
\end{eqnarray*}
Now, by the same argument as above (using Corollary \ref{cor:1}), 
$$\T_\omega^{2 \ell} \1_\mu \F_{n-1}^{(\mu_{n-1}-j)} \E_{n-1}^{(\mu_{n-1}-j)} \cong 
\F_{n-1}^{(\mu_{n-1}-j)} \T_\omega^{2 \ell} \E_{n-1}^{(\mu_{n-1}-j)} \1_\mu \la -2p\ell (\mu_{n-1}+p+1)\ra [2p\ell]$$
where $p = \mu_{n-1}-j$. This is supported in cohomological degrees $\le -2\ell$ unless $j = \mu_{n-1}$ which leaves us with one copy of the identity. Thus the only terms in (\ref{eq:2}) which could contribute to cohomological degrees $> -2\ell$ come from 
$$[(\U_{n-1})(\U_{n-2}\U_{n-1}) \dots (\U_1 \dots \U_{n-2})] \T_\omega^{2 \ell} \1_\mu [(\U_{n-2} \dots \U_1) \dots (\U_{n-1} \U_{n-2})(\U_{n-1})] \1_\l.$$
Repeating this argument with $\U_{n-2}, \U_{n-3}$ and so on, we are left with just one term, namely $\T_\omega^{2\ell} \1_\l$. But this term in $\T_\omega^{2\ell} \R \1_\l = \T_\omega^{2\ell} \Cone(\1_\l \rightarrow \T_\omega^2 \1_\l)$ is zero since it is exactly the one that cames from the term $\1_\l$ inside the complex $\T_\omega^2 \1_\l$. Thus $\T_\omega^{2\ell} \R \1_\l$ is supported in homological degrees $\le -2\ell$. 
\end{proof}

Proposition \ref{prop:2} implies that $\lim_{\ell \rightarrow \infty} \T_\omega^{2\ell} \R \1_\l = 0$. This means that $\P^- := \lim_{\ell \rightarrow \infty} \T_\omega^{2\ell}$ exists (the negative in $\P^-$ indicates that the complex is bounded above). Notice that the map $\1_\l \rightarrow \T_\omega^2 \1_\l$ induces a map $\1_\l \rightarrow \P^- \1_\l$. 

\begin{Proposition}\label{prop:idempotent}
If $\g = \sl_n$ then $\P^- \in \Kom^-(\K)$ is an idempotent, meaning that $\P^- \P^- \cong \P^-$. 
\end{Proposition}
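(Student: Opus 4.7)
The plan is to reduce idempotency to the statement that $\P^-$ absorbs an extra factor of $\T_\omega^2$, i.e., that the natural map $\P^- \1_\l \to \P^- \T_\omega^2 \1_\l$ is an isomorphism in $\Kom^-_*(\K)$. This natural map is obtained by composing $\P^-$ on the left with the map $\1_\l \to \T_\omega^2 \1_\l$ of Section \ref{sec:1toT2map}, so its cone is $\P^- \R \1_\l$, and it suffices to show that this cone vanishes.

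For the cone vanishing, I would write $\P^- \1_\l = \lim_\ell \T_\omega^{2\ell} \1_\l$ and use that composition on the right with the fixed 1-morphism $\R \1_\l$ sends Cauchy direct systems to Cauchy direct systems with the same limit; this identifies $\P^- \R \1_\l$ with $\lim_\ell \T_\omega^{2\ell} \R \1_\l$. By Proposition \ref{prop:2}, each $\T_\omega^{2\ell} \R \1_\l$ is supported in homological degrees $\le -2\ell$, so in any fixed cohomological range the direct system is eventually zero; this forces the limit, and hence $\P^- \R \1_\l$, to vanish. Iterating the resulting isomorphism $\P^- \1_\l \cong \P^- \T_\omega^2 \1_\l$ gives $\P^- \1_\l \cong \P^- \T_\omega^{2\ell} \1_\l$ for every $\ell \ge 0$, and these isomorphisms are compatible with the structural transition maps $\T_\omega^{2\ell} \to \T_\omega^{2(\ell+1)}$ by construction.

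To conclude, I would compute
\begin{equation*}
\P^- \cdot \P^- \1_\l \;\cong\; \lim_\ell \P^- \T_\omega^{2\ell} \1_\l \;\cong\; \lim_\ell \P^- \1_\l \;\cong\; \P^- \1_\l,
\end{equation*}
where the first isomorphism again uses that composition commutes with Cauchy limits, the second comes from the previous paragraph applied compatibly with the transition maps, and the last holds because the resulting direct system is constant up to compatible isomorphism.

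The main obstacle is the bookkeeping around commuting composition with these direct limits and verifying compatibility with the transition maps: one must check that the absorption isomorphisms $\P^- \T_\omega^{2\ell} \1_\l \cong \P^- \1_\l$ intertwine the maps $\1 \to \T_\omega^2$ on the right in such a way that the colimit becomes literally constant. Both steps rely on the uniqueness-of-limits statement recalled in Section \ref{sec:convergence} (Rozansky's criterion) together with the homological-degree bound supplied by Proposition \ref{prop:2}. Once these pieces are in place, the argument becomes essentially formal.
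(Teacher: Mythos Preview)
Your approach is correct and shares the paper's core idea: reduce to showing $\P^- \R \1_\l \cong 0$ via Proposition~\ref{prop:2}. The execution differs slightly, and the paper's version sidesteps exactly the obstacle you flag. Rather than writing $\P^- \R \1_\l$ as $\lim_\ell \T_\omega^{2\ell} \R \1_\l$ and worrying about commuting composition with direct limits, the paper uses the absorption $\P^- \T_\omega^{2\ell} \1_\l \cong \P^- \1_\l$ (immediate from the definition of the limit, since shifting the index of a Cauchy system does not change its limit) to rewrite $\P^- \R \1_\l \cong \P^- \T_\omega^{2\ell} \R \1_\l$ for every $\ell$; since $\T_\omega^{2\ell} \R \1_\l$ lives in homological degrees $\le -2\ell$ and $\P^-$ is bounded above, the composite does too, forcing it to be contractible. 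The paper then applies this directly to the map $\phi: \P^- \1_\l \to \P^- \P^- \1_\l$ induced by $\1_\l \to \P^- \1_\l$, rather than first proving $\P^- \cong \P^- \T_\omega^2$, iterating, and passing to a second limit as you do. Your route works, but the paper's ordering (use absorption as input, not output) eliminates the compatibility bookkeeping you identify as the main difficulty.
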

\begin{proof}
Consider the map $\phi: \P^- \1_\l \rightarrow \P^- \P^- \1_\l$ induced by $\1_\l \rightarrow \P^- \1_\l$. The cone of this map is (by definition) $\P^- \R \1_\l \cong 0$. Thus $\phi$ is an isomorphism. 
\end{proof}

\subsection{Convergence of $\P^-$ in K-theory}\label{sec:Kconverge}
We now show that $\P^- \1_\l$ converges in K-theory to give a well defined element $p(\P^- \1_\l) \in \hK(\K)$. 

Choose a basis $b_1, \dots, b_s$ of morphisms in $K(\K)$ (this space is finite dimensional by assumption). As before, if $A$ is a morphism in $K(\K)$ then we write $A = \sum_{i,j} a_{ij} q^i b_j$ and denote $\la A \ra_q = {\rm min}\{i: a_{ij} \ne 0 \}$. For a 1-morphism $\A \in \Kom(\K)$ we will write $\la \A \ra_q \ge e$ if $\A$ is homotopic to a complex $\A^\bullet$ where $\la \A^s \ra_q \ge e$ for all $s$. Likewise, we write $\la \A \ra_q^{\le N} \ge e$ if $\la \A^s \ra_q \ge e$ for all $s \le N$. 

\begin{Lemma}\label{lem:new1}
There exists some $L$, independent of $\ell$, so that $\la \T_\omega^{2 \ell} \R \ra_q \ge 4 \ell + L$ for all $\ell \ge 1$. 
\end{Lemma}
\begin{proof}
To study $\T_\omega^{2\ell} \R \1_\l$ we proceed as in the proof of Proposition \ref{prop:2}. Namely we first consider $\T_\omega^{2\ell} \1_\l \T_{n-1}$ and assume $\l_{n-1} \le 0$ (the case $\l_{n-1} \ge 0$ is similar). This complex is made up of terms of the form $\T_\omega^{2\ell} \1_\l \E_{n-1}^{(s)} \F_{n-1}^{(- \l_{n-1} + s)}$. Moving the $\E_{n-1}^{(s)}$ term over to the other side of $\T_\omega^{2\ell}$ we get equation $\T_\omega^{2 \ell} \T_\omega^2 \1_\l$ is homotopic to
\begin{equation}\label{eq:7}
\fC^s := \E_{n-1}^{(s)} \T_\omega^{2\ell} \F_{n-1}^{(-\l_{n-1}+s)} \T_{n-1}^{-1} \T_\omega^2 \la -2s\ell(-\l_{n-1}+s+1) \ra [2s\ell].
\end{equation}
Now suppose that we know by induction that $\la \T_\omega^{2\ell} \ra_q \ge m_\ell$ for some $m_\ell$. Let $M$ be the minimum over $\la \E_{n-1}^{(s)} \B_i \F_{n-1}^{(-\l_{n-1}+s)} \T_{n-1}^{-1} \T_\omega^2 \ra_q$ where $\B_i$ is a representative of $b_i$ in $\K$. Then
$$\la \fC^s \ra_q \ge 2s \ell (s+1) + m_\ell + M.$$
The right side is at least $4\ell+m_\ell+M$ unless $s=0$ in which case we get 
$$\F_{n-1}^{(-\l_{n-1})} \T_\omega^{2\ell} \T_{n-1}^{-1} \T_\omega^2 \cong \U_{n-1} \T_\omega^{2\ell} \T_{n-2} \T_{n-2}^{-1} \T_{n-1}^{-1} \T_\omega^2$$ 
and we repeat the argument above. In this way we eventually end up with 
$$[(\U_{n-1})(\U_{n-2}\U_{n-1}) \dots (\U_1 \dots \U_{n-2})]\T_\omega^{2\ell} \U_{n-1}^2 [(\U_{n-2} \dots \U_1) \dots (\U_{n-1} \U_{n-2})(\U_{n-1})].$$
We now simplify $\U_{n-1}^2$ and repeat the argument above just like in the proof of Proposition \ref{prop:2}. Thus we find that $\la \T_\omega^{2 \ell} \R \ra_q \ge 4\ell+m_\ell+M$. This means that, since $\R \1_\l \cong \Cone (\1_\l \rightarrow \T_\omega^2 \1_\l)$, $\la \T_\omega^{2(\ell+1)} \ra_q \ge {\rm min}(4\ell+m_\ell+M,m_\ell)$. So, if $\ell \ge -\frac{M}{4}$, we have $\la \T_\omega^{2(\ell+1)} \ra_q \ge m$ for some $m$. This implies $\la \T_\omega^{2\ell} \R \ra_q \ge 4 \ell + m + M$ and the result follows.  
\end{proof}

Now, given $\ell$ choose $N_\ell$ so that $\T_\omega^{2 \ell}$ is supported in degrees $> N_\ell$.  Using the exact triangle $\T_\omega^{2(r-1)} \rightarrow \T_\omega^{2r} \rightarrow \T_\omega^{2(r-1)} \R$ we find that if $\la \T_\omega^{2(r-1)} \ra_q^{\le N_\ell} \ge e_{r-1}$ for some $e_{r-1}$ then 
$$\la \T_\omega^{2r} \ra^{\le N_\ell}_q \ge {\rm min}(e_{r-1}, \la \T_\omega^{2r} \R \ra^{\le N_\ell}_q) \ge {\rm min}(e_{r-1}, 4r + L).$$
 Applying these inequalities repeatedly we find that for any $r \gg 0$ we have 
$$\la \T_\omega^{2r} \ra^{\le N_\ell}_q \ge {\rm min}(e_{\ell}, 4 \ell+L)$$
But since $\T_\omega^{2 \ell}$ is supported in degrees $> N_\ell$ we can take $e_\ell = \infty$ and so $\la \T_\omega^{2r} \ra_q^{\le N_\ell} \ge 4 \ell + L$ for any $r \gg 0$. Hence $\la \P^- \ra_q^{\le N_\ell} \ge 4 \ell+L$ for any $\ell$. Thus $\P^- \in \Kom^-_*(\K)$ which completes the proof of Theorem \ref{thm:main1}. 

\section{Categorified clasps}\label{sec:clasps}

In this section we prove Theorem \ref{thm:main2}. 

Denote by $\k^m$ the standard representation of $U_q(\sl_m)$ with basis $v_1, \dots, v_m$. The vector space 
$$\Lambda_q(\k^m) = \k[q,q^{-1}]\la v_1, \dots, v_m \ra/(v_i^2, v_iv_j + qv_jv_i \text{ for } i<j)$$
is the standard wedge product representation of $U_q(\sl_m)$. 

Now consider the $U_q(\sl_m)$-module $\Lambda_q^{\i}(\k^m) := \Lambda_q^{i_1}(\k^m) \otimes \dots \otimes \Lambda_q^{i_n}(\k^m)$ where ${\i}$ is a sequence of integers $0 \le i_1, \dots, i_n \le m$. In terms of highest weight representations $\Lambda_q^{i_k}(\k^m) = V_{\Lambda_{i_k}}$. As a $U_q(\sl_m)$-module $\Lambda_q^{\i}(\k^m)$ has a unique summand isomorphic to the highest weight representation $V_{\i} := V_{\sum_k \Lambda_{i_k}}$. We denote by 
$$\iota: V_{\i} \rightarrow \Lambda_q^{\i}(\k^m) \text{ and } 
\pi: \Lambda_q^{\i}(\k^m) \rightarrow  V_{\i}$$ 
the natural inclusion and projection. Their composition is denoted 
$$P \1_{\i} := \iota \circ \pi \in \End_{U_q(\sl_m)}(\Lambda_q^{\i}(\k^m)).$$ 
Notice $\pi \circ \iota \in \End_{U_q(\sl_m)}(V_{\i})$ is a multiple of the identity since $V_{\i}$ is irreducible. So $(P \1_{\i})^2$ is a multiple of $P \1_{\i}$ and we uniquely rescale $P \1_{\i}$ so that $(P \1_{\i})^2 = P \1_{\i}$. Following \cite{Kup} we refer to the idempotent $P \1_{\i}$ as a {\em clasp}. 

\subsection{Skew Howe duality}\label{sec:Howe}

Our aim now is to understand clasps using skew Howe duality, {\it i.e.} by studying $\Lambda_q(\k^m \otimes \k^n)$. Formally, this algebra is the quadratic dual of the quantum algebra $M_q(m \times n)$ which is the quantum analogue of the algebra of $m \times n$ matrices (see \cite{Man}, in particular section 8.9). There exist two isomorphisms 
$$\Lambda_q(\k^m)^{\otimes n} \longleftarrow \Lambda_q(\k^m \otimes \k^n) \longrightarrow \Lambda_q(\k^n)^{\otimes m}.$$
For our purposes, we only identify the composition isomorphism 
\begin{equation}\label{eq:4}
\Lambda_q(\k^m)^{\otimes n} \longrightarrow \Lambda_q(\k^n)^{\otimes m}
\end{equation}
(as $\k[q,q^{-1}]$-modules) as follows. Having fixed the basis $v_1, \dots, v_m$ for $\k^m$ the left side of (\ref{eq:4}) has basis $\{ v_{\i_1} \otimes \dots \otimes v_{\i_n} \}$ where each $\i_k$ is a sequence $1 \le s_1 < \dots < s_l \le m$ and $v_{\i_k} = v_{s_1} \wedge \dots \wedge v_{s_l}$. Likewise, the right side of (\ref{eq:4}) has basis $\{ w_{{\j}_1} \otimes \dots \otimes w_{{\j}_m} \}$ where $w_1, \dots, w_n$ is a basis of $\k^n$. Then the map in (\ref{eq:4}) is given by 
$$v_{\i_1} \otimes \dots \otimes v_{\i_n} \mapsto (-1)^{\#\{(a,b,k_1,k_2): a \in \i_{k_1}, b \in \i_{k_2}, a < b, k_1 < k_2 \}} w_{{\j}_1} \otimes \dots \otimes w_{{\j}_m}$$
where $\ell \in \j_{\ell'}$ if and only if $\ell' \in \i_\ell$.

\begin{Lemma}\label{lem:commute} The actions of $U_q(\sl_m)$ and $U_q(\sl_n)$ on $\Lambda_q(\k^m)^{\otimes n} \cong \Lambda_q(\k^n)^{\otimes m}$ commute.
\end{Lemma}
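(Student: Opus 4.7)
The plan is to promote the commutativity question to the common space $\Lambda_q(\C^m \otimes \C^n)$, equip that space with commuting actions of $U_q(\sl_m)$ and $U_q(\sl_n)$ in a manifestly symmetric way, and then verify that the two isomorphisms onto $\Lambda_q(\C^m)^{\otimes n}$ and $\Lambda_q(\C^n)^{\otimes m}$ respectively transport these actions to the standard (coproduct) actions on the tensor product sides. Commutativity on $\Lambda_q(\C^m \otimes \C^n)$ is visible from the construction, and pushing it through the isomorphisms gives the Lemma.

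First I would recall, following Manin \cite{Man}, that $\Lambda_q(\C^m \otimes \C^n)$ is generated by symbols $x_{ij} = v_i \otimes w_j$ with quadratic relations inherited from $M_q(m \times n)$; the two natural algebra maps to $\Lambda_q(\C^m)^{\otimes n}$ and $\Lambda_q(\C^n)^{\otimes m}$ are isomorphisms of $\C[q,q^{-1}]$-modules. Next, since $\C^m \otimes \C^n$ is a $U_q(\sl_m)$-module (via $\C^m$, with $\C^n$ carried along as multiplicities), there is a unique extension of this action to $\Lambda_q(\C^m \otimes \C^n)$ making it a $U_q(\sl_m)$-module algebra; explicitly $E_a,F_a$ act as $q$-derivations on the $x_{ij}$'s that only touch the first index. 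Symmetrically one defines a $U_q(\sl_n)$ action touching only the second index. Since on generators the $U_q(\sl_m)$ action changes only $i$ and the $U_q(\sl_n)$ action changes only $j$, they commute on $\C^m \otimes \C^n$, and because both actions extend via the same comultiplication to the same module algebra structure on $\Lambda_q(\C^m \otimes \C^n)$, they commute on the whole exterior algebra.

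It then remains to check that the isomorphism $\Lambda_q(\C^m \otimes \C^n) \xrightarrow{\sim} \Lambda_q(\C^m)^{\otimes n}$ identifies the $U_q(\sl_m)$-action built above with the coproduct action on the tensor product (and similarly for the $U_q(\sl_n)$-side). This is a direct computation on decomposable elements of the form $x_{i_1 j_1} \cdots x_{i_N j_N}$: the $U_q(\sl_m)$-derivation rule on such a monomial reproduces the standard coproduct formula on $\Lambda_q(\C^m)^{\otimes n}$ once the factors are grouped by second index $j$, with the $q$-weights arising from moving $E_a$ or $F_a$ past other $x_{\cdot,j'}$'s matching the powers of $K_a$ in the coproduct on the $j'$-th tensor factor.

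The main obstacle will be bookkeeping the signs and $q$-powers in the final paragraph: one has to check that the skew-symmetrization sign in the explicit formula for (\ref{eq:4}) (the count $\#\{(a,b,k_1,k_2):\ldots\}$) is compatible with the quantum straightening of $x_{ij}$'s needed to pass between the two presentations, so that the commuting $U_q$-actions on $\Lambda_q(\C^m \otimes \C^n)$ pull back to the coproduct actions on both sides with the correct signs and no extra $q$-factors. Once this combinatorial identity is in hand, commutativity on $\Lambda_q(\C^m)^{\otimes n} \cong \Lambda_q(\C^n)^{\otimes m}$ follows immediately.
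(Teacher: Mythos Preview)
Your approach is correct but takes a genuinely different route from the paper. The paper's proof is a two-line reduction: since $U_q(\sl_m)$ and $U_q(\sl_n)$ are each generated by their root $U_q(\sl_2)$ subalgebras, it suffices to check that any single $E_i,F_i,K_i$ from the $\sl_m$ side commutes with any single $E'_j,F'_j,K'_j$ from the $\sl_n$ side; since such a pair only sees indices $i,i+1$ in $\C^m$ and tensor slots $j,j+1$, this reduces to the case $m=n=2$, which is then checked by hand.

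Your strategy instead builds a common home $\Lambda_q(\C^m\otimes\C^n)$ where the commutativity is structurally visible (the two actions touch disjoint indices of the generators $x_{ij}$), and then transports this to the tensor-product sides. This is more conceptual and explains \emph{why} the actions commute rather than just verifying it. The cost is that you must (a) check the $q$-derivation actions are well-defined on $\Lambda_q(\C^m\otimes\C^n)$, i.e.\ preserve its quadratic relations, (b) verify that commutativity on generators propagates to the whole algebra via the module-algebra structure (this needs the auxiliary fact that each $K_a$ commutes with $E'_b,F'_b$ as operators, which you should make explicit), and (c) carry out the sign/$q$-power bookkeeping you flag at the end. None of these is hard, but together they are more work than the paper's direct $2\times 2$ check. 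In fact, step (a) already localizes to pairs of generators and so essentially contains an $m=n=2$ computation of its own---so the two approaches are closer in practice than they first appear.
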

\begin{proof}
It is enough to consider the root $U_q(\sl_2)$ subalgebras of $U_q(\sl_m)$ and $U_q(\sl_n)$. So we can assume $m=n=2$ and that case can be checked by an explicit calculation. 
\end{proof} 

For $N \in \N$ denote by $\Lambda^N_q(\k^m)^{\otimes n}$ the $N$-graded piece of $\Lambda_q(\k^m)^{\otimes n}$ where $\deg(v_i)=1$. The action of $U_q(\sl_n)$ preserves this piece. The decomposition of $\Lambda^N_q(\k^m)^{\otimes n}$ into weight spaces is given by $\oplus_{\i} \Lambda_q^{\i}(\k^m)$ where the direct sum is over all $0 \le i_1, \dots, i_n \le m$ with $\sum_j i_k = N$. 

Under this action we have 
$$E_k: \Lambda_q^{\i}(\k^m) \rightarrow \Lambda_q^{\i + \alpha_k}(\k^m) \text{ and } F_k: \Lambda_q^{\i + \alpha_k}(\k^m) \rightarrow \Lambda_q^{\i}(\k^m)$$
where $\alpha_k = (0, \dots, 0, -1, 1, 0, \dots, 0)$ with the nonzero entries in the $k$ and $k+1$ spot. 
The dominant weights correspond to those ${\i}$ where $0 \le i_1 \le \dots \le i_n \le m$ (in this case we call ${\i}$ dominant). The notation $\1_{\i}$ indicates the projection onto this weight space. 

\begin{Proposition}\label{prop:Punique} If $\i$ is a dominant weight then $P \1_{\i} F_k = 0$ for $k=1, \dots, m-1$. Moreover, $P \1_{\i}$ is the unique such (nonzero) projection in $\End_{U_q(\sl_m)}(\Lambda_q^{\i}(\k^m))$. 
\end{Proposition}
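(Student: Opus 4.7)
The plan is to deduce both assertions from quantum skew Howe duality. By Lemma \ref{lem:commute} the actions of $U_q(\sl_m)$ and $U_q(\sl_n)$ commute on $\Lambda_q^N(\C^m \otimes \C^n)$ and form a Howe dual pair, yielding a $U_q(\sl_m) \otimes U_q(\sl_n)$-module decomposition
$$\Lambda_q^N(\C^m \otimes \C^n) \cong \bigoplus_\lambda V_\lambda \otimes W_\lambda,$$
where $W_\lambda$ is the irreducible $U_q(\sl_n)$-module matched with $V_\lambda$. The first step is to identify $W_\i$ when $\i$ is dominant (weakly increasing in the paper's convention): a direct combinatorial calculation, using that $\mu := \sum_k \Lambda_{i_k}$ regarded as the $m$-tuple $\mu_j = \#\{k : i_k \ge j\}$ has conjugate partition $(i_n, i_{n-1}, \dots, i_1)$, shows that the standard $\sl_n$-highest weight of $W_\i$ is $(i_n, \dots, i_1)$, and hence the longest Weyl element sends this to $\i$. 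Thus $\i$ is precisely the standard $\sl_n$-lowest weight of $W_\i$, and in particular $(W_\i)_\i$ is one-dimensional. Making this identification correctly, while accounting for the paper's nonstandard sign convention $\alpha_k = (0,\dots,-1,1,\dots,0) = -\alpha_k^{\mathrm{std}}$ (which exchanges the roles of raising and lowering operators), is the main technical point of the proof.

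Restricting the Howe decomposition to the $\sl_n$-weight $\i$ gives $\Lambda_q^{\i}(\C^m) = \bigoplus_\lambda V_\lambda \otimes (W_\lambda)_\i$, and $\pi$ is precisely projection onto the summand indexed by $\lambda = \i$. For the first claim, $F_k$ acts only on the $W_\lambda$-tensor factor, so it suffices to show $(W_\i)_{\i + \alpha_k} = 0$. But $\i + \alpha_k = \i - \alpha_k^{\mathrm{std}}$ lies strictly below the lowest weight $\i$ of $W_\i$ in the standard dominance order, so this weight space vanishes. Hence $F_k$ carries $\Lambda_q^{\i + \alpha_k}(\C^m)$ entirely into $\bigoplus_{\lambda \ne \i} V_\lambda \otimes (W_\lambda)_\i$, and therefore $P \1_\i F_k = \iota \circ \pi \circ F_k = 0$.

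For uniqueness, let $Q \in \End_{U_q(\sl_m)}(\Lambda_q^{\i}(\C^m))$ be a nonzero idempotent with $Q F_k = 0$ for all $k$. Since $Q$ is $U_q(\sl_m)$-equivariant, its range is a direct sum of $\sl_m$-isotypic pieces, and the hypothesis forces $\sum_k \mathrm{Im}(F_k) \subseteq \ker Q$. For any $\lambda \ne \i$ with $(W_\lambda)_\i \ne 0$, the Howe-duality matching implies that $\i$ is not the standard lowest weight of the irreducible $W_\lambda$; the standard representation-theoretic fact that every non-lowest weight space of an irreducible module is spanned by images of the raising operators — which, with the paper's sign convention, are precisely the $F_k$ — then gives $(W_\lambda)_\i = \sum_k F_k\bigl((W_\lambda)_{\i+\alpha_k}\bigr)$. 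Consequently every summand $V_\lambda \otimes (W_\lambda)_\i$ with $\lambda \ne \i$ lies in $\ker Q$, so $Q$ must project onto $V_\i \otimes (W_\i)_\i \cong V_\i$. Since $V_\i$ is $U_q(\sl_m)$-irreducible, this forces $Q = P \1_\i$.
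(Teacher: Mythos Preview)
Your proof is correct and follows essentially the same strategy as the paper: both arguments rest on the skew Howe decomposition $\Lambda_q^N(\C^m\otimes\C^n)\cong\bigoplus_\lambda V_\lambda\otimes W_\lambda$ and the identification of $V_{\i}\subset\Lambda_q^{\i}(\C^m)$ with the extremal $\sl_n$-weight vectors (this is the content of the paper's Lemma~\ref{lem:3}). The paper states the first claim by simply noting that $\Lambda_q^{\i+\alpha_k}(\C^m)$ contains no copy of $V_{\i}$, and for uniqueness observes that $\Lambda_q^{\i}(\C^m)$ is spanned by highest weight vectors together with the images of the $F_k$; your version unpacks both of these by working summand-by-summand in the Howe decomposition and making the weight combinatorics (including the sign convention $\alpha_k=-\alpha_k^{\mathrm{std}}$, so that $\i$ is the \emph{lowest} standard weight of $W_{\i}$) fully explicit. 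This extra detail is helpful, but the underlying mechanism is the same.
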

\begin{proof}
Since $\Lambda_q^{\i+\alpha_k}(\k^m)$ does not contain $V_{\i}$ it follows that $P \1_{\i} F_k = 0$ for any $k=1, \dots, m-1$.

On the other hand, $\Lambda_q^{{\i}}(\k^m)$ breaks up as a direct sum $V_{\i} \oplus V'$ for some $U_q(\sl_m)$-module $V'$. Moreover, it is spanned by vectors $F_k(v)$ and highest weight vectors. By Lemma \ref{lem:3} the highest weight vectors span precisely $V_{\i}$. This means that if $P' \1_{\i}$ is a projector such that $P' \1_{\i} F_k = 0$ for all $k$ then it must either be zero or it must project onto $V_{\i}$ (in which case $P'=P$). 
\end{proof}

\begin{Lemma}\label{lem:3} The $U_q(\sl_m)$-submodule $V_{\i} \subset \Lambda_q^{\i}(\k^m)$ coincides with the vector space of highest weight vectors for the action of $U_q(\sl_n)$. 
\end{Lemma}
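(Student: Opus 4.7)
The plan is to establish the two inclusions separately. The inclusion $V_{\i} \subseteq \{U_q(\sl_n)\text{-highest weight vectors}\}$ will be a direct verification, while the reverse inclusion uses quantum skew Howe duality.

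For the first inclusion, observe that since the actions of $U_q(\sl_m)$ and $U_q(\sl_n)$ commute on $\Lambda_q^{\i}(\C^m)$ (Lemma \ref{lem:commute}), the space of $\sl_n$-highest weight vectors is a $U_q(\sl_m)$-submodule. Because $V_{\i}$ is $U_q(\sl_m)$-irreducible, it suffices to exhibit a single nonzero $\sl_n$-highest weight vector inside $V_{\i}$; applying $U_q(\sl_m)$ to it will then sweep out all of $V_{\i}$, and each image remains $\sl_n$-highest weight by commutation. The natural candidate is the $\sl_m$-highest weight vector
$$v := (v_1 \wedge \dots \wedge v_{i_1}) \otimes (v_1 \wedge \dots \wedge v_{i_2}) \otimes \dots \otimes (v_1 \wedge \dots \wedge v_{i_n}) \in \Lambda_q^{\i}(\C^m).$$
Using the isomorphism (\ref{eq:4}) to pass to $\Lambda_q(\C^n)^{\otimes m}$, where the $U_q(\sl_n)$-action is the standard tensor product action on fundamentals, the image of $v$ is a tensor product whose $\ell$-th factor has the form $w_{a_\ell} \wedge w_{a_\ell+1} \wedge \dots \wedge w_n$ for some $a_\ell$ (the dominance hypothesis $i_1 \le \dots \le i_n$ is exactly what ensures each factor is a contiguous range ending at $n$). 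In particular, whenever $w_{k+1}$ appears in a factor, so does $w_k$, and $E_k$ acting on any factor produces $0$. The coproduct formula then gives $E_k v = 0$ for every $k \in \{1,\dots,n-1\}$.

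For the reverse inclusion I would invoke quantum skew Howe duality, which gives a multiplicity-free decomposition
$$\Lambda_q^N(\C^m \otimes \C^n) \cong \bigoplus_\mu V_\mu \otimes W_{\mu^T}$$
where $\mu$ ranges over partitions fitting in an $m \times n$ box with $|\mu| = N = \sum_k i_k$, and $V_\mu$, $W_{\mu^T}$ denote the irreducible $U_q(\sl_m)$- and $U_q(\sl_n)$-modules associated with $\mu$ and its conjugate $\mu^T$. This follows from classical $(GL_m, GL_n)$-Howe duality since both sides are semisimple with $q$-independent dimensions. Restricting to the $\sl_n$-weight space $\Lambda_q^{\i}(\C^m)$ and taking $\sl_n$-highest weight vectors, only the unique $\mu$ for which $\i$ is the $\sl_n$-highest weight of $W_{\mu^T}$ contributes, and for $\i$ dominant in the paper's convention this $\mu$ satisfies $V_\mu \cong V_{\i}$. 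The highest weight subspace therefore has dimension exactly $\dim V_{\i}$, which together with the first inclusion yields equality.

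The main obstacle is reconciling the paper's sign convention $\alpha_k = -e_k + e_{k+1}$ with the standard Howe duality statement and verifying that the paper's notion of ``$\i$ dominant'' ($i_1 \le \dots \le i_n$) corresponds exactly to $\i$ being the $\sl_n$-highest weight of the unique summand containing $V_{\i}$. The dimension argument at the end is crucial because it bypasses the need to enumerate or normalize every individual highest weight vector, and instead pins down the space using just the explicit vector $v$ together with an a priori dimension bound.
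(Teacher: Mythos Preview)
Your proof is correct and rests on the same core tool as the paper's: skew Howe duality. The paper's argument is shorter---it immediately specializes to $q=1$ and reads off both inclusions at once from the multiplicity-free decomposition $\Lambda^N(\C^m\otimes\C^n)\cong\bigoplus_{\i}V_{\i^\vee}\boxtimes V_{\i}$, without separating the two directions or exhibiting any explicit vector. Your hands-on construction of the bi-highest-weight vector $v$ and its verification via the isomorphism~(\ref{eq:4}) is a nice concrete supplement, but it is not logically needed once Howe duality is invoked for the dimension count: that decomposition already identifies the full $\sl_n$-highest-weight subspace in the weight space $\i$ with $V_{\i}\otimes(\text{highest weight line})$, giving both containments simultaneously. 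Both arguments tacitly assume $\i$ is dominant (this is how the lemma is applied in Proposition~\ref{prop:Punique}); you flag this explicitly, while the paper does so implicitly by treating $\i$ as a Young diagram.
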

\begin{proof}
In order to prove this it suffices to consider the case $q=1$. Now, skew Howe duality says that, as an $(\sl_m,\sl_n)$-bimodule, we have 
$$\Lambda^N(\k^m \otimes \k^n) \cong \oplus_{|{\i}|=N} V_{{\i}^\vee} \boxtimes V_{\i}$$
where the sum is over all partitions (or equivalently Young diagrams) ${\i}$ of size $N$ which fit in an $n \times m$ box and ${\i}^\vee$ denotes the dual Young diagram (obtained by flipping about a diagonal). In particular, this means that the highest weight vectors of the isotypic component $V_{{\i}^\vee}$ for the action of $\sl_n$ is spanned by $V_{\i}$. 
\end{proof}

\subsection{The action of $\t_\omega$}

Consider again the action of $U_q(\sl_n)$ on $\Lambda_q^N(\k^m \otimes \k^n)$ described above. The decategorification of $\T_k \1_\i$ gives 
\begin{equation}\label{eq:cpxdecat}
\t_k \1_{\i} := \sum_{s \ge 0} (-q)^s E_k^{(- \la {\i}, \alpha_k \ra + s)} F_k^{(s)} \1_{\i} 
\hspace{1cm} \text{ or } \hspace{1cm}
\t_k \1_{\i} := \sum_{s \ge 0} (-q)^s F_k^{(\la {\i}, \alpha_k \ra + s)} E_k^{(s)} \1_{\i} 
\end{equation}
depending on whether $\la {\i}, \alpha_k \ra \le 0$ or $\la {\i}, \alpha_k \ra \ge 0$ (recall that the shift $\la 1 \ra$ decategorifies to multiplication by $q^{-1}$). Here the pairing $\la \cdot, \cdot \ra$ is given by the usual dot product on ${\i} = (i_1, \dots, i_n)$ and $\alpha_k = (0, \dots, -1,1, \dots, 0)$. Note that $\t_k \1_\i = \1_{s_k \cdot \i} \t_k$ where $s_k$ acts on ${\i}$ by switching $i_k$ and $i_{k+1}$. We define $\t_\omega$, just like $\T_\omega$ as 
$$\t_\omega \1_{\i} := (\t_{n-1} \dots \t_1)(\t_{n-1} \dots \t_2) \dots (\t_{n-1} \t_{n-2})(\t_{n-1}) \1_{\i}.$$

\begin{Proposition}\label{prop:3} Suppose ${\i}$ is dominant and let $v$ be a highest weight vector such that $F_{k_1}^{(p_1)} \dots F_{k_j}^{(p_j)}(v)$ has weight $\i$. Then 
\begin{equation}\label{eq:5}
\t_\omega^2 F_{k_1}^{(p_1)} \dots F_{k_j}^{(p_j)}(v) = q^{\la {\i'} + {\i}, {\i'} - {\i} \ra + 2 \sum_l p_l} F_{k_1}^{(p_1)} \dots F_{k_j}^{(p_j)}(v)
\end{equation}
where ${\i'}$ is the weight of $v$. 
\end{Proposition}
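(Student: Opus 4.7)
The plan is to reduce the statement to the base case $\t_\omega^2(v)=v$ on the highest weight vector, by iteratively commuting $\t_\omega^2$ past each divided power $F_{k_l}^{(p_l)}$ using the decategorified form of Corollary \ref{cor:1}.

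Decategorifying Corollary \ref{cor:1} (the shift $[2p]$ contributes $(-1)^{2p}=1$ in K-theory and the grading shift becomes a power of $q$) yields
\[
\t_\omega^2 \, F_i^{(p)} \1_\mu \;=\; q^{\,2p(\la\mu,\alpha_i\ra - p + 1)} \, F_i^{(p)} \, \t_\omega^2 \1_\mu,
\]
where $\mu$ denotes the weight of the vector on which $F_i^{(p)}$ is applied. (Strictly speaking, Corollary \ref{cor:1} yields this only when the output weight $\mu - p\alpha_i$ is $\alpha_i$-dominant; at intermediate steps of the iteration where this fails, one deduces the same formula from the companion $E$-commutation together with the standard $E$-$F$ exchange from Lemma \ref{lem:rels1}.)

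Setting $\mu_l := \i' - \sum_{m>l} p_m \alpha_{k_m}$ (the weight after applying the last $j-l$ of the $F$'s), iterating gives
\[
\t_\omega^2 \! \left( F_{k_1}^{(p_1)} \cdots F_{k_j}^{(p_j)} v \right) \;=\; q^E \, F_{k_1}^{(p_1)} \cdots F_{k_j}^{(p_j)} \, \t_\omega^2(v), \qquad E = \sum_{l=1}^j 2p_l \bigl( \la \mu_l, \alpha_{k_l} \ra - p_l + 1 \bigr).
\]
Expanding $\la\mu_l, \alpha_{k_l}\ra = \la\i', \alpha_{k_l}\ra - \sum_{m>l} p_m \la\alpha_{k_m}, \alpha_{k_l}\ra$ and using $\sum_{l,m} p_l p_m \la\alpha_{k_l}, \alpha_{k_m}\ra = |\i' - \i|^2$ (with $\la\alpha_k, \alpha_k\ra = 2$), one finds
\[
E \;=\; 2\la\i', \i' - \i\ra - |\i' - \i|^2 + 2\sum_l p_l \;=\; \la\i'+\i, \i'-\i\ra + 2\sum_l p_l,
\]
so the proposition reduces to the identity $\t_\omega^2(v) = v$.

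For this base case, I would use that $E_k(v)=0$ for all $k$, so that the defining sum for $\t_k \1_{\i'}$ collapses to its $s=0$ term, giving $\t_k(v) = F_k^{(\la\i',\alpha_k\ra)}(v)$ (the $\sl_n$-lowest vector of the $\alpha_k$-string through $v$). Iterating the reduced expression for $\t_\omega$ then sends $v$ to a scalar multiple of the $\sl_n$-lowest weight vector of $V^{\sl_n}_{\i'}$, and applying $\t_\omega$ once more (using the dual collapse of the $\t_k$-sum on $F_k$-annihilated vectors) returns $v$ with all intermediate $q$-factors cancelling. The hard part is the bookkeeping in this final step; the cleanest route is induction on $n$, with the base case $n=2$ handled via the identity $E_1^{(r)} F_1^{(s)}(v) = \qbins{\la\i',\alpha_1\ra+r-s}{r} F_1^{(s-r)}(v)$ (which shows that only the $s=0$ term contributes to $\t_1^2(v)$), and the inductive step using the factorization $\t_\omega = (\t_1 \cdots \t_{n-1}) \, \t_{\omega_{n-1}}$ together with the fact that $v$ is simultaneously a highest weight vector for the Levi subalgebra $\sl_{n-1} \subset \sl_n$.
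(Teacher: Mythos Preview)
Your reduction step is exactly the paper's: decategorify Corollary~\ref{cor:1} to get the commutation of $\t_\omega^2$ past each $F_{k_l}^{(p_l)}$, then sum the exponents and simplify the telescoping sum to $\la\i'+\i,\i'-\i\ra + 2\sum_l p_l$. Your parametrization by the input weight $\mu$ rather than the output weight $\l$ is just a change of variables, and your remark about the sign hypothesis in Corollary~\ref{cor:1} is a fair point that the paper itself glosses over.

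The only divergence is in the base case $\t_\omega^2(v)=v$. You propose induction on $n$ via the factorization $\t_\omega = (\t_1\cdots\t_{n-1})\t_{\omega_{n-1}}$. This can be made to work, but as you admit the bookkeeping is awkward: you must push $\t_{\omega_{n-1}}$ through $(\t_1\cdots\t_{n-1})$ using braid relations and then control the resulting vector. The paper avoids this entirely by using the \emph{palindromic} reduced expression (\ref{eq:0}) for $\t_\omega^2$. Since $E_k(v)=0$ (resp.\ $F_k(w)=0$) forces $\t_k = U_k$ on such vectors, one gets
\[
\t_\omega^2(v) = \bigl[(U_{n-1})(U_{n-2}U_{n-1})\cdots\bigr]\bigl[\cdots(U_{n-1}U_{n-2})(U_{n-1})\bigr](v).
\]
Now the innermost pair $U_{n-1}U_{n-1}$ collapses to the identity by Lemma~\ref{lem:rels1} (only the $j=\la\mu,\alpha_{n-1}\ra$ term survives), and one peels off layers symmetrically until nothing is left. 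This is more direct than your induction and requires no tracking of $q$-powers in the base case at all.
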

\begin{proof}
The decategorification of Corollary \ref{cor:1} states that 
$$\t_\omega^2 \1_{\i} F_k^{(p)} \cong q^{2p(\la {\i}, \alpha_k \ra + p + 1)} F_k^{(p)} \t_\omega^2 \1_{\i}.$$ 
Applying this repeatedly and keeping track of the powers of $q$ we find that the exponent of $q$ is 
$$2p_1(\la {\i}, \alpha_{k_1} \ra + p_1 + 1) + 2p_2(\la {\i} + p_1 \alpha_{k_1}, \alpha_{k_2} \ra + p_2 + 1) + \dots +  2p_j(\la {\i} + p_1 \alpha_{k_1} + \dots + p_{j-1} \alpha_{k_{j-1}}, \alpha_{k_j} \ra + p_j+1)$$
which simplifies to 
$$2 \sum_l p_l \la {\i}, \alpha_{k_l} \ra + 2 \sum_{1 \le a < b \le j} \la p_a \alpha_{k_a}, p_b \alpha_{k_b} \ra + 2 \sum_l (p_l^2+p_l).$$
Now using that ${\i'} - {\i} = \sum_l p_l \alpha_{k_l}$ it is easy to check that this simplifies to give
$$2 \la {\i}, {\i'} - {\i} \ra + \la {\i'} - {\i}, {\i'} - {\i} \ra + 2 \sum_l p_l $$
which is the same as the exponent of $q$ in (\ref{eq:5}). 

Finally, to complete the proof one shows that $\t_\omega^2 v = v$ since $v$ is a highest weight vector. To see this first note that if $E_k(w)=0$ (resp. $F_k(w)=0$) then $\t_k(w) = U_k(w)$ and $F_k U_k(w) = 0$ (resp. $E_k U_k(w) = 0$). Here $U_k$ is the decategorification of $\U_k$, namely $U_k(w)$ equals 
$$E_k^{(-\la \wt(w), \alpha_k \ra)}(w) \text{ if } \la \wt(w), \alpha_k \ra \le 0 \hspace{.5cm} \text{ and } \hspace{.5cm} F_k^{(\la \wt(w), \alpha_k \ra)}(w)  \text{ if } \la \wt(w), \alpha_k \ra \ge 0$$
where $\wt(w)$ denotes the weight of $w$. Thus we get:
$$\t_\omega^2(v) = [(U_{n-1})(U_{n-2} U_{n-1}) \dots (U_1 \dots U_{n-1})][(U_{n-1} \dots U_1) \dots (U_{n-1} U_{n-2})(U_{n-1})](v).$$
Now consider the middle two terms $U_{n-1} U_{n-1} (w)$ where $w = (U_{n-2} \dots U_1) \dots (U_{n-1} U_{n-2})(U_{n-1})(v)$. By the argument above we know either $E_{n-1}(w) = 0$ or $F_{n-1}(w) = 0$. Let us suppose $E_{n-1}(w)=0$ (the other case is the same). Then 
\begin{eqnarray*}
U_{n-1} U_{n-1} (w)  
&=& E_{n-1}^{(\la \wt(w), \alpha_{n-1} \ra)} F_{n-1}^{(\la \wt(w), \alpha_{n-1} \ra)} (w) \\
&=& \sum_{j \ge 0} {\qbins{\la \wt(w), \alpha_{n-1} \ra}{j}} F_{n-1}^{(\la \wt(w), \alpha_{n-1} \ra-j)} E_{n-1}^{(\la \wt(w), \alpha_{n-1} \ra-j)} (w) 
\end{eqnarray*}
Since $E_{n-1}(w)=0$ all these terms vanish except for the one term when $j = \la \wt(w), \alpha_{n-1} \ra$. Thus we get $U_{n-1} U_{n-1} (w) = w$. Repeating this way we obtain $\t_\omega^2(v) = v$. 
\end{proof}

\begin{Corollary}\label{cor:Punique}
Suppose ${\i}$ is a dominant weight. The clasp $P \1_{\i}$ is the unique (nonzero) idempotent in $\End(\Lambda_q^{\i}(\k^m))$ which satisfies $\t_\omega^2 P = P = P \t_\omega^2$.
\end{Corollary}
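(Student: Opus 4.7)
The plan is to verify the two identities $\t_\omega^2 P\1_\i = P\1_\i$ and $P\1_\i \t_\omega^2 = P\1_\i$ and then deduce uniqueness from a Schur-type eigenvalue analysis.

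The first identity follows immediately from Lemma \ref{lem:3} together with Proposition \ref{prop:3}: Lemma \ref{lem:3} identifies the image of $P\1_\i$ with the space of $U_q(\sl_n)$-highest weight vectors in $\Lambda_q^\i(\C^m)$, all of $\sl_n$-weight $\i$, and for such a $v$ Proposition \ref{prop:3} with $j=0$ and $\i'=\i$ gives $\t_\omega^2 v = q^0 v = v$.

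To handle the identity $P\1_\i \t_\omega^2 = P\1_\i$ I would use skew Howe duality to decompose
\[
\Lambda_q^\i(\C^m) \;=\; \bigoplus_{\mu \succeq \i}\, A_\mu \otimes (V_\mu^{\sl_n})_\i
\]
into $U_q(\sl_m)$-isotypic components, where $\mu$ ranges over dominant $\sl_n$-weights admitting an $\i$-weight vector (equivalently $\mu \succeq \i$), $A_\mu$ is the associated $U_q(\sl_m)$-irreducible, and $(V_\mu^{\sl_n})_\i$ is the multiplicity space. The summand $V_\i$ is precisely the $\mu=\i$ piece, with multiplicity space one-dimensional. Every vector in the $\mu$-isotypic summand has the form $F_{k_1}^{(p_1)}\cdots F_{k_j}^{(p_j)}(v)$ with $v$ a $U_q(\sl_n)$-highest weight vector of weight $\mu$, so Proposition \ref{prop:3} forces $\t_\omega^2$ to act on this summand as multiplication by the single scalar
\[
q^{c_\mu}, \qquad c_\mu \;=\; \langle \mu+\i,\,\mu-\i\rangle + 2\sum_l p_l.
\]
A short positivity calculation---writing $\mu-\i = \sum_k n_k \alpha_k$ with $n_k\ge 0$, invoking $\langle \i,\alpha_k\rangle \ge 0$ (dominance of $\i$), positive definiteness of $\langle\cdot,\cdot\rangle$ on the real root span, and $\sum p_l = \sum n_k$---gives $c_\mu>0$ for $\mu\succ\i$ and $c_\i=0$. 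Since $P\1_\i$ kills every $\mu\ne\i$ isotypic component and is the identity on the $\mu=\i$ piece, this yields $P\1_\i(\t_\omega^2-\id)=0$.

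For uniqueness, let $P'$ be any nonzero $U_q(\sl_m)$-equivariant projection satisfying the intertwining relations. By Schur's lemma applied to the isotypic decomposition above, $P'$ must respect the summands, so $P'=\bigoplus_\mu \id_{A_\mu}\otimes P'_\mu$ for operators $P'_\mu$ on the multiplicity spaces. The relations $\t_\omega^2 P'=P'=P'\t_\omega^2$ then become $(q^{c_\mu}-1)P'_\mu=0$, forcing $P'_\mu=0$ whenever $\mu\ne\i$. Since the multiplicity space for $\mu=\i$ is one-dimensional and $P'_\i$ is a projection, $P'_\i\in\{0,1\}$; nontriviality of $P'$ forces $P'_\i=1$, so $P'=P\1_\i$. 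The main thing to check carefully is the positivity $c_\mu>0$ for $\mu\succ\i$; once that is in hand, the rest is a direct eigenvalue/Schur-type analysis of the Howe decomposition.
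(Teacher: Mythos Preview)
Your argument is correct and follows essentially the same route as the paper: both rest on Proposition~\ref{prop:3} to compute $\t_\omega^2$ on vectors of the form $F_{k_1}^{(p_1)}\cdots F_{k_j}^{(p_j)}(v)$, and on the same positivity computation $\langle \i'+\i,\i'-\i\rangle + 2\sum_l p_l > 0$ for $\i'\ne\i$ dominant. The only difference is organizational: for uniqueness you phrase things via the Howe isotypic decomposition and Schur's lemma, whereas the paper deduces directly that $P'F_k=0$ and then invokes Proposition~\ref{prop:Punique} (whose proof in turn rests on the same Lemma~\ref{lem:3}). One small point to tighten: not every vector in the $\mu$-isotypic piece is literally a monomial $F_{k_1}^{(p_1)}\cdots F_{k_j}^{(p_j)}(v)$, only a linear combination of such; but since $\sum_l p_l=\sum_k n_k$ is determined by $\mu-\i$, the eigenvalue $q^{c_\mu}$ is the same for all monomials appearing, so the conclusion stands.
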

\begin{proof}
The vector space $\Lambda_q^{\i}(\k^m)$ is spanned by vectors $v \in V_{\i} \subset \Lambda_q^{\i}(\k^m)$ (which are highest weight vectors) and vectors of the form $F_{k_1}^{(p_1)} \dots F_{k_j}^{(p_j)}(v)$ where $v$ is a highest weight vector. 

In the first case we have $P(v)=v$ and $\t_\omega^2(v)=v$ by Proposition \ref{prop:3}. In the second case we have $P F_{k_1}^{(p_1)} \dots F_{k_j}^{(p_j)}(v) = 0$ by Proposition \ref{prop:Punique} and likewise $P \t_\omega^2 F_{k_1}^{(p_1)} \dots F_{k_j}^{(p_j)}(v) = 0$ by Proposition \ref{prop:3}. This shows that $\t_\omega^2 P = P = P \t_\omega^2$. 

On the other hand, suppose $P'$ is another projection which satisfies $\t_\omega^2 P' = P' = P' \t_\omega^2$. Then 
\begin{eqnarray*}
P' F_{k_1}^{(p_1)} \dots F_{k_j}^{(p_j)}(v) 
&=& P' \t_\omega^2 F_{k_1}^{(p_1)} \dots F_{k_j}^{(p_j)}(v) \\
&=& P' q^{\la {\i'} + {\i}, {\i'} - {\i} \ra + 2 \sum_l p_l} F_{k_1}^{(p_1)} \dots F_{k_j}^{(p_j)}(v)
\end{eqnarray*}
by Proposition \ref{prop:3}. Now $\la {\i'}, {\i'} - {\i} \ra \ge 0$ and $\la {\i}, {\i'} - {\i} \ra \ge 0$ since $\i, \i'$ are dominant weights and $\i'-\i = \sum_l p_l \alpha_{i_l}$ where $p_l > 0$. Thus the exponent of $q$ is positive and we conclude $P' F_{k_1}^{(p_1)} \dots F_{k_j}^{(p_j)}(v)=0$. Then $P' \1_{\i} F_k = 0$ for any $k$ and hence $P' \1_{\i} = P \1_{\i}$ by Proposition \ref{prop:Punique}. 
\end{proof} 

Recall that $\P^- \1_{\i} \in \Kom^-(\K)$ is an idempotent (Proposition \ref{prop:idempotent}) satisfying $\T_\omega^2 \P^- \1_{\i} \cong \P^- \1_{\i} \cong \T_\omega^2 \P^- \1_{\i}$ (essentially by definition). It follows that if $\i$ is a dominant weight then $\P^- \1_{\i}$ categorifies $P \1_\i$ in the sense that $p(\P^- \1_{\i}) = P \1_{\i} \in \hK(\K)$. 

If $\i$ is not dominant let $\sigma \in S_n$ be a permutation such that $\sigma \cdot \i$ is dominant and consider its lift $\t_\sigma$ to the braid group. Now $\t_\sigma^{-1} P \1_\i \t_\sigma$ is idempotent and $\t_\sigma, \t_\sigma^{-1}$ commute with $\t_\omega^2$ which means 
$$\t_\omega^2 (\t_\sigma^{-1} P \1_\i \t_\sigma) \1_{\sigma \cdot \i}
\cong \t_\sigma^{-1} P \1_\i \t_\sigma \1_{\sigma \cdot \i}
\cong (\t_\sigma^{-1} P \1_\i \t_\sigma) \t_\omega^2 \1_{\sigma \cdot \i}.$$
So by Corollary \ref{cor:Punique} we conclude that $\t_\sigma^{-1} P \1_\i \t_\sigma = P \1_{\sigma \cdot \i}$. 

A similar argument show that $\T_\sigma^{-1} \P^- \1_\i \T_\sigma$ is idempotent and 
$$\T_\omega^2 (\T_\sigma^{-1} \P^- \1_\i \T_\sigma) \1_{\sigma \cdot \i}
\cong \T_\sigma^{-1} \P^- \1_\i \T_\sigma \1_{\sigma \cdot \i}
\cong (\T_\sigma^{-1} \P^- \1_\i \T_\sigma) \T_\omega^2 \1_{\sigma \cdot \i}.$$
This means that $p(\T_\sigma^{-1} \P^- \1_\i \T_\sigma) = P \1_{\sigma \cdot \i}$ which, since $\t_\sigma^{-1} P \1_\i \t_\sigma = P \1_{\sigma \cdot \i}$, implies $p(\P^- \1_\i) = P \1_{\i}$. This concludes the proof of Theorem \ref{thm:main2}.

\section{The representation $\Lambda_q^{m \infty}(\k^m \otimes \k^{2\infty})$ and tangle invariants}\label{sec:replambda}

We will now prove Theorem \ref{thm:main3}. Most of the work is setting everything up correctly at the decategorified level (sections \ref{sec:tanglesfund} and \ref{sec:tanglesgen}). It is then clear how to pass to categories (section \ref{sec:catknots}). In section \ref{sec:linkinv} we explain how to obtain $\Z^2$-graded homological link invariants from our setup. 

\subsection{The weight spaces}\label{sec:weightspaces}

If we fix $m,N$ then 
\begin{eqnarray*}
\Lambda_q^{mN}(\k^m \otimes \k^{2N})
&\cong& \Lambda_q^{mN}(\k^m \oplus \dots \oplus \k^m) \\
&\cong& \bigoplus_{i_1 + \dots i_{2N} = mN} \Lambda_q^{i_1}(\k^m) \otimes \dots \otimes \Lambda_q^{i_{2N}}(\k^m)
\end{eqnarray*}
where there are $2N$ summands $\k^m$ in the first line. As noted in section \ref{sec:Howe}, each $\Lambda_q^{i_1}(\k^m) \otimes \dots \otimes \Lambda_q^{i_{2N}}(\k^m)$ is a weight space for the action of $U_q(\sl_{2N})$ where 
$$\left[ \dots \otimes \Lambda_q^{i_k}(\k^m) \otimes \Lambda_q^{i_{k+1}}(\k^m) \otimes \dots \right]
\overset{E_k}{\underset{F_k}{\rightleftarrows}} 
\left[ \dots \otimes \Lambda_q^{i_k-1}(\k^m) \otimes \Lambda_q^{i_{k+1}+1}(\k^m) \otimes \dots \right]$$

Thus, the nonzero weight spaces of $\Lambda_q^{mN}(\k^m \otimes \k^{2N})$ as a $U_q(\sl_{2N})$-module are in natural bijection with $2N$-tuples $(i_1, \dots, i_{2N})$ where $0 \le i_1, \dots, i_{2N} \le m$ and $\sum_\ell i_\ell = mN$. The weight space labeled by $\i = (i_1, \dots, i_{2N})$ will be denoted $V(\i)$. 

In this notation, $\alpha_k = (0, \dots, -1,1, \dots, 0)$ so that $V(\i) \overset{E_k}{\underset{F_k}{\rightleftarrows}} V(\i+\alpha_k)$. The bilinear form $\la \cdot, \cdot \ra$ on weights is given by the usual dot product $\la \i, \i' \ra = \i \cdot \i'$ of tuples. The highest weight is $(0,\dots,0,m,\dots,m)$ where there are $N$ $0$s and $m$s. The Weyl group of $\sl_{2N}$, with generators $s_1, \dots, s_{2N-1}$, permutes the weights
$$s_k \cdot (i_1, \dots, i_k, i_{k+1}, \dots, i_{2N}) = (i_1, \dots, i_{k+1}, i_k, \dots, i_{2N}).$$

Given a weight ${\i}$ denote by $\rho(\i)$ the sequence obtained by dropping all $\i_\ell \in \{0,m\}$. For example, if $\i = (1,0,3,m,5)$ then $\rho(\i) = (1,3,5)$. Moreover, we denote by $S(\i)$ the set of weights $\i'$ such that $\rho(\i) = \rho(\i')$. 

\begin{Lemma}\label{lem:4} 
Fix $\i$. One can canonically identify all the weight spaces $V(\j)$ with $\j \in S(\i)$.
\end{Lemma}
\begin{proof}
Any two weights in $S(\i)$ can be related by a sequence of moves which exchange $i_\ell, i_{\ell+1} \in \i$ as long as at least one of them belongs to $\{0,m\}$. Now, if $i_k$ or $i_{k+1}$ belongs to $\{0,m\}$ then it is easy to see $\t_k \1_\l$ is either $E_k^{(-\la \i,\alpha_k \ra)} \1_\i$ or $F_k^{(\la \i,\alpha_k \ra)} \1_\i$ depending on whether $\la \i, \alpha_k \ra \le 0 $ or $\la \i, \alpha_k \ra \ge 0$. This means that $\t_k^2 \1_{\i} = \1_{\i}$. 

Since the action is transitive on $S(\i)$ we can assume that $\i = (0,\dots,0,i_1,\dots,i_j,m,\dots,m)$ where there are $n_1$ $0$'s and $n_2$ $m$'s and $i_\ell \not\in \{0,m\}$. Now $\t_k \1_\i$ clearly acts by the identity on the $n_1$ $0$'s or on the $n_2$ $m$'s. Combining this with the fact that $\t_k^2 \1_{\i} = \1_{\i}$ if $i_k$ or $i_{k+1}$ is in $\{0,m\}$ (proven above) gives the result. 

\end{proof}

Letting $N \rightarrow \infty$ we denote the resulting vector space by $\Lambda_q^{m\infty}(\k^m \otimes \k^{2\infty})$. More precisely, 
\begin{align*}
\Lambda_q^{m \infty}(\k^m \otimes \k^{2 \infty})
&\cong \Lambda_q^{m \infty}(\bigoplus_{k \in \Z} \k^m) \\
&\cong \bigoplus_{\i} \left[ \dots \otimes \Lambda_q^{i_k}(\k^m) \otimes \Lambda_q^{i_{k+1}}(\k^m) \otimes \dots \right]
\end{align*}
where the direct sum is over all sequences $\i$ where $i_k = 0$ if $k \ll 0$ and $i_k = m$ if $k \gg 0$ and the sum of all $i_k \not\in \{0,m\}$ is divisible by $m$. Because of the former condition the infinite tensor product in each summand above is actually finite. As before, $E_k$ and $F_k$ correspond to maps 
$$(\dots, i_k, i_{k+1}, \dots) \overset{E_k}{\underset{F_k}{\rightleftarrows}} (\dots, i_k-1, i_{k+1}+1, \dots).$$
except now $k \in \Z$. The weight space labeled by $\i$ is still denoted $V(\i)$. 

Given $\i$ we again have $\rho(\i)$ which forgets all the terms in $\i$ equal to $0$ or $m$. Note that $\rho(\i)$ is still a finite sequence. As before, we denote by $S(\i)$ all sequences $\i'$ such that $\rho(\i) = \rho(\i')$. 

Now consider the embedding $U_q(\sl_{2N}) \rightarrow U_q(\sl_{\infty})$ given by $E_k \mapsto E_{k-N}$ and $F_k \mapsto F_{k-N}$ where $k=1, \dots, 2N-1$. If we restrict $\Lambda_q^{m\infty}(\k^m \otimes \k^{2\infty})$ to $U_q(\sl_{2N})$ we find that it contains the module $\Lambda_q^{mN}(\k^m \otimes \k^{2N})$ as a direct summand. Moreover, given any weight ${\i}$, the weight space $V({\i})$ sits as the weight space of such a direct summand for $N$ sufficiently large. The following is an immediate corollary of Lemma \ref{lem:4}

\begin{Corollary}\label{cor:3} 
One can canonically identify all the weight spaces $V(\j)$ of $\Lambda_q^{m \infty}(\k^m \otimes \k^{2 \infty})$ if $\j \in S(\i)$. 
\end{Corollary}

To define the tangle invariants it will be convenient to define the complexes 
$$\T'_k \1_\i := \begin{cases} \T_k \1_\i [-i_k] \la i_k \ra & \text{ if } \la \i, \alpha_k \ra = -i_k + i_{k+1} \ge 0 \\ \T_k \1_\i [-i_{k+1}]\la i_{k+1} \ra & \text{ if } \la \i, \alpha_k \ra = -i_k + i_{k+1} \le 0 \end{cases}$$ 
It is easy to check that these $\T'_k$ also satisfy the braid relations of Proposition \ref{prop:braidgrpaction}. The advantage is that we now have the following ({\it c.f.} Lemma \ref{lem:2}). 

\begin{Corollary}\label{COR:ET}
If $|i-j|=1$ then $\T'_i \T'_j \E_i \cong \E_j \T'_i \T'_j$ and $\T'_i \T'_j \F_i \cong \F_j \T'_i \T'_j$. 
\end{Corollary}

We will denote by $\t'_k \1_\i$ the decategorification of $\T'_k \1_\i$. 

\subsection{Tangle invariants: fundamental representations}\label{sec:tanglesfund}

Consider an oriented tangle $T$ whose strands are labeled by representations (or equivalently, dominant weights) of $\sl_m$. Let $ \ul = (\l^{(1)}, \dots, \l^{(n)}) $ be the labels on the strands at the top and $ \umu = (\mu^{(1)}, \dots, \mu^{(n')}) $ be the labels on the strands at the bottom. As usual, $V_\l$ denotes the irreducible $\sl_m$ representation with highest weight $\l$. Note that if a strand labeled by $V_\l$ is oriented upward then the boundary point is marked by $\l$ whereas if it is oriented downward then it is marked by $\l'$ where $V_{\l'} \cong V_\l^*$ is the dual. 

A $U_q(\sl_m)$ oriented tangle invariant associates to such a tangle a map of $U_q(\sl_m)$-modules 
\begin{equation*}
\psi(T) :  V_\ul := V_{\lambda^{(1)}} \otimes \dots \otimes V_{\lambda^{(n)}} \rightarrow V_\umu := V_{\mu^{(1)}} \otimes \dots \otimes V_{\mu^{(n')}}.
\end{equation*}
This is done by analyzing a tangle projection from bottom to top and assigning maps to each cap, cup and crossing (the generating tangles are shown in figure (\ref{fig:1})). If the maps $\psi(T)$ do not depend on the planar projection of the tangle then we get a map
\begin{equation*}
\psi : \biggl\{ (\ul, \umu) \text{ tangles } \biggr\} \rightarrow \Hom_{U_q(\sl_m)}(V_\ul, V_\umu)
\end{equation*}
If $T = K$ is an oriented link then $ \psi(K) $ is a map $ \k[q,q^{-1}] \rightarrow \k[q,q^{-1}]$ and $\psi(K)(1)$ becomes a polynomial invariant of $K$. 

Reshetikhin and Turaev defined such an oriented tangle invariant in \cite{RT}. We now explain how to recover their maps from the $U_q(\sl_\infty)$-module $\Lambda_q^{m \infty}(\k^m \otimes \k^{2 \infty})$ using skew Howe duality. For the moment we consider the case when all strands are labeled by fundamental weights. So $\ul = (\Lambda_{i_1}, \dots, \Lambda_{i_n})$ and $V_\ul$ is identified with $V(\i)$ where $\i = (\dots, 0, i_1, \dots, i_n, m, \dots)$. 

\begin{itemize}
\item To the four over crossings from figure (\ref{fig:1}) one associates maps $V(\i) \rightarrow V(s_k \cdot \i)$ given by $T'_k$, $(-q)^{-i_k} T'_k$, $(-q)^{-m+i_{k+1}} T'_k$ and $(-q)^{i_{k+1}-i_k} T'_k$ respectively. The corresponding four under crossings are associated the inverse maps. 
\item To the cap and cup from figure (\ref{fig:1}) are associated the maps 
\begin{align*}
& E_k^{(i_k)}: V(\dots,i_k,m-i_k,\dots) \rightarrow V(\dots,0,m,\dots) \ \ \text{ and } \\
& F_k^{(i_k)}: V(\dots,0,m,\dots) \rightarrow V(\dots,i_k,m-i_k,\dots)
\end{align*}
regardless of the orientation of the strand.
\end{itemize}

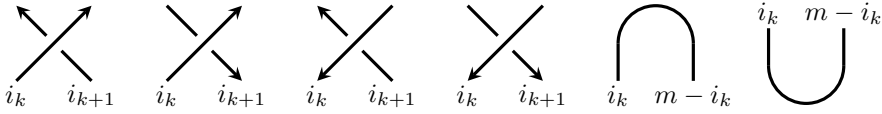
\begin{figure}[!h]
\centering
\begin{tikzpicture}[>=stealth]
\draw [->](0,0) -- (1,1) [very thick];
\draw [->](0.4,0.6) -- (0,1) [very thick];
\draw [-](1,0) -- (0.6,0.4) [very thick];
\draw [shift={+(0,-0.2)}](0,0) node {$i_k$};
\draw [shift={+(0,-0.2)}](1,0) node {$i_{k+1}$};

\draw [shift={+(2,0)}][->](0,0) -- (1,1) [very thick];
\draw [shift={+(2,0)}][-](0.4,0.6) -- (0,1) [very thick];
\draw [shift={+(2,0)}][<-](1,0) -- (0.6,0.4) [very thick];
\draw [shift={+(2,0)}][shift={+(0,-0.2)}](0,0) node {$i_k$};
\draw [shift={+(2,0)}][shift={+(0,-0.2)}](1,0) node {$i_{k+1}$};

\draw [shift={+(4,0)}][<-](0,0) -- (1,1) [very thick];
\draw [shift={+(4,0)}][->](0.4,0.6) -- (0,1) [very thick];
\draw [shift={+(4,0)}][-](1,0) -- (0.6,0.4) [very thick];
\draw [shift={+(4,0)}][shift={+(0,-0.2)}](0,0) node {$i_k$};
\draw [shift={+(4,0)}][shift={+(0,-0.2)}](1,0) node {$i_{k+1}$};

\draw [shift={+(6,0)}][<-](0,0) -- (1,1) [very thick];
\draw [shift={+(6,0)}][-](0.4,0.6) -- (0,1) [very thick];
\draw [shift={+(6,0)}][<-](1,0) -- (0.6,0.4) [very thick];
\draw [shift={+(6,0)}][shift={+(0,-0.2)}](0,0) node {$i_k$};
\draw [shift={+(6,0)}][shift={+(0,-0.2)}](1,0) node {$i_{k+1}$};

\draw [shift={+(8,.5)}](0,0) -- (0,-.5) [very thick];
\draw [shift={+(8,.5)}](1,0) -- (1,-.5)[-] [very thick];
\draw [shift={+(8,.5)}](0,0) arc (180:0:.5) [very thick];
\draw [shift={+(8,0)}][shift={+(0,-0.2)}](0,0) node {$i_k$};
\draw [shift={+(8,0)}][shift={+(0,-0.2)}](1,0) node {$m-i_k$};

\draw [shift={+(10,.2)}](0,0) -- (0,.5) [-][very thick];
\draw [shift={+(10,.2)}](1,0) -- (1,.5) [very thick];
\draw [shift={+(10,.2)}](0,0) arc (180:360:.5) [very thick];
\draw [shift={+(10,0)}][shift={+(0,.9)}](0,0) node {$i_k$};
\draw [shift={+(10,0)}][shift={+(0,.9)}](1,0) node {$m-i_k$};
\end{tikzpicture}
\caption{The cap and cup can have either orientation.}\label{fig:1}
\end{figure}

\begin{Proposition}\label{prop:4}
These maps define a $U_q(\sl_m)$ oriented tangle invariant. 
\end{Proposition}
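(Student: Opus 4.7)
The plan is to check three things in turn: $U_q(\sl_m)$-equivariance of all the assignments, invariance under framed tangle isotopy, and agreement with the Reshetikhin-Turaev maps.

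First, equivariance comes for free. The maps $\t_k^{\pm 1}$, $E_k^{(i_k)}$ and $\1_\i F_k^{(i_k)}$ are all built from the $U_q(\sl_\infty)$-action on $\Lambda_q^{m\infty}(\C^m \otimes \C^{2\infty})$. Since the $U_q(\sl_m)$ and $U_q(\sl_\infty)$ actions commute (Lemma \ref{lem:commute}), each of our assignments is automatically a morphism of $U_q(\sl_m)$-modules. The canonical identifications $V(\dots,0,m,\dots) \cong V(\dots,m,0,\dots)$ from Corollary \ref{cor:3}, also built from $U_q(\sl_\infty)$, are equivariant for the same reason.

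Second, I would verify the framed tangle relations generator by generator. The Reidemeister II and III moves for crossings reduce to the braid relations in the braid group of $\sl_\infty$, which hold by the decategorification of Proposition \ref{prop:braidgrpaction}. The zig-zag (cup-cap) identities reduce, via the definitions of cap and cup, to identities of the form $E_k^{(a)} F_k^{(a)} \1_\i = \1_\i$ on weights $\i$ where $(i_k, i_{k+1}) = (0,m)$ or $(m,0)$; these collapse via Lemma \ref{lem:rels1}, exactly as in the last two paragraphs of the proof of Lemma \ref{lem:4}. Finally, the slide relations expressing that a strand may be isotoped under or over a cup or cap translate, via the same collapse of $\t_k$ to a single divided power on boundary weights exploited in Lemma \ref{lem:4}, into relations obtained by decategorifying Corollary \ref{cor:0}.

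Third, I would identify the resulting invariant with Reshetikhin-Turaev. By \cite{CKL1} the $\t_k$'s act on $V_{\Lambda_{i_1}} \otimes \cdots \otimes V_{\Lambda_{i_n}}$ as the $R$-matrix braiding for $U_q(\sl_m)$. When $i_k + i_{k+1} = m$ the space $\Hom_{U_q(\sl_m)}(V_{\Lambda_{i_k}} \otimes V_{\Lambda_{i_{k+1}}}, \C)$ is one-dimensional, so our cap and the Reshetikhin-Turaev cap differ by a scalar; the same is true for the cup. Pinning down these two scalars is a computation on a single small configuration (for instance, the evaluation of an unknot labeled by $V_{\Lambda_k}$, whose invariant must equal $\qbin{m}{k}$, or a single zig-zag).

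The main obstacle I anticipate is this last step: tracking the $q$-powers and sign conventions so that the normalization of the cap and cup produced by skew Howe duality agrees on the nose with the Reshetikhin-Turaev normalization across all labels and orientations. Once the scalars are correct on one cap and one cup, the other cases should follow from the Reidemeister and slide relations verified in step two, reducing the remaining work to bookkeeping.
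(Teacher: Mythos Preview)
Your overall strategy matches the paper's: first use the commutativity of the $U_q(\sl_m)$ and $U_q(\sl_\infty)$ actions to get equivariance for free, then verify the framed Reidemeister-type relations one by one. Two points need correction, and one part of your proposal is extraneous.

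First, you omit the framed (RI) relation. The paper checks explicitly that a cup followed by a crossing of its two strands equals a scalar $(\star)=q^{i_k(m-i_k)}(-q)^{\min(i_k,m-i_k)}$ times the other cup. This is the curl relation, and it is on the paper's list of relations to verify (figure~\ref{fig:2}). The computation uses exactly the decategorification of Corollary~\ref{cor:0}: one writes $\t_k E_k^{(i_k)}\1_{(\dots,m,0,\dots)}$ and commutes $\t_k$ past the divided power, picking up the shift that becomes $(\star)$. You cite Corollary~\ref{cor:0}, but you attach it to the wrong relation.

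Second, for the fork/slide move (a strand passing over or under a cup or cap), Corollary~\ref{cor:0} is not enough: that corollary is a one-node $\sl_2$ statement relating $\t$ with $E^{(p)},F^{(p)}$ at the \emph{same} index. The fork move requires commuting $\t_k$ with a cap at position $k\pm 1$, which is a two-node statement. The paper handles this via the decategorification of Lemma~\ref{lem:2}, obtaining $\t_k\t_{k+1}E_k^{(i_k)}=E_{k+1}^{(i_k)}\t_k\t_{k+1}$, from which the fork identity follows after unwinding the identifications of Corollary~\ref{cor:3}. Your collapse-to-a-single-divided-power idea from Lemma~\ref{lem:4} is part of the story, but it does not by itself produce the needed intertwining between adjacent indices.

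Finally, your third step (matching with Reshetikhin--Turaev) is not part of Proposition~\ref{prop:4}; the paper states and proves that separately in the proposition immediately following. Your sketch of that argument is fine, and indeed the paper argues exactly as you do: crossings agree with the $R$-matrix by \cite{CKL1}, and caps/cups are pinned down up to an invertible scalar in $\C[q,q^{-1}]$ by one-dimensionality of the relevant $\Hom$ space together with (R0).
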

\begin{proof}
In Lemma \ref{lem:commute} we saw that the actions of $U_q(\sl_m)$ and $U_q(\sl_n)$ on $\Lambda_q^N(\k^m \otimes \k^n)$ commute. This implies that the actions of $U_q(\sl_m)$ and $U_q(\sl_\infty)$ on $\Lambda_q^{m\infty}(\k^m \otimes \k^{2\infty})$ also commute. Since $V(\i)$ is a weight space of $U_q(\sl_\infty)$ the action of $U_q(\sl_m)$ preserves it. Morever, all the maps defined above (crossings, caps and cups) are written in terms of elements in $U_q(\sl_\infty)$ which means that they commute with the $U_q(\sl_m)$ action and hence induce maps of $U_q(\sl_m)$-modules. 

\begin{figure}[ht]
\begin{center}
\puteps[0.4]{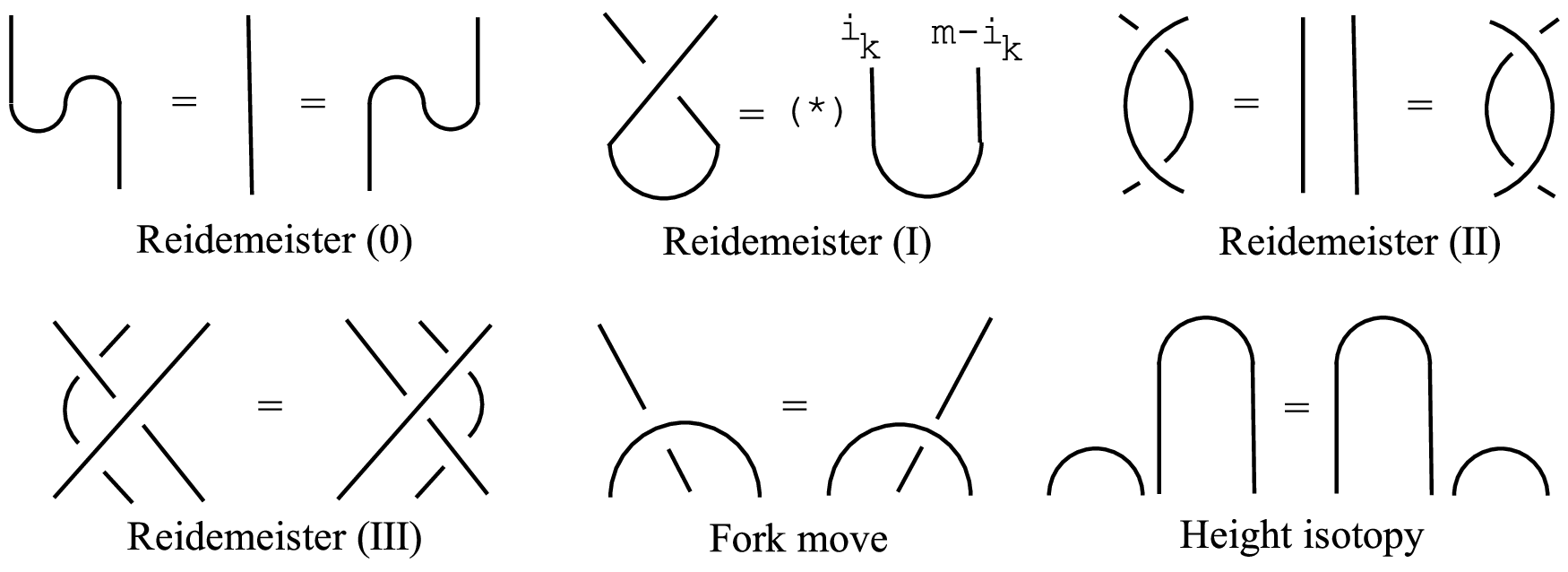}
\end{center}
\caption{Invariance relations where $(\star) = q^{i_k(m-i_k)}$.}\label{fig:2}
\end{figure}

To show that we have an invariant it suffices to check the relations in figure (\ref{fig:2}) where the strands are labeled by arbitrary fundamental weights and orientations. One also needs to check isotopy relations involving changing the height of crossings, caps and cups which are far apart (for example, the right most relation in the second line of figure (\ref{fig:2})). However, these isotopy relations are clear because the functors $E_i, F_i$ commute with $E_j, F_j$ if $|i-j|>1$. We now prove the remaining relations. 

{\bf (R0).} This relation amounts to showing that the following two compositions 
\begin{align*}
& V(\dots, 0,m,i_k,\dots) \xrightarrow{F_{k-2}^{(i_k)}} V(\dots, i_k, m-i_k, i_k, \dots) \xrightarrow{E_{k-1}^{(m-i_k)}} V(\dots, i_k, 0, m, \dots) \\
& V(\dots,i_k,0,m,\dots) \xrightarrow{F_{k+1}^{(m-i_k)}} V(\dots,i_k,m-i_k,i_k,\dots) \xrightarrow{E_{k}^{(i_k)}} V(\dots,0,m,i_k,\dots)
\end{align*}
are both equal to the identity map once you identify $V(\dots, 0,m,i_k,\dots)$ and $V(\dots, i_k, 0,m,\dots)$. We prove the first composition (the second follows in the same way). 

Using Lemma \ref{lem:4}, the identification $V(\dots,i_k,0,m,\dots) \xrightarrow{\sim} V(\dots,0,m,i_k,\dots)$ is given by $F_{k+1}^{(m-i_k)} E_k^{(i_k)}$. So we need to show that 
$$F_{k-1}^{(m-i_k)} E_{k-2}^{(i_k)} E_{k-1}^{(m-i_k)} F_{k-2}^{(i_k)} = id \in \End_{U_q(\sl_m)}(V(\dots,0,m,i_k,\dots)).$$ 
The left side equals $F_{k-1}^{(m-i_k)} E_{k-2}^{(i_k)}  F_{k-2}^{(i_k)} E_{k-1}^{(m-i_k)} = F_{k-1}^{(m-i_k)} E_{k-1}^{(m-i_k)} = id$ which completes the proof. 

{\bf (RI).} The (RI) relation consists of showing that 
$$V(\dots,0,m,\dots) \xrightarrow{F_k^{(i_k)}} V(\dots,i_k,m-i_k,\dots) \xrightarrow{\t'_k} V(\dots,m-i_k,i_k,\dots)$$
is equal to $(-q)^{i_k} q^{i_k(m-i_k)} F_k^{(m-i_k)}$. There are two cases. If $2i_k \le m$ then 
$$\t'_k F_k^{(i_k)} = q^{i_k(m-i_k)} E_k^{(i_k)} \t'_k = q^{i_k(m-i_k)} E_k^{(i_k)} F_k^{(m)} = q^{i_k(m-i_k)} F_k^{(m-i_k)}$$
where we used Corollary \ref{cor:0} to get the second equality (recall that in K-theory the shift $\la 1 \ra$ corresponds to $q^{-1}$). The case $2i_k \ge m$ is similar. 

{\bf (RII) and (RIII).} The (RII) relation follows from \cite{CKL3} where we show that $\t'_k$ is invertible. The (RIII) relation follows from \cite{CK3} (see the discussion in section \ref{sec:braidgrpaction} and Proposition \ref{prop:braidgrpaction}).  

{\bf Fork relations.} The fork relations involving cups follows formally from those involving caps together wit (R0). There are eight fork relations involving caps depending on the two possible ways to orient the strands and whether the crossings are over or under. We prove one of these cases, shown in figure (\ref{fig:A}), as the others are exactly the same. 

\begin{figure}[!h]
\centering
\begin{tikzpicture}[>=stealth]
\draw [-](0,0) -- (.7,.7) [very thick];
\draw [->](0.4,0.6) -- (0,1) [very thick];
\draw [-](1,0) -- (0.6,0.4) [very thick];
\draw (0.7,0.7) arc (125:65:.5) [very thick];
\draw [->](1.2,0.75) -- (2,0) [very thick];
\draw [shift={+(0,-0.2)}](0,0) node {$i_k$};
\draw [shift={+(0,-0.2)}](1,0) node {$i_{k+1}$};
\draw [shift={+(0,-0.2)}](2,0) node {$m-i_k$};

\draw [shift={+(0,0)}](3,.5) node {$=$};

\draw [shift={+(4,0)}][-](0,0) -- (.7,.7) [very thick];
\draw [shift={+(4,0)}][-](1,0) -- (1.4,0.4) [very thick];
\draw [shift={+(4,0)}][->](1.6,0.6) -- (2,1) [very thick];
\draw [shift={+(4,0)}](0.7,0.7) arc (125:65:.5) [very thick];
\draw [shift={+(4,0)}][->](1.2,0.75) -- (2,0) [very thick];
\draw [shift={+(4,0)}][shift={+(0,-0.2)}](0,0) node {$i_k$};
\draw [shift={+(4,0)}][shift={+(0,-0.2)}](1,0) node {$i_{k+1}$};
\draw [shift={+(4,0)}][shift={+(0,-0.2)}](2,0) node {$m-i_k$};
\end{tikzpicture}
\caption{One of the fork relations.}\label{fig:A}
\end{figure}
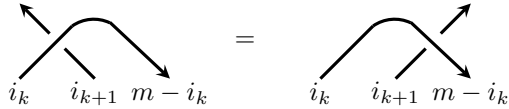

The left side is equal to $E_{k+1}^{(i_k)} \t'_k \1_{(\dots,i_k,i_{k+1},m-i_k,\dots)}$. The right hand side is equal to 
$$(-q)^{m-i_{k+1}} F_k^{(m-i_{k+1})} E_{k+1}^{(m-i_{k+1})} \1_{(\dots, 0,m,i_{k+1}, \dots)} E_k^{(i_k)} (\t'_{k+1})^{-1}.$$
The left most two terms are equal to $(-q)^{-m+i_{k+1}} \t'_k \t'_{k+1}$. The result follows since $\t'_k \t'_{k+1} E_k^{(i_k)} = E_{k+1}^{(i_k)} \t'_k \t'_{k+1}$ using (the decategorification of) Corollary \ref{COR:ET}.
\end{proof}

Above we used the $U_q(\sl_\infty)$ structure to construct isomorphisms $V_\ul = V(\i) \xrightarrow{\sim} V(s_k \cdot \i) = V_\umu$ and to define caps and cups. Alternatively, one can define this isomorphism as $Flip \circ R$ where $R$ is the $R$-matrix defined in \cite{RT} and $Flip$ exchanges factors $k$ and $k+1$ (one can also define caps and cups in this way). However, these two approaches are equivalent.

\begin{Proposition} The maps $T'_k \1_\i$, $E_k^{(i_k)} \1_\i$ and $\1_\i F_k^{(i_k)}$ used above to define crossings, cups and caps are equal to the Reshetikhin-Turaev maps \cite{RT} up to sign and multiplication by some power of $q$. 
\end{Proposition}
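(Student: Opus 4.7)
The plan is to use Schur's lemma throughout, exploiting the fact that both families of maps are $U_q(\sl_m)$-equivariant (our maps because they are built out of elements of $U_q(\sl_\infty)$, which commutes with the $U_q(\sl_m)$-action by Lemma \ref{lem:commute}, and the Reshetikhin-Turaev maps by their very construction). So it suffices to match the two sets of maps on each irreducible $U_q(\sl_m)$-summand of the source.

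First I would dispose of the cups and caps. Given $i_k+i_{k+1}=m$, the tensor product $V_{\Lambda_{i_k}} \otimes V_{\Lambda_{i_{k+1}}} = \Lambda_q^{i_k}(\C^m) \otimes \Lambda_q^{i_{k+1}}(\C^m)$ contains the trivial representation with multiplicity exactly one, so
\[
\Hom_{U_q(\sl_m)}\bigl(V_{\Lambda_{i_k}} \otimes V_{\Lambda_{i_{k+1}}},\, \k \bigr) \cong \Hom_{U_q(\sl_m)}\bigl(\k,\, V_{\Lambda_{i_k}} \otimes V_{\Lambda_{i_{k+1}}} \bigr) \cong \k.
\]
Both our cap $E_k^{(i_k)}\1_\i$ (composed with the identification of Corollary \ref{cor:3}) and the Reshetikhin-Turaev cap are nonzero elements of this one-dimensional space, so they agree up to a scalar, and similarly for cups. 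A short direct computation (or the rigidity axioms, after normalizing the cup so that cup-cap gives the quantum dimension) identifies the discrepancy as a power of $q$.

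For the crossings I would reduce to the case of a single crossing between two strands labeled by fundamental weights $\Lambda_a$ and $\Lambda_b$, i.e.\ to comparing $\t_k\1_\i$ with $\mbox{Flip}\circ R$ on $V_{\Lambda_a}\otimes V_{\Lambda_b}$. By the skew Howe decomposition
\[
V_{\Lambda_a} \otimes V_{\Lambda_b} \;\cong\; \bigoplus_{j=0}^{\min(a,b,m-\max(a,b))} V_{\Lambda_{\min(a,b)-j}+\Lambda_{\max(a,b)+j}},
\]
each summand appears with multiplicity one on both sides, so the $U_q(\sl_m)$-equivariant Hom space $\Hom(V_{\Lambda_a}\otimes V_{\Lambda_b}, V_{\Lambda_b}\otimes V_{\Lambda_a})$ has dimension $\min(a,b)+1$ and admits a natural basis of projectors onto the simple summands. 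The Reshetikhin-Turaev $R$-matrix acts on the summand indexed by $j$ by an explicit scalar $\varepsilon_j\, q^{c_j}$, where $c_j$ is a quadratic Casimir-type expression in $j$ and $\varepsilon_j\in\{\pm 1\}$. By Schur's lemma, $\t_k$ must similarly act as a scalar $d_j$ on each summand; the task is to show $d_j = \varepsilon_j q^{c_j}$ up to an overall (single, summand-independent) scalar.

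The main obstacle, and the only real content, is pinning down these scalars $d_j$. The cleanest way is to note that $\t_k^2$ acts as $d_j^2$ on the $j$-th summand and to compute these eigenvalues using Proposition \ref{prop:3} applied to a highest weight vector of $V_{\Lambda_{\min(a,b)-j}+\Lambda_{\max(a,b)+j}}$, written as $F_{i_1}^{(p_1)}\cdots F_{i_r}^{(p_r)}$ of a highest weight vector of $V(\i)$; the resulting exponent of $q$ matches $2c_j$ after a direct bookkeeping of the pairings $\la \cdot,\cdot\ra$. This determines each $d_j$ up to a sign, and the signs can be fixed inductively by acting on the top summand $V_{\Lambda_a+\Lambda_b}$ (where $\t_k$ acts by the $q$-symmetrizer, matching the $R$-matrix up to the standard normalization). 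Alternatively, one can simply cite \cite{CKL1}, where precisely this identification of the skew-Howe braid action on $\Lambda_q^{mN}(\k^m\otimes\k^{2N})$ with the Reshetikhin-Turaev $R$-matrix action is carried out at the categorified level, from which the decategorified statement follows immediately.
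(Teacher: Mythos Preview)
Your argument is correct and follows the paper's: for caps and cups both proofs invoke the one-dimensionality of $\Hom_{U_q(\sl_m)}(V_{\Lambda_{i_k}}\otimes V_{\Lambda_{m-i_k}},\k)$, and for crossings the paper simply cites \cite[Thm.~4.3]{CKL1}, which is exactly your stated alternative.

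Two remarks are worth making. First, the paper's reason that the cap/cup ratio is a power of $q$ is crisper than an unspecified ``direct computation'': both maps are defined over $\C[q,q^{-1}]$ and the relevant Hom space has rank one at $q=1$, so the ratio $f$ lies in $\C[q,q^{-1}]$; the zig-zag relation (R0), which you have already verified in Proposition~\ref{prop:4}, then forces $f$ to be a unit in $\C[q,q^{-1}]$, i.e.\ a scalar times $q^n$. Second, your primary eigenvalue approach to crossings via Proposition~\ref{prop:3} is an interesting addition not present in the paper, but the ``inductive'' determination of the signs of the $d_j$ is not actually justified: knowing $\t_k^2$ fixes each $d_j$ only up to an independent sign on each irreducible summand, and matching the top summand alone does not propagate to the others. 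Since the citation to \cite{CKL1} closes this gap anyway, the proof stands.
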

\begin{proof}
This follows from \cite{CKM} where we construct a functor from $U_q(\sl_n)$ (thought of as a category where the objects are weights) to the category of representations of $U_q(\sl_m)$. In particular, under this functor, the maps $T'_k$ are taken to the isomorphisms $Flip \circ R$. In \cite[Sect. 6]{CKM} we determine the precise factor needed to rescale $T'_k \1_\i$ in order to get the Reshetikhin-Turaev crossing on the nose. 
\end{proof}

\subsection{Tangle invariants: arbitrary representations}\label{sec:tanglesgen}

To obtain the Reshetikhin-Turaev invariants for tangles labeled by arbitrary weights we use the clasps $P \1_{\i}$ defined in section \ref{sec:clasps}. Recall that $P \1_\i$ is the composition 
\begin{equation}\label{eq:11}
\Lambda_q^{\i}(\k^m) \xrightarrow{\pi} V_{\i} \xrightarrow{\iota} \Lambda_q^{\i}(\k^m)
\end{equation}
where $V_\i = V_{\sum_k \Lambda_{i_k}}$, the first map is projection and the second map is inclusion. Such a map is unique up to scalar but if we insist that $P^2=P$ then this scalar is also uniquely determined.  

\begin{figure}[ht]
\begin{center}
\puteps[0.4]{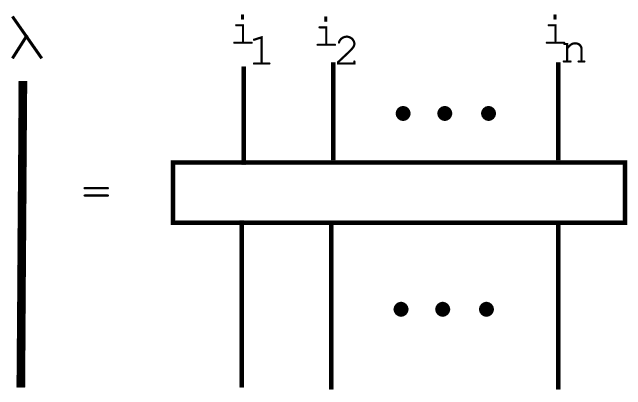}
\end{center}
\caption{A strand labeled $\l = \sum_{k=1}^n \Lambda_{i_k}$ defined using a clasp.}\label{fig:3}
\end{figure}

Instead of dealing with a strand labeled by $\l = \sum_{k=1}^n \Lambda_{i_k}$ we replace it with $n$ strands labeled by $i_1, \dots, i_n$ together with a clasp. Diagrammatically, the clasp is illustrated by a box as in figure (\ref{fig:3}). In this setup, we work with clasps and strands labeled only by fundamental weights. One can then associate a map to crossings, cups and caps involving strands labeled by arbitrary representations as follows. 

Since $P \1_\i$ is idempotent we recover $V_\i$ as the image of $P \1_\i$ on $\Lambda_q^{\i}(\k^m)$. Now, if $\sum_{k=1}^n \Lambda_{i_k}$ and $\mu = \sum_{k=1}^{n'} \Lambda_{j_k}$ one recovers the crossing map $V_\i \otimes V_\j \rightarrow V_\j \otimes V_\i$ from the crossing map $V(\i) \otimes V(\j) \rightarrow V(\j) \otimes V(\i)$. Note that this latter map involves multiple crossings of strands labeled by fundamental representations and hence is somewhat complicated. 

However, this construction only makes sense if this crossing map maps $V_\i \otimes V_\i \subset V(\i) \otimes V(\j)$ to $V_\j \otimes V_\i \subset V(\j) \otimes V(\i)$. This is indeed the case because of the relation in the first row of figure (\ref{fig:4}). The relations in figure (\ref{fig:4}) also ensure that the resulting maps satisfy the relations in figure (\ref{fig:2}) where $(\star) = q^{\sum_k i_k(m-i_k)}$ if the top left strand is labeled by $\sum_k \Lambda_{i_k}$. 

\begin{figure}[ht]
\begin{center}
\puteps[0.4]{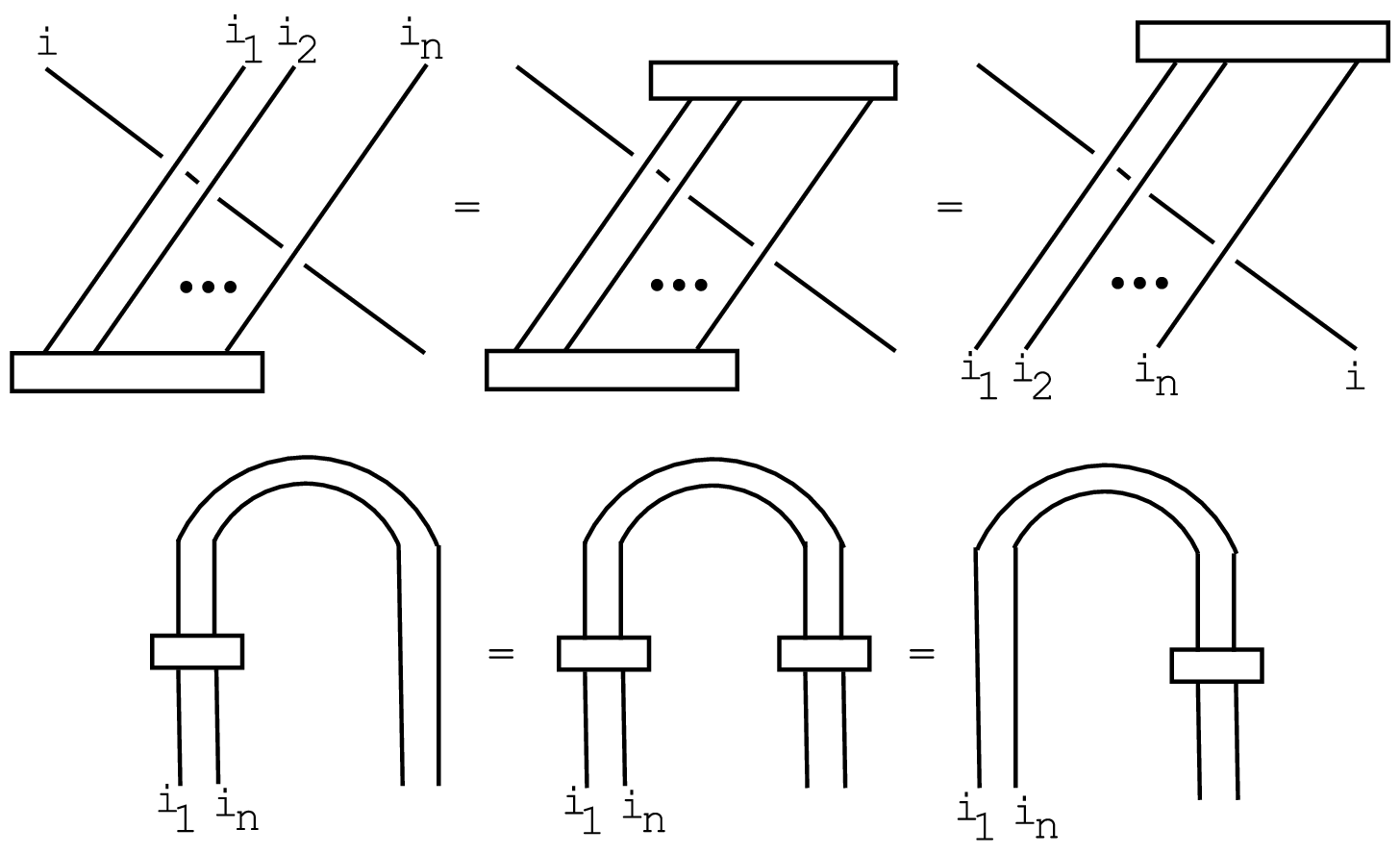}
\end{center}
\caption{Relations involving clasps.}\label{fig:4}
\end{figure}

\begin{Proposition}\label{prop:5}
The relations in figure (\ref{fig:4}) are valid. 
\end{Proposition}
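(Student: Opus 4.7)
The plan is to reduce all the relations in figure \ref{fig:4} to two foundational facts. First, every tangle map defined in section \ref{sec:tanglesfund}---the crossings $\t_k$, caps $E_k^{(i_k)}$, and cups $F_k^{(i_k)}$---is a morphism of $U_q(\sl_m)$-modules. This is automatic from Lemma \ref{lem:commute}: these maps are realized by elements of $U_q(\sl_\infty)$, whose action commutes with that of $U_q(\sl_m)$ on $\Lambda_q^{m\infty}(\C^m \otimes \C^{2\infty})$. In particular, the tangle maps preserve the decomposition of each $V(\i)$ into $U_q(\sl_m)$-isotypic components. Second, the clasp $P \1_\i$ is the unique idempotent in $\End_{U_q(\sl_m)}(\Lambda_q^\i(\C^m))$ with image $V_\i$, equivalently characterized either by $P \1_\i F_k = 0$ for all $k$ (Proposition \ref{prop:Punique}) or by $\t_\omega^2 P \1_\i = P \1_\i = P \1_\i \t_\omega^2$ (Corollary \ref{cor:Punique}).

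I would treat the relations in figure \ref{fig:4} in three groups. The relations asserting that a clasp slides through an external tangle operation (a crossing, cap, or cup acting between a clasped bundle and a disjoint bundle or strand) follow directly from $U_q(\sl_m)$-equivariance: a $U_q(\sl_m)$-map preserves isotypic components, so it commutes with the projection $P \1_\i$ onto the $V_\i$-isotypic component, up to the identification produced by that map. The idempotency relation $P \1_\i \circ P \1_\i = P \1_\i$, together with the relation that a large clasp swallows a nested clasp on a subset of its strands, follow from the definition (\ref{eq:11}): the image of the nested clasp contains $V_\i$ and is contained in $\Lambda_q^\i(\C^m)$, so composing with $P \1_\i$ extracts $V_\i$ in either case.

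The main obstacle is the absorption relations, which state that an internal crossing of two adjacent fundamental-weight strands within a clasped bundle is swallowed by the clasp at the cost of an explicit framing scalar. To handle this I would expand $\t_k \1_\i$ via the formula (\ref{eq:cpxdecat}) as $\sum_{s \ge 0} (-q)^s E_k^{(\cdot + s)} F_k^{(s)} \1_\i$, compose on the left with $P \1_{s_k \cdot \i}$, and apply Proposition \ref{prop:Punique} to kill every term with $s > 0$. Only the $s=0$ term survives, contributing the scalar $q^{i_k(m-i_k)}(-q)^{\min(i_k,m-i_k)}$ by the same weight-tracking computation as in the proof of relation (RI) in Proposition \ref{prop:4}. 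Iterating over all adjacent pairs of fundamental strands inside the bundle then yields the framing factor $(\star)$. The fork-type absorption relations involving an internal cap or cup meeting the clasp are handled analogously: the vanishing principle $P \1_\i F_k = 0$ collapses the relevant decomposition to a single surviving term, recovering the identity already established in Proposition \ref{prop:4} under the inclusion $V_\i \hookrightarrow \Lambda_q^\i(\C^m)$.
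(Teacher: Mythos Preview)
Your route differs from the paper's, which simply invokes Proposition~\ref{prop:5'}: the relations are established at the categorical level using the infinite-twist description $\P^- = \lim_\ell \T_\omega^{2\ell}$ together with Proposition~\ref{prop:2}, and Proposition~\ref{prop:5} then follows by decategorification via $\p$. You instead attempt a direct decategorified proof using the $U_q(\sl_m)$-module structure. This is a reasonable strategy, but the sliding argument has a genuine gap.

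You claim the crossing commutes with $P \1_\i$ because a $U_q(\sl_m)$-map ``preserves isotypic components.'' The projector in play, however, is $P \1_\i \otimes \id$ acting on $V(\i) \otimes V_{\Lambda_i}$, and its image $V_\i \otimes V_{\Lambda_i}$ is \emph{not} an isotypic component of that tensor product: other summands $V' \subset V(\i)$ may contribute the same irreducibles after tensoring with $V_{\Lambda_i}$. An arbitrary $U_q(\sl_m)$-linear map $V(\i)\otimes W \to W\otimes V(\i)$ need not intertwine $P\otimes\id$ with $\id\otimes P$. What you actually need is \emph{naturality} of the crossing in each tensor factor; this holds because the $\sl_\infty$-crossing agrees with the $R$-matrix braiding up to scalar (the result quoted from \cite{CKL1}), but that is an additional input you have not invoked. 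A cleaner fix---and essentially the decategorified shadow of the paper's argument---is to use Corollary~\ref{cor:Punique}: the braid-group identity $(\t_1\cdots\t_n)\,\t_{\omega}^2 = \t_{\omega'}^2\,(\t_1\cdots\t_n)$ (the full twist on the bundle conjugates through the cabled crossing) shows that $(\t_1\cdots\t_n)P\1_\i(\t_1\cdots\t_n)^{-1}$ satisfies the characterizing property of the clasp on the shifted bundle, hence equals it by uniqueness.

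Your absorption argument has a smaller orientation issue. In the expansion you quote, the \emph{leftmost} factor of each summand is $E_k^{(\cdot)}$, so composing $P$ on the left does not meet the hypothesis $P\1_\i F_k = 0$ of Proposition~\ref{prop:Punique}. You should either compose $P$ on the right and use $E_k P\1_\i = 0$ (immediate from Lemma~\ref{lem:3}, since the image of $P\1_\i$ consists of $U_q(\sl_n)$-highest weight vectors), or switch to the other expansion of $\t_k$ in (\ref{eq:cpxdecat}).
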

\begin{proof}
This is a consequence of Proposition \ref{prop:5'} where we check these relations at the categorical level. This means that we regard $P \1_\i$ as the class $[\P^- \1_\i]$ where $\P^- = \lim_{\ell \rightarrow \infty} \T_\omega^{2\ell}$ rather than as the composition $\iota \circ \pi$ from (\ref{eq:11}). From this point of view the relations in figure (\ref{fig:4}) follow very easily. 
\end{proof}

\begin{Remark}\label{rem:justweights}
From the construction above it becomes clear that we only used two things about the representation $\Lambda_q^{m \infty}(\k^m \otimes \k^{2\infty})$ in order to obtain our knot invariant. Namely, we used information about certain weight spaces being zero and we used that the highest weight space is one-dimensional. 
\end{Remark}

\subsection{Categorical knot invariants}\label{sec:catknots}

We now categorify the picture from the previous section. In place of $\Lambda_q^{m \infty}(\k^m \otimes \k^{2\infty})$ we take a 2-category $\K$ equipped with an $(\sl_\infty,\th)$ action. Moreover, we require that the nonzero weight spaces of this 2-category match precisely with the nonzero weight spaces of $\Lambda_q^{m \infty}(\k^m \otimes \k^{2\infty})$. In other words, the nonzero weight spaces of $\K$ are indexed by sequences $\i = (\dots, 0,0, i_1, \dots, i_n, m,m, \dots)$. Notice that we do not require that the objects $\l$ of $\K$ be categories whose K-theory recovers $\Lambda_q^{m \infty}(\k^m \otimes \k^{2\infty})$ ({\it cf.} Remark \ref{rem:justweights}). 

Instead of $E_k^{(r)},F_k^{(r)}, \t_k, \t'_k$ we now have $\E_k^{(r)},\F_k^{(r)}, \T_k, \T'_k$.  

\begin{Corollary}\label{cor:3'} 
Fix $\i$. One can canonically identify all the weights $\j \in S(\i)$ in $\K$. 
\end{Corollary}
\begin{proof}
This is the categorical analogue of Corollary \ref{cor:3}. The proof is the same.
\end{proof}

We now immitate the definition from section \ref{sec:tanglesfund} to define categorical tangle invariants. 
\begin{itemize}
\item To the four over crossings from figure (\ref{fig:1}) one associates maps $\i \rightarrow s_k \cdot \i$ by $\T'_k$, $\T'_k [-i_k] \la i_k \ra$, $\T'_k [-m+i_{k+1}]\la m-i_{k+1} \ra$ and $\T'_k [i_{k+1}-i_k] \la -i_{k+1}+i_k \ra$ respectively. The corresponding four under crossings are associated the inverse maps.
\item To the cap and cup from figure (\ref{fig:1}) are associated the maps
\begin{align*}
& \E_k^{(i_k)}: (\dots,i_k,m-i_k,\dots) \rightarrow (\dots,0,m,\dots) \ \ \text{ and } \\
& \F_k^{(i_k)}: (\dots,0,m,\dots) \rightarrow (\dots,i_k,m-i_k,\dots)
\end{align*}
regardless of the orientation of the strand.
\end{itemize}

\begin{Proposition}\label{prop:4'}
These maps define a homological oriented tangle invariant. In other words, the relations in figure (\ref{fig:2}) hold as isomorphisms of functors, where $(\star) = \la -i_k(m-i_k) \ra$. 
\end{Proposition}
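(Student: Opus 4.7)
The plan is to categorify the proof of Proposition \ref{prop:4} verbatim: every algebraic identity used there has a natural lift to an isomorphism of functors via the tools already assembled (Corollary \ref{cor:0}, Lemma \ref{lem:rels1}, Proposition \ref{prop:braidgrpaction}, Lemma \ref{lem:2}), and the remaining work reduces to bookkeeping of quantum grading shifts and homological degrees.

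First I would dispatch the easy pieces. The isotopy relations involving distant caps, cups or crossings follow from the fact that $\E_i$ commutes with $\E_j, \F_j$ and $\T_j$ whenever $|i-j|>1$. Relation (RII) is precisely the statement that $\T_k$ is invertible in $\Kom^-_*(\K)$, proved in \cite{CKL3}. Relation (RIII) is the braid relation $\T_k \T_{k+1} \T_k \cong \T_{k+1} \T_k \T_{k+1}$, established in Proposition \ref{prop:braidgrpaction}. The fork move is a direct application of Lemma \ref{lem:2}, just as in the decategorified argument. For (R0) I would repeat the algebraic manipulation from the proof of Proposition \ref{prop:4}, reading each equality as an isomorphism of functors. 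Every step either moves an $\E$ past an $\F$ using Lemma \ref{lem:rels1}, or replaces $\T_k \1_\i$ with $\E_k^{(-\la \i,\alpha_k \ra)} \1_\i$ or $\F_k^{(\la \i,\alpha_k \ra)} \1_\i$, which is legitimate whenever the Rickard complex collapses to a single term --- precisely when $i_k$ or $i_{k+1}$ lies in $\{0,m\}$, as observed in the proof of Lemma \ref{lem:4}. All shifts cancel and the composition is isomorphic to the identity functor.

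The delicate step is (RI), since this is what pins down the precise value of $(\star)$. Assume $2i_k \le m$ and consider $\T_k \E_k^{(i_k)} \1_\i$ with $\i = (\dots,m,0,\dots)$ in positions $k,k+1$. After applying $\E_k^{(i_k)}$ the resulting weight $\i'$ satisfies $\la \i',\alpha_k \ra = 2i_k - m \le 0$, so by Corollary \ref{cor:0} sliding $\T_k$ past $\E_k^{(i_k)}$ costs $\la -i_k(m-i_k+1)\ra [i_k]$ and converts $\E_k^{(i_k)}$ into $\F_k^{(i_k)}$. Next $\T_k \1_\i \cong \E_k^{(m)} \1_\i$ because every higher term of the Rickard complex annihilates $\1_\i$, and then $\F_k^{(i_k)} \E_k^{(m)} \1_\i \cong \E_k^{(m-i_k)} \1_\i$ by Lemma \ref{lem:rels1} (only the summand with $j=i_k$ survives, since $\F_k^{(i_k-j)} \1_\i = 0$ for $j<i_k$). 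The total shift is $\la -i_k(m-i_k+1)\ra[i_k]$, which equals $\la -i_k(m-i_k) - \min(i_k,m-i_k) \ra [\min(i_k,m-i_k)]$ since $\min(i_k,m-i_k) = i_k$ in this regime; this is exactly $(\star)$. The case $2i_k \ge m$ is symmetric, using the dual cases of Corollary \ref{cor:0} and Lemma \ref{lem:rels1}, and yields the same expression with $\min(i_k,m-i_k) = m-i_k$.

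The main obstacle throughout is careful bookkeeping of the quantum and cohomological shifts produced by Corollary \ref{cor:0} and Lemma \ref{lem:rels1}, and verifying that they combine to give precisely $(\star)$ in every case and orientation. A secondary point is that, at several stages, we obtain complexes whose terms match a standard Rickard or divided-power complex but whose differentials are a priori only determined up to scalar; here one invokes Remark \ref{rem:cpx}, together with Lemma \ref{lem:homs}, to conclude that the complexes are genuinely homotopy equivalent and hence define isomorphic functors on $\Kom^-_*$, not merely isomorphic objects in the decategorification.
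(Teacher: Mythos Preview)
Your proposal is correct and follows essentially the same approach as the paper, which simply states that the proof is the categorical analogue of Proposition~\ref{prop:4} and that ``the proof is the same''; you have spelled out in detail exactly how each step of that decategorified argument lifts (isotopy via commutativity, (RII) and (RIII) via \cite{CKL3} and Proposition~\ref{prop:braidgrpaction}, the fork move via Lemma~\ref{lem:2}, and the (RI) shift computation via Corollary~\ref{cor:0} and Lemma~\ref{lem:rels1}). Your final remark invoking Remark~\ref{rem:cpx} and Lemma~\ref{lem:homs} to justify that the resulting complexes are genuinely homotopy equivalent is a reasonable extra caution, though the paper leaves this implicit.
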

\begin{proof}
This is the categorical analogue of Proposition \ref{prop:4}. The proof is the same. Recall that $\la 1 \ra$ denotes a grading shift by one while $[1]$ is a cohomological shift by one. The (RI) relation states that the curl can be undone if we include a shift by $(\star)$, which is the categorical analogue of multiplying by $q^{i_k(m-i_k)}$. 
\end{proof}

Next we define the clasp as the complex of 1-morphisms $\P^- \1_\i \in \Kom_*^-(\K)$. 

\begin{Proposition}\label{prop:5'}
The relations in figure (\ref{fig:4}) are valid as isomorphisms of functors.
\end{Proposition}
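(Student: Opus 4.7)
The plan is to deduce every relation in figure \ref{fig:4} from a single \emph{absorption principle} for the categorical clasp. Namely, whenever $\T_k$ is a braid generator acting on two strands within the scope of a clasp $\P^- \1_\i$, there is a canonical isomorphism
$$\T_k \, \P^- \1_\i \;\cong\; \P^- \1_{s_k \cdot \i} \, \T_k,$$
and after identifying $\Kom^-_*(\D(\i))$ with $\Kom^-_*(\D(s_k \cdot \i))$ as in Corollary \ref{cor:3'}, the resulting endofunctor is isomorphic to $\P^- \1_\i \la r_k \ra [s_k]$, where $r_k, s_k$ are the shifts dictated by the scalar action of $\t_k$ on the irreducible summand $V_\i \subset \Lambda_q^\i(\C^m)$ (read off from Proposition \ref{prop:3} and Corollary \ref{cor:Punique}). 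Granted this, each relation in figure \ref{fig:4} becomes a formal manipulation. The first row---that a crossing of two clasped bundles factors through the image of both clasps---follows by pulling the underlying fundamental-weight crossings through on either side and checking that the shifts cancel. The interaction of clasps with caps and cups reduces similarly to Corollary \ref{cor:1}, which controls how $\E_k^{(p)}$ and $\F_k^{(p)}$ commute with $\T_\omega^2$ and hence, after passing to the limit, with $\P^-$. The idempotence relation is Proposition \ref{prop:idempotent}.

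The absorption principle itself I would prove in two stages. First, since $\T_\omega^2 \1_\i$ is the full twist and therefore central in the braid subgroup generated by those $\T_k$ permuting labels within the clasp, Proposition \ref{prop:braidgrpaction} yields $\T_k \T_\omega^{2\ell} \1_\i \cong \T_\omega^{2\ell} \T_k \1_\i$ for every $\ell \ge 0$, and passing to the limit gives $\T_k \P^- \1_\i \cong \P^- \1_{s_k \cdot \i} \, \T_k$. Second, to identify this with $\P^- \1_\i \la r_k \ra [s_k]$, I would construct a comparison map by composing the unit $\1_\i \to \T_\omega^{2\ell} \1_\i$ from section \ref{sec:1toT2map} with the appropriate projection onto the factor of $\T_k$ appearing inside the braid word for $\T_\omega^2$, producing a natural transformation $\P^- \1_\i \la r_k \ra [s_k] \to \T_k \P^- \1_\i$ whose cone we then need to kill.

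The main obstacle is precisely that contractibility step. At the decategorified level the corresponding identity is immediate from Proposition \ref{prop:3} and Corollary \ref{cor:Punique}, but lifting it requires a homological estimate in the spirit of the proof of Theorem \ref{thm:main1}. Concretely, I would apply $\T_\omega^{2\ell}$ to the cone and argue, using the same combinatorial reduction as in Proposition \ref{prop:2}, that the surviving terms lie in arbitrarily negative cohomological degree; together with the $q$-adic convergence analysis of section \ref{sec:Kconverge}, this forces the cone to vanish in $\Kom^-_*(\K)$. Once the absorption lemma is in place, each of the diagrammatic relations of figure \ref{fig:4} follows by the same routine manipulations used to prove Proposition \ref{prop:4} at the decategorified level.
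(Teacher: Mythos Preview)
Your proposal contains the right ingredients but takes a significant detour, and one step is mistaken.

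The relations in figure~\ref{fig:4} are of the form ``clasp on one side $\cong$ clasp on both sides'': for the first row, the two sides are literally
\[
(\T_1\cdots\T_n)(\T_\omega^\infty)\1_{(\i,i)}
\quad\text{and}\quad
(\T_\omega^\infty)(\T_1\cdots\T_n)(\T_\omega^\infty)\1_{(\i,i)}.
\]
The paper proves this in one stroke: take the comparison map induced by the unit $\1\to\T_\omega^\infty$ on the left factor, so the cone is $(\R)(\T_1\cdots\T_n)(\T_\omega^\infty)\1_{(\i,i)}$; then peel off $\T_\omega^{2\ell}$ from the right-hand $\T_\omega^\infty$, conjugate it past $(\T_1\cdots\T_n)$ (this is the braid identity $(\T_1\cdots\T_n)\T_\omega^{2\ell}\cong\T_\omega^{2\ell}(\T_1\cdots\T_n)$, which moves the full twist from one copy of $\sl_n$ to the shifted one), and invoke Proposition~\ref{prop:2} to see that $(\R)(\T_\omega^{2\ell})$ lives in homological degree $\le -2\ell$. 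Letting $\ell\to\infty$ kills the cone. No second stage is needed.

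Your route instead aims for a stronger absorption lemma, that $\T_k\P^-\1_\i\cong\P^-\1_\i\langle r_k\rangle[s_k]$ after identifying $\D(\i)$ with $\D(s_k\cdot\i)$. Two problems. First, Corollary~\ref{cor:3'} only identifies weight categories whose labels differ by permuting $0$'s and $m$'s; for an internal crossing $\T_k$ within the clasp you generally have $\rho(\i)\ne\rho(s_k\cdot\i)$, so there is no such canonical identification to invoke. Second, even granting some identification, proving that $\T_k$ acts on $\P^-$ as a pure shift is a genuinely harder statement than the figure~\ref{fig:4} relations themselves---it is a categorical version of ``the $R$-matrix acts by a scalar on irreducibles''---and you yourself flag the contractibility of the relevant cone as the main obstacle. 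You would be replacing a short direct argument by a longer one whose hardest step is of the same flavor.

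Your ``first stage'' (centrality of $\T_\omega^2$, hence $\T_k\P^-\cong\P^-\T_k$) is correct and is exactly the commutation the paper uses. The fix is simply to stop there: apply that commutation inside the cone of $\1\to\P^-$ and use Proposition~\ref{prop:2} directly, rather than passing through the stronger absorption statement.
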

\begin{proof}
We check the relation between the first two diagrams in the first row of figure (\ref{fig:4}) (the other cases are proved in the same way). The left and right hand sides are equal to 
$$(\T'_1 \dots \T'_n)(\P^- \I) \1_{(\i,i)} \text{ and } (\I \P^-)(\T'_1 \dots \T'_n)(\P^- \I)\1_{(\i,i)}$$
Now consider the map 
\begin{equation}\label{eq:12}
(\T'_1 \dots \T'_n)(\P^- \I) \1_{(\i,i)} \rightarrow (\I \P^-)(\T'_1 \dots \T'_n)(\P^- \I) \1_{(\i,i)}
\end{equation} 
induced by the map $\1_{(i,\i)} \rightarrow (\I \P^-) \1_{(i,\i)}$. The cone of this is (by definition) 
\begin{equation}\label{eq:13}
(\R)(\T'_1 \dots \T'_n)(\P^- \I) \1_{(\i,i)}.
\end{equation} 
Now, for any $n \ge 0$, $\P^- \1_{(\i,i)} \cong (\T_\omega^{2n}) (\P^-) \1_{(\i,i)}$ and, based on the RIII relation for the $\T_i$'s, we have $(\T'_1 \dots \T'_n)(\T_\omega^{2n} ) \cong ( \T_\omega^{2n}) (\T'_1 \dots \T'_n)$. Thus we get
$$(\R)(\T'_1 \dots \T'_n)(\P^- \I) \1_{(\i,i)} \cong (\R)(\T_\omega^{2n})(\T'_1 \dots \T'_n)(\P^- \I) \1_{(\i,i)}.$$
Now $(\R)(\T_\omega^{2n})$ is supported in homological degrees $\le -2n$ by Proposition \ref{prop:2}. Since $(\T'_1 \dots \T'_n)(\P^- \I)$ is supported in homological degrees bounded above by some $B$ this means that (\ref{eq:13}) is supported in degrees $\le -2n+B$. Since we can choose $n$ arbitrarily this means that (\ref{eq:13}) is contractible and hence the map in (\ref{eq:12}) is an isomorphism. 
\end{proof}

This concludes the proof of Theorem \ref{thm:main3}. 

\subsection{A homological link invariant}\label{sec:linkinv}
Using the process above, the invariant associated to an oriented link $K$ is a complex of 1-morphisms $\Psi_-(K) \in \End_{\Kom^-(\K)}((\0, \um))$. All the terms in this complex are a direct sum of $\1_{(\0, \um)}$ since any composition of 1-morphisms which starts and ends at $(\0, \um)$ breaks up as a direct sum of identity morphisms (with various degree shifts). 

To obtain a (doubly graded) homology from $\Psi_-(K)$ consider the $\Z$-graded algebra 
$$ \rA := \End_{\K}^*(\1_{({\0}, {\um})}) = \bigoplus_{k \in \Z} \End_\K(\1_{({\0}, {\um})}, \1_{({\0}, {\um})} \la k \ra).$$
and the associated functor 
$$\Hom^*_{\Kom^-(\K)}(\1_{({\0}, {\um})}, \bullet): \End_{\Kom^-(\K)}((\0,\um)) \rightarrow \Kom^-(\rA \dmod).$$
Since under this map $\1_{({\0}, {\um})} \mapsto \rA$, the image $\Psi_-'(K)$ of $\Psi_-(K)$ is a complex of projective $\rA$-modules. Tensoring with $\k$ gives us $\overline{\Psi}_-(K) := \k \otimes_{\rA} \Psi_-'(K)$ which is a complex of $\Z$-graded vector spaces. We then define $\sH_-^{i,j}(K)$ to be $\mbox{gr}^j H^i(\overline{\Psi}_-(K))$. 

\begin{Remark}
In this paper, the 2-categories $\K$ have the property that $\End^k(\1_{(\0,\um)}) = 0$ if $k \ne 0$. Hence $\rA \cong \k$ and the procedure above is redundant ({\it i.e.} in this case $\Psi_-(K)$ contains the same information as $\sH_-^{i,j}(K)$).  
\end{Remark}

\section{Affine Grassmannians and the 2-category $\K_{\Gr,m}$}\label{sec:affGrass}

We now define a 2-category $\K_{\Gr,m}$ and an $(\sl_\infty,\th)$ action on it. The objects in $\K_{\Gr,m}$ are derived categories of coherent sheaves on certain iterated Grassmannian bundles associated to the affine Grassmannian for ${\rm PGL}_m$. Taking K-theory we recover the $U_q(\sl_\infty)$-module $\Lambda_q^{m \infty}(\k^m \otimes \k^{2\infty})$. 

\subsection{Derived categories of coherent sheaves}\label{sec:FMkernels}
If $X$ is a variety then we write $D(X)$ for the bounded derived category of coherent sheaves on $X$. An object (kernel) $\sP \in D(X \times Y)$ whose support is proper over $Y$ induces a functor $\Phi_{\sP}: D(X) \rightarrow D(Y)$ via $(\cdot) \mapsto \pi_{2*}(\pi_1^* (\cdot) \otimes \sP)$ (where every operation is derived). If $\sQ \in D(Y \times Z)$ then $\Phi_{\sQ} \circ \Phi_{\sP} \cong \Phi_{\sQ * \sP}: D(X) \rightarrow D(Z)$ where $\sQ * \sP = \pi_{13*}(\pi_{12}^* \sP \otimes \pi_{23}^* \sQ)$ is the composition (a.k.a. convolution product) of kernels. 

If $X$ carries a $\k^\times$ then, abusing notation, we denote by $D(X)$ the derived category of $\k^\times$-equivariant coherent sheaves on $X$ which. We write $\{\cdot\}$ for a shift in the $\k^\times$ grading. We can similarly define everything above for the bounded above (resp. below) derived categories $D^-(X)$ (resp. $D^+(X)$) of coherent sheaves on $X$.

\subsection{Varieties}\label{sec:varieties}
We define
\begin{equation*}
Y(\i) := \{ \k[z]^m = L_0 \subset L_1 \subset \dots \subset L_{2N} \subset \k(z)^m : z L_j \subset L_{j-1}, \dim(L_j/L_{j-1}) = i_j \}
\end{equation*}
where the $L_i$ are complex vector subspaces. For $j=1, \dots, 2N$ there is a natural vector bundle on $Y(\i)$ whose fibre over a point $\{L_0 \subset \dots \subset L_{2N}\}$ is the vector space $L_j/L_0$. We denote this bundle by $\sL_j$.

There is an action of $\k^\times$ on $\k(z)$ given by $t \cdot z^k = t^{2k} z^k$. This induces an action of $\k^\times$ on $\k(z)^m$. Since for any $v \in \k(z)^m$ we have $t \cdot (zv) = t^2 z(t \cdot v)$ this induces a $\k^\times$ action on $Y(\i)$ via $t \cdot (L_0, \dots ,L_{2N}) = (t \cdot L_0, \dots , t \cdot L_{2N})$. Everything we define or claim will be $\k^\times$-equivariant with respect to this action. 

\begin{Remark}
If we forget $L_{2N}$ then we get a Grassmannian bundle 
$$Y(i_1, \dots, i_{2N-1}, i_{2N}) \rightarrow Y(i_1, \dots, i_{2N-1})$$
with fibres $\G(i_{2N},m)$ because $L_{2N}$ can be any subspace in $z^{-1}(L_{2N-1})/L_{2N-1} \cong \k^m$ of dimension $i_{2N}$. Thus $Y(\i)$ is just an iterated Grassmannian bundle. The relation of $Y(\i)$ to the affine Grassmannian is via the twisted (or convolution) product \cite{CK2}. More precisely, $Y(\i) = Y(i_1) \tilde{\times} Y(i_2) \tilde{\times} \dots \tilde{\times} Y(i_{2N})$.
\end{Remark}

\subsection{Kernels}\label{sec:kernels} 
For $r \ge 0$ we define correspondences $W_k^r(\i) \subset Y(\i) \times Y(\i + r \alpha_k)$ as the subvariety $\{(L_\bullet, L'_\bullet): L_\bullet \subset L_\bullet'\}$. Here, as in section \ref{sec:weightspaces}, $\alpha_k$ denotes $(0,\dots, -1,1,\dots, 0)$ where the $-1$ is the $k$th term. More explicitly, 
\begin{align*}
W_k^r(\i) =& \{\k[z]^m = L_0 \xrightarrow{i_1} \dots \xrightarrow{i_{k-1}} L_{k-1} \xrightarrow{i_k-r} L_k' \xrightarrow{r} L_k \xrightarrow{i_{k+1}} L_{k+1} \xrightarrow{i_{k+2}} \dots \xrightarrow{i_{2N}} L_{2N} \subset \k(z)^m: \\
& zL_j \subset L_{j-1} \text{ for all } j, \text{ and } zL'_k \subset L_{k-1} \}
\end{align*}
where the arrows are inclusions and the superscripts indicate the codimension of the inclusion. We define the kernels
\begin{align}
\label{eq:kernel1}
\sE_k^{(r)} \1_\i &:= \O_{W_k^r(\i)} \otimes \det(\sL_k'/\sL_{k-1})^r \otimes \det(\sL_k/\sL_k')^{-i_k+r} \{r(i_k-r)\} \in D(Y(\i) \times Y(\i + r\alpha_k)) \\
\label{eq:kernel2}
\1_\i \sF_k^{(r)} &:= \O_{W_k^r(\i)} \otimes \det(\sL_{k+1}/\sL_k')^{-r} \otimes \det(\sL_k'/\sL_k)^{i_{k+1}} \{r i_{k+1}\} \in D(Y(\i + r\alpha_k) \times Y(\i))
\end{align}
where the prime denotes pullback from the second factor.

\subsection{Deformations}\label{sec:deforms}
The variety $Y(\i)$ has a natural deformation over $\bA^{2N}$ given by
$$\{\k[z]^m = L_0 \subset L_1 \subset \dots \subset L_{2N} \subset \k(z)^m; {\ux} \in \bA^{2N}: (z-x_j) L_j \subset L_{j-1}, \dim(L_j/L_{j-1}) = i_j\}.$$
Notice that if $x_j=0$ for all $j$ we recover $Y(\i)$. This deformation is trivial over the main diagonal in $\bA^{2N}$. For this reason we restrict it to the locus where $x_{2N}=0$ to obtain a deformation $\tY(\i) \rightarrow \bA^{2N-1}$. We identify $\bA^{2N-1}$ with the complexified root lattice $Y_\k$. The $\k^\times$ action on $Y(\i)$ extends to a $\k^\times$ action on all of $\tY(\i)$ if we act on the base $\bA^{2N-1}$ via $\ux \mapsto t^2 \ux$.

As explained in \cite[Sect. 14.2]{C2} such a deformation gives us a linear map 
\begin{equation}
\label{eq:new4}
\theta: Y_\k \rightarrow \Hom(\Delta_* \O_{Y(\i)}, \Delta_* \O_{Y(\i)} [2]\{-2\})
\end{equation}
where $\Delta: Y(\i) \rightarrow Y(\i) \times Y(\i)$ is the diagonal map. 

\subsection{The 2-category $\K_{\Gr,m}$}\label{sec:catsl8}
The 2-category $\K_{\Gr,m}$ consists of:
\begin{itemize}
\item objects: $D(Y(\0, \i, \um))$ where $\i = (i_1, \dots, i_{2N})$ for some $N$ and $\sum_k i_k$ is divisible by $m$,
\item 1-morphisms: all kernels, 
\item 2-morphisms: all morphisms of kernels. 
\end{itemize}
This 2-category is endowed with the $\Z$-grading $\la 1 \ra = [1]\{-1\}$. Since the variety $Y(\0,\i,\um)$ can be naturally identified with $Y(\i)$ we find that (\ref{eq:new4}) gives us a map $\theta: Y_\k \rightarrow \End^2(\1_{\0,\i,\um})$. This is because $\End^k(\1_\i)$ is identified with $\Hom(\Delta_* \O_{Y(\i)}, \Delta_* \O_{Y(\i)} [k]\{-k\})$. 

\begin{Theorem}\label{thm:geomcat}
The data above gives a $(\sl_\infty,\th)$ action on $\K_{\Gr,m}$. This action categorifies the $U_q(\sl_\infty)$-module $\Lambda_q^{m \infty}(\k^m \otimes \k^{2\infty})$.
\end{Theorem}
\begin{proof}
Conditions (i),(ii),(iii) were proven in \cite{CKL1} as part of Theorem 3.3. Note that in that paper we used different line bundles to define the $\sE$'s and $\sF$'s. However, those line bundles differ from the ones here by conjugating with $\prod_k \det(\sL_k/\sL_0)^{-i_k}$ on $Y(\i)$ so all the relations still hold. 

Condition (iv) was checked in the case of cotangent bundles to partial flag varieties in \cite[Sect. 3]{CK3} but the exact same proof applies here. Conditions (vi) and (vii) are obvious. Finally, condition (v) is a consequence of the fact that the sheaves $\sE_i$ and $\sF_i$ deform along $\alpha_i^\perp \subset Y_\k$. This was again shown in the case of cotangent bundles in \cite{CK3} but the same proof applies. 

What remains is to identify the $U_q(\sl_\infty)$-module it categorifies. Let us restrict ourselves to varieties $Y(\i)$ where $\i = (i_1, \dots, i_{2N})$. The variety $Y(\i)$ is an iterated Grassmannian bundle. Since the Grothendieck group of the Grassmannian $\bG(i,m)$ has rank $\binom{m}{i}$ it follows that 
$$\dim_\k K(Y(\i)) = \prod_{k=1}^{2N} \binom{m}{i_k}.$$
Since a finite dimensional $U_q(\sl_{2N})$-module is uniquely determined by the dimensions of its weight spaces it follows that the restriction of $(\sl_\infty,\th)$ to this subcategory categorifies the $U_q(\sl_{2N})$-module $\Lambda_q^{mN}(\k^m \otimes \k^{2N})$. Letting $N \rightarrow \infty$ we obtain the $U_q(\sl_\infty)$-module $\Lambda_q^{m \infty}(\k^m \otimes \k^{2\infty})$. 
\end{proof}

\section{Nakajima quiver varieties and the 2-category $\K_{{\rm Q},m}$}\label{sec:quivers}

Since $\Lambda_q^{mN}(\k^m \otimes \k^{2N})$ has commuting actions of $U_q(\sl_m)$ and $U_q(\sl_{2N})$ each $\sl_m$ weight space is preserved by the action of $U_q(\sl_{2N})$. In particular, we can restrict the $U_q(\sl_{2N})$ action to the zero weight space of $\sl_m$, which is isomorphic to $(\Lambda_q^{N}(\k^{2N}))^{\otimes m}$. 

Letting $N \rightarrow \infty$ we can likewise restrict the action of $U_q(\sl_\infty)$ from $\Lambda_q^{m \infty}(\k^m \otimes \k^{2 \infty})$ to $\Lambda_q^{\infty}(\k^{2\infty})^{\otimes m}$. In this section we explain how this smaller $U_q(\sl_\infty)$-module can be categorified using Nakajima quiver varieties. This leads to the same link invariants as the ones constructed above. 

\subsection{The 2-category $\K_{{\rm Q},m}$} 

Let $\Gamma$ be the Dynkin diagram of $\sl_{2N}$ and denote by $I$ the sets of vertices of $\Gamma$. Given $\v, \w \in \N^I \cong \N^{2N-1}$ one can define the Nakajima quiver variety $\M(\v,\w)$ as a symplectic quotient (see \cite[Sect. 3]{CKL4}). Now, if we fix $\w$ and allow $\v$ to vary then $\M(\v,\w)$ corresponds to the weight space $\l := \sum_{i \in I} w_i \Lambda_i - \sum_{i \in I} v_i \alpha_i$. 

We now define the 2-category $\K_{{\rm Q},\w}$ to consist of:
\begin{itemize}
\item objects: weight $\l$ indexes $D(\M(\v,\w))$ where $\l,\v,\w$ are related as above,
\item 1-morphisms: all kernels,
\item 2-morphisms: all morphisms of kernels.
\end{itemize}
Certain correspondences between these varieties define kernels
$$\sE_i \in D(\M(\v,\w)) \times \M(\v-{\e}_i,\w) \text{ and }
\sF_i \in D(\M(\v-{\e}_i,\w) \times \M(\v,\w))$$
where $\e_i = (0,\dots,0,1,0,\dots,0)$ has a $1$ in position $i$. Moreover, varying the moment map condition also gives deformations $\widetilde{\M}(\v,\w) \rightarrow Y_\k$ which give us  maps $\th: Y_\k \rightarrow \End^2(\1_\l)$ as in section \ref{sec:deforms}. 

This data gives us an $(\sl_{2N},\th)$ action on $\K_{{\rm Q},\w}$. This follows from \cite{CKL4} where we showed that it gives us a geometric categorical $\sl_{2N}$ action together with \cite[Prop. 14.4]{C2} which explains how such a geometric action induces an $(\sl_{2N},\th)$ action. Moreover, this action categorifies the $U_q(\sl_{2N})$-module $\bigotimes_i V_{\Lambda_i}^{\otimes w_i} \cong \bigotimes_i \Lambda_q^{i}(\k^{2N})^{\otimes w_i}$. 

Now take $\w = (0, \dots, 0,m,0, \dots, 0) \in \N^{2N-1}$. Then $\oplus_{\v} K(\M(\v,\w)) \cong \Lambda_q^N(\k^{2N})^{\otimes m}$ as $U_q(\sl_{2N})$-modules. In this case the weight spaces $K(\M(\v,\w))$ are in bijection with $2N$-tuples $(i_1, \dots, i_{2N})$ with $0 \le i_1, \dots, i_{2N} \le m$ and $\sum_\ell i_\ell = mN$. This bijection is given by 
\begin{equation}\label{eq:phi}
\phi: \v \mapsto (0^N,m^N) + (v_1,-v_1+v_2,-v_2+v_3,\dots,-v_{2N-2}+v_{2N-1},-v_{2N-1}).
\end{equation}
Letting $N \rightarrow \infty$ we obtain a 2-category $\K_{{\rm Q},m}$. The discussion above implies the following. 

\begin{Proposition}[\cite{CKL4}]\label{prop:6} 
The data above gives a $(\sl_\infty,\th)$ action on $\K_{{\rm Q},m}$. This action categorifies the $U_q(\sl_\infty)$-module $\Lambda_q^{\infty}(\k^{2\infty})^{\otimes m}$. 
\end{Proposition}

Given this $(\sl_\infty,\th)$ action on $\K_{{\rm Q},m}$ one can define functors for cups, caps and crossings just as with $\K_{\Gr,m}$ which, when applied to an oriented link, gives us a homological link invariant. 

\subsection{Geometrization}

In the discussion above we explained that there exists a projection map
\begin{equation}\label{eq:projection}
\Lambda_q^{mN}(\k^m \otimes \k^{2N}) \longrightarrow \Lambda_q^{\infty}(\k^{2\infty})^{\otimes m}
\end{equation}
of $U_q(\sl_\infty)$-modules. Moreover, the left side is categorified using the varieties $Y(\i)$ while the right using quiver varieties. One can realize this projection geometrically as follows. 

Recall that in section \ref{sec:affGrass} we defined the twisted product varieties 
$$Y(\i) = \{ \k[z]^m = L_0 \subset L_1 \subset \dots \subset L_{2N} \subset z^{-2mN} \k[z]^m : z L_j \subset L_{j-1}, \dim(L_j/L_{j-1}) = i_j \}$$
where $\sum_k i_k = Nm$ (for notational convenience we have replaced $\k(z)^m$ with $z^{-2mN} \k[z]^m$ in the definition of $Y(\i)$ above). This allows us to define the projection 
$$Pr: z^{-2mN} \k[z]^m \cong z^{-2mN} \k[z] \otimes \k^m \longrightarrow z^{-N} \k[z] \otimes \k^m \cong z^{-N} \k[z]^m$$ 
as follows 
$$Pr(z^k \otimes v) = \begin{cases} z^k \otimes v &\text{ if } k \ge -N \\ 0 &\text{ if } k < -N. \end{cases}$$
where $v \in \k^m$. Then we can define $U(\i) \subset Y(\i)$ as the locus of points in $L_\bullet \in Y(\i)$ such that $Pr(L_{2N}) = z^{-N} \k[z]^m$ or, equivalently, the locus of points where $\dim(Pr(L_{2N})/\k[z]^m) = Nm$. Since $\dim(L_{2N}/\k[z]^m) = \sum_k i_k = Nm$ it is not hard to see that $U(\i) \subset Y(\i)$ is an open subscheme.

Using \cite{MV} one can show that $U(\i)$ is actually isomorphic to $\M(\v,\w)$ where $\w = (0^{N-1},m,0^{N-1})$ and $\phi(\v) = \i$ via the bijection $\phi$ from (\ref{eq:phi}). Moreover, if we denote by $u$ the natural inclusion of $\M(\v,\w)$ into $Y(\i)$ then the restriction map $u^*: D(Y(\i)) \rightarrow D(\M(\v,\w))$ gives a functor 
$$u^*: \K_{\Gr,m} \rightarrow \K_{{\rm Q},m}$$
which intertwines the $(\sl_\infty,\th)$ actions either side and categorifies the projection map (\ref{eq:projection}). 

\section{Some example computations: $\Sym^2 V$ and the adjoint representation}\label{sec:example}

To illustrate our constructions of clasps we will compute the cohomology of the unknot labeled by $V_{2 \Lambda_1} = \Sym^2 V$ (where $V$ is the standard $m$-dimensional representation of $\sl_m$) and by the adjoint representation. In the case of $\Sym^2 V$ the unknot is illustrated in figure (\ref{fig:5}) where the P denotes the clasp $\P^-$ categorifying the composition $V \otimes V \xrightarrow{\pi} V_{2 \Lambda_1} \xrightarrow{\iota} V \otimes V$.

\begin{figure}[ht]
\begin{center}
\puteps[0.4]{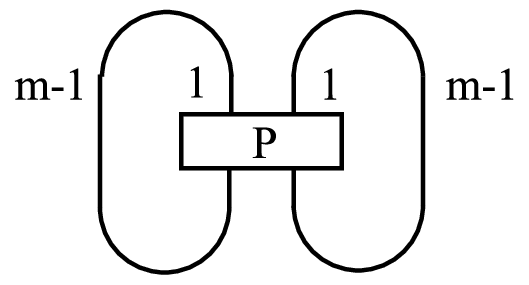}
\end{center}
\caption{}\label{fig:5}
\end{figure}

In what follows we will use all the properties of a 2-representation in the sense of \cite{KL3}. One motivation for performing these calculations is to illustrate how the computation of our link homologies can be performed entirely within the realm of higher representation theory. 

\subsection{Some notation}

Very briefly, recall the diagrammatic notation from \cite{KL3}. We will only deal with the $\sl_2$ case where $\E=\E_1$ and $\F = \F_1$. An upward (resp. downward) pointing strand $\sUup$ (resp. $\sUdown$) denotes $\E$ (resp. $\F$). Caps and cups denote adjunctions. A crossing denotes the affine nilHecke 2-morphism $T: \E\E \rightarrow \E\E \la -2 \ra$ while a solid dot $\sUupdot$ indicates the 2-morphism $X: \E \rightarrow \E \la 2 \ra$. By convention, 1-morphisms are composed horizontally going to the left while 2-morphisms are composed vertically going upwards. 

We refer the reader to \cite{KL3} for complete list of relations satisfied by such diagrams. One should not confuse such diagrams with link diagrams like the one in figure (\ref{fig:5}). 

\subsection{The clasp} 

First we need an explicit description of the clasp $\P^-$. In this case it involves only two strands, so $\g = \sl_2$ and we have
$$(-2) \overset{\E}{\underset{\F}{\rightleftarrows}} (0) \overset{\E}{\underset{\F}{\rightleftarrows}} (2)$$
where weight space $(0)$ corresponds to $V \otimes V$ while $(-2)$ and $(2)$ to $\Lambda^2_q(V) \otimes \Lambda^0_q(V)$ and $\Lambda^0_q(V) \otimes \Lambda^2_q(V)$ respectively. The projector $\P^- \1_0$ is given by $\lim_{\ell \rightarrow \infty} \T^{2\ell} \1_0$ which we now describe explicitly. 

First, we have 
\begin{align}\label{eg1}
\T \1_0 \cong [ \E \F \1_0 \la -1 \ra \xrightarrow{\Ucapr} \1_0 ]
\end{align}
and squaring gives 
\begin{align*}
\T^2 \1_0 \cong \left[ \E \F \E \F \1_0 \la -2 \ra \xrightarrow{\left(\sUup \sUdown \Ucapr, \Ucapr \sUup \sUdown \right)} \E \F \1_0 \la -1 \ra \oplus \E \F \1_0 \la -1 \ra \xrightarrow{\left(\Ucapr, - \Ucapr \right)} \1_0 \right]
\end{align*}
Now $\F \E \1_{-2} \cong \1_{-2} \la -1 \ra \oplus \1_{-2} \la 1 \ra$ where the isomorphisms are given by 
\begin{equation}\label{eg0}
\Ucupr \ \ \text{ and } \ \  \Udotcupr \ \ \ \ \text{ with inverses } \ \ \ \  \Udotcapl - \Ucapl \hspace{-.2cm} \smccbub{2}{} \ \ \text{ and } \ \ \Ucapl.
\end{equation}
Using these we find that $\T^2 \1_0$ is isomorphic to the complex 
\begin{align*}
\xymatrix{
\E \F \1_0 \la -3 \ra \ar[rr]^{\sUupdot \sUdown} \ar@/_1pc/[drr]|-{\sUup \sUdowndot} & & \E \F \1_0 \la -1 \ra \ar[r]^{\Ucapr} & \1_0 \\
\E \F \1_0 \la -1 \ra \ar@/^1pc/[rru]|-{\id} \ar[rr]_{\id}  & & \E \F \1_0 \la -1 \ra \ar[ur]_{-\Ucapr} & 
}
\end{align*}
Using the cancellation on the two terms in the bottom row we end up with 
\begin{align}\label{eg2}
\T^2 \1_0 \cong [ \E \F \1_0 \la -3 \ra \xrightarrow{\sUupdot \sUdown - \sUup \sUdowndot} \E \F \1_0 \la -1 \ra \xrightarrow{\Ucapr} \1_0 ].
\end{align}

Now composing (\ref{eg1}) with (\ref{eg2}) gives us that $\T^3 \1_0$ is isomorphic to 
\begin{align*}
\xymatrix{
& & & \E \F \1_0 \la -3 \ra \ar[rr]^{\sUupdot \sUdown - \sUup \sUdowndot} & & \E \F \1_0 \la -1 \ra \ar[r]^{\Ucapr} & \1_0 \\
\E \F \E \F \1_0 \la -4 \ra \ar@/^/[urrr]^{-\Ucapr \sUup \sUdown} \ar[rrr]_{\sUup \sUdown \sUupdot \sUdown - \sUup \sUdown \sUup \sUdowndot} & & &  \E \F \E \F \1_0 \la -2 \ra \ar[rr]_{\sUup \sUdown \Ucapr} \ar@/_/[urr]^{\Ucapr \sUup \sUdown} & & \E \F \1_0 \la -1 \ra \ar[ur]_{-\Ucapr} & }
\end{align*}
Using (\ref{eg0}) again we can expand to obtain
\begin{align*}
\xymatrix{
& & \E \F \1_0 \la -3 \ra \ar[rr]^{-\sUupdot \sUdown + \sUup \sUdowndot} & & \E \F \1_0 \la -1 \ra \ar[r]^{\Ucapr} & \1_0 \\
\E \F \1_0 \la -5 \ra \ar@/^1pc/[urr]^{\sUupdot \sUdown} \ar[rr]|-<<<<{\alpha} \ar@/_/[drr]|->>>{\beta} & & \E \F \1_0 \la -1 \ra \ar[urr]|-{\id} \ar[rr]|-<<<<{\id} & & \E \F \1_0 \la -1 \ra \ar[ur]_{-\Ucapr} & \\
\E \F \1_0 \la -3 \ra \ar[uurr]|->>>>{\id} \ar@/_/[urr]|->>>>{- \sUup \sUdowndot} \ar[rr]_{\id} & & \E \F \1_0 \la -3 \ra \ar@/_/[uurr]|-<<<{\sUupdot \sUdown} \ar@/_1pc/[urr]_{\sUup \sUdowndot} & & &}
\end{align*}
where $\alpha = - \sUup \smcbub{-1} \sUdown$ and $\beta = - \sUup \sUdowndot + \sUup \smccbub{2} \sUdown$. Note that to end up with this complex we used several relations such as the one relating clockwise and counterclockwise bubbles \cite[Eq. 3.7]{KL3} and the bubble slide relations \cite[Prop. 3.3]{KL3}. In the end, using the cancellation lemma we end up with 
\begin{align}\label{eg3}
\T^3 \1_0 \cong [\E \F \1_0 \la -5 \ra \xrightarrow{\sUupdot \sUdown + \sUup \sUdowndot - \sUup \smccbub{2} \sUdown} \E \F \1_0 \la -3 \ra \xrightarrow{\sUupdot \sUdown - \sUup \sUdowndot} \E \F \1_0 \la -1 \ra \xrightarrow{\Ucapr} \1_0 ].
\end{align}
The fact that the composition of the first two maps above is zero is a consquence of 
$$\sUupdot {\mbox{\tiny 2}} \sUdown + \sUupdot \smcbub{+1} \sUdown + \sUup \smcbub{+2} \sUdown = 0 = \sUup \sUdowndot {\mbox{\tiny 2}} + \sUup \smcbub{+1} \sUdowndot + \sUup \smcbub{+2} \sUdown$$
which follows from relation \cite[Eq. 3.5]{KL3} together with $\smcbub{+1} = - \smccbub{2}$ from \cite[Eq. 3.7]{KL3}.

Iterating the argument above we find that
\begin{equation}\label{eq:Tinfty}
\T^{2n} \1_0 \cong \left[ \E \F \1_0 \la -2n-1 \ra \rightarrow \E \F \1_0 \la -2n+1 \ra \rightarrow \dots \rightarrow \E \F \1_0 \la -1 \ra \rightarrow \1_0 \right]
\end{equation}
where the maps alternate between $\sUupdot \sUdown - \sUup \sUdowndot$ and $\sUupdot \sUdown + \sUup \sUdowndot - \sUup \smccbub{2} \sUdown$. Thus $\P^- \1_0 = \lim_{\ell \rightarrow \infty} \T^{2 \ell} \1_0$ is the obvious limit of this complex. 

\subsection{The unknot labeled by $\Sym^2 V$}\label{sec:computation1}

In terms of our categorical 2-representation, the homology of the knot in figure (\ref{fig:5}) is given by the composition
\begin{equation}\label{eq:calculate}
\E_3 \F_1 \P^-_2 \E_1 \F_3 \in \Hom_{\Kom^-_*(\K)}((\0,m,0,0,m,\um), (\0,m,0,0,m,\um))
\end{equation}
where $\P^-_2 = \lim_{\ell \rightarrow \infty} \T_2^{2 \ell}$ is the infinite complex 
\begin{equation}\label{eq:tempcpx}
\left[ \dots \rightarrow \E_2 \F_2 \1_\i \la -2n-1 \ra \rightarrow \E_2 \F_2 \1_\i \la -2n+1 \ra \rightarrow \dots \rightarrow \E_2 \F_2 \1_\i \la -3 \ra \rightarrow \E_2 \F_2 \1_\i \la -1 \ra \rightarrow \1_\i \right]
\end{equation}
where $\i = ({\0}, m-1,1,1,m-1, {\um})$. Now, 
\begin{align*}
\E_3 \F_1 (\E_2 \F_2) \E_1 \F_3 \1_{\bk} 
&\cong \E_3 \E_2 \F_1 \E_1 \1_{({\0}, m,1,0,m-1, {\um})} \F_2 \F_3 \1_{\bk} \\
&\cong \oplus_{[m-1]} \E_3 \E_2 \F_2 \1_{({\0},m,0,1,m-1, {\um})} \F_3 \1_{\bk} \\
&\cong \oplus_{[m-1]} \E_3 \F_3 \1_{\bk} \\
&\cong \oplus_{[m]} \oplus_{[m-1]} \1_{\bk}
\end{align*}
where $\bk = ({\0},m,0,0,m, {\um})$. Subsequently, the composition in (\ref{eq:calculate}) simplifies to give a complex 
\begin{equation}\label{eq:cpxhomology}
\left[ \dots \rightarrow \bigoplus_{[m][m-1]} \1_\bk \la -2n-1 \ra \xrightarrow{d_n} \bigoplus_{[m][m-1]} \1_\bk \la -2n+1 \ra \xrightarrow{d_2} \dots \xrightarrow{d_1} \bigoplus_{[m][m-1]} \1_\bk \la -1 \ra \xrightarrow{d_0} \bigoplus_{[m][m]} \1_\bk \right]
\end{equation}
where the value of the right most term is a consequence of the fact that 
$$\E_3 \F_1 \E_1 \F_3 \1_\bk \cong \E_3 \F_3 \E_1 \F_1 \1_\bk \cong \bigoplus_{[m][m]} \1_\bk.$$
It remains to identify the differentials in (\ref{eq:cpxhomology}). 

\begin{Lemma}\label{lem:diffs} 
The differential $d_0$ in (\ref{eq:cpxhomology}) is injective. After that, $d_n=0$ if $n$ is odd while if $n>0$ is even then $d_n$ has the highest possible rank, namely $(m-1)^2$. 
\end{Lemma}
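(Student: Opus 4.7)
The strategy is to make explicit the isomorphism $\E_3\F_1\E_2\F_2\E_1\F_3\1_\bk\cong\bigoplus_{[m][m-1]}\1_\bk$ and then compute how the three kinds of 2-morphisms making up $\P^-_2\1_{\i'}$ in \eqref{eq:Tinfty}---the counit $\Ucapr$, the odd-$n$ differential $\phi^-=\sUupdot\sUdown-\sUup\sUdowndot$, and the even-$n$ differential $\phi^+=\sUupdot\sUdown+\sUup\sUdowndot-\sUup\smccbub{2}\sUdown$---behave under the sandwiching $\E_3\F_1(-)\E_1\F_3\1_\bk$. The isomorphism is constructed by commuting $\E_i$ past $\F_j$ with $|i-j|\ne 0$ to reach $\E_3\E_2\F_1\E_1\F_2\F_3\1_\bk$, applying condition (v) to give $\F_1\E_1\1_\mu\cong\bigoplus_{[m-1]}\1_\mu$ (using $\F_1\1_\mu=0$ since position 1 is already $m$), simplifying $\E_2\F_2\1_{\bk-\alpha_3}\cong\1_{\bk-\alpha_3}$ (using $\E_2\1_{\bk-\alpha_3}=0$), and identifying $\E_3\F_3\1_\bk\cong\bigoplus_{[m]}\1_\bk$. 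Because $\End^0(\1_\bk)=\k$, each sandwiched 2-morphism becomes a scalar-valued matrix on the graded decomposition, so the rank claims can be read off block-by-block.

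For injectivity of $d_0$: uniqueness of nonzero natural transformations between matched graded summands (each $\Hom$ is one-dimensional by condition (iii) and adjunction) shows the sandwiched counit is, on each block, a scalar multiple of a canonical map; tracing the counit through the chain above verifies the scalars are nonzero. Since the $m-1$ graded slots of $[m-1]\la-1\ra$ sit inside the $m$ slots of $[m]$ (missing only the top slot $\la m-1\ra$), the resulting matrix is a graded injection. For vanishing of $d_n$ at odd $n$: a KLR-type relation expresses $X_2I-IX_2$ on $\E_2\F_2\1_{\i'}$ (with $\la\i',\alpha_2\ra=0$) as a cup-cap composition factoring through $\1_{\i'}$ plus lower-degree bubble corrections; since $\1_{\i'}$ is not a summand of $\E_2\F_2\1_{\i'}$ the factorization vanishes after sandwiching, and the corrections are killed by a degree count.

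For $d_n$ of rank $(m-1)^2$ at even $n\ge 2$: the matrix of $\phi^+$ is computed block-by-block on the pyramid of multiplicities $(1,2,\ldots,m-1,m-1,\ldots,2,1)$ of graded slots in $[m][m-1]$, using Lemma \ref{lem:IXonFs} for the action of $X_2I+IX_2$ and the bubble-slide relations from \cite[Prop. 3.3]{KL3} for $I\smccbub{2}I$. Each block between slots of multiplicity $a$ and $b$ achieves the maximum rank $\min(a,b)$, and summing gives $2(1+2+\cdots+(m-2))+(m-1)=(m-1)^2$. The main obstacle is precisely this block-by-block computation: explicitly verifying that the bubble correction $I\smccbub{2}I$, which by \cite[Prop. 3.3]{KL3} mixes dot actions with fake bubbles, does not reduce the rank of any individual block below its maximum value.
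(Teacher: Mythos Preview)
Your overall strategy---make the decomposition $\E_3\F_1\E_2\F_2\E_1\F_3\1_\bk\cong\bigoplus_{[m][m-1]}\1_\bk$ explicit and then compute each differential as a scalar matrix---matches the paper's. But your argument for the odd-$n$ vanishing contains a genuine error, and the other two cases are too schematic compared to what is actually needed.

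\textbf{Odd $n$.} Your claim that the cap--cup part of $X_2I-IX_2$ ``vanishes after sandwiching'' because $\1_{\i'}$ is not a summand of $\E_2\F_2\1_{\i'}$ is a non-sequitur. Sandwiching by $\E_3\F_1(-)\E_1\F_3$ sends the intermediate object $\1_{\i'}$ to $\E_3\F_1\E_1\F_3\1_\bk\cong\bigoplus_{[m][m]}\1_\bk$, so the resulting map $\bigoplus_{[m][m-1]}\1_\bk\to\bigoplus_{[m][m]}\1_\bk\to\bigoplus_{[m][m-1]}\1_\bk$ has no reason to vanish; whether $\1_{\i'}$ is a summand of $\E_2\F_2\1_{\i'}$ is irrelevant. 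The paper's mechanism is different and concrete: after writing out the inclusion/projection maps for the summands $\1_\bk$, one sees that the two diagrams obtained by inserting a dot on the left strand versus the right strand of $\E_2\F_2$ simplify, via the relation that dots slide through up--down crossings, to the \emph{same} diagram. Hence $\sUupdot\sUdown$ and $\sUup\sUdowndot$ induce identical matrices and their difference is zero. No abstract factorization argument suffices here.

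\textbf{$d_0$ and even $n$.} For these the paper does not rely on rank lemmas like Lemma~\ref{lem:IXonFs} (which concerns $IX$ on $\F^{(r)}\F$, a different configuration) or on Hom-space dimension counts. It writes down explicit inclusion/projection maps indexed by pairs $(a,b)$ with $0\le a\le m-1$, $0\le b\le m-2$, simplifies each matrix entry via bubble slides to closed-form expressions such as $\delta_{b_1+b_2,m-1}\delta_{a_1+a_2,m-3}-\delta_{b_1+b_2,m-2}\delta_{a_1+a_2,m-2}$, and observes that the resulting matrices are block-diagonal with blocks that are (for $d_0$) unit upper-triangular and (for even $n>0$) negatives of type-$A$ Cartan matrices. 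Injectivity and the rank $(m-1)^2$ then follow by inspection. Your outline correctly identifies the multiplicity pattern $(1,2,\dots,m-1,m-1,\dots,2,1)$, but the step you flag as ``the main obstacle''---showing each block has maximal rank---is precisely where the work lies, and it requires the explicit matrix entries rather than an appeal to Lemma~\ref{lem:IXonFs}.
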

\begin{proof}
First we consider the differential $d_0$. As we vary $0 \le a_1 \le m-1, 0 \le b_1 \le m-2$ we pick out every summand $\1_\bk$ inside $\E_3 \F_1 \E_2 \F_2 \E_1 \F_3 \1_\bk$ as the composition depicted in the lower half (the part below the lower dashed line) of the left hand diagram in (\ref{fig:6}). 

\begin{figure}[ht]
\begin{center}
\puteps[0.4]{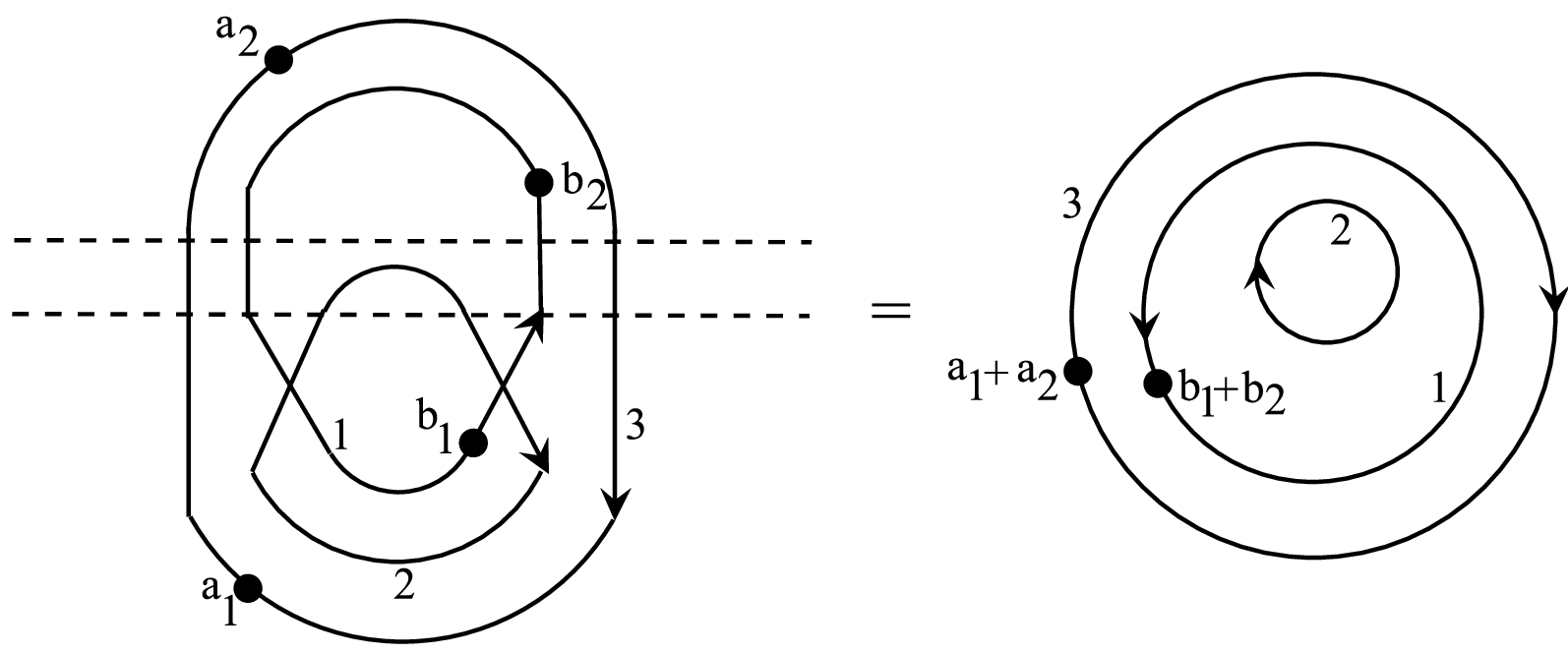}
\end{center}
\caption{}\label{fig:6}
\end{figure}

Next, the differential $d_0$ in (\ref{eq:cpxhomology}) is induced by the adjunction map. Finally, we pick out every summand $\1_\bk$ in $\E_3 \F_1 \E_1 \F_3 \1_\bk$ as the composition depicted above the upper dashed line in (\ref{fig:6}). Subsequently, the differential $d_0$ is given by an $(m-1)(m-2) \times (m-1)^2$ matrix $M_0$, with rows labeled by $(a_1,b_1)$ and columns by $(a_2,b_2)$ and where the corresponding matrix entry is the composition on the left of figure (\ref{fig:6}). It remains to simplify this composition. 

First, we move the circle labeled 2 inside the circle labeled 1 and, using the fact that the dots move through up-down crossings, obtain the composition encoded by the right hand diagram of figure (\ref{fig:6}). Next, we use the bubble slide relations \cite[Prop. 3.3,3.4]{KL3} to slide the innermost bubble all the way to the outside (this takes two steps). Keeping in mind that $\End(\1_\bk)$ is supported in degree zero (so any positive degree maps are zero) we end up with
\begin{equation}\label{eq:c1}
\smccbub{b_1+b_2} \smcbub{a_1+a_2+2} - \smccbub{b_1+b_2+1} \smcbub{a_1+a_2+1}.
\end{equation}
Now, $\smccbub{b} \smcbub{a} = \delta_{b,m-1} \delta_{a,m-1}$ so (\ref{eq:c1}) is equal to $\delta_{b_1+b_2,m-1} \delta_{a_1+a_2,m-3} - \delta_{b_1+b_2,m-2} \delta_{a_1+a_2,m-2}$. This means that $M_0$ is a block matrix where the blocks are indexed by $\ell = a_1+b_1$. For example, if $\ell=m-2$ then we obtain the block
\begin{equation}\label{eq:b1}
\begin{matrix}
(a_1,b_1) \setminus (a_2,b_2) & (m-2,0) & (m-3,1) & \dots & (1,m-3) & (0,m-2) \\
(0,m-2) & -1 & 1 & \dots & 0 & 0\\
(1,m-3) & 0 & -1 & \dots & 0 & 0 \\
\dots & \dots & \dots & \dots & \dots & \dots \\
(m-3,1) & 0 & \dots & \dots & -1 & 1 \\
(m-2,0) & 0 & \dots & \dots & 0 & -1 
\end{matrix}
\end{equation}
Subsequently, we conclude that $d_0$ is injective, with cokernel $\bigoplus_{[m]} \1_{\bk} \la m-1 \ra$. 

Next, we need to understand the remaining maps in the complex (\ref{eq:cpxhomology}). Recall that the differentials in (\ref{eq:tempcpx}) alternate between $\sUupdot \sUdown - \sUup \sUdowndot$ and $\sUupdot \sUdown + \sUup \sUdowndot - \sUup \smccbub{2} \sUdown$. Arguing as above, $\sUupdot \sUdown$ and $\sUup \sUdowndot$ induce the two compositions on the left and middle diagrams in figure (\ref{fig:7}) (where this time $0 \le a_1,a_2 \le m-1$ and $0 \le b_1,b_2 \le m-2$). Both of these are equal to the composition on the right in figure (\ref{fig:7}) (which is the same as the composition in (\ref{fig:6})). This explains why $\sUupdot \sUdown - \sUup \sUdowndot$ induces the zero map ({\it i.e.} $d_n=0$ if $n$ is odd). 

\begin{figure}[ht]
\begin{center}
\puteps[0.4]{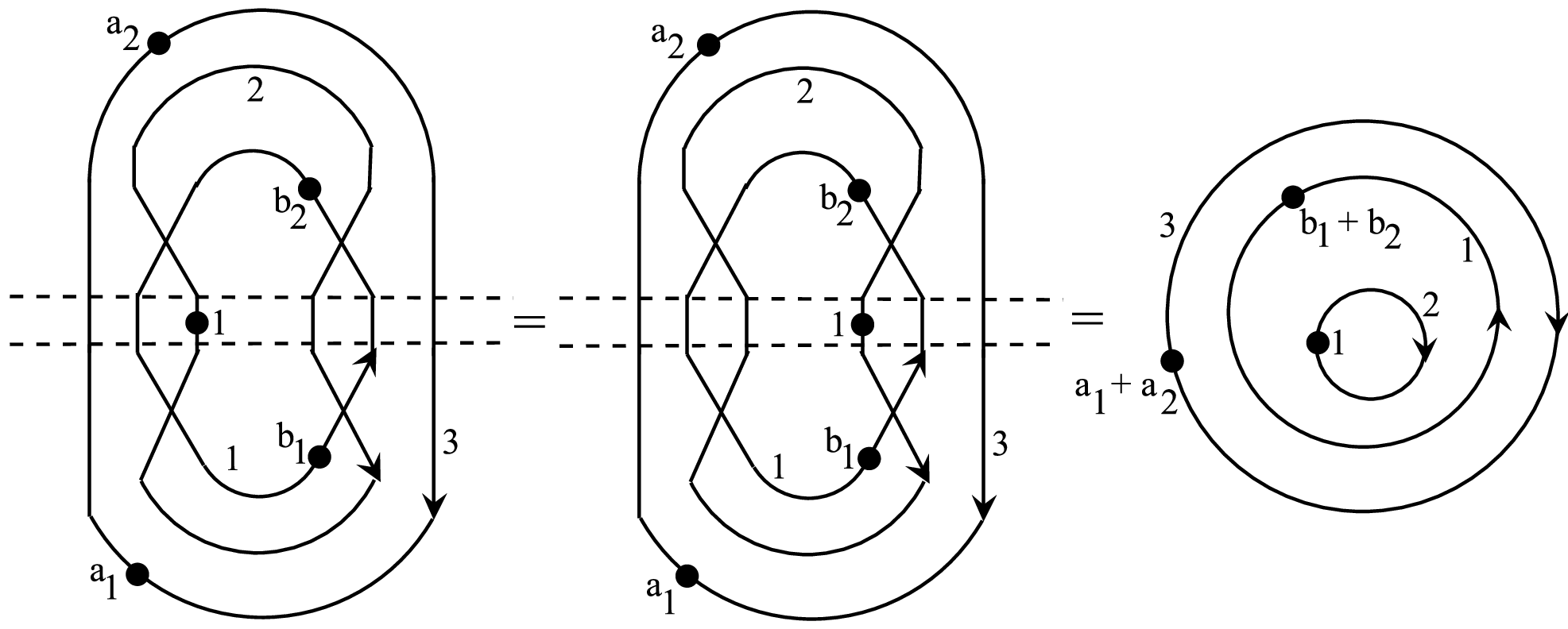}
\end{center}
\caption{}\label{fig:7}
\end{figure}

Finally, we need to compute the contribution of $\sUup \smccbub{2} \sUdown$. Proceeding as above we get the composition from figure (\ref{fig:8}). 

\begin{figure}[ht]
\begin{center}
\puteps[0.4]{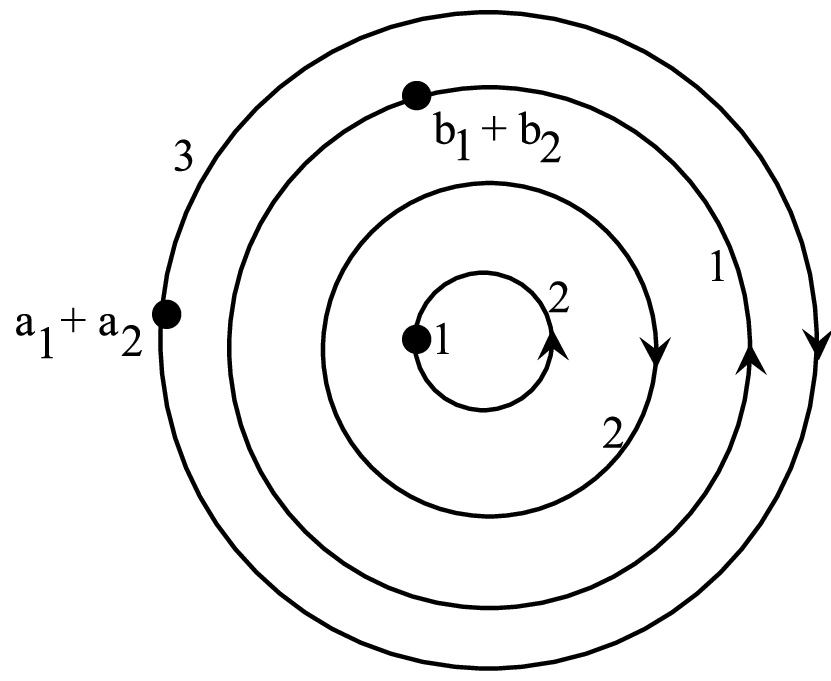}
\end{center}
\caption{}\label{fig:8}
\end{figure}

Using the bubble slide relations, the two inner bubbles (both labeled by $2$) in (\ref{fig:8}) are equal to 
$$\ccbub{} \cbub{} + 2 \smcbub{1} = \smcbub{1} - \smccbub{1}$$
where to obtain the equality above we used that $\smccbub{1} + \ccbub{} \cbub{} + \smcbub{1} = 0$ (here all strands are labeled by $2$). The calculation involving $\smcbub{1}$ was done above and gave (\ref{eq:c1}). A similar computation involving $\smccbub{1}$ gives
\begin{equation}\label{eq:c2}
\smccbub{b_1+b_2+2} \smcbub{a_1+a_2} - \smccbub{b_1+b_2+1} \smcbub{a_1+a_2+1}.
\end{equation}
Combining (\ref{eq:c1}) and (\ref{eq:c2}) we find that $d_n$, where $n > 0$ is even, is given by 
$$2 \times (\ref{eq:c1}) - (\ref{eq:c2}) = \smccbub{b_1+b_2} \smcbub{a_1+a_2+2} - 2 \smccbub{b_1+b_2+1} \smcbub{a_1+a_2+1} + \smccbub{b_1+b_2+2} \smcbub{a_1+a_2}.$$
Using that $\smccbub{b} \smcbub{a} = \delta_{b,m-1} \delta_{a,m-1}$, this again gives a matrix $M_n$ whose blocks are indexed by $\ell=a_1+b_1$. For example, if $\ell=m-2$ then we obtain the block
\begin{equation}\label{eq:b2}
\begin{matrix}
(a_1,b_1) \setminus (a_2,b_2) & (m-2,0) & (m-3,1) & \dots & (1,m-3) & (0,m-2) \\
(0,m-2) & -2 & 1 & \dots & 0 & 0\\
(1,m-3) & 1 & -2 & \dots & 0 & 0 \\
\dots & \dots & \dots & \dots & \dots & \dots \\
(m-3,1) & 0 & \dots & \dots & -2 & 1 \\
(m-2,0) & 0 & \dots & \dots & 1 & -2
\end{matrix}
\end{equation}
It is now straightforward to check that, when $n>0$ is even, $d_n$ induces a map of highest possible rank.
\end{proof}

\begin{Corollary}\label{cor:homology}
Denote by $\chi_{q,t}(V_{2\Lambda_1})$ the graded Poincar\'e polynomial of the $\sl_m$ homology of the unknot labeled by $V_{2\Lambda_1}$. Then 
\begin{equation}\label{eq:Poincare}
\chi_{q,t}(V_{2\Lambda_1}) = 1 + [m-1] \frac{q^{-m}+ t^3 q^{m+4}}{1-t^2q^4}.
\end{equation}
\end{Corollary}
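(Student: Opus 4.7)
The plan is to apply Lemma \ref{lem:diffs} termwise to the complex (\ref{eq:cpxhomology}) and assemble the graded Euler characteristic. Since $d_0$ is injective, $d_n = 0$ for odd $n \ge 1$, and $d_n$ has rank $(m-1)^2$ for even $n \ge 2$, the homologies have dimensions $\dim H^0 = m$, $H^{-1} = 0$, and $\dim H^{-k} = m-1$ for all $k \ge 2$. What remains is to determine the internal $q$-gradings of each $H^{-k}$.

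For $H^0$, the block structure of $M_0$ displayed in the Lemma proof identifies the cokernel as $\bigoplus_{[m]} \1_\bk \la m-1 \ra$, contributing $[m]q^{-(m-1)} = 1 + [m-1]q^{-m}$ to $\chi_{q,t}$ under the convention $\la s \ra \leftrightarrow q^{-s}$ in the Poincar\'e polynomial.

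For $k \ge 2$, I will trace through the matrix $M_n$ of the Lemma proof to identify the surviving cokernel summands (for even $k$) or kernel summands (for odd $k \ge 3$). The anti-diagonal block support $\ell_1+\ell_2 = 2m-4$, together with the internal-grading constraints of the complex, identifies $H^{-k} \cong \bigoplus_{[m-1]} \1_\bk \la m-2k \ra$ for even $k \ge 2$, and $H^{-k} \cong \bigoplus_{[m-1]} \1_\bk \la 2-m-2k \ra$ for odd $k \ge 3$. As a sanity check at $m=2$: one gets $H^{-2} \cong \1_\bk\la -2 \ra$ and $H^{-3} \cong \1_\bk\la -6 \ra$, contributing $q^2 t^2$ and $q^6 t^3$ respectively, which matches the $m=2$ expansion of (\ref{eq:Poincare}).

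Finally, summing the two geometric series
\begin{align*}
\sum_{k \ge 1} [m-1]q^{-m+4k}t^{2k} &= \frac{[m-1]q^{-m+4}t^2}{1-q^4t^2}, \\
\sum_{k \ge 1} [m-1]q^{m+4k}t^{2k+1} &= \frac{[m-1]q^{m+4}t^3}{1-q^4t^2}
\end{align*}
and combining with the $H^0$ contribution via the identity $[m-1]q^{-m} + [m-1]q^{-m+4}t^2/(1-q^4t^2) = [m-1]q^{-m}/(1-q^4t^2)$ yields formula (\ref{eq:Poincare}). The main technical obstacle will be the detailed block-structure analysis required to verify that the cokernels and kernels sit at exactly the claimed gradings, reconciling the combinatorial support condition from the bubble identities in the Lemma proof with the internal-grading constraints imposed by $\End^\ast(\1_\bk)$.
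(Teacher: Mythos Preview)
Your proposal is correct and follows essentially the same route as the paper's proof. Both arguments invoke Lemma~\ref{lem:diffs} to determine the ranks of the differentials, use the block structure of the matrices $M_n$ to pin down the graded pieces of the kernel and cokernel of each $d_n$, and then sum the resulting geometric series. The paper phrases the cancellation step slightly differently---writing the surviving complex (\ref{eq:invt}) with all zero differentials rather than listing $H^{-k}$ degree by degree---but the content and the final series manipulation are the same as yours.
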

\begin{Remark} 
Recall that, by convention, $q$ corresponds to the grading shift $\la -1 \ra$ while $t$ corresponds to $[1]$ (a downward shift by one in cohomology). 
\end{Remark}
\begin{proof}
We replace $\1_\bk$ by $\k$ in (\ref{eq:cpxhomology}) to obtain a complex of graded vector spaces. To simplify notation denote $A := \bigoplus_{[m-1]} \k$ so that $\bigoplus_{[m][m-1]} \k = \bigoplus_{[m]} A$. Since $d_n$ has maximal rank when $n>0$ is even we find that its kernel and cokernel are $A \la -m+1 \ra$ and $A \la m+1 \ra$ respectively. So, after canceling out terms we are left with the following complex 
\begin{equation}\label{eq:invt}
\dots \xrightarrow{} A \la -m-8 \ra \rightarrow A \la m-8 \ra \xrightarrow{} A \la -m-4 \ra \rightarrow A \la m-4 \ra \xrightarrow{} 0 \rightarrow \bigoplus_{[m]} \k \la m-1 \ra.
\end{equation}
where all the maps are zero. Thus the graded Poincar\'e polynomial is 
$$q^{-m+1}[m] + [m-1]\left( q^{-m+4}t^2+q^{m+4}t^3+q^{-m+8}t^4+q^{m+8}t^5 + \dots \right)$$
which simplifies to give (\ref{eq:Poincare}). 
\end{proof}

It is easy to check that $\chi_{q,-1}(V_{2\Lambda_1}) = \frac{[m][m+1]}{[2]}$ which, as expected, is the quantum dimension of $V_{2\Lambda_1} = \Sym^2 V$. 

\subsubsection{The cases $m=2,3$} When $m=2$, Corollary \ref{cor:homology} gives $\chi_{q,t}(V_{2\Lambda_1}) = 1 + \frac{q^{-2} + t^3 q^6}{1-t^2q^4}$. This invariant was also computed in \cite{CoK} (see section 4.3.1). Their homology is supported in positive degree so their Poincar\'e polynomial is actually $1 + \frac{q^{-2} + t^{-3} q^6}{1-t^{-2}q^4}$, obtained by replacing $t$ with $t^{-1}$ (one should also take $q \mapsto q^{-1}$ but this is not necessary since they use the convention that $\la 1 \ra \leftrightarrow q$ which is opposite from ours).  

When $m=3$, Corollary \ref{cor:homology} gives $\chi_{q,t}(V_{2\Lambda_1}) = 1 + [2] \frac{q^{-3} + t^3 q^7}{1-t^2q^4}$. This was also computed in \cite[Example 5.4]{Ros} (the calculation in the first arXiv version of that paper contained a small typo).

\subsubsection{Working over $\Z$}
The calculation above was performed carefully enough that it actually works over $\Z$ rather than $\k$. In this case we still have $d_n=0$ if $n$ is odd, while $d_n$ is of largest possible rank if $n$ is even, but now we get some torsion. If $d_0$ then the blocks all look like the map in (\ref{eq:b1}) so there is no torsion. On the other hand, if $n>0$ is even then the matrix from (\ref{eq:b2}), which is just minus the Cartan matrix of type $A_{m-1}$, is equivalent to 
$$
\left( \begin{matrix} 
-m & 0 & 0 & \dots & 0  \\
0 & 1 & 0 & \dots & 0 \\
\dots & \dots & \dots & \dots & \dots \\
0 & 0 & \dots & 1 & 0 \\
0 & 0 & \dots & 0 & 1
\end{matrix} \right).$$
This has cokernel $\Z/m\Z$. It is easy to check that the cokernels of the other blocks do {\em not} have any torsion. Thus, we get $\Z/m\Z$ torsion which is isomorphic to 
$$\bigoplus_{d \ge 1} \Z/m\Z \la -4d \ra [2d]$$
({\it i.e.} it occurs in cohomological degrees $-2,-4,-6,\dots$). When $m=2$ this torsion was also computed in \cite{CoK} at the end of section 4.3.1 (in their notation, it corresponds to using the parameter $\alpha=0$). Note that the $\Z$-rank of the homology is still given by (\ref{eq:Poincare}). 

\subsubsection{Torus links} 
Cutting off the calculations above also computes the invariants of torus links. For example, consider the link in figure (\ref{fig:5}) where P is replaced by $n$ crossings ({\it i.e.} $\T^{n}$). This gives the $(2,n)$ torus link. Then the associated invariant is calculated by the complex in (\ref{eq:invt}) where we chop off everything after the $(n-1)$th term $A \la \cdot \ra$. So, for example, if $n$ is odd then the Poincar\'e polynomial of the $(2,n)$ torus link becomes 
$$1 + [m-1] \frac{q^{-m}+ t^3 q^{m+4}}{1-t^2q^4} - [m-1] \frac{(tq^2)^{n+1}(q^mt+q^{-m})}{1-t^2q^4}.$$
When $m=2$ this homology was originally computed by Khovanov in \cite[Prop. 26]{K1}. 

\subsection{The unknot labeled by the adjoint representation}\label{sec:computation2}
We now compute the homology of the unknot labeled by $V_{\Lambda_1 + \Lambda_{m-1}}$. In this case we use the same knot as in figure (\ref{fig:5}) but relabel the middle two strands $m-1,1$ instead of $1,1$. Then the composition we need to compute is 
$$\E_3 \E_1 \P_2^- \F_1 \F_3 \in \Hom_{\Kom^-_*(\K)}((\bk),(\bk))$$
where $\bk = (\0,0,m,0,m,\um)$ and $\P_2^- = \T_2^\infty$ is now the infinite complex
$$\left[ \dots \rightarrow \E_2 \F_2 \1_\i \la - m(2n-1)- 1 \ra \rightarrow \E_2 \F_2 \1_\i \la - m(2n-1) + 1 \ra \rightarrow \dots \rightarrow \E_2 \F_2 \1_\i \la - m + 1 \ra \rightarrow \1_\i \right].$$
Here $\i = ({\0},1,m-1,1,m-1,{\um})$ and, based on some calculations similar to the ones in the last section, the differentials alternate between 
\begin{equation}\label{eg6}
\sUupdot \sUdown - \sUup \sUdowndot \ \ \ \text{ and } \ \ \ \ \sum_{a+b+c=m-1} \sUupdot \mbox{\tiny{a}} \smcbub{+(b)} \sUdowndot \mbox{\tiny{c}}
\end{equation}
where $+(b)$ indicates that the bubble has the appropriate number of dots so that its degree is $b$. 

Now, we will show that $\E_3 \E_1 \E_2 \F_2 \F_1 \F_3 \1_\bk \cong \oplus_{[m]} \1_\bk$ by computing $\E_3 \E_1 \F_2 \E_2 \F_1 \F_3 \1_\bk$ in two ways. On the one hand, we have
\begin{align*}
\E_3 \E_1 \F_2 \E_2 \F_1 \F_3 \1_\bk 
&\cong \E_3 \E_1 \E_2 \F_2 \F_1 \F_3 \1_\bk \bigoplus_{[m-2]} \E_3 \E_1 \F_1 \F_3 \1_\bk 
\cong \E_3 \E_1 \F_2 \E_2 \F_1 \F_3 \1_\bk \bigoplus_{[m-2][m][m]} \1_\bk
\end{align*}
while, on the other hand, we have 
\begin{align*}
\E_3 \E_1 \F_2 \E_2 \F_1 \F_3 \1_\bk  \cong \F_2 \E_3 \E_1 \F_1 \F_3 \E_2 \1_\bk \cong \bigoplus_{[m-1][m-1]} \F_2 \E_2 \1_\bk \cong \bigoplus_{[m-1]^2[m]} \1_\bk.
\end{align*}
Since $[m-1]^2[m]-[m-2][m]^2 = [m]$ we get that $\E_3 \E_1 \E_2 \F_2 \F_1 \F_3 \1_\bk \cong \oplus_{[m]} \1_\bk$. Thus we end up with a complex 
$$\left[ \dots \xrightarrow{d_{2n}} \bigoplus_{[m]} \1_\bk \la - m(2n-1)- 1 \ra \xrightarrow{d_{2n-1}} \bigoplus_{[m]} \1_\bk \la - m(2n-1)+ 1 \ra \rightarrow \dots \xrightarrow{d_1} \bigoplus_{[m]} \1_\bk \la - m + 1 \ra \xrightarrow{d_0} \bigoplus_{[m][m]} \1_\bk \right].$$
where, once again, we need to figure out the differentials. 

\begin{Lemma}\label{lem:iso}
As we vary $0 \le a \le m-1$, the map depicted below the dashed line on the left in figure (\ref{fig:9}) induces an isomorphism 
$$\bigoplus_{[m]} \1_\bk \xrightarrow{\sim} \E_3 \E_1 \E_2 \F_2 \F_1 \F_3 \1_\bk.$$ 
\end{Lemma}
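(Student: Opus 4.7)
The plan is to verify Lemma \ref{lem:iso} by the same diagrammatic strategy used in the proof of Lemma \ref{lem:diffs}, namely by making both sides of the asserted isomorphism concrete and checking that the induced $m \times m$ scalar matrix is invertible. Specifically, decompose the source as $\bigoplus_{[m]} \1_\bk \cong \bigoplus_{a=0}^{m-1} \1_\bk \la m-1-2a \ra$, where the $a$th summand maps into $\E_3 \E_1 \E_2 \F_2 \F_1 \F_3 \1_\bk$ via the diagram of figure (\ref{fig:9}) carrying $a$ dots in the prescribed position. From the two different computations of $\E_3 \E_1 \F_2 \E_2 \F_1 \F_3 \1_\bk$ given just above the lemma one extracts, in a parallel way, explicit projections $\pi_b: \E_3 \E_1 \E_2 \F_2 \F_1 \F_3 \1_\bk \twoheadrightarrow \1_\bk \la m-1-2b \ra$ for $b=0,\dots,m-1$.

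First, I will write down the $m \times m$ matrix $M$ whose $(a,b)$ entry is the composition $\pi_b \circ \iota_a \in \Hom(\1_\bk \la m-1-2a \ra, \1_\bk \la m-1-2b \ra)$. Because $\End(\1_\bk)$ is supported only in degree zero, every entry of $M$ is a scalar, and is automatically zero unless $a=b$. Next, I will evaluate the diagonal entries $M_{aa}$ via the three-step simplification used to pass from figure (\ref{fig:6}) to the block (\ref{eq:b1}): slide the inner circles through the oppositely oriented crossings (the dots are transparent), then use the bubble-slide relations of \cite[Prop.~3.3, 3.4]{KL3} to push all resulting bubbles to the outside, and finally apply $\smccbub{b'}\smcbub{a'} = \delta_{a',m-1}\delta_{b',m-1}$. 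Keeping track of only the degree-zero contributions, each diagonal entry reduces to a signed sum of Kronecker deltas that ultimately evaluates to $\pm 1$.

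The main obstacle will be bookkeeping: managing the signs and counting precisely which of the many terms produced by the bubble-slide relations survive both the degree constraint ($a+b = m-1$ in the bubble exponents) and the positivity constraint on $\End(\1_\bk)$. This is exactly the same kind of computation carried out in Lemma \ref{lem:diffs} for the blocks (\ref{eq:b1}) and (\ref{eq:b2}), just with one more pair of $\E_k,\F_k$ functors to thread through; no genuinely new diagrammatic phenomenon appears. Once $M$ is shown to be diagonal with $\pm 1$ entries, it is in particular invertible, and hence the map in figure (\ref{fig:9}) is an isomorphism, completing the proof.
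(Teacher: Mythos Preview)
Your proposal is correct and follows essentially the same route as the paper. The paper writes down the candidate inverse explicitly (the maps \emph{above} the dashed line in figure~\ref{fig:9}, indexed by the number of dots $b$), simplifies the composite $\pi_b\circ\iota_a$ via the commutation of $\E_i$ with $\F_j$ for $i\neq j$, a curl reduction, and the bubble-slide relations, and obtains $(-1)^{m-1}\delta_{a+b,m-1}$; so in the paper's dot-indexing the pairing matrix is anti-diagonal with uniform sign. Your version indexes the projections by the target grading shift instead of by dots, which forces the off-(anti)diagonal entries to vanish on degree grounds before any computation and leaves only the analogues of the $a+b=m-1$ entries to evaluate---this is the same computation, merely relabelled. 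One small point: in passing from the composite diagram to a product of bubbles there is a curl reduction (as in \cite[5.16]{La}) that is not present in the figure~\ref{fig:6} computation you cite as a template; it costs nothing extra, but be aware it is an additional step beyond the Lemma~\ref{lem:diffs} routine.
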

\begin{proof}
We will show that as you vary $0 \le b \le m-1$ the map above the dashed line in figure (\ref{fig:9}) gives $(-1)^{m-1}$ times the inverse map. To do this we evaluate the composition on the left in (\ref{fig:9}). 

\begin{figure}[ht]
\begin{center}
\puteps[0.4]{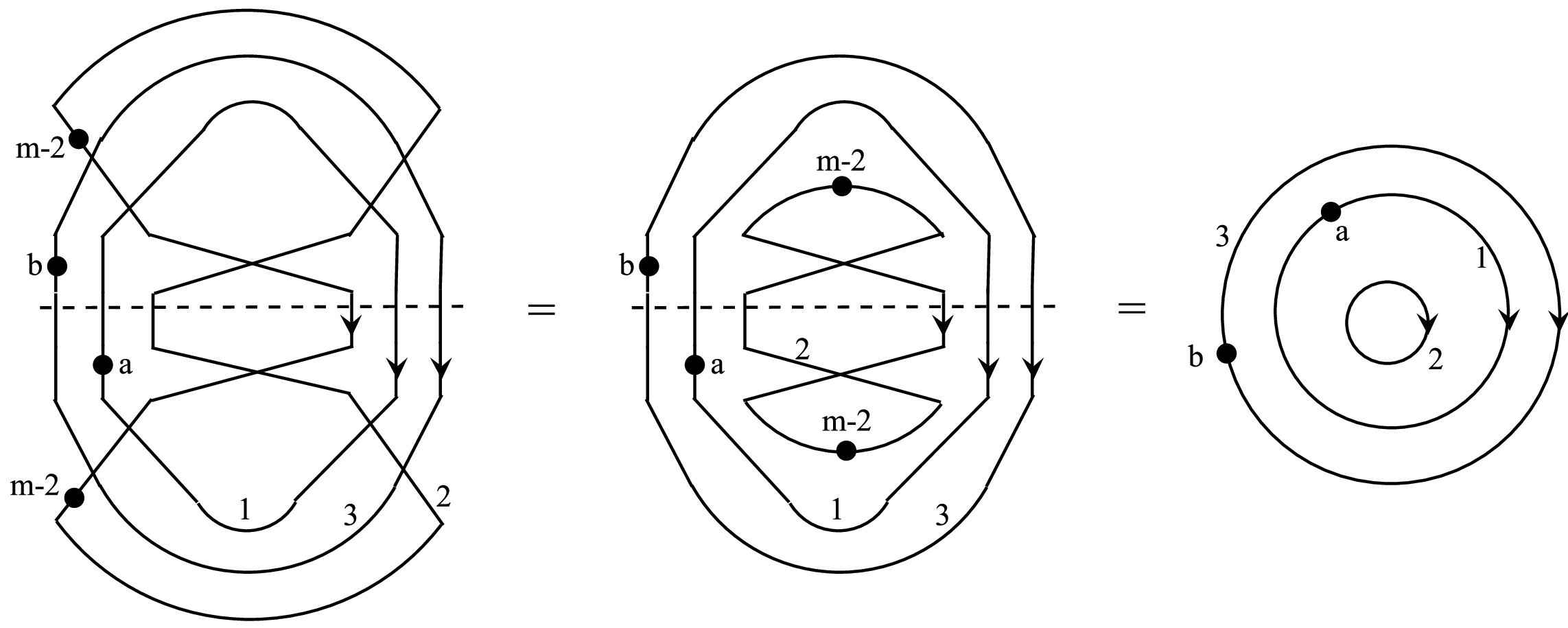}
\end{center}
\caption{}\label{fig:9}
\end{figure}

Since $\E_i$ and $\F_j$ commute when $i \ne j$, the composition in the left of (\ref{fig:9}) can be simplified to give the middle diagram. Using for instance \cite[5.16]{La}, the bottom (resp. top) curl in the middle diagram is equal to a cup (resp. cap) and we obtain the right hand diagram. Now, we can slide the inner bubble towards the outside, using \cite[Prop. 3.3,3.4]{KL3} as we did before, to obtain
$$\sum_{k=0}^{m-1-j} \sum_{j=0}^{m-1} (-1)^{k+j} \smcbub{+(m-1-j-k)} \hspace{-.8cm} \mbox{\tiny{2}} \hspace{.3cm} \smcbub{b+k} \hspace{-.2cm} \mbox{\tiny{3}} \smcbub{a+j} \hspace{-.2cm} \mbox{\tiny{1}} 
= \sum_{k=0}^{m-1-j} \sum_{j=0}^{m-1} (-1)^{k+j} \delta_{j+k,m-1} \delta_{b+k,m-1} \delta_{a+j,m-1}$$
which simplifies to give $(-1)^{m-1} \delta_{a+b,m-1}$. Thus the $m \times m$ matrix whose $(a,m-1-b)$ entry is given by the composition on the left in figure (\ref{fig:9}) equals $(-1)^{m-1}$ times the identity matrix (as claimed). 
\end{proof}

Now, if we sandwich $\sUup \sUup (\sUupdot \sUdown - \sUup \sUdowndot) \sUdown \sUdown$ where the dashed line is in figure (\ref{fig:9}) then we get zero. This means that $d_n=0$ if $n$ is odd. If $n>0$ is even then, for degree reasons, $d_n$ must induce zero between all but one summand $\1_\bk$. The differential on this summand is the sum over all $0 \le a,b,c \le m-1$ with $a+b+c=m-1$ of the composition on the left in figure (\ref{fig:10}). It remains to simplify this composition. 

\begin{figure}[ht]
\begin{center}
\puteps[0.4]{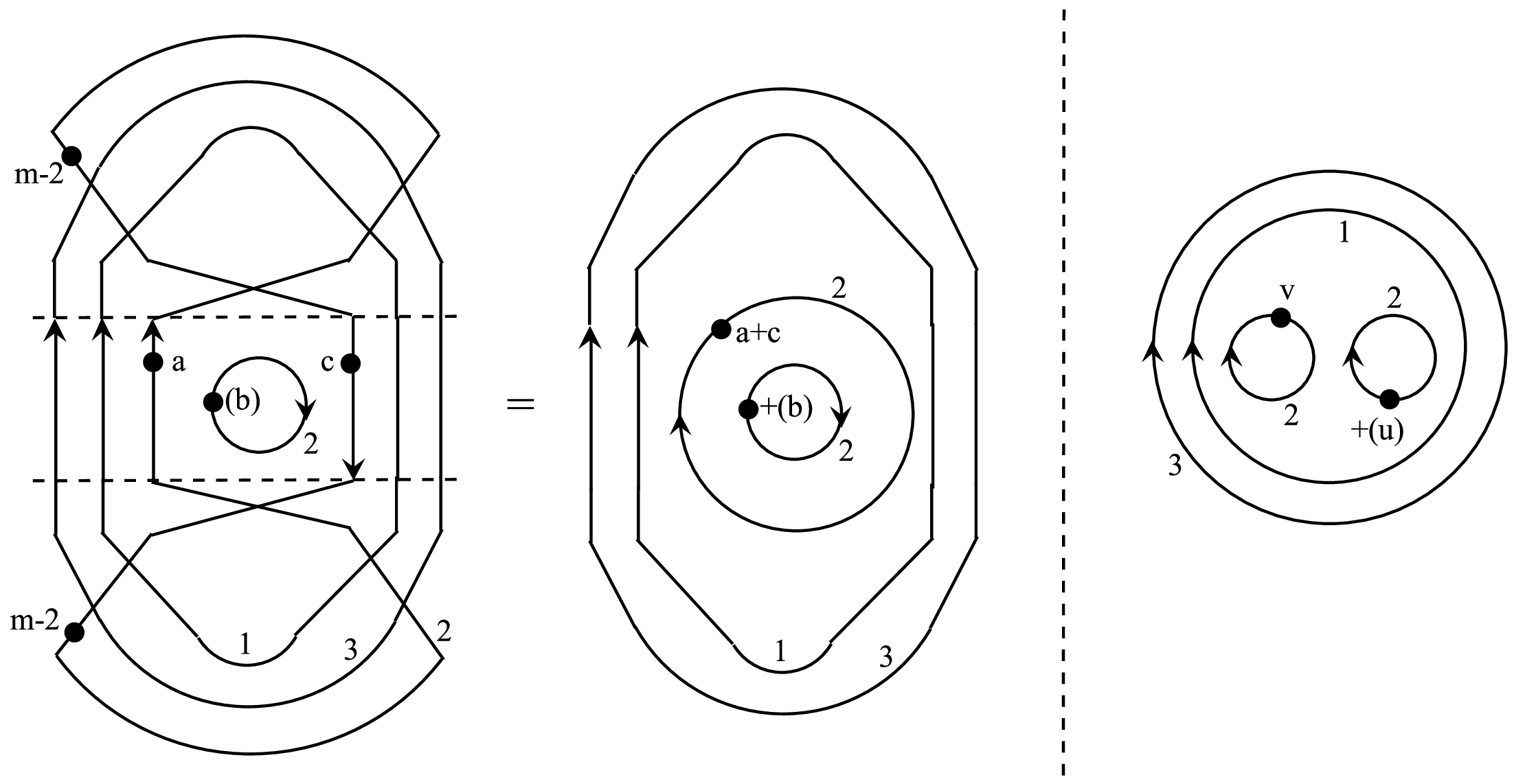}
\end{center}
\caption{}\label{fig:10}
\end{figure}

Now, the two middle concentric circles in figure (\ref{fig:10}) are equal to 
\begin{equation}\label{eq:15}
\smcbub{+(b)} \smcbub{a+c} - 2 \cdot \smcbub{+(b-1)} \smcbub{a+c+1} + \smcbub{+(b-2)} \smcbub{a+c+2}.
\end{equation}
At the same time, if $v+u=m-1$, then the composition on the right hand side of figure (\ref{fig:10}) can be simplified using the bubble silde relations to give $u+1$. Thus, if $b > 0$ then the sum in (\ref{eq:15}) contributes zero. If $b=0$ then (\ref{eq:15}) contributes $1$, and since we are summing over all $a+c=m-1$, we obtain $m$. 

In other words, the differential on the lone summand is multiplication by $m$. Finally, a similar argument shows that $d_0$ is injective. So, after cancelling out and replacing $\1_\bk$ with $\k$, we are left with 
$$\left[ \dots \xrightarrow{0} \bigoplus_{[m-1]} \k \la -5m \ra \xrightarrow{0} \bigoplus_{[m-1]} \k \la -3m \ra \xrightarrow{0} \bigoplus_{[m-1]} \k \la -3m \ra \xrightarrow{0} \bigoplus_{[m-1]} \k \la -m \ra \xrightarrow{0} 0 \rightarrow \bigoplus_{[m][m-1]} \k \la 1 \ra \right].$$
Thus, we arrive at the Poincar\'e polynomial
\begin{equation}\label{eq:Poincare2}
\chi_{q,t}(V_{\Lambda_1+\Lambda_{m-1}}) = [m-1] \left( [m-1]+ \frac{q^{-m} + t^3 q^{3m}}{1-t^2q^{2m}} \right).
\end{equation}
When $m=3$ this was also computed in \cite[Example 5.5]{Ros}). 

\begin{Remark}
The computation above is also valid over $\Z$. In other words, we find that over $\Z$ the rank of the homology is given by (\ref{eq:Poincare2}) while the torsion is equal to $\bigoplus_{d \ge 1} \Z/m\Z \la -2md \ra [2d]$. 
\end{Remark}

\section{Final remarks}\label{sec:remarks}

\subsection{The clasp $\P^+$ and the associated link homology $\sH_+^{i,j}(K)$}\label{sec:P+} 

In section \ref{sec:linkinv} we defined the link invariant $\sH^{i,j}_-(K)$ using the projectors $\P^- \in \Kom^-_*(\K)$ defined as $\lim_{\ell \rightarrow \infty} \T_\omega^{2 \ell}$. However, one can equally well define the projector
$$\P^+ := \lim_{\ell \rightarrow \infty} \T_\omega^{- 2 \ell} \in \Kom^+_*(\K)$$
where $\Kom_*^+$ is the analogue of $\Kom_*^-$ consisting of complexes that are bounded below (note that while $\P^-$ is defined as a direct limit, $\P^+$ is defined as an inverse limit). Using $\P^+$ we obtain another link invariant $\sH^{i,j}_+(K)$. These two invariants are related as follows. 

\begin{Proposition}\label{prop:Dmap}
Let $K$ be an oriented link and $K^!$ its mirror. Then $\sH^{i,j}_+(K) \cong \sH^{-i,-j}_-(K^!)$. 
\end{Proposition}
\begin{Remark}
If $K$ does not contain any clasps ({\it i.e.} its strands are labeled only by fundamental representations) then $\sH^{i,j}_+(K) \cong \sH^{i,j}_-(K)$ and there is only one homology, which we denote simply $\sH^{i,j}(K)$. Then Proposition \ref{prop:Dmap} implies that $\sH^{i,j}(K) \cong \sH^{-i,-j}(K^!)$. When $m=2$ ({\it i.e.} in the case of Khovanov homology) this fact was originally observed in \cite[Cor. 11]{K1}. 
\end{Remark}

To prove this result we work with $\K_{\Gr,m}$. This choice of 2-category has some extra structure worth discussing. Namely, each variety $Y(\i)$ is equipped with the Grothendieck-Verdier dualizing functor
$$\sA \mapsto \sA^\vee \otimes \omega_{Y(\i)} [\dim Y(\i)] \text{ for any } \sA \in D(Y(\i)).$$
This functor is contravariant and satisfies $\bD(\sA \la 1 \ra) = \bD(\sA) \la -1 \ra$ for any object $\sA \in \D(\l)$. Moreover, $\bD$ exchanges $\E_i$ and $\F_i$ in the sense that $\bD \E_i \1_\l \cong \F_i \bD \la \l_i+1 \ra$ which can be checked by a direct calculation of kernels.

One can extend $\bD$ to the whole 2-category by acting on objects as $\l \mapsto -\l$ and on 1-morphisms by taking their right adjoint. Notice that $\l \mapsto -\l$ corresponds to $\i = (\dots, i_{k-1},i_k,\dots) \mapsto (\dots, m-i_{k-1},m-i_k,\dots)$. Also, the fact that $\bD$ is contravariant on 1-morphisms means that $\bD$ exchanges $\Kom^-_*(\K_{\Gr,m})$ and $\Kom^+_*(\K_{\Gr,m})$. 
\begin{Remark}
In this paper we considered the $U_q(\sl_\infty)$-module $\Lambda_q^{m \infty}(\k^m \otimes \k^{2\infty})$ as a highest weight representation with highest weight $(\dots,0,0,m,m,\dots)$. However, $\bD$ exchanges highest and lowest weight modules so now we have a lowest weight representation with lowest weight $(\dots,m,m,0,0,\dots)$. This particular involution on categorified quantum groups, which exchanges highest and lowest weight 2-categories, has been considered before (see \cite[Remark 4.9]{Rou2}). 
\end{Remark}

We now check how $\bD$ acts on the associated link invariants. First, it flips cups and caps (up to a shift). This is because a cup and a cap were defined by maps 
$$\F_i^{(k)}: (\dots,0,m,\dots) \rightarrow (\dots,k,m-k,\dots) \text{ and } 
\E_i^{(k)}: (\dots,k,m-k,\dots) \rightarrow (\dots,0,m,\dots)$$
so that 
\begin{equation}\label{eq:shifts}
\bD \circ \E_i^{(k)} \cong \F_i^{(k)} \circ \bD \la m-k \ra \ \ \text{ and } \ \ \bD \circ \F_i^{(k)} \cong \E_i^{(k)} \circ \bD \la -m+k \ra.
\end{equation}
Second, it exchanges positive and negative crossings because $(\T_i)_R \cong \T_i^{-1}$. Subsequently, it also exchanges $\P^-$ and $\P^+$. 

Now, for a link $K$, the homology $\sH^{i,j}_-(K)$ is computed from $\Psi_-(K)$. Now consider $\bD \circ \Psi_-(K)$. Moving $\bD$ to the right has the effect of flipping $K$ about a horizontal line, interchanging positive and negative crossings and exchanging all projectors $\P^-$ with $\P^+$ (at least up to some overall grading shift). 

Now, flipping $K$ gives an equivalent link while exchanging over and under crossings amounts to replacing $K$ by its mirror $K^!$. Finally, in the end there is no overall grading shift since, in a link, caps and cups always come in pairs and so the shifts in (\ref{eq:shifts}) actually cancel out. Thus $\bD \circ \Psi_-(K) \cong \Psi_+(K^!) \circ \bD$ which completes the proof of Proposition \ref{prop:Dmap}.

\subsection{Triangulated structure}\label{sec:triangulated}

The 2-category $\K_{\Gr,m}$ has an extra triangulated structure. Namely, the categories $D(Y(\i))$ and the categories of kernels between them are triangulated. We have ignored this structure until now. 

Recall that the grading $\la 1 \ra$ we have used is equal to $[1]\{-1\}$ where $[\cdot]$ is the cohomological grading and $\{\cdot\}$ is the grading corresponding to the $\k^\times$ action on the varieties $Y(\i)$. We now consider the 2-category $\K^+_{\Gr,m}$ where we allow objects and kernels which are bounded below but not necessarily above (one can also consider the 2-category $\K^-_{\Gr,m}$ where objects and kernels are bounded above but not below). The 2-category $\K^+_{\Gr,m}$ should not be confused with $\Kom^+(\K_{\Gr,m})$. 

A complex $ (\A_\bullet, f_\bullet) = \A_n \xrightarrow{f_n} \A_{n-1} \rightarrow \cdots \xrightarrow{f_1} \A_0 $ of 1-morphisms is naturally an object in $\Kom(\K_{\Gr,m})$, but we can also try to take its convolution $C(\A_\bullet)$ (see \cite{GM} section IV, exercise 1). If $n=1$ then $C(\A_\bullet)$ is just the cone of $f_1: \A_1 \rightarrow \A_0$. If $n>1$ it is an iterated cone. 

In general a complex may not have a convolution or, if it exists, it might not be unique. However, there exist certain conditions (see for instance \cite[Prop. 8.3]{CK1}) which imply the existence and uniqueness of a convolution. In \cite{CKL1} we showed that the complex (\ref{eq:cpx1}) defining $\T_i \1_\l$ satisfies these conditions and hence has a unique convolution $C(\T_i) \1_\l \in \K_{\Gr,m}$. Notice that $C(\T_i)$ is now a 1-morphism in $\K_{\Gr,m}$ (not a complex of 1-morphisms). Thus $\T_\omega^2$ has a convolution $C(\T_\omega^2) \in \K_{\Gr,m}$ and the map $\1_\l \rightarrow \T_\omega^2 \1_\l$ induces a map $\1_\l \rightarrow C(\T_\omega^2 \1_\l)$. Subsequently we can then consider the limit $\lim_{n \rightarrow \infty} C(\T_\omega^2 \1_\l)^{n}$ and ask if it is well defined.

\begin{Lemma}\label{lem:convolution}
Suppose $\l$ is a weight such that $\T_i \1_\l = \left[\E_i \F_i \1_\l \la -1 \ra \rightarrow \1_\l \right]$. Then $\lim_{n \rightarrow \infty} C(\T_i)^{\pm 2n} \1_\l$ exists as a 1-morphism in $\K^\pm_{\Gr,m}$. 
\end{Lemma}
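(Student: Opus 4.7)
The plan is to leverage Proposition \ref{prop:2} together with the fact that the Rickard complex collapses to only two terms in this weight in order to directly bound the cohomology of the transition cones. Under the hypothesis $\la \l, \alpha_i \ra = 0$ together with $\1_{\l \pm 2 \alpha_i} = 0$, formula (\ref{eq:cpx'}) truncates to $\T_i \1_\l = [\E_i \F_i \1_\l \la -1 \ra \to \1_\l]$, a two-term complex whose convolution $C(\T_i) \1_\l$ exists as the cone of the 2-morphism underlying this complex, viewed as an object of $D(Y(\l) \times Y(\l))$. Iterating and using that convolution is compatible with composition of 1-morphisms, the powers $C(\T_i)^{2n} \1_\l$ are well-defined 1-morphisms, and coincide with the convolutions of the bounded complexes $\T_i^{2n} \1_\l$, whose existence and uniqueness follows by iterated cone construction along a Postnikov tower.

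Next, I would construct the direct system. The map $\1_\l \to \T_\omega^2 \1_\l = \T_i^2 \1_\l$ of section \ref{sec:1toT2map}, on passage to convolutions, produces morphisms $\1_\l \to C(\T_i)^{2n} \1_\l$ compatible with the transition maps $C(\T_i)^{2n} \1_\l \to C(\T_i)^{2(n+1)} \1_\l$ obtained by applying $C(\T_i)^{2n}$ to $\1_\l \to C(\T_i)^{2} \1_\l$. To verify convergence in $\K^-_{\Gr,m}$ it suffices to show that for every integer $k$ the truncation $\tau_{\ge k} C(\T_i)^{2n} \1_\l$ stabilizes as $n \to \infty$, i.e.\ that the cone of the transition map has sheaf-cohomology eventually vanishing in degrees $\ge k$.

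The third and key step is to identify the transition cone. Setting $\R \1_\l := \Cone(\1_\l \to \T_i^2 \1_\l) \in \Kom(\K)$, the cone of the map $C(\T_i)^{2n} \1_\l \to C(\T_i)^{2(n+1)} \1_\l$ is precisely the convolution of the complex $\T_i^{2n} \R \1_\l$. By Proposition \ref{prop:2} this complex is homotopic to one supported in homological degrees $\le -2n$. Each of its terms is an FM kernel of the form $\E_i^{(a)} \F_i^{(b)} \1_\mu$ (with internal grading shifts), and each such kernel is represented geometrically in $D(Y(\mu) \times Y(\mu - (a-b)\alpha_i))$ by a bounded object whose sheaf-cohomology lives in a range independent of $n$. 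A standard spectral sequence argument (equivalently, repeatedly applying long exact sequences in the Postnikov tower) then shows that $H^i$ of the convolution vanishes for $i > -2n + C$, where $C$ depends only on the uniform bound on the sheaf-cohomological ranges of the kernels $\E_i^{(a)} \F_i^{(b)} \1_\mu$. Hence $\tau_{\ge k}$ of the transition cone vanishes once $n > (C-k)/2$, which gives the desired convergence in $\K^-_{\Gr,m}$.

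Finally, the case $\lim_n C(\T_i)^{-2n} \1_\l \in \K^+_{\Gr,m}$ can be handled either by applying the Grothendieck--Verdier duality functor $\bD$ from section \ref{sec:P+} (which interchanges $\K^-_{\Gr,m}$ with $\K^+_{\Gr,m}$ and carries $C(\T_i)$ to $C(\T_i)^{-1}$), or by rerunning the same argument on the inverse system using the dual bound to Proposition \ref{prop:2} for $\T_i^{-2n}$. The main obstacle is the quantitative step in the third paragraph: converting the homological-degree bound in $\Kom(\K)$ into a sheaf-cohomological bound after convolution, since this requires tracking the Postnikov tower and knowing that the kernels appearing are geometrically bounded of uniformly controlled size, a fact that holds in $\K_{\Gr,m}$ by its explicit construction in section \ref{sec:affGrass}.
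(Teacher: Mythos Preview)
Your argument contains a genuine error in the third step, and it leads you to the wrong target category. You conclude that $\lim_{n\to\infty} C(\T_i)^{2n}\1_\l$ lies in $\K^-_{\Gr,m}$, but the correct answer is $\K^+_{\Gr,m}$ (see the Remark immediately following the lemma in the paper: $\P^-$ is unbounded below as a complex in $\Kom$, but its convolution is unbounded \emph{above} as a kernel).

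The mistake is in the sentence ``each such kernel is represented geometrically \ldots\ by a bounded object whose sheaf-cohomology lives in a range independent of $n$.'' The base kernels $\E_i^{(a)}\F_i^{(b)}\1_\mu$ are indeed uniformly bounded sheaves, but the terms of the complex carry grading shifts $\la\cdot\ra$, and in $\K_{\Gr,m}$ one has $\la 1\ra = [1]\{-1\}$. Thus $\E_i\F_i\1_\l\la -2k+1\ra = \E_i\F_i\1_\l[-2k+1]\{2k-1\}$ sits in derived-cohomological degree $2k-1$, which grows without bound. When you form the convolution, the term in $\Kom$-position $-k$ contributes in derived degree $(2k-1)-k = k-1$, so the transition cone $C(\T_i^{2n}\R\1_\l)$ has cohomology concentrated near degree $2n$, not below $-2n+C$ as you claim. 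Proposition~\ref{prop:2} only controls the $\Kom$-homological position; it says nothing about the internal $[\cdot]$-shifts hidden in $\la\cdot\ra$, and those dominate.

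The paper's proof avoids this trap by using the explicit description (\ref{eq:Tinfty}) of $\T_i^{2n}\1_\l$ computed in Section~\ref{sec:example}, from which one reads off directly both the $\Kom$-position and the $\la\cdot\ra$-shift of each term and performs the arithmetic above. Your appeal to Proposition~\ref{prop:2} and a generic spectral-sequence bound is not enough here: you must track the competition between the two gradings, and for that the explicit shape of the complex is essential.
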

\begin{Remark}
Notice that although $\P^-$ (resp. $\P^+$) is a complex unbounded {\em below} (resp. {\em above}), its convolution turns out to be a kernel unbounded {\em above} (resp. {\em below}). 
\end{Remark}
\begin{proof}
The composition $\T_i^{2n} \1_\l$ is homotopic to a complex of the form 
\begin{equation}\label{eq:lemmaconv}
\left[ \E_i \F_i \1_\l \la -2n-1 \ra \rightarrow \E_i \F_i \1_\l \la -2n+1 \ra \rightarrow \dots \rightarrow \E_i \F_i \1_\l \la -1 \ra \rightarrow \1_\l \right]
\end{equation}
as described in equation (\ref{eq:Tinfty}). Thus $C(\T_i^{2n} \1_\l)$ is equal to some convolution of this complex. It is not hard to check that in $\K_{\Gr,m}$ the kernel $\E_i \F_i \1_\l$ is actually a sheaf. Since $\la 1 \ra = [1] \{-1\}$ this means that in $C(\T_i^{2n}) \1_\l$, the term $\E_i \F_i \1_\l \la -2k+1 \ra$ in homological degree $-k$ contributes a sheaf in homological degree $-(-2k+1)-k=k-1$. Taking the limit we find that $\lim_{n \rightarrow \infty} C(\T_i)^{2n} \1_\l$ lies in $\K^+_{\Gr,m}$. 
\end{proof}

\begin{Conjecture}\label{conj:convolution}
The limit $\lim_{n \rightarrow \infty} C(\T_i)^{\pm 2n} \1_\l$ exists as a 1-morphism in $\K^\pm_{\Gr,m}$. 
\end{Conjecture}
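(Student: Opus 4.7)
\medskip

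The plan is to lift the analysis of $\P^- = \lim_{\ell \to \infty} \T_\omega^{2\ell}$ carried out in section \ref{sec:thm1} from the homotopy category $\Kom^-_*(\K_{\Gr,m})$ to the triangulated 2-category $\K^+_{\Gr,m}$ itself, replacing iterated cones of chain maps by iterated convolutions. The geometric fact driving Lemma \ref{lem:convolution} was that $\E_i\F_i\1_\l$ is a genuine sheaf, so that the shift $\la -2k+1 \ra$ on the $k$-th term pushed that term to ever larger cohomological degree after convolution (using $\la 1 \ra = [1]\{-1\}$). The same mechanism should run in general, after establishing an explicit form for $\T_\omega^{2n}\1_\l$ in which the summand kernels are bounded and the shifts grow linearly in the complex index.

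\medskip

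First, for each $n$, I would extract a canonical Postnikov presentation of $\T_\omega^{2n}\1_\l$ whose terms are (bounded complexes of) kernels in $\K_{\Gr,m}$. Iterating Corollary \ref{cor:1} in the same manner as the estimate (\ref{eq:9}) in section \ref{sec:Kconverge}, one obtains a complex $\sA^\bullet_n$ in which the $k$-th term, sitting in homological degree $-k$, is a fixed (independent of $n$, once $n \gg k$) bounded complex of kernels built from divided powers $\E_i^{(r)}, \F_j^{(s)}$, and carries a grading shift whose magnitude grows linearly in $k$. Because $\la 1 \ra = [1]\{-1\}$, after convolution the contribution of the $k$-th term lands in cohomological degree tending to $+\infty$ with $k$, which is precisely the cohomological drift argument used at the end of the proof of Lemma \ref{lem:convolution}.

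\medskip

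Second, I would formulate a triangulated convergence principle parallel to section \ref{sec:convergence}, but for Postnikov towers: a sequence of maps $\sB_0 \rightarrow \sB_1 \rightarrow \sB_2 \rightarrow \dots$ in $\K^+_{\Gr,m}$ with $\Cone(\sB_n \rightarrow \sB_{n+1})$ cohomologically supported in degrees $\ge a_n$, where $a_n \to +\infty$, admits a well-defined limit $\sB_\infty$, unique up to canonical isomorphism, such that each $\sB_n \rightarrow \sB_\infty$ has cone supported in degrees $\ge a_n$. Applying this to $\sB_n := C(\T_\omega^{2n})\1_\l$ --- whose existence and uniqueness at each finite stage follows from \cite[Prop. 8.3]{CaK1} applied to (\ref{eq:Tomega}) --- the shift estimate of the first step supplies the required divergent $a_n$, yielding the limit $C_\infty := \lim_{n \to \infty} C(\T_\omega^{2n})\1_\l \in \K^+_{\Gr,m}$.

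\medskip

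To identify $C_\infty$ with a convolution of $\P^-\1_\l$, one observes that the Postnikov data defining $C(\T_\omega^{2n})\1_\l$ pass to the limit and assemble into Postnikov data for the infinite complex $\P^-\1_\l$ of Theorem \ref{thm:main1}, so by definition $C_\infty$ realizes a convolution of $\P^-$; the $\P^+$ case is dual via the functor $\bD$ of section \ref{sec:P+}. The main difficulty will be in the second step: establishing uniqueness of the limit of a Postnikov tower in a triangulated 2-category is more delicate than for complexes, because the ambiguity in a convolution (the choice of connecting triangles and maps $h_i, g_i$ in a Postnikov system) can a priori accumulate in the limit. Controlling this should require a careful computation, in each $\K_{\Gr,m}$ weight space, of the negative-degree $\Hom$s between the kernels appearing in $\sA^\bullet_n$ --- in the spirit of Lemma \ref{lem:homs} --- to show that all relevant obstruction groups vanish once $k$ is large enough.
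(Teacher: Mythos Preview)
The statement you are attempting to prove is a \emph{conjecture} in the paper, not a theorem: the paper gives no proof of it. What the paper does is prove the very special case of Lemma~\ref{lem:convolution} and then remark that ``a more detailed study of $\T_\omega^{2n}\1_\l$ along the lines of section~\ref{sec:convergence} should prove'' the general statement, before recording it as Conjecture~\ref{conj:convolution}. So there is no paper proof to compare your proposal against; your outline is essentially an elaboration of the one-sentence heuristic the paper itself offers.

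That said, as a programme your three steps are reasonable and track the paper's suggestion, but step two is where the real content lies and your sketch does not close the gap. The convergence result you cite from section~\ref{sec:convergence} (Rozansky's theorem) is for the homotopy category of an additive category, where Cauchy sequences of complexes have unique limits up to homotopy. In a bare triangulated category such as $\K^+_{\Gr,m}$ there is no general analogue: homotopy limits of towers need not exist, and when they do (e.g.\ via a Milnor-style construction using countable products) they are typically only unique up to non-canonical isomorphism, with a $\varprojlim^1$ obstruction controlling the ambiguity. Your appeal to a vanishing of negative-degree $\Hom$s ``in the spirit of Lemma~\ref{lem:homs}'' is the right instinct for killing this obstruction, but Lemma~\ref{lem:homs} is a statement about consecutive terms in a single Rickard complex, not about the far more complicated kernels that appear after iterating $\T_\omega^2$ many times and simplifying; extending that $\Hom$-vanishing to the general terms of your $\sA^\bullet_n$ is exactly the ``more detailed study'' the paper defers. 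Similarly, in step one you assert that the $k$-th term stabilises (independent of $n$ once $n \gg k$) to a bounded complex of kernels with a linearly growing shift; the arguments in section~\ref{sec:Kconverge} give $q$-adic estimates on classes in $K$-theory, not a termwise stabilisation of the complex itself, so this too requires work beyond what is written. In short: your plan matches the paper's intended strategy, you have correctly located the main difficulty, but what you have is an outline rather than a proof, which is consistent with the statement's status as an open conjecture.
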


\subsection{Geometric construction of clasps}\label{sec:projgeom}
It is natural to wonder if the clasps $\P^-$ have a geometric description inside $\K_{\Gr,m}$. It seems the answer is ``yes'' when $m=2$ and ``perhaps not'' for $m > 2$. 

Suppose $m=2$ and $\i = ({\0},1,1,{\u2})$. Forgetting $L_1$ gives us a projection map $p: Y(\i) \rightarrow \overline{Y(\i)}$ where
$$\overline{Y(\i)} = \{ \k[z]^2 = L_0 \subset L_2 \subset \k(z)^2: zL_2 \subset L_2, z^2L_2 \subset L_0 \text{ and } \dim(L_2/L_0) = 2 \}.$$
Generically $p$ is one-to-one since $L_1$ can be recovered as $L_1 = zL_2$. Note that $Y(\i)$ in this case is a compactification of $T^\star \bP^1$. The restriction of $p$ to $T^\star \bP^1$ is just the affinization morphism. 

Consider the composition $p^* p_*: D^-(Y(\i)) \rightarrow D^-(Y(\i))$. Note that we have to work with unbounded below complexes since $\overline{Y(\i)}$ is singular.  

\begin{Proposition}\label{prop:pclasp}\cite[Prop. 6.8]{C1}. 
If $m=2$ and $\i = ({\0},1,1,{\u2})$ then $p^* p_*: D^-(Y(\i)) \rightarrow D^-(Y(\i))$ is induced by the kernel $C(\P^+) \in D^-(Y(\i) \times Y(\i))$.
\end{Proposition}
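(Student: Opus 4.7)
The plan is to compute both sides as explicit Fourier--Mukai kernels in $D^-(Y(\i) \times Y(\i))$ and identify them.

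First, I would identify the FM kernel inducing $p^*p_*$. The pushforward $p_*$ has kernel $\mathcal{O}_{\Gamma_p}$ (the structure sheaf of the graph) on $Y(\i) \times \overline{Y(\i)}$, and $p^*$ is induced by the same sheaf viewed as a kernel on $\overline{Y(\i)} \times Y(\i)$. Their convolution is $\pi_{13*}\mathcal{O}_{\Gamma_p \times_{\overline{Y(\i)}} \Gamma_p}$, which is the structure sheaf (derived) of the fibered product $Z := Y(\i) \times^L_{\overline{Y(\i)}} Y(\i)$ sitting inside $Y(\i) \times Y(\i)$. Since $p$ is a small birational morphism with a single nontrivial fiber $\bP^1$ over the point $q = \{L_2 = z^{-1}L_0\}$, one has set-theoretically $Z = \Delta_{Y(\i)} \cup (\bP^1 \times \bP^1)$, with the two components meeting along the diagonal of $\bP^1$. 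A short analysis (using a local toric model) shows that the derived fiber product coincides with the naive one, and one obtains an explicit two-term presentation $\mathcal{O}_Z = [\mathcal{O}_\Delta \oplus \mathcal{O}_{\bP^1 \times \bP^1} \to \mathcal{O}_{\bP^1}]$, exhibiting the defining sheaf of the fibered product.

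Second, I would compute $C(\P^+)$ explicitly. Since $\la \l, \alpha_1 \ra = 0$ and the relevant neighboring weight spaces are zero, Lemma \ref{lem:convolution} (dualized to the $\P^+$ side) guarantees that the convolution exists in $\K^-_{\Gr,m}$. Dualizing the complex (\ref{eq:Tinfty}) gives $\T_1^{-2\ell} \1_\l$ as a staircase
\[
\bigl[\, \1_\l \to \sE_1 \sF_1 \1_\l \la 1 \ra \to \sE_1 \sF_1 \1_\l \la 3 \ra \to \cdots \to \sE_1 \sF_1 \1_\l \la 4\ell - 1 \ra \,\bigr],
\]
with differentials alternating in the two types identified in Section \ref{sec:example}. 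The kernel $\sE_1 * \sF_1 \1_\l$, computed from the correspondences in Section \ref{sec:kernels}, is naturally supported on the locus $\{(L_\bullet, L'_\bullet) : L_j = L'_j \text{ for } j \ne 1\} \subset Y(\i) \times Y(\i)$; restricted to the open set where $L_2 \ne z^{-1}L_0$ this locus is just the diagonal, while over the special point the locus is exactly $\bP^1 \times \bP^1$. An iterated cone (Postnikov) computation identifies $\sE_1 \sF_1 \1_\l$ in $D(Y(\i) \times Y(\i))$ with a two-term complex $\mathcal{O}_{\bP^1 \times \bP^1}\text{-component} \oplus \text{diagonal piece}$ twisted by explicit line bundles.

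Third, I would match the two sides by taking the limit of the Postnikov tower. Away from the singular locus $q$, the map $p$ is an isomorphism, so both kernels restrict to $\mathcal{O}_\Delta$ there; this follows because $\sE_1 \sF_1 \1_\l$ restricted to this open set is $\mathcal{O}_\Delta \la 1 \ra \oplus \mathcal{O}_\Delta \la -1 \ra$, and the staircase complex collapses via Gaussian elimination (Lemma \ref{lem:cancel}) to just $\mathcal{O}_\Delta$ in the limit (since $\lim_\ell [0 \to 1 \to q^2 \to q^4 \to \cdots] = 1$ in the $q$-adic sense, recovering the identity). The work is then concentrated on the formal neighborhood of the contracted $\bP^1$. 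There, using a local model for the singularity $\overline{Y(\i)}$ (an $A_1$ surface singularity $\times$ a smooth factor), one recognizes the infinite convolution as the structure sheaf $\mathcal{O}_Z$ of the fibered product, by matching against the standard resolution of the diagonal of the Mukai flop.

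The main obstacle will be the identification at the singular point in the limit: one must control the differentials of the infinite Postnikov tower and show they assemble into a genuine resolution of $\mathcal{O}_Z$ rather than, say, a direct sum of shifted copies. The cleanest route is to exhibit a natural map $C(\P^+) \to \mathcal{O}_Z$ induced from the counit $\sE_1 \sF_1 \to \id$ (giving the degree-zero term) and verify it is a quasi-isomorphism by computing Tor-groups locally, using that $\bP^1 \times \bP^1$ meets the diagonal transversally in $Y(\i) \times Y(\i)$ along $\bP^1$.
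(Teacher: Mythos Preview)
The paper does not give a self-contained proof of this proposition: it simply records that the result was proved as Proposition~6.8 of \cite{Ca1}. So there is no in-paper argument to compare yours against directly; your sketch is an attempt to supply what the paper outsources.

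As an independent approach, your outline is reasonable and heads in the right direction. Identifying the kernel of $p^*p_*$ with the structure sheaf of the (derived) fibre product $Z = Y(\i)\times^L_{\overline{Y(\i)}}Y(\i)$ is standard, and the set-theoretic description $Z = \Delta \cup (\bP^1\times\bP^1)$ is correct since $p$ is the contraction of the $(-2)$-curve on the Hirzebruch surface $\bF_2$ to an $A_1$ point. Two cautions, however. First, your assertion that the derived fibre product agrees with the naive one is not automatic here: $\overline{Y(\i)}$ has an $A_1$ singularity, and the blowup is not flat over it, so you should expect nontrivial higher $\mathrm{Tor}$ in $\O_{Y(\i)}\otimes^L_{\O_{\overline{Y(\i)}}}\O_{Y(\i)}$; a local computation in the $A_1$ model is genuinely needed, and the ``short analysis'' you allude to deserves to be spelled out. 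Second, and more seriously, your plan to match the infinite Postnikov tower to $\O_Z$ by producing a single map $C(\P^+)\to\O_Z$ from the counit and checking it on $\mathrm{Tor}$ is the right idea, but you have not indicated how the alternating differentials in the staircase (the $X\!I - I\!X$ versus $X\!I + I\!X - \text{bubble}$ maps of Section~\ref{sec:example}) translate into a recognisable resolution of $\O_Z$; this is where the actual content lies, and your sketch stops just short of it. The argument in \cite{Ca1} proceeds instead by first identifying $C(\T^2)$ geometrically (as a spherical twist kernel) and then passing to the limit, which sidesteps having to track all the differentials at once.
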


If you are interested in the geometry then this result gives a representation theoretic way to understand the somewhat complicated functor $p^* p_*$ in terms of the simpler functors $\E$ and $\F$ used to define $\P^+$. For instance, it allows you to compute the cohomology of the kernel which induces $p^* p_*$. 

More generally, if $m=2$ and $\i = ({\0}, 1^k, {\u2})$ then 
$$Y(\i) = \{\k[z]^2 = L_0 \subset L_1 \subset \dots \subset L_k \subset \k(z)^2: zL_{j+1} \subset L_j, \dim(L_{j+1}/L_j) = 1 \}$$
and again we have a projection $p: Y(\i) \rightarrow \overline{Y(\i)}$ which forgets $L_1, \dots, L_{k-1}$. 
\begin{Conjecture}\label{conj:clasp}
If $m=2$ and $\i = ({\0},1^k,{\u2})$ then $p^* p_*: D^-(Y(\i)) \rightarrow D^-(Y(\i))$ is induced by the kernel $C(\P^+) \in D^-(Y(\i) \times Y(\i))$.
\end{Conjecture}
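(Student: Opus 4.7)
The plan is to proceed by induction on $k$, with the base case $k=2$ supplied by Proposition \ref{prop:pclasp}. Writing $\Phi := p^*p_*$ and using the kernel $(p \times p)_*\O_{Y(\i) \times_{\overline{Y(\i)}} Y(\i)}$ that induces it, I will characterize this kernel by three properties which the convolution $C(\P^+\1_\i)$ also enjoys: (i) idempotency under convolution, (ii) absorption by every Rickard braid, i.e.\ $C(\T_j) * \Phi \cong \Phi \cong \Phi * C(\T_j)$ for $j = 1, \ldots, k-1$, and (iii) the K-theory class equals the classical clasp $P\1_\i$. A categorical lift of the uniqueness statement in Corollary \ref{cor:Punique} then forces $\Phi \cong C(\P^+\1_\i)$, granting the existence of the convolution as in Conjecture \ref{conj:convolution}.

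Properties (i) and (iii) should be comparatively routine. Since $Y(\i)$ is an iterated $\bP^1$-bundle giving a Bott--Samelson-type resolution of the generalized affine Grassmannian Schubert variety $\overline{Y(\i)}$ for $PGL_2$, standard rationality results yield $Rp_*\O_{Y(\i)} \cong \O_{\overline{Y(\i)}}$. The projection formula then gives $p_*p^* \cong \mathrm{id}$, so $\Phi \circ \Phi \cong \Phi$. For (iii), geometric Satake identifies $\overline{Y(\i)}$ with the support of the highest-weight summand $V_{k\Lambda_1} \subset V_{\Lambda_1}^{\otimes k}$, and a Leray-type calculation shows $[p^*p_*]$ equals the classical clasp projecting onto $V_{k\Lambda_1}$.

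The heart of the argument is property (ii). When $\la \i, \alpha_j \ra = 0$ the Rickard complex reduces to the two-term complex $\T_j\1_\i = [\E_j\F_j\1_\i\la -1\ra \to \1_\i]$ from (\ref{eq:cpx'}), and absorption reduces to showing that the adjunction counit induces an isomorphism $p_* \circ \E_j\F_j\1_\i\la -1\ra \xrightarrow{\sim} p_*$. Geometrically, the kernel $\sE_j\sF_j\1_\i$ assembled from (\ref{eq:kernel1})--(\ref{eq:kernel2}) is supported on the correspondence $\{(L_\bullet, L'_\bullet): L_r = L'_r \text{ for } r \neq j\}$, which collapses to the diagonal under $p \times p$ because $\overline{Y(\i)}$ forgets all of $L_1, \ldots, L_{k-1}$. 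Tracking the line-bundle twists in (\ref{eq:kernel1}) together with the adjunction map should upgrade this set-theoretic collapse to a derived isomorphism of kernels on $\overline{Y(\i)} \times \overline{Y(\i)}$.

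The main obstacle is passing from term-wise collapse to a genuine isomorphism of kernels in $D^-(Y(\i) \times Y(\i))$: the complex $\P^+$ is unbounded, so one must ensure that the iterated cones in a Postnikov system for $C(\P^+\1_\i)$ converge to exactly $(p\times p)_*\O_{Y(\i)\times_{\overline{Y(\i)}}Y(\i)}$. A promising strategy, generalizing the $k=2$ proof in \cite[Prop.~6.8]{Ca1}, is to factor $p$ through elementary forgetful maps $q_\ell: Y(\underline{0}, 1^\ell, k-\ell, \u2) \to Y(\underline{0}, 1^{\ell-1}, k-\ell+1, \u2)$, apply the induction hypothesis to handle $\P^+_{k-1}$, and analyze only a single outer cup-cap at each step. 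The convergence is then controlled because, by an argument parallel to Proposition \ref{prop:2}, the remaining tail of the Postnikov system after the inductive step lives in arbitrarily deep cohomological degree.
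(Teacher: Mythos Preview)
The statement you are addressing is labeled a \emph{Conjecture} in the paper, and the paper offers no proof of it. The only case established is $k=2$, via Proposition~\ref{prop:pclasp} (proved in \cite{Ca1}); for general $k$ the paper explicitly leaves the assertion open and conditions further discussion on it. So there is no ``paper's own proof'' to compare your proposal against.

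That said, your outline does not constitute a proof, and the gap is in the step you flag as the ``main obstacle'' but then try to finesse. Your strategy is to characterize both $p^*p_*$ and $C(\P^+\1_\i)$ by the three properties (i)--(iii) and then invoke ``a categorical lift of the uniqueness statement in Corollary~\ref{cor:Punique}.'' But Corollary~\ref{cor:Punique} is a uniqueness statement for endomorphisms of a finite-dimensional vector space; no categorical analogue is proved anywhere in the paper, and establishing one would be essentially as hard as the conjecture itself. Two idempotent kernels with the same class in K-theory, each absorbing the $\T_j$'s, need not be isomorphic in $D^-(Y(\i)\times Y(\i))$: nothing in the paper rules out, say, extensions or nontrivial filtrations invisible in K-theory. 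Your proposed inductive factorization through the maps $q_\ell$ is plausible as a line of attack, but the sentence ``the convergence is then controlled because \dots the remaining tail \dots lives in arbitrarily deep cohomological degree'' is exactly the unproven content of Conjecture~\ref{conj:convolution}, which you are assuming rather than proving. In short, you have correctly identified the ingredients one would want, but the two load-bearing steps (categorical uniqueness of the idempotent, and convergence of the convolution) remain open.
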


Unfortunately, if $m > 2$ the functors $p^* p_*$ and $C(\P^+)$ (assuming the latter convolution exists) do not generally agree even at the level of K-theory. Thus, it remains an open question to give a representation theoretic interpretation of $p^* p_*$. 

\subsection{Clasps and category $\O$}

In \cite{FSS} the module $V^{\otimes n}$, where $V$ is the standard representation of $\sl_2$, is categoried by $\oplus_{k=0}^n A_{k,n}-{\rm gmod }$ for some graded algebras $A_{k,n}$ (this follows from \cite{FKS} where these categories correspond to a certain category $\O$). The Jones-Wenzl projectors are then categorified by explicit $(A_{k,n},A_{k,n})$-bimodules.

The simplest nontrivial example of their construction is when $k=1,n=2$. In this case $A_{1,2}$ is a 5-dimensional algebra which can be described as the path algebra of the quiver 
$$\xymatrix{ v \bullet \ar@/^/[r]^x & \bullet \ar@/^/[l]^y w } \ \ \text{ with relations } \ \ yx = 0. $$
If we denote by $e_w$ the constant path that starts and ends at $w$ then $e_w A_{1,2} e_w \cong \k[t]/t^2$ where $t=xy$. The projector is then defined by the $(A_{1,2},A_{1,2})$-bimodule $M_{1,2} := A_{1,2} e_w \otimes_{\k[t]/t^2} e_w A_{1,2}$ (the tensor is derived so this ends up being a complex of bimodules). 

On the other hand, \cite{CoK} also defines a complex of bimodules which categorifies this particular Jones-Wenzl projector. These two projectors are related by Koszul duality \cite{SS}. One can draw the following parallels between this work and the results in this paper. 
$$\begin{tabular}{|ccc|}
\hline
\noalign{\smallskip}
$A_{1,2}$ - gmod &
\xymatrix{ \ar@{<~>}[rr] & &} & \txt{coherent sheaves on $T^\star \bP^1$ \\ (or on its compactification $Y(1,1)$)} \\
\hline
\noalign{\smallskip}
projector induced by bimodule $M_{1,2}$ & \xymatrix{ \ar@{<~>}[rr] & & } & \txt{ functor  $p^*p_*$ where $p: T^\star \bP^1 \rightarrow \overline{T^\star \bP^1}$ \\ is the affinization map} \\
\hline
\noalign{\smallskip}
\txt{ relation proved in \cite{SS} between \\ $M_{1,2}$ and \cite{CoK} projector} & \xymatrix{ \ar@{<~>}[rr] & & } & \text{ Proposition \ref{prop:pclasp}} \\ 
\hline
\noalign{\smallskip}
\txt{ conjectural relation between \cite{FSS} \\ projectors and \cite{CoK} projectors } & \xymatrix{ \ar@{<~>}[rr] & & } & \text{ Conjecture \ref{conj:clasp}} \\
\hline
\end{tabular}$$
It would be interesting to generalize the category $\O$ approach of \cite{FSS,SS} from $\sl_2$ to $\sl_n$ invariants and to extend both sides of the table above.


\begin{thebibliography}{E-G-S}

\bibitem[C1]{C1}
S. Cautis, Flops and about: a guide, \textit{EMS Congress Reports} (2011), no. 8, 61--101; \textsf{arXiv:1111.0688}.

\bibitem[C2]{C2}
S. Cautis, Rigidity in higher representation theory; {\sf arXiv:1409.0827}.

\bibitem[CK1]{CK1} 
S. Cautis and J. Kamnitzer, Knot homology via derived categories of coherent sheaves I, $\sl_2$ case, \textit{Duke Math. J.} \textbf{142} (2008), no. 3, 511--588. \textsf{arXiv:0701194}.

\bibitem[CK2]{CK2}
S. Cautis and J. Kamnitzer, Knot homology via derived categories of coherent sheaves II, $\sl_m$ case, \textit{Invent. Math.} \textbf{174} (2008), no. 1, 165--232. \textsf{arXiv:0710.3216}

\bibitem[CK3]{CK3} 
S. Cautis and J. Kamnitzer, Braiding via geometric categorical Lie algebra actions, \textit{Compositio Math.} \textbf{148} (2012), no. 2, 464--506. \textsf{arXiv:1001.0619}.

\bibitem[CLa]{CLa}
S. Cautis and A. Lauda, Implicit structure in 2-representations of quantum groups, \textit{Selecta Math. (N.S.)} (to appear); \textsf{arXiv:1111.1431}.

\bibitem[CLi]{CLi}
S. Cautis and A. Licata, Vertex operators and 2-representations of quantum affine algebras; \textsf{arXiv:1112.6189}.

\bibitem[CKL1]{CKL1} 
S. Cautis, J. Kamnitzer and A. Licata, Categorical geometric skew Howe duality, \textit{Invent. Math.} \textbf{180} (2010), no. 1, 111--159. \textsf{arXiv:0902.1795}.

\bibitem[CKL2]{CKL2} 
S. Cautis, J. Kamnitzer and A. Licata, 
Coherent sheaves and categorical $\sl_2$ actions, \emph{Duke Math. J.} \textbf{154} (2010), no. 1, 135--179; \textsf{arXiv:0902.1796}.

\bibitem[CKL3]{CKL3} 
S. Cautis, J. Kamnitzer and A. Licata, Derived equivalences for cotangent bundles of Grassmannians via categorical $\sl_2$ actions, \textit{J. Reine Angew. Math.} \textbf{675} (2013), 53--99; \textsf{arXiv:0902.1797}.

\bibitem[CKL4]{CKL4}
S. Cautis, J. Kamnitzer and A. Licata, Coherent sheaves on quiver varieties and categorification, \textit{Math. Annalen} \textbf{357} (2013), no. 3, 805--854; \textsf{arXiv:1104.0352}.

\bibitem[CKM]{CKM}
S. Cautis, J. Kamnitzer and S. Morrison, Webs and quantum skew Howe duality, \textit{Math. Annalen} (to appear); \textsf{arXiv:1210.6437}.

\bibitem[CR]{CR}
J. Chuang and R. Rouquier, Derived equivalences for symmetric groups and $\sl_2$-categorification, \textit{Ann. of Math.} \textbf{167} (2008), no. 1, 245--298; \textsf{math.RT/0407205}.

\bibitem[CoK]{CoK}
B. Cooper and V. Krushkal, Categorification of the Jones-Wenzl projectors, \textit{Quantum Topology} \textbf{3} (2012), 139--180; \textsf{arXiv:1005.5117}.

\bibitem[FKS]{FKS}
I. Frenkel, M. Khovanov and C. Stroppel, A categorification of finite-dimensional irreducible representations of quantum $\sl_2$ and their tensor products, \textit{ Selecta Math. (N.S.)} \textbf{12} (2006), no. 3-4, 379--431.

\bibitem[FSS]{FSS}
I. Frenkel, C. Stroppel and J. Sussan, Categorifying fractional Euler characteristics, Jones-Wenzl projector and 3j-symbols with applications to Exts of Harish-Chandra bimodules, \textit{Quantum Topology} \textbf{3} (2012), no. 2, 181--253; \textsf{arXiv:1007.4680}. 

\bibitem[GM]{GM}
S. Gelfand and Y. Manin, \textit{Methods of homological algebra, second edition}, Springer-Verlag, 2003.

\bibitem[K1]{K1}
M. Khovanov, A categorification of the Jones polynomial, \textit{Duke Math. J.}, \textbf{101} no. 3 (1999), 359--426; \textsf{math.QA/9908171}.

\bibitem[K2]{K2}
M. Khovanov, A functor-valued invariant of tangles, \textit{Algebr. Geom. Topol.} \textbf{2} (2002), 665--741; \textsf{math.QA/0103190}.

\bibitem[KLMS]{KLMS}
M. Khovanov, A. Lauda, M. Mackaay and M. Stosic, Extended graphical calculus for categorified quantum $\sl_2$, \textit{Memoirs of the A.M.S.}, \textbf{219} (2012), 87pp; \textsf{arXiv:1006.2866}. 

\bibitem[KL1]{KL1}
M. Khovanov and A. Lauda, A diagrammatic approach to categorification of quantum groups I, \textit{Represent. Theory} \textbf{13} (2009), 309--347; \textsf{arXiv:0803.4121}.

\bibitem[KL2]{KL2}
M. Khovanov and A. Lauda, A diagrammatic approach to categorification of quantum groups II, \textit{Trans. Amer. Math. Soc.} \textbf{363} (2011), 2685--2700; \textsf{arXiv:0804.2080}.

\bibitem[KL3]{KL3}
M. Khovanov and A. Lauda, A diagrammatic approach to categorification of quantum groups III, \textit{Quantum Topology} \textbf{1}, Issue 1 (2010), 1--92; \textsf{arXiv:0807.3250}.

\bibitem[KR]{KR}
M. Khovanov and L. Rozansky, Matrix factorizations and link homology, \textit{Fund. Math.} \textbf{199} (2008), no. 1, 1--91; \textsf{math.QA/0401268}.

\bibitem[Kup]{Kup}
G. Kuperberg, Spiders for rank 2 Lie algebras, \textit{Comm. Math. Phys.} \textbf{180} (1996), 109--151; \textsf{arXiv:q-alg/9712003}.

\bibitem[La]{La}
A. Lauda, A categorification of quantum $\sl_2$, \textit{Adv. Math.} \textbf{225} (2008), 3327--3424; \textsf{arXiv:0803.3652}.

\bibitem[LQR]{LQR}
A. Lauda, H. Queffelec and D. Rose, Khovanov homology is a skew Howe 2-representation of categorified quantum $\sl_m$; \textsf{arXiv:1212.6076}.

\bibitem[Lee]{Lee}
E. S. Lee, An endomorphism of the Khovanov invariant, \textit{Adv. Math.} \textbf{197} (2005), no. 2, 554--586. 

\bibitem[Lu]{Lu}
G. Lusztig, \textit{Introduction to quantum groups}, Progress in Mathematics {\bf 110}. Birkh\"auser Boston Inc. 1993. 

\bibitem[Man]{Man}
Y. Manin, \textit{Quantum groups and non-commutative geometry}, Centre de recherche math\'ematique, 1988.

\bibitem[MSV]{MSV}
M. Mackaay, M. Stosic and P. Vaz, $\sl_N$-link homology ($N \ge 4$) using foams and the Kapustin-Li formula, \textit{Geom. Topol.}, \textbf{13} (2009), no. 2, 1075--1128; \textsf{arXiv:0708.2228}

\bibitem[MS]{MS}
V. Mazorchuk and C. Stroppel, A combinatorial approach to functorial quantum $\sl_k$ knot invariants, \textit{Amer. Jour. Math.} \textbf{131} (2009), no. 6, 1679--1713; \textsf{arXiv:0709.1971}

\bibitem[MV]{MV}
I. Mirkovi\'c and M. Vybornov, On quiver varieties and affine Grassmannians of type A, \textit{C. R. Math. Acad. Sci. Paris} \textbf{336} (2003), no. 3, 207--212; math.AG/0206084.

\bibitem[QR]{QR}
D. Rose and H. Queffelec, The $\sl_n$ foam 2-category: a combinatorial formulation of Khovanov-Rozansky homology via categorical skew Howe duality; \textsf{1405.5920}.

\bibitem[Ras]{Ras}
J. Rasmussen, Khovanov homology and the slice genus, \textit{Invent. Math.} \textbf{182} (2010), no. 2, 419--447. 

\bibitem[Rin]{Rin}
C. M. Ringel, Tame Algebras and Integral Quadratic Forms, \textit{Lecture Notes in Mathematics}, \textbf{1099} (1984), Springer Berlin.

\bibitem[RT]{RT}
N. Reshetikhin and V. Turaev, Ribbon graphs and their invariants derived from quantum groups, \textit{Comm. Math. Phys.} \textbf{127} (1990), 1--26.

\bibitem[Ros]{Ros}
D. Rose, A categorification of quantum $\sl_3$ projectors and the $\sl_3$ Reshetikhin-Turaev invariant of tangles, \textit{Quantum Topology} \textbf{5} (2014), no. 1, 1--59; \textsf{arXiv:1109.1745v1}. 

\bibitem[Rou1]{Rou1}
R. Rouquier, 2-Kac-Moody algebras; \textsf{arXiv:0812.5023}.

\bibitem[Rou2]{Rou2}
R. Rouquier, Quiver Hecke algebras and 2-Lie algebras, \textit{Algebra Colloq.} \textbf{19} (2012), no. 2, 48pp; \textsf{arXiv:1112.3619}. 

\bibitem[Roz]{Roz}
L. Rozansky, An infinite torus braid yields a categorified Jones-Wenzl projector; \textsf{arXiv:1005.3266v1}. 

\bibitem[St]{St}
M. Stosic, Indecomposable 1-morphisms of $\dot{U}^+_3$ and the canonical basis of $U_q^+(\sl_3)$; \textsf{arXiv:1105.4458}. 

\bibitem[SS]{SS}
C. Stroppel and J. Sussan, Categorified Jones-Wenzl projectors: a comparison, \textit{Perspectives in Representation Theory: Proceedings for Igor Frenkel's 60th birthday} (2013), 333--352; \textsf{arXiv:1105.3038}.

\bibitem[Su]{Su}
J. Sussan, Category $\O$ and $\sl_k$ link invariants; \textsf{arXiv:0701045}.

\bibitem[W1]{W1}
B. Webster, Knot invariants and higher representation theory I: diagrammatic and geometric categorification of tensor products; \textsf{arXiv:1001.2020.v7}.

\bibitem[W2]{W2}
B. Webster, Knot invariants and higher representation theory II: the categorification of quantum knot invariants; \textsf{arXiv:1005.4559.v5}.

\bibitem[Wu]{Wu}
H. Wu, A colored $\sl_N$-homology for links in $S^3$; \textsf{arXiv:0907.0695}. 

\bibitem[Y]{Y}
Y. Yonezawa, Quantum $(\sl_n, \Lambda V_n)$ link invariant and matrix factorizations, \textit{Nagoya Math. J.} \textbf{204} (2011), 69--123; \textsf{arXiv:0906.0220}. 

\end{thebibliography}
\end{document}